\numberwithin{equation}{section}
\newtheorem{theorem}{Theorem}[section]
\newtheorem{lemma}[theorem]{Lemma}
\newtheorem{proposition}[theorem]{Proposition}
\date{}
\begin{document}
\title{On the mean convexity of a space-and-time neighborhood of generic singularities formed by mean curvature flow}
\author{Zhou Gang\footnote{gzhou@math.binghamton.edu, partly supported by NSF grant DMS-1443225 and DMS-1801387.}}
\maketitle
\centerline{Department of Mathematical Sciences, Binghamton University, Binghamton, NY, 13850}
\setlength{\leftmargin}{.1in}
\setlength{\rightmargin}{.1in}
\normalsize \vskip.1in
\setcounter{page}{1} \setlength{\leftmargin}{.1in}
\setlength{\rightmargin}{.1in}
\large

\date

%\fixNumberingInArticle
\setlength{\leftmargin}{.1in}
\setlength{\rightmargin}{.1in}
%\NowFootNum \fixNumberingInArticle
\normalsize \vskip.1in
\setcounter{page}{1} \setlength{\leftmargin}{.1in}
\setlength{\rightmargin}{.1in}
\large

\section*{Abstract}
We consider one of the generic regimes of formation of singularities. We obtain a detailed description of a possibly small, but fixed, neighborhood of the blowup point, up to (and including) the blowup time, and find that it is mean convex. This confirms a conjecture by Ilmanen. 
And we find that the singularity is isolated from the other ones. 

\tableofcontents

\section{Introduction}
Here we study mean curvature flow (MCF) for a $n-$dimensional hypersurface embedded in $\mathbb{R}^{n+1}$:
\begin{align}\label{eq:MeanCur}
\partial_{t}X_{t}=-h(X_{t}),
\end{align} where $X_{t}$ is the immersion at time $t$, $h(X_{t})$ is the mean curvature vector at the point $X_{t}.$ 

We are mainly interested in the generic blowups, and a small, but fixed, space-and-time neighborhood of the singularity. The objective is to find a detailed description of the neighborhood, and prove it is mean convex in certain regimes.

It was proved by Colding and Minicozzi in \cite{ColdingMiniUniqueness} that for the generic singularity,
suppose that the singularity is at time $T$ and at spatial $0$, then under the scaling $X_{t}\rightarrow \frac{1}{\sqrt{T-t}}X_t$, the manifold will converge to a unique cylinder $\mathbb{R}^{n-k}\times \mathbb{S}_{\sqrt{2k}}^{k},\ k=1,2,\cdots, n-1,$ or $\mathbb{S}^{n}_{\sqrt{2n}}$, here $\mathbb{S}^{k}_{\sqrt{2k}}$ is $k$-dimensional torus with radius $\sqrt{2k}.$

Here we choose to study the regime where the limit cylinder is $\mathbb{R}^3\times \mathbb{S}^{1}_{\sqrt{2}}$. One motivation is that such singularities are not understood as well as $\mathbb{R}\times \mathbb{S}^{n-1}_{\sqrt{2(n-1)}}.$ See the results in \cite{HuiskenSur2009, BrHui2016, MR3602529, MR3662439}. For the other related works, see \cite{Hamilton1997, AltAngGiga1995, sesum2008, AnDaSE15, AnDaSE18}.  We expect that the techniques work equally well for all the generic blowups.

Next we present our results.

Suppose that the blowup point is the origin and the time is $T$, and the limit cylinder is $\mathbb{R}^{3}\times \mathbb{S}^{1}_{\sqrt{2}}$, then for $t$ sufficiently close to the blowup time $T$, a neighborhood of the blowup point can be parametrized by some positive function $u$ as
\begin{align}\label{eq:origPara1}
\Psi(z,t)= \left[
\begin{array}{ccc}
z\\
u(z, \theta, t)\cos\theta\\
u(z,\theta,t) \sin \theta
\end{array}
\right],\ z\in \mathbb{R}^3, \ |z|\leq c(t)\ \text{for some}\ c(t)>0,\ \text{and}\ \theta\in \mathbb{T}.
\end{align}
Here we study the function $u$. We prove that in certain regime there exists a fixed $\epsilon>0$ such that when $t\in [T-\epsilon,T]$, the part of hypersurface in the ball $B_{\epsilon}(0)\subset \mathbb{R}^5$ is of the form \eqref{eq:origPara1}. Moreover we obtain a detailed description of the function $u$, see Theorem \ref{prob:converge} below.

Now we discuss some technical aspects. We study the problem in two steps. In the first step we consider the rescaled MCF, namely $X_{t}\rightarrow \frac{1}{\sqrt{T-t}},$ and define a new function $v$ by
\begin{align}
u(z,\theta,t)=&\sqrt{T-t} \ v(y,\theta, \tau)\label{eq:rescaIni}
\end{align} here $y$ and $\tau$ are the rescaled space and time variables defined as
\begin{align}
y:=\frac{z}{\sqrt{T-t}},\ \tau:=-ln(T-t).
\end{align} 
Then the part parametrized in \eqref{eq:origPara1} becomes
\begin{align}
\sqrt{T-t}\left[
\begin{array}{c}
y\\
v(y,\theta,\tau)cos\theta\\
v(y,\theta,\tau) sin\theta
\end{array}
\right].
\end{align}

We manage to prove that for $|y|\leq 3\tau^{\frac{1}{2}+\frac{1}{20}}$, the dominant part of $v$ is $\sqrt{2+\tau^{-1}y^{T}\tilde{B}y}$ in the sense that
\begin{align}
\Big|v-\sqrt{2+\tau^{-1}y^{T}\tilde{B}y}\Big|\leq \tau^{-\frac{3}{10}},\label{eq:tent1}
\end{align}
where $\tilde{B}$ is a $3\times 3$ diagonal matrix
\begin{align}
\tilde{B}=diag[b_1,b_2,b_3], \ \text{with}\ b_{k}=0\ \text{or}\ 1.
\end{align} And we have estimates on the derivatives of $v$ in this region, see Theorem \ref{THM:TwoReg1} below. 

Compare to the other works, here we do not study Huisken's monotonicity formula since from that it is difficult to control the solution in the region $|y|\leq 3\tau^{\frac{1}{2}+\frac{1}{20}}$. To be seen on an intuitive level, suppose that we have $\|e^{-\frac{1}{8}|y|^2}f\|_{2}\leq \tau^{-10}$, one can not derive decay estimate for $f$ in the region $|y|=\mathcal{O} (\tau^{\frac{1}{2}})$ as $\tau\rightarrow \infty$, since the function $e^{-\frac{1}{8}|y|^2}$ decays too rapidly.

The situation changes if one considers our chosen weighted $L^{\infty}$ norm. Suppose that $\|\langle y\rangle^{-3}f\|_{\infty}\leq \tau^{-2}$, then when $|y|\leq 3\tau^{\frac{1}{2}+\frac{1}{20}}$, we have a decay estimate for $f$,
\begin{align}
|f(y)|\leq \langle y\rangle^{3}\tau^{-2}\lesssim \tau^{-\frac{7}{20}}.
\end{align} To use such weighted $L^{\infty}$-norms we apply propagator estimates, see Lemma \ref{LM:propagator} below. An easy, but essential, version is that, there exists a constant $C$ such that for any $t\geq 0,$
\begin{align}
\|\langle y\rangle^{-3} e^{-tL}f\|_{\infty}\leq Ce^{-\frac{3}{2}t} \|\langle y\rangle^{-3}f\|_{\infty}
\end{align} where $L:=-\Delta+\frac{1}{2}y\cdot \nabla_y,$ and $f$ is any function satisfying $f\perp e^{-\frac{1}{4}|y|^2}, e^{-\frac{1}{4}|y|^2}y_k, e^{-\frac{1}{4}|y|^2}y_k y_l$, $k,l=1,2,3.$ Note that the conjugate of $L$, $e^{-\frac{1}{8}|y|^2}Le^{\frac{1}{8}|y|^2}$ is the Harmonic oscillator.

In the second step we consider the regime where 
\begin{align}
b_1=b_2=b_3=1,\label{eq:ThreeB}
\end{align} which is arguably the most most generic regime in the sense that if one of the $b_k$ is zero, then after a generic perturbation at a large time $\tau_1,$ all these $b_k$ will become 1 as $\tau\rightarrow \infty$. 

Here we consider a new (un-rescaled) MCF from some fixed time $t_1=t(\tau_1)$, with $\tau_1$ sufficiently large and hence $T-t_1$ sufficiently small, with initial condition provided by $v(\cdot,\tau_1)$ (thus is a rescaled version of $u(\cdot,t_1)$ by \eqref{eq:rescaIni}). This makes the new flow of the form
\begin{align}
\left[
\begin{array}{ccc}
z\\
q(z, \theta, s)\cos\theta\\
q(z,\theta,s) \sin \theta
\end{array}
\right],
\end{align} where $q$ is related to $v$ and $u$ by
\begin{align*}
q(z,\theta,s)=&\sqrt{1-s}\ v\big(\frac{z}{\sqrt{1-s}},\theta, -\ln(1-s)+\tau_1\big)\\
=&\frac{1}{\lambda} u(\lambda z, \theta, \lambda^2 s+t_1).\label{eq:quId}
\end{align*}
and $s$ and $\lambda$ are defined as 
\begin{align}
s:=\frac{t-t_1}{\lambda^2}, \ \lambda:=\sqrt{T-t_1}.
\end{align} 
Obviously the new flow will blowup at time $s=1.$

By the identity in \eqref{eq:quId} we have that estimates for $q$ implies those for $u.$ Moreover since the MCF is scaling invariant, the one parametrized by $q$ is also a MCF.

Now we study the region where $|z|\tau_1^{-\frac{1}{2}-\frac{1}{20}}\approx 1$. By the estimate in \eqref{eq:tent1}, it is close to a cylinder with a large radius $\approx \tau_1^{\frac{1}{20}}\gg 1$. Thus, at least intuitively, as $s\rightarrow 1$ i.e. the blowup time, under mean curvature flow, this part will stay close to a cylinder with a large radius. In the proof of Theorem \ref{prob:converge} we will make this rigorous, together with regularity estimate and the technique of local smooth extension, see e.g. \cite{Eckerbook}.

Thus it is critically important that we have a good control on the rescaled MCF in the ball $B_{0}(\tau^{\frac{1}{2}+\epsilon})$ for some $\epsilon>0.$

For the regimes where at least one of $b_k$ in \eqref{eq:ThreeB} is zero, we will address them in subsequent papers.

The present paper depends on our paper \cite{GZ2017}, where, among other results, we proved that the $3\times 3$ symmetric matrix $B(\tau)$, in \eqref{eq:decPr} below, is close to being semi-positive definite. This allows us to consider a large region of the rescaled flow, otherwise $\sqrt{2+y^{T}B y}$ might not be well defined. Some of the key techniques were devised in \cite{GaKnSi, GaKn20142, GS2008}, see also \cite{BrKu}.

The paper is organized as the following: In Section \ref{sec:MainTHM} we state two Main Theorems \ref{THM:TwoReg1}
and \ref{prob:converge}. The Theorem \ref{THM:TwoReg1} is reformulated in Section \ref{sec:refor}. And the results there will be proved in Sections \ref{sec:ReforWeightLInf} and \ref{sec:regul}. The two Main Theorems will be proved in Sections \ref{sec:ProTwoReg1} and \ref{sec:flowThr} respectively. In Sections \ref{sec:estM1}-\ref{sec:M3} we prove some technical results. 

The estimates in Section \ref{sec:estM1} are carried out in detail, with new ideas explained. Some of the details in Sections \ref{sec:M3} and \ref{sec:estM2} will be skipped if they are similar to those in Section \ref{sec:estM1}.

\section*{Acknowledgement}
The author would like to thank Shengwen Wang for pointing out that the proved estimates imply mean convexity, and their importances.

\section{Main Theorem}\label{sec:MainTHM}

As defined in \cite{CoMi2012}, $\lambda(\Sigma)$ is the supremum of the Gaussian surface areas of hypersurface $\Sigma$ over all centers and scales. 
Under the condition that there exists a constant $\lambda_0>0$ such that for any $\tau>0$,
\begin{align}
\lambda(\Sigma_{\tau})\leq \lambda_0,\label{eq:generic}
\end{align} it was proved in \cite{CoMi2012, CIM13} the only possible singularities are cylinders. Then it was proved in \cite{ColdingMiniUniqueness} the limit cylinder is unique. Here $\Sigma_{\tau}$ is the rescaled hypersurface at time $\tau$.

We suppose that the blowup point is the origin and the blowup time is $T>0,$
and suppose that the limit cylinder is $\mathbb{R}^{3}\times \mathbb{S}^{1}_{\sqrt{2}},$ parameterized by, 
\begin{align}\label{eq:limitCylin}
\left[
\begin{array}{ccc}
y\\
\sqrt{2}cos\theta\\
\sqrt{2}sin\theta
\end{array}
\right],\ y:=(y_{1},\ y_2,\  y_{3})^{T}\in \mathbb{R}^3.
\end{align}

Then we have that, in a (possibly shrinking) neighborhood of the singularity, MCF takes the form
\begin{align}\label{eq:origPara}
\Psi(z,t)= \left[
\begin{array}{ccc}
z\\
u(z, \theta, t)\cos\theta\\
u(z,\theta,t) \sin \theta
\end{array}
\right],\ z\in \mathbb{R}^3, \ |z|\leq c(t)\ \text{for some}\ c(t)>0,
\end{align}
where $u$ is periodic in $\theta$.

Then we define a rescaled MCF by rescaling $u$ into $v$, specifically
\begin{align}
u(z,\theta,t)=&\sqrt{T-t} \ v(y,\theta, \tau)\label{eq:rescale}
\end{align} here $y$ and $\tau$ are the rescaled space and time variables defined as
\begin{align}
y:=\frac{z}{\sqrt{T-t}},\ \tau:=-ln(T-t).\label{eq:scaling}
\end{align} 
Then the part parametrized in \eqref{eq:origPara} becomes
\begin{align}\label{eq:rescaledOri}
\sqrt{T-t}\left[
\begin{array}{c}
y\\
v(y,\theta,\tau)cos\theta\\
v(y,\theta,\tau) sin\theta
\end{array}
\right].
\end{align}

In the present paper we consider the following region:
\begin{align}
|y|\leq \Omega(\tau),\ \text{and}\ \tau\geq \xi_0 \ \text{with}\ \xi_0\ \text{sufficiently large}
\end{align} with $\Omega$ defined as
\begin{align}
\Omega(\tau):=\sqrt{100 \ln \tau+9 (\tau-\xi_0)^{\frac{11}{10}}}.\label{eq:defOmega}
\end{align}

The result is the following:  
\begin{theorem}\label{THM:TwoReg1}
Suppose the condition \eqref{eq:generic} holds, the blowup point is the origin and the blowup time is $T$, and the limit cylinder is the one parametrized by \eqref{eq:limitCylin}.

Then there exists a large time $\xi_0$ such that for $\tau\geq \xi_0$ and for $|y|\leq \Omega(\tau)$, the rescaled MCF is parametrized by
\begin{align}\label{eq:representation}
\sqrt{T-t} \left[
\begin{array}{ccc}
y\\
v(y, \theta,\tau)cos\theta\\
v(y, \theta,\tau)sin\theta
\end{array}
\right],
\end{align} with $v,\ y,\ \tau$ defined in \eqref{eq:rescale} and \eqref{eq:scaling}, and, up to a rotation $y\rightarrow U y$, one has that, for $|y|\leq \Omega(\tau),$
\begin{align}
v(y, \theta,\tau)=\sqrt{\frac{2+y^{T}B(\tau) y}{2a(\tau)}}+\eta(y, \theta,\tau),\label{eq:decPr}
\end{align} where, for some $C>0,$ the parameter $a$ satisfies the estimate
\begin{align}
|a(\tau)-\frac{1}{2}|\leq C\tau^{-1},
\end{align} and the symmetric $3\times 3$ matrix $B$ satisfies the estimates
\begin{align}
B(\tau)=\tau^{-1}\left[
\begin{array}{lll}
b_1&0&0\\
0&b_2&0\\
0&0&b_3
\end{array}
\right]+\mathcal{O}(\tau^{-2}),\ \text{with}\ b_{k}=0\ \text{or}\ 1,\  k=1,2,3,
\end{align}
and
the function $\eta$ satisfies the estimates
\begin{align}
\|e^{-\frac{1}{8}|y|^2}1_{\Omega}\eta(\cdot,\tau)\|_2\leq C \tau^{-2},\label{eq:estEta}
\end{align} and 
\begin{align}\label{eq:InftyEst}
\begin{split}
\|\langle y\rangle^{-3}1_{\Omega}\partial_{\theta}^{l}\eta(\cdot,\tau)\|_{\infty}\leq &C (\tau^{-2}+\Omega^{-4}),\ l=0,1,2,\\
\|\langle y\rangle^{-2}1_{\Omega}\partial_{\theta}^{l}\nabla_{y}\eta(\cdot,\tau)\|_{\infty}\leq &C \Omega^{-3},\ l=0,1,\\
\|\langle y\rangle^{-1}1_{\Omega}\nabla_{y}^{k}\eta(\cdot,\tau)\|_{\infty}\leq &C \Omega^{-2},\ |k|=2.
\end{split}
\end{align}

\end{theorem}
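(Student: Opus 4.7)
The plan is to run a bootstrap argument that combines modulation theory for the parameters $a(\tau), B(\tau)$ with weighted estimates for the remainder $\eta$ in the large region $|y| \leq \Omega(\tau)$. First, inserting the ansatz \eqref{eq:decPr} into the rescaled MCF equation, one derives a PDE for $\eta$ of the form $\partial_\tau \eta = -\mathcal{L} \eta + N(\eta) + S(\dot a, \dot B)$, where the linear part $\mathcal{L}$ is a short-range perturbation of the operator $L - 1$ on $\mathbb{R}^3$ (with $L$ as defined after \eqref{eq:InftyEst}) plus the Laplacian on $\mathbb{S}^1$, $N$ collects quadratic and higher nonlinearities, and $S$ is generated by the modulation equations. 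Note that the profile $\sqrt{(2+y^T B y)/(2a)}$ is well-defined throughout $B_\Omega$ because \cite{GZ2017} ensures $B(\tau)$ is essentially semi-positive, making $2 + y^T B y \gtrsim 1$.

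Next, the modulation parameters $a(\tau)$ and $B(\tau)$ are fixed by imposing the orthogonality of $\eta$ to the neutral modes $e^{-|y|^2/4}$ and $e^{-|y|^2/4} y_k y_l$ of the conjugated harmonic oscillator. Differentiating these conditions in $\tau$ and using the PDE for $\eta$ produces a finite-dimensional ODE system for $a$ and $B$. The $a$-equation gives $\dot a = \mathcal{O}(\tau^{-2})$, hence $|a(\tau) - 1/2| = \mathcal{O}(\tau^{-1})$; the $B$-equation, after simultaneous diagonalization via the rotation $y \mapsto U y$, reduces to a Riccati-type system whose stable asymptotics force the eigenvalues of $\tau B(\tau)$ to tend to $b_k \in \{0, 1\}$ with $\mathcal{O}(\tau^{-1})$ error. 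The Gaussian-weighted $L^2$ bound \eqref{eq:estEta} then follows from an energy estimate on the spectral subspace orthogonal to the neutral modes, where $\mathcal{L}$ has a spectral gap pushed below $-2$ by the orthogonality.

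The main novelty is the polynomial-weighted $L^\infty$ estimate \eqref{eq:InftyEst}, which provides control throughout $B_\Omega$ where the Gaussian weight $e^{-|y|^2/8}$ is already negligible. The strategy is to write the Duhamel formula $\eta(\tau) = e^{-(\tau-\tau_0)\mathcal{L}} \eta(\tau_0) + \int_{\tau_0}^\tau e^{-(\tau-s)\mathcal{L}}(N + S)(s)\, ds$ and estimate it in the norm $\|\langle y\rangle^{-3} \cdot\|_\infty$ using Lemma \ref{LM:propagator}, which supplies the decay factor $e^{-3t/2}$ for data orthogonal to the first three Hermite levels. The two-part bound $\tau^{-2} + \Omega^{-4}$ reflects two source mechanisms: a $\tau^{-2}$ piece from linear propagation of data already of that size (provided by the Gaussian-$L^2$ bound together with a short-time smoothing upgrade), and an $\Omega^{-4}$ piece from nonlinear/modulation sources whose weighted $L^\infty$ norm is saturated near the boundary of $B_\Omega$, where the weight $\langle y\rangle^{-3}$ competes with profile terms growing like $|y|^2$.

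The main obstacle will be controlling the nonlinearity $N$ inside the large region. Since the profile reaches size $\tau^{1/20}$ at $|y| \sim \Omega$, geometric quantities such as $v^{-1}|\nabla v|^2$ in the mean-curvature expression must be expanded carefully around the profile to expose cancellations, so that $N(\eta)$ is genuinely quadratically small in $\eta$ in the weighted $L^\infty$ norm, and the nonresonant profile--profile contributions land in the range of $\mathcal{L}$ with non-degenerate decay. The derivative bounds in \eqref{eq:InftyEst} will then follow from interior parabolic regularity: once $\eta$ is $C^0$-controlled, Schauder-type estimates on balls of unit radius inside $B_\Omega$ (with coefficients controlled by the already-established $C^0$ bound on $v$) yield the claimed estimates for $\nabla_y \eta$ and $\nabla_y^2 \eta$, while differentiating in $\theta$ commutes with the equation up to manageable commutators. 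The same regularity input also provides the short-time $L^2 \to L^\infty$ upgrade needed to initiate the Duhamel bootstrap.
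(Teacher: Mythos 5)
Your outline reproduces the broad skeleton of the argument (modulation decomposition with orthogonality conditions, weighted-$L^\infty$ propagator/Duhamel bounds, parabolic regularity for derivatives), but as written it has two genuine gaps. First, you never explain how the hypersurface is a graph over the cylinder with uniformly small derivatives on all of $|y|\leq\Omega(\tau)$, i.e.\ where an analogue of the a priori bounds \eqref{eq:SmCon} comes from. This cannot be dispatched by ``Schauder on unit balls'': the cross-sectional radius of the profile ranges from $\sqrt2$ up to order $\tau^{1/20}$ across the region, so the natural smallness is for the scaling-invariant quantities $v^{|k|-1}|\nabla_y^k\partial_\theta^l v|$, and the only available input at the starting time lives in the much smaller region $|y|\lesssim\sqrt{\ln\tau}$ from \cite{GZ2017}. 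The paper spends Lemma \ref{LM:regularity} (Section \ref{sec:regul}) on exactly this: a dyadic decomposition into regions $\Lambda_n$ where the radius is comparable to $2^n$, a rescaled unrescaled flow $q_n$ on each, local smooth extension in the sense of Ecker, and then a bootstrap in which this lemma and the weighted estimates (Theorem \ref{THM:TwoReg}) alternately extend each other in time. Without some version of this step your Duhamel bootstrap has no admissible a priori class in which to run, and the statement that the flow is parametrized as in \eqref{eq:representation} on $|y|\leq\Omega(\tau)$ is itself unproved.

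Second, your Duhamel formula is written as if $\eta$ solved a globally defined equation, but all estimates are only available on $|y|\leq(1+\epsilon)\Omega(\tau)$, so the evolution must be localized with a cutoff $\chi_\Omega$, and the resulting term $\tfrac12(y\cdot\nabla_y\chi_\Omega)w$ is of order one, does not decay in $\tau$, and the map $\chi_\Omega w\mapsto(y\cdot\nabla_y\chi_\Omega/\chi_\Omega)\chi_\Omega w$ is unbounded near $|y|=(1+\epsilon)\Omega$. Handling this is the main technical point of the paper's proof: the non-positive part of $y\cdot\nabla_y\chi_\Omega$ is absorbed into the generator (using monotonicity of $\chi$ and an auxiliary cutoff $\tilde\chi_\Omega$), and the leftover boundary terms are controlled via the special vanishing properties \eqref{eq:properties} and the fractional-power trick $|\chi_\Omega w|^{3/4}|w|^{1/4}$; this is also precisely where the $\Omega^{-4}$ term in \eqref{eq:InftyEst} originates, not merely from ``nonlinear sources saturated near the boundary.'' Two further inaccuracies worth noting: after projecting out modes up to quadratic order the relevant spectral gap is of size $1/2$ (the actual propagator bound used is $e^{-\frac25(\tau-\sigma)}$), not ``below $-2$''; and the paper does not re-derive the modulation ODEs and Gaussian-$L^2$ bound, but imports them from \cite{GZ2017}, showing the change of cutoff from $\chi_R$ to $\chi_\Omega$ alters the parameters only by $\mathcal O(\tau^{-20})$, and it treats the angular modes separately (propagator estimates for $m=0,\pm1$, maximum principle for $P_{\theta,\geq2}w$), which your single-propagator treatment glosses over.
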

The theorem will be reformulated in Section \ref{sec:refor}, and proved in Section \ref{sec:ProTwoReg1}.

Here $1_{\leq \Omega}$ is the Heaviside function defined as
\begin{align}
1_{\leq \Omega}(y)=\left[
\begin{array}{ll}
1\ &\ \text{if}\ |y|\leq \Omega,\\
0\ & \ \text{otherwise.}
\end{array}
\right.
\end{align}

Next we discuss the original MCF. 

\begin{theorem}\label{prob:converge} Suppose that in \eqref{eq:Best} of Theorem \ref{THM:TwoReg} we have 
\begin{align}
b_1=b_2=b_3=1 .
\end{align}
Then there exist constants $\epsilon_1, \ \epsilon_2>0$, such that when $0\leq T-t\leq \epsilon_1$ and $|z|\leq \epsilon_2$, the manifold is parameterized as in \eqref{eq:origPara}, and $u$ is continuous in all the variables. 

And in the same space and time intervals, except at $(z,t)=(0,T)$ where $u(0,\theta,T)=0$, the following two statements hold:
\begin{itemize}
\item[(A)]
the function $u$ is positive, smooth in all variables, and is strictly decreasing in $t,$
\item[(B)] For any fixed $N$, there 
 there exists some positive constant $\delta(|z|,t)$ satisfying $$\lim_{|z|\rightarrow 0,\ t\rightarrow T}\delta(|z|,t)= 0,$$ such that for $|m|+n=1,2,\cdots, N,$
\begin{align}
u^{|m|-1}\big|\partial_{\theta}^{n} \nabla_{z}^{m}u(z, \theta,t)\big|\leq \delta(|z|,t),\label{eq:oriz}
\end{align} 
and the normal direction ${\bf{n}}(z,\theta,t)$ satisfies 
\begin{align}
\big|{\bf{n}}(z,\theta,t)-(0,0,0, -sin\theta, cos\theta)^{T}\big|\leq \delta(|z|,t).\label{eq:oriz3}
\end{align}

And in the considered set the hypersurface is mean convex.
\end{itemize}

\end{theorem}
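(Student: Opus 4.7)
The plan is to leverage Theorem \ref{THM:TwoReg1} to control the rescaled flow on the region $|y|\leq \Omega(\tau)$, which corresponds in the original variables to the shrinking ball $|z|\leq \sqrt{T-t}\,\Omega(\tau)$, and then use parabolic regularity combined with the local smooth extension theorem for MCF, in the spirit of \cite{Eckerbook}, to bridge from this shrinking-in-$z$ region out to a fixed spatial ball $B_{\epsilon_2}(0)$ for all times up to $T$. The overall structure of the proof will therefore split $\{|z|\leq \epsilon_2, \, T-t\leq \epsilon_1\}$ into an inner and an outer piece, with compatible estimates on the overlap.

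First, on the inner piece I would pull back the rescaled estimates to the original variables. Since $b_1=b_2=b_3=1$, the leading profile from \eqref{eq:decPr} gives
\begin{align*}
u(z,\theta,t) \;\approx\; \sqrt{T-t}\,\sqrt{\tfrac{2+\tau^{-1}|y|^2}{2a(\tau)}} \;=\; \sqrt{\tfrac{2(T-t)+|z|^2/\tau}{2a(\tau)}},
\end{align*}
which is strictly positive away from $(z,t)=(0,T)$, smooth, strictly decreasing in $t$, and satisfies $u\to 0$ only at $(0,T)$. The $\eta$-bounds in \eqref{eq:InftyEst} propagate to $u$ with a loss of powers of $\sqrt{T-t}$ per $\nabla_z$, but are still controlled by $\Omega(\tau)^{-2}$, which tends to zero as $t\to T$; rescaling back this is exactly the smallness $\delta(|z|,t)$ required in \eqref{eq:oriz}--\eqref{eq:oriz3}.

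Second, on the outer annulus $\sqrt{T-t}\,\Omega(\tau)\leq |z|\leq \epsilon_2$ I would invoke the two-step rescaling described in the introduction. Fix a large $\tau_1$ (to be taken as the scale chosen in the paper), set $t_1 = T-e^{-\tau_1}$ and $\lambda=\sqrt{T-t_1}$, and start a fresh un-rescaled MCF from $t=t_1$ with initial data coming from $v(\cdot,\tau_1)$. Along the inner boundary $|z|\approx \lambda\,\Omega(\tau_1)$, Theorem \ref{THM:TwoReg1} says the initial surface is $C^k$-close to a round cylinder of large radius $R_1\approx \lambda\,\tau_1^{1/20}\gg \lambda$. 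Since a round cylinder of radius $R_1$ contracts only on the slow time scale $R_1^2\gg T-t_1$, pseudolocality and Ecker's smooth local extension for MCF guarantee that the portion of the flow with $|z|$ bounded below remains smooth and a small $C^k$-perturbation of a slowly shrinking large cylinder throughout $t\in[t_1,T)$, with explicit $\delta(|z|,t)$-rates. Patching with the inner region along the overlap $|z|\sim \lambda\,\Omega(\tau_1)$ yields a global graph representation by a function $u(z,\theta,t)$ with the continuity and smoothness claimed in the theorem.

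Finally, for mean convexity together with strict $t$-monotonicity, the two regions give the sign separately. On the inner region the explicit profile $u\approx \sqrt{(2(T-t)+|z|^2/\tau)/(2a)}$ is, by direct computation, both strictly mean convex and strictly $t$-decreasing, with quantitative lower bounds that survive the small perturbation controlled by \eqref{eq:InftyEst}. On the outer region the surface is a small $C^2$-perturbation of a round shrinking cylinder, whose mean curvature is strictly positive and whose radius strictly decreases, so $H>0$ and $\partial_t u<0$ there as well. A direct mean curvature computation for the graph \eqref{eq:origPara} reduces both sign statements to quantitative control of $u$, $\nabla_z u$, $\nabla_z^2 u$ and $\partial_\theta u$, which is exactly what the previous two steps produce. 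The main obstacle, and where most of the effort will go, is making the pseudolocality and parabolic regularity estimates uniform in the chosen large scale $\tau_1\to\infty$ on the outer region, so that $\epsilon_1$ and $\epsilon_2$ may be chosen independently of that scale; once this uniformity is secured, mean convexity follows from the strict positivity of the leading profile's curvature.
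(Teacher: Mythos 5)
Your plan follows essentially the same route as the paper: transfer the rescaled estimates of Theorem \ref{THM:TwoReg1} directly to $u$ in the region $|y|\lesssim \tau^{\frac12+\frac1{20}}$, restart an un-rescaled MCF at a large time $t_1$ where the surface is $C^k$-close to a cylinder of large radius, use Ecker-type local smooth extension, regularity and interpolation to flow that part through the blowup time, pull the estimates back through the scaling identity (the quantities $u^{|m|-1}|\partial_{\theta}^{n}\nabla_{z}^{m}u|$ being scaling invariant), and read off positivity of $H$ and $\partial_t u<0$ from the resulting closeness to a cylinder. The paper's versions of your last step are only cosmetically different: monotonicity comes directly from the graph equation \eqref{eq:MCF}, which under \eqref{eq:oriz} gives $\partial_t u=-u^{-1}\big(1+\mathcal{O}(\delta)\big)$, and mean convexity comes from normalizing $u$ to $1$ at a point and using the scaling invariance of \eqref{eq:oriz}, rather than from an explicit profile computation.

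The one real soft spot is your coverage of the outer region by a single restart at $\tau_1$. The inner boundary of your outer annulus, $|z|=\sqrt{T-t}\,\Omega(\tau)$, tends to $0$ as $t\to T$, while a flow restarted once at $t_1$ and extended by pseudolocality/local extension only controls $|z|$ on the fixed scale $\sim\lambda\,\Omega(\tau_1)$; as written, the space-time set where $\sqrt{T-t}\,\Omega(\tau)\le |z|\ll \lambda\,\Omega(\tau_1)$ and $t$ is close to $T$ is covered by neither piece, so the patching at $|z|\sim\lambda\,\Omega(\tau_1)$ does not yet exhaust $\{|z|\le\epsilon_2,\ T-t\le\epsilon_1\}$. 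The paper closes this not by making the extension uniform as $\tau_1\to\infty$ (the obstacle you name) but by restarting at point-dependent later times: since $e^{\frac12\tau}$ grows much faster than $\tau^{\frac12+\frac1{20}}$, each fixed $z\neq 0$ exits the region $|y|\le 2\tau^{\frac12+\frac1{20}}$ at some time $t(\tau)\ge t_1$; before $t(\tau)$ the pointwise estimates \eqref{eq:decPr} and \eqref{eq:InftyEst} for $v$ give \eqref{eq:oriz} directly for low-order derivatives, and after $t(\tau)$ (and, for higher derivatives, also before) one reruns the restarted-flow argument from that time, choosing $\lambda$ according to the region as in \eqref{eq:restart}. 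The uniformity you worry about then comes for free, since the smallness constants only improve as the restart time grows; so the fix stays inside your framework, but it is needed for the covering argument to be complete.
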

The theorem will be proved in Section \ref{sec:flowThr}. The fact that the manifold is mean convex is directly implied by the estimates in \eqref{eq:oriz}, to be shown below. For the importance of mean convexity, namely non-fattening, we refer to the results in \cite{HerWhite2017}.

Now we prove that a neighborhood of singularity is mean convex. Observe that if we rescale the flow such that $u(z,\theta,t)$ is rescaled into a new function $u_1$ defined by the identity
\begin{align}
u(z,\theta,t)=\lambda u_1(\lambda^{-1} z, \theta, \lambda^{-2}t), 
\end{align} then we have that, since these functions in \eqref{eq:oriz} are ``rescaling invariant",
\begin{align}
u_1^{|m|-1}|\partial_{\theta}^{n} \nabla_{z}^{m}u_1(z, \theta,t)|=u^{|m|-1}|\partial_{\theta}^{n} \nabla_{x}^{m}u(x, \theta,s)|_{x=\lambda^{-1}z, s=\lambda^{-2}t}\leq \delta(|x|,s).\label{eq:oriz4}
\end{align} Now we choose $\lambda$ to make $u_1(z_0,\theta_0,t_0)=1$ for some $z_0,\ \theta_0$ and $t_0$, then by the estimates in \eqref{eq:oriz4}, we find that an increasingly large (as $\lambda\rightarrow 0$) neighborhood of $(z_0,\theta_0)$ will become increasingly resemble to a cylinder with radius $1$, up to any order of derivatives. Hence this part of the hypersurface is mean convex, i.e. has positive mean curvature. 

The flow through singularities will be addressed in subsequent papers. 

\section{Reformulation of Theorem \ref{THM:TwoReg1}}\label{sec:refor}

In what follows we derive equations for the function $u$ in \eqref{eq:origPara}.

Recall that we suppose the blowup point is the origin, and the blowup time is $T$, and the limit cylinder is the one parametrized by \eqref{eq:limitCylin}.

Then for $t<T,$ there exists some $\epsilon(t)>0$ such that in the region $|z|\leq \epsilon(t)$, the manifold can be parameterized as in \eqref{eq:origPara}.
And the function $u$ satisfies the parabolic differential equation, by the mean curvature equation \eqref{eq:MeanCur} and the results in \cite{MR1770903},
\begin{align}
\begin{split}\label{eq:MCF}
\partial_{t}u=&\frac{1}{1+|\nabla_x u|^2+(\frac{\partial_{\theta}u}{u})^2}\sum_{k=1}^{3}\Big[1+|\nabla_x u|^2- (\partial_{x_k}u)^{2}+(\frac{\partial_{\theta}u}{u})^2\Big]\partial^{2}_{x_k}u\\
&+u^{-2}\frac{1+|\nabla_x u|^2}{1+|\nabla_x u|^2+(\frac{\partial_{\theta}u}{u})^2}\partial_{\theta}^2 u+
u^{-2}\frac{2\partial_{\theta}u}{1+|\nabla_x u|^2+(\frac{\partial_{\theta}u}{u})^2}\sum_{l=1}^{3}\partial_{x_l}u\partial_{x_l}\partial_{\theta}u\\
&+\frac{1}{1+|\nabla_x u|^2+(\frac{\partial_{\theta}u}{u})^2}\frac{(\partial_{\theta}u)^2}{u^3}-\sum_{i\not= j}\frac{\partial_{x_i} u \partial_{x_j} u}{1+|\nabla_x u|^2+(\frac{\partial_{\theta}u}{u})^2}\partial_{x_i}\partial_{x_j}u-\frac{1}{u}.
\end{split}
\end{align}

Now we rescale solution as in \eqref{eq:rescale}, and derive an equation for the function $v$, 
\begin{align}
\partial_{\tau}v=\Delta_{y} v+v^{-2}\partial_{\theta}^2 v-\frac{1}{2}y\cdot\nabla_{y}v+\frac{1}{2}v-\frac{1}{v}+N_1(v)\label{eq:scale1}
\end{align} with $N_1(v)$ defined as
\begin{align}
\begin{split}\label{eq:defFu}
N_1(v):=&-\frac{\sum_{k=1}^{3}(\partial_{y_k}v)^{2} \partial^{2}_{y_k}v}{1+|\nabla_y v|^2+(\frac{\partial_{\theta}v}{v})^2}-\sum_{i\not= j}\frac{\partial_{y_i} v \partial_{y_j} v}{1+|\nabla_y v|^2+(\frac{\partial_{\theta}v}{v})^2}\partial_{y_i}\partial_{y_j}v\\
&+v^{-2}\frac{2\partial_{\theta}v}{1+|\nabla_y v|^2+(\frac{\partial_{\theta}v}{v})^2}\sum_{l=1}^{3}\partial_{y_l}v\partial_{y_l}\partial_{\theta}v-v^{-2} \frac{(v^{-1}\partial_{\theta}v)^{2} \partial^{2}_{\theta}v}{1+|\nabla_y v|^2+(\frac{\partial_{\theta}v}{v})^2}\\
&+\frac{1}{1+|\nabla_y v|^2+(\frac{\partial_{\theta}v}{v})^2}\frac{(\partial_{\theta}v)^2}{v^3}.
\end{split}
\end{align}

Now we present the general strategy in proving Theorem \ref{THM:TwoReg1}. For the details we refer to the proof of Theorem \ref{THM:TwoReg1} in Section \ref{sec:ReforWeightLInf}.

We will proceed by bootstrap arguments. Specifically, under assumption of some regularity estimates, specifically \eqref{eq:SmCon} below, in a time interval $[\xi_0,\tau_1]$ with $\tau_1>\xi_0$, we prove Theorem \ref{THM:TwoReg} below. And the estimates in Theorem \ref{THM:TwoReg}, in turn, will make Lemma \ref{LM:regularity} applicable in a larger interval $[\xi_0,\ \tau_2]$ with $\tau_2>\tau_1$. The estimates in Lemma \ref{LM:regularity} imply \eqref{eq:SmCon}, and hence in turn, makes Theorem \ref{THM:TwoReg}, hold in a larger interval.

To initiate the bootstrap arguments, we need the estimates from the previous paper \cite{GZ2017}.

In the first part of the bootstrap arguments, we prove the following results.
\begin{theorem}\label{THM:TwoReg}

Suppose that $\xi_0$ in \eqref{eq:defOmega} is a sufficiently large constant. 

There exists a small constant $\delta$, such that if in the region $|y|\leq (1+\epsilon)\Omega(\tau)$ and in the time interval $\tau\in [\xi_0,\ \tau_1]$ the following estimates hold, for $ m, \ |k|+l=1,\cdots,5$, and $|k|\geq 1,$
\begin{align}\label{eq:SmCon}
\Big|\frac{v(\cdot,\tau)}{\sqrt{2+\tau^{-1}y^{T}\tilde{B}y}}-1\Big|,\ v^{-1}(\cdot,\tau)|\partial_{\theta}^m v(\cdot,\tau)|,\ |\nabla_{y}^{k}\partial_{\theta}^{l} v(\cdot,\tau)|\leq \delta,
\end{align} where $\tilde{B}$ is a $3\times 3$ diagonal matrix 
\begin{align}
\tilde{B}=\left[
\begin{array}{lll}
b_{1}&0&0\\
0&b_2&0\\
0&0&b_3
\end{array}
\right],\ b_{k}=1\ \text{or}\ 0,\ k=1,2,3,
\end{align}
then the following statements hold in the time interval $[\xi_0,\ \tau_1]$.

There exist unique parameters $a,\ \alpha_1$, $\alpha_2$, a $3\times 3$ symmetric real matrix $B$ and 3-dimensional vectors $\vec\beta_k,\ k=1,2,3,$ such that
\begin{align}
\begin{split}
v(y,\ \theta,\tau)=V_{a(\tau), B(\tau)}(y)+\vec\beta_1(\tau)\cdot y&+{\vec\beta}_2(\tau)\cdot y cos\theta + {\vec\beta}_3(\tau)\cdot y sin\theta\\
& +\alpha_1(\tau) cos\theta +\alpha_2(\tau) sin\theta+w(y,\ \theta,\tau),\label{eq:decomVToW}
\end{split}
\end{align} and 
the function $e^{-\frac{1}{8}|y|^2}\chi_{\Omega}w$ satisfies the following orthogonality conditions,
\begin{align}
\begin{split}\label{eq:orthow}
e^{-\frac{1}{8}|y|^2}w\chi_{\Omega}\perp  &\ e^{-\frac{1}{8} |y|^2},\ e^{-\frac{1}{8} |y|^2} cos\theta,\ e^{-\frac{1}{8} |y|^2} sin\theta,  \\ 
&e^{-\frac{1}{8} |y|^2} y_k,\ e^{-\frac{1}{8} |y|^2}(\frac{1}{2}y_k^2-1),\ e^{-\frac{1}{8} |y|^2} y_k cos\theta,\ e^{-\frac{1}{8} |y|^2} y_k sin\theta,\ k=1,2,3, \\
&\  e^{-\frac{1}{8} |y|^2} y_m y_n, \ m\not=n, \ \ m,n=1,2,3.
\end{split}
\end{align} Here $V_{a,B}$ and $\chi_{\Omega}$ are two functions to be defined in \eqref{eq:defVaB} and \eqref{eq:defChi3} below. 

Up to a unitary rotation, $y\rightarrow U y$, the $3\times 3$ real symmetric matrix $B$ is ``almost" semi-positive definite,
\begin{align}\label{eq:Best}
B(\tau)=\tau^{-1}\left[
\begin{array}{ccc}
b_1&0&0\\
0&b_2&0\\
0&0&b_3
\end{array}
\right]+\mathcal{O}(\tau^{-2}),\ \text{and}\ b_{k}=0\ \text{or}\ 1,\ k=1,2,3, 
\end{align} 
The other parameters and vectors satisfy the estimates,  for some $C>0,$
\begin{align}\label{eq:10parame}
|a(\tau)-\frac{1}{2}|\leq C\tau^{-1},
\end{align} and
\begin{align}
|\vec\beta_1(\tau)|\leq C \tau^{-2},\ |\vec\beta_2(\tau)|,&\ |\vec\beta_3(\tau)|,\ |\alpha_1(\tau)|,\ |\alpha_2(\tau)|\leq C \tau^{-3}.\label{eq:betaA}
\end{align}
And they satisfy the equations,
\begin{align}\label{eq:scalarEqn}
\begin{split}
|\frac{d}{d\tau}B+ B^{T}B|\leq &C\tau^{-3}\\
|\Big(\frac{1}{a}\frac{d}{d\tau}-2\Big)\Big(a-\frac{1}{2}-\frac{1}{2}(b_{11}+b_{22}+b_{33})\Big)|\leq &C\tau^{-2},\\
|\frac{d}{d\tau} \vec{\beta}_1-a(1+\mathcal{O}(|B|))\vec\beta_1|\leq &C\tau^{-3},\\
|\frac{d}{d\tau}\vec{\beta}_2|,\ |\frac{d}{d\tau}\vec{\beta}_3|,\ 
|\frac{d}{d\tau}\alpha_1-\frac{1}{2} \alpha_1|,\ |\frac{d}{d\tau}\alpha_2-\frac{1}{2}\alpha_2|\leq &C\tau^{-3}.
\end{split}
\end{align}
The remainder $w$ satisfies the estimates, in weighted $L^2$ norms,
\begin{align}
\sum_{|k|+l=0,1,2}\|e^{-\frac{1}{8}|y|^2}\nabla_{y}^{k}\partial_{\theta}^{l}\big(\chi_{\Omega}w(\cdot,\tau)\big)\|_2\leq C &\tau^{-2}, \label{eq:expWei}
\end{align}
and in the weighted $L^{\infty}-$norms, for some constant $\kappa(\epsilon)$ to be defined in \eqref{eq:defKappa} below,
\begin{align}
\|\langle y\rangle^{-3}\partial_{\theta}^{m}\chi_{\Omega} w(\cdot,\tau)\|_{L^{\infty}}\leq &C \big(\tau^{-2}+\kappa(\epsilon) \Omega^{-4}\big), \ \text{with}\ m=0,1,2,\label{eq:weighL1}\\
\|\langle y\rangle^{-2}\nabla_y\partial_{\theta}^{m} \chi_{\Omega}w(\cdot,\tau)\|_{L^{\infty}}\leq &C \kappa(\epsilon) \Omega^{-3},\ \text{with}\ m=0,1.\label{eq:weighL2}\\
\|\langle y\rangle^{-1}\nabla_y^{l} \chi_{\Omega}w(\cdot,\tau)\|_{L^{\infty}}\leq &C \kappa(\epsilon) \Omega^{-2},\ \text{with}\ |l|=2.\label{eq:weighL3}
\end{align} 
\end{theorem}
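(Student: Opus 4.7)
The plan is to carry out a modulation-theoretic Lyapunov--Schmidt reduction about the approximate profile $V_{a(\tau),B(\tau)}$. First I would set up the decomposition \eqref{eq:decomVToW} by an implicit-function-theorem argument applied to the map that sends the ten parameters $(a,B,\vec\beta_1,\vec\beta_2,\vec\beta_3,\alpha_1,\alpha_2)$ to the Gaussian-weighted projections of the residual $v - V_{a,B} - \vec\beta_1\cdot y - \vec\beta_2\cdot y\cos\theta - \vec\beta_3\cdot y\sin\theta - \alpha_1\cos\theta - \alpha_2\sin\theta$ onto the finite-dimensional span appearing in \eqref{eq:orthow}. The smallness hypothesis \eqref{eq:SmCon} makes the Jacobian of this map close to the diagonal of Gaussian moments of the relevant Hermite polynomials, hence invertible, so the parameters are uniquely determined and the orthogonality conditions on $e^{-|y|^2/8}\chi_\Omega w$ hold by construction. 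Smoothness of $v$ in $\tau$ transfers to smoothness of the parameters.

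Substituting \eqref{eq:decomVToW} into \eqref{eq:scale1} and projecting onto each of the Gaussian modes produces the modulation ODEs \eqref{eq:scalarEqn} together with an evolution equation for $w$ of the schematic form $\partial_\tau w = -\mathcal{L}\, w + \mathcal{N}(w) + \mathcal{R}$, where $\mathcal{L}$ is the linearization of the right-hand side of \eqref{eq:scale1} about $V_{a,B}$, $\mathcal{N}$ collects the quadratic and higher terms in $w$, and $\mathcal{R}$ gathers the modulation-equation remainders and the commutators with the cutoff $\chi_\Omega$. A direct computation, keeping only leading terms, reduces the scalar equations to the Riccati-type identity $\dot B \approx -B^2$ and an almost-linear equation for $a-\tfrac12 - \tfrac12\,\mathrm{tr}\,\tilde B$. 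Since \cite{GZ2017} guarantees that $B$ is nearly semipositive definite, simultaneous diagonalization by a unitary rotation $y\to Uy$ makes each eigenvalue $\lambda$ satisfy $\dot\lambda\approx -\lambda^2$, so $\lambda$ is either identically $0$ to leading order or asymptotic to $\tau^{-1}$, yielding \eqref{eq:Best} and \eqref{eq:10parame}. The parameters $\vec\beta_k,\alpha_k$ correspond to neutral/unstable eigendirections of the constant-coefficient linearization; their source terms inherit the decay of $w$ and of the higher-order corrections to $V_{a,B}$, and integrating the ODEs backward from large $\tau$ gives \eqref{eq:betaA}.

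The weighted $L^2$ bound \eqref{eq:expWei} follows from a Gronwall argument: the conjugate $e^{-|y|^2/8}\mathcal{L}\, e^{|y|^2/8}$, restricted to the orthogonal complement of the Hermite modes listed in \eqref{eq:orthow}, has spectral gap at least $3/2$, so the weighted $L^2$ norm of $w$ contracts at rate $e^{-3\tau/2}$ against sources coming from $N_1(v)$, the modulation-equation residues, and $[\chi_\Omega,\mathcal{L}]$, each of size $\mathcal{O}(\tau^{-2})$ by the bootstrap hypothesis \eqref{eq:SmCon} and the Riccati bound on $B$. Differentiating the $w$-equation and iterating on $\nabla_y^k\partial_\theta^l w$ yields the higher-derivative $L^2$ estimates. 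To pass to the weighted $L^\infty$ bounds \eqref{eq:weighL1}--\eqref{eq:weighL3} I would combine Duhamel's formula with the propagator estimate in Lemma \ref{LM:propagator}, projecting the source onto the orthogonal complement of the neutral and unstable Hermite modes and then using
\begin{equation*}
\|\langle y\rangle^{-3} e^{-(\tau-\sigma) L} f\|_\infty \lesssim e^{-3(\tau-\sigma)/2}\|\langle y\rangle^{-3} f\|_\infty
\end{equation*}
to integrate in $\sigma$. The $\tau^{-2}$ contribution then arises from the bulk nonlinear source, while the $\Omega^{-4}$, $\Omega^{-3}$, $\Omega^{-2}$ contributions come from the commutators with $\chi_\Omega$, which are supported near $|y|\sim\Omega$ and pick up the corresponding powers of $\langle y\rangle$ under the chosen weights.

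The main obstacle will be controlling the full nonlinearity $N_1(v)$ and the $v^{-2}$-type terms uniformly on $|y|\leq (1+\epsilon)\Omega(\tau)$: there $V_{a,B}\sim\sqrt{1+\tau^{-1}|y|^2/2}$ can be as large as $\Omega^{1/10}\gg 1$, factors of $v^{-1}$ provide only modest decay, and derivatives of $w$ in the $L^\infty$ weights carry unfavorable powers of $\langle y\rangle$. Matching the resulting source sizes against the precise weights $\langle y\rangle^{-3}$, $\langle y\rangle^{-2}$, $\langle y\rangle^{-1}$ appearing in \eqref{eq:weighL1}--\eqref{eq:weighL3}, so that the total inhomogeneity in the $L^\infty$ Duhamel formula is strictly better than the bootstrap hypothesis and thereby closes it, is the technical heart of the argument.
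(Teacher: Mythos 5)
Your overall architecture (modulation/orthogonality via a fixed-point argument, scalar ODEs by Gaussian projections, weighted $L^2$ by spectral gap, weighted $L^\infty$ by Duhamel plus Lemma \ref{LM:propagator}) matches the paper in outline, and the paper in fact imports the decomposition, the parameter estimates \eqref{eq:Best}--\eqref{eq:betaA}, \eqref{eq:scalarEqn} and the $L^2$ bound \eqref{eq:expWei} from \cite{GZ2017} up to $\mathcal{O}(\tau^{-20})$ corrections rather than rederiving them. But there is a genuine gap in the step you describe last, and it is precisely the step the paper spends most of its effort on: you treat the commutator terms with $\chi_\Omega$ as boundary-localized sources that ``pick up the corresponding powers of $\langle y\rangle$.'' This fails for the term $\tfrac12\,(y\cdot\nabla_y\chi_\Omega)\,w$: its amplitude is $\mathcal{O}(1)$ (not $\mathcal{O}(\Omega^{-1})$), the map $\chi_\Omega w\mapsto (y\cdot\nabla_y\chi_\Omega)w$ is unbounded since $|y\cdot\nabla_y\chi_\Omega/\chi_\Omega|\to\infty$ near $|y|=(1+\epsilon)\Omega$, and the only pointwise control of $w$ without a cutoff factor is $|w|\lesssim\delta\sqrt{1+\tau^{-1}|y|^2}$ from \eqref{eq:SmCon}; this gives a source of size roughly $\delta\,\Omega^{-3}\tau^{1/20}$ in the $\langle y\rangle^{-3}$ norm, which is far larger than the target $\tau^{-2}+\kappa(\epsilon)\Omega^{-4}$ in \eqref{eq:weighL1}, so the bootstrap does not close. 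The paper's fix is structural: one exploits the sign $y\cdot\nabla_y\chi_\Omega\le 0$ and absorbs the bulk of this term (via an auxiliary cutoff $\tilde\chi_\Omega$) into the linear operator as a nonnegative potential, and the leftover piece is small only because the cutoff is constructed with the degenerate vanishing \eqref{eq:properties}, which also yields the finiteness of $\kappa(\epsilon)$ in \eqref{eq:defKappa} and the fractional-power trick (as in \eqref{eq:fractional}) needed whenever a term like $w\,\nabla_y\chi_\Omega$ appears without a full factor of $\chi_\Omega$. None of this is present in your proposal, and without it the $\Omega^{-4},\Omega^{-3},\Omega^{-2}$ rates cannot be obtained.

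A second, smaller mismatch: you propose to run the Duhamel/propagator argument for the whole of $w$ with a gap $e^{-3(\tau-\sigma)/2}$. The propagator estimates of Lemma \ref{LM:propagator} are only for the $\theta$-independent operators acting on the low Fourier modes $w_0,w_{\pm1}$, and even there the available rate (after the limited orthogonality set and the potentials coming from $V_{a,B}$ on the large region $|y|\lesssim\Omega$) is $e^{-\frac{2}{5}(\tau-\sigma)}$, not $e^{-3(\tau-\sigma)/2}$. For the high $\theta$-frequencies the paper does not use a propagator at all: it applies the maximum principle to weighted quantities such as $(100+|y|^2)^{-3}\|P_{\theta,\ge2}\partial_\theta^3\chi_\Omega v\|_{L^2_\theta}^2$, exploiting the coercivity of the $v^{-2}\partial_\theta^2$ term on modes $|m|\ge2$ together with cancellations in $N_1,N_2$; some mechanism of this kind (or a genuinely new propagator estimate for the $\theta$-dependent operator) is needed, and your plan as stated does not supply it.
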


Here $V_{a,B}$ in \eqref{eq:decomVToW} is a function defined as, for $a\in \mathbb{R}^{+}$ and $3\times 3$ symmetric matrix $B$,
\begin{align}
V_{a,B}(y):=\sqrt{\frac{2+y^{T}B y}{2a}}.\label{eq:defVaB}
\end{align} Before defining the cutoff function $\chi_{\Omega}$, we define a smooth, spherically symmetric cutoff function $\chi$,
\begin{align}\label{eq:defChi3}
\chi(z)=\chi(|z|)=\ \Big[
\begin{array}{lll}
1,\ \text{if}\ |z|\leq 1,\\
0,\ \text{if}\ |z|\geq 1+\epsilon.
\end{array}
\
\end{align} We require it is decreasing in $|z|$, and there exist constants $M_k=M_k(\epsilon), \ k=0, 1,\cdots, 5,$ such that for any $z$ satisfying $0\leq 1+\epsilon-|z|\ll 1$, $\chi$ satisfies the estimates
\begin{align}
\begin{split}\label{eq:properties}
\frac{d^{k}}{d|z|^{k}}\chi(|z|)=&M_k\Big||z|-1-\epsilon\Big|^{20-k}+\mathcal{O}\big(\Big||z|-1-\epsilon\Big|^{21-k}\big),\ k=0,1,2,3,4.
\end{split}
\end{align}

Such a function is easy to construct, we skip the details here. 

Now we define the cutoff functions $\chi_{\Omega}$, for any $\Omega>0,$ as 
\begin{align}
\chi_{\Omega}(y):=\chi(\frac{y}{\Omega}).\label{eq:reCutoff}
\end{align}

The constant $\kappa(\epsilon)$ is defined to control terms produced by the cutoff function $\chi$,
\begin{align}
\kappa(\epsilon):=\sum_{k=1}^{5}\sup_{|z|}\Big|\frac{d^{k}}{d|z|^{k}} \chi(|z|)\Big|+\sup_{z}\sum_{|l|=1,2,3}|\chi^{-\frac{3}{4}}(z)\ \nabla^{l}_{z}\chi |<\infty.\label{eq:defKappa}
\end{align} To justify that $\kappa(\epsilon)$ is finite, we have that the first term is finite since the function is smooth; the second is also finite since the conditions in \eqref{eq:properties} imply that $\big|\nabla^{l}_{z}\chi(|z|)\big|$, $|l|\leq 3,$ approach to zero faster than $\chi^{\frac{3}{4}}(|z|)$ as $|z| \rightarrow 1+\epsilon,$ and recall that $\chi(|z|)>0$ when $|z|< 1+\epsilon.$

Next we formulate the second part of bootstrapping argument.

To make Theorem \ref{THM:TwoReg} applicable one needs to verify its conditions \eqref{eq:SmCon}.  
For that purpose, we need the following results, recall that the definition of $\Omega$ in \eqref{eq:defOmega} depends on $\xi_0,$
\begin{lemma}\label{LM:regularity}
Suppose that when $|y|\leq \Omega(\tau)$ and $\tau\in [\xi_0, \tau_1]$ with $\tau_1\geq \xi_0+20$, the graph function $v$ of the rescaled MCF satisfies the estimates, 
\begin{align}
\Big|v(\cdot,\tau)-\sqrt{2+\tau^{-1}y^{T}\tilde{B}y}\Big|\leq \Omega^{-\frac{2}{5}}(\tau),\ \text{and}\ |\nabla_{y}^{k}\partial_{\theta}^{l} v(\cdot,\tau)|\leq \Omega^{-\frac{9}{10}}(\tau),\ \ |k|+l=1,2,\label{eq:PointSmall}
\end{align} 
and $\tilde{B}$ takes the form
\begin{align}
\tilde{B}=\left[
\begin{array}{lll}
b_{1}&0&0\\
0&b_2&0\\
0&0&b_3
\end{array} 
\right],\ b_{k}=1\ \text{or}\ 0,\ k=1,2,3.
\end{align}

Then there exists a constant $\delta=\delta(\xi_0)>0$,  such that 
\begin{align}
\delta(\xi_0)\rightarrow 0,\ \text{as}\ \xi_0\rightarrow \infty,
\end{align} and the following estimates hold.

There exist some constant $C$, independent of $\delta$, and some small constant $\kappa=\kappa(\delta)>0$ such that at time
$
\tau=\tau_1+10\kappa$ and in the region 
\begin{align}
 |y|\leq (1+5\kappa)\Big(\Omega(\tau)-C\sup_{|y|\leq \Omega(\tau)}\sqrt{2+\tau^{-1}y^{T}\tilde{B}y}\Big), \label{eq:SmoExte}
\end{align}
$v$ satisfies the estimates, for $m=1,2,\cdots,5$ and $|k|+l=1,2,\cdots,5$ and $|k|\geq 1$,
\begin{align}
\Big|\frac{v(\cdot,\tau)}{\sqrt{2+\tau^{-1}y^{T}\tilde{B}y}}-1\Big|,\ v^{-1}\Big|\partial_{\theta}^m v\Big|,\ v^{|k|-1}\Big|\nabla_{y}^{k}\partial_{\theta}^{l} v(\cdot,\tau)\Big|\leq \delta.\label{eq:extenSmooth}
\end{align} 
\end{lemma}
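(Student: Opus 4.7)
The statement is a parabolic smoothing estimate: under $C^{2}$ smallness at earlier times one recovers $C^{5}$ smallness at a slightly later time on a slightly shrunk region. The natural framework is the un-rescaled MCF \eqref{eq:MCF}, where the flow is genuinely parabolic without drift and where interior regularity for graphical MCF is standard (see Ecker \cite{Eckerbook}). The plan is to translate the hypothesis via the scaling \eqref{eq:rescale} into smallness for the un-rescaled graph $u$ relative to the reference profile $U(z,t):=\sqrt{2(T-t)+z^{T}\tilde{B}z}$, apply interior smoothing of graphical MCF, and translate back.

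First, I would verify that under \eqref{eq:PointSmall} the un-rescaled graph $u$ is $C^{1}$-close to $U$ on the un-rescaled ball $|z|\le \sqrt{T-t_{1}}\,\Omega(\tau_{1})$ over an un-rescaled time interval of length comparable to $(T-t_{1})\kappa$. Because $v$ is bounded below by a constant and $|\nabla_{y}v|$ is small, the coefficient matrix of \eqref{eq:MCF} (equivalently of \eqref{eq:scale1}) is uniformly elliptic with smooth, bounded coefficients. Next I would invoke the scaling-invariant interior regularity estimate for graphical MCF: on a parabolic cylinder $B_{r}(z_{0})\times[t_{0}-r^{2},t_{0}]$ on which $|\nabla_{x}u|$ is bounded and $u$ is bounded away from $0$, one obtains $r^{|k|+l-1}u^{|k|-1}|\partial_{\theta}^{l}\nabla_{z}^{k}u|\le C_{k,l}$ on the half-cylinder, with smallness inherited from the smallness of $u-U$. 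Writing $v$ back through \eqref{eq:rescale} and taking $r$ proportional to $\sqrt{T-t_{1}}$ times a fixed fraction of $\kappa$ yields the rescaled bounds \eqref{eq:extenSmooth}. The weighting $v^{|k|-1}$ in the conclusion is dictated precisely by the scaling invariance $u\mapsto\lambda u(\lambda^{-1}z,\theta,\lambda^{-2}t)$ of MCF, the same invariance already exploited around \eqref{eq:oriz4}.

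To produce smallness (not just boundedness) and the property $\delta(\xi_{0})\to 0$, I would write $w:=v-V$ with $V:=\sqrt{2+\tau^{-1}y^{T}\tilde{B}y}$. Substituting into \eqref{eq:scale1} and freezing the quasilinear coefficients at $V$ (with quasilinear error controlled by the assumed $C^{2}$ smallness), $w$ satisfies a linear uniformly parabolic equation with smooth coefficients, a source term of size $\mathcal{O}(\tau^{-1})$ coming from the fact that $V$ is not an exact rescaled solution, and initial data of size $\mathcal{O}(\Omega^{-2/5})$. Both decay to $0$ as $\xi_{0}\to\infty$, and parabolic Schauder estimates applied on a parabolic cylinder of rescaled radius of order $1$ give $C^{5}$ smallness of the same order after rescaled time $10\kappa$, on the shrunken region where the Schauder buffer fits.

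The main obstacle I expect is purely bookkeeping: tracking how a parabolic cylinder of size $\kappa$ in rescaled time corresponds, through \eqref{eq:rescale}, to a spatial buffer proportional to the radial size of the reference cylinder in rescaled coordinates, which is the origin of the $C\sup_{|y|\le\Omega(\tau)}\sqrt{2+\tau^{-1}y^{T}\tilde{B}y}$ subtraction in \eqref{eq:SmoExte}, and the factor $(1+5\kappa)$ coming from the Schauder half-cylinder shrinkage. Once this bookkeeping is in place, no PDE-analytic content beyond standard quasilinear parabolic smoothing is needed; the angular variable $\theta$ on $\mathbb{T}$ is handled identically, and the $v$ and $v^{-1}$ factors appearing in \eqref{eq:scale1}--\eqref{eq:defFu} are bounded and bounded away from $0$ throughout the considered region because $V\ge\sqrt{2}$.
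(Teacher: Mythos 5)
Your plan breaks at the step where you assert uniform parabolicity: the claim that ``the $v$ and $v^{-1}$ factors \ldots are bounded and bounded away from $0$ throughout the considered region because $V\ge\sqrt{2}$'' is false on $|y|\le\Omega(\tau)$. With $\Omega$ as in \eqref{eq:defOmega} and at least one $b_k=1$ (the relevant case), $\sqrt{2+\tau^{-1}y^{T}\tilde{B}y}$ ranges from $\approx\sqrt{2}$ up to order $\tau^{\frac{1}{20}}$ on this region, so $v$ is not bounded above uniformly, the $\theta$-diffusion coefficient $v^{-2}$ in \eqref{eq:scale1} degenerates like $\tau^{-\frac{1}{10}}$, and the drift $\frac{1}{2}y\cdot\nabla_{y}$ has coefficients of size $\Omega(\tau_1)\to\infty$. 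Hence your Schauder argument for $w=v-V$ on cylinders of rescaled size $O(1)$ does not have constants uniform in $\xi_0$ (nor uniform across the region), and the conclusion $\delta(\xi_0)\to 0$ does not follow. The same non-uniformity defeats your un-rescaled version: a single parabolic radius $r$ proportional to $\kappa\sqrt{T-t_1}$ is much smaller than the local cylinder radius $u\approx\sqrt{T-t_1}\sqrt{2+\tau_1^{-1}y^{T}\tilde{B}y}$ in the outer part of the region, so interior estimates at that one scale do not produce smallness of the weighted quantities $v^{-1}|\partial_{\theta}^{m}v|$ and $v^{|k|-1}|\nabla_{y}^{k}\partial_{\theta}^{l}v|$ uniformly, in particular in the $\theta$ direction whose intrinsic length is $2\pi u\gg r$.

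The missing idea is precisely what the paper identifies as the main difficulty of this lemma and resolves by a scale-adapted decomposition: split $\{|y|\le\Omega(\tau_1)\}$ into the dyadic regions $\Lambda_n$ of \eqref{eq:Lambdan}, on which $\sqrt{2+\tau_1^{-1}y^{T}\tilde{B}y}\in[2^{n-1}\sqrt{2},2^{n+1}\sqrt{2}]$, and rescale each by $\lambda_n=2^{n}$ as in \eqref{eq:scalN}, so every rescaled flow $q_n$ is uniformly close to a cylinder of radius $O(1)$ and the local smooth extension plus interior regularity apply with constants independent of $n$ (the extension time $10\lambda_n^{-2}\kappa$ and the availability of the backward interval $s_1\in[-1,0]$, which forces $\tau_1-\xi_0\ge\tau_1^{1/2}$ when $\Lambda_n\ne\emptyset$ for $n\ge2$, are part of the bookkeeping). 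Undoing the $\lambda_n$ scaling is exactly what makes only the scale-invariant combinations small, which is why the weights $v^{-1}$ and $v^{|k|-1}$ appear in \eqref{eq:extenSmooth}; finally one checks that the shrunken sets $\tilde\Lambda_n$ cover the region \eqref{eq:SmoExte}. You correctly invoke scaling invariance to explain the weights, but without choosing the rescaling parameter according to the local radius the uniformity claim at the heart of your argument fails, so the proof as written has a genuine gap.
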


The lemma will be proved in Section \ref{sec:regul}.

Assuming Theorem \ref{THM:TwoReg} and Lemma \ref{LM:regularity}, we are ready to prove Theorems \ref{THM:TwoReg1} and \ref{prob:converge}.

\section{Proof of Main Theorem \ref{THM:TwoReg1}}\label{sec:ProTwoReg1}
As said earlier, we plan to prove Theorem \ref{THM:TwoReg1} by bootstrapping arguments. To start it, we need to verify certain estimates for the interval $\tau\in [\xi_0, \ \xi_0+40]$ with $\xi_0$ being the initial time and chosen to be large. They will be provided by our previous paper \cite{GZ2017}.

Recall that in \cite{GZ2017} we defined a function $R(\tau)$ as, for some $\tau_0\gg 1,$
\begin{align}
R(\tau):=\sqrt{\frac{26}{5}\ln\tau+100 \ln(1+\tau-\tau_0)}.\label{eq:defRTau}
\end{align} And we derived estimates for $v$ when $|y|\leq R(\tau).$

The definition of $\Omega(\tau)$ in \eqref{eq:defOmega} implies that, $$R(\tau)\geq \Omega(\tau)\ \text{for}\ \tau\in [\xi_0, \xi_0+40],\ \text{if}\ \xi_0\gg \tau_0.$$ Thus the results proved in \cite{GZ2017} is applicable in this interval, most importantly they fulfill the conditions \eqref{eq:PointSmall} to make Lemma \ref{LM:regularity} applicable. Now we fix the constant $\epsilon$ in Theorem \ref{THM:TwoReg} to be the constant $\kappa$ in Lemma \ref{LM:regularity}, choose $\xi_0$ to be sufficiently large such that $\delta$ in Lemma \ref{LM:regularity} is sufficiently small. Then by Lemma \ref{LM:regularity} the conditions in \eqref{eq:SmCon} hold in an interval $[\xi_0,\xi_0+40+\kappa_1]$ for some $\kappa_1>0$. This makes Theorem \ref{THM:TwoReg} applicable in the same interval.

Results in Theorem \ref{THM:TwoReg} fulfills the condition \eqref{eq:PointSmall}, and hence makes Lemma \ref{LM:regularity} applicable in the same interval. Lemma \ref{LM:regularity}, in turn, makes the condition \eqref{eq:SmCon}, hence Theorem \ref{THM:TwoReg} hold in an even larger interval $[\xi_0,\ \xi_{0}+40+\kappa_2]$ with $\kappa_2> \kappa_1$. 

By bootstrapping these arguments, we find that the results in Theorem \ref{THM:TwoReg} hold in the interval $[\xi_0,\ \infty)$.

We complete the proof of Theorem \ref{THM:TwoReg1} by taking the estimates from Theorem \ref{THM:TwoReg}.

%%%%%%%%%%%%%%%%%%%%%%%%%%

\section{Proof of Main Theorem \ref{prob:converge} }\label{sec:flowThr}

To facilitate later discussions we fix a large time $\tau_1$ of rescaled MCF, then define a new MCF. The property we exploit is that MCF is scaling invariant, in the present situation it means that if $u(x,\theta,t)$ is a solution to \eqref{eq:MCF}, then so is $\lambda^{-1}u(\lambda x,\theta,\lambda^2 t)$ for any $\lambda>0.$ 

We define the new flow such that the part of the original flow parametrized by $u$ is parametrized by a new function $p$, from the time $t_1=t(\tau_1),$ specifically
 \begin{align}\label{eq:flow2}
 \left[
\begin{array}{ccc}
z\\
p(z, \theta, s)\cos\theta\\
p(z,\theta,s) \sin \theta
\end{array}
\right],
\end{align}
with $p(z,\theta,s)$ defined in terms of the functions $u$, and hence of the function $v$ through the identity \eqref{eq:rescale},
\begin{align}
\begin{split}\label{eq:restart}
p(z,\theta,s):
=&\frac{1}{\lambda} u(\lambda z, \theta,  \lambda^2 s+t_1)\\
=&\frac{\sqrt{1-\tau_1^{\frac{1}{10}}s}}{\tau_{1}^{\frac{1}{20}}}\ v\big(\frac{\tau_1^{\frac{1}{20}} z}{\sqrt{1-\tau_1^{\frac{1}{10}}s}},\theta, -\ln(1-\tau_1^{\frac{1}{10}}s)+\tau_1\big),
\end{split}
\end{align} where $s$, $t_1$ $\lambda$ are defined as \begin{align}
s:=\frac{t-t_1}{\lambda^2}, \ t_1:=t(\tau_1),\ \lambda:=\tau^{\frac{1}{20}}_1\sqrt{T-t_1}.
\end{align} Recall that $\tau$ is defined by a bijection $\tau=-\ln (T-t)$, $t_1$ is the time $t$ when $\tau(t)=\tau_1.$
The blowup time here is $\tau_{1}^{-\frac{1}{10}}$ since as $s\rightarrow \tau_{1}^{-\frac{1}{10}}$, $\tau=-ln(1-\tau_{1}^{\frac{1}{10}}s)+\tau_1\rightarrow \infty.$

We plan to study this flow, or equivalently the function $p$, in the small positive time interval $s\in [0,\tau_1^{-\frac{1}{10}}]$ and $z $ in the set $|z|\in \big[\sqrt{2}\tau_1^{\frac{1}{2}}-\tau_1^{\frac{1}{4}}, \sqrt{2}\tau_1^{\frac{1}{2}}+\tau_1^{\frac{1}{4}}\big]$. Then we transfer the estimates to the corresponding part for $u$ through the second identity in \eqref{eq:restart}, and obtain the desired estimates.

We derive estimates for the function $p$ by those for $v$, and the latter is estimated in detail in Theorem \ref{THM:TwoReg}. Provided that $\tau$ is sufficiently large, then in the region $|y|\leq 3\tau^{\frac{1}{2}+\frac{1}{20}}$, \eqref{eq:weighL1} implies that, 
\begin{align}
|w(y,\sigma,\tau)|\leq \langle y\rangle^{3} \|\langle y\rangle^{-3}w(\cdot,\tau)\|_{\infty}\lesssim &
\langle y\rangle^{3} \tau^{-2}\lesssim \tau^{-\frac{7}{20}},
\end{align} and similarly from the estimates in \eqref{eq:weighL2} and \eqref{eq:weighL3},
\begin{align}\label{eq:higherDer}
\sum_{k+|l|=1}|\partial_{\theta}^{k}\nabla_{y}^{l}w(\cdot,\tau)| \lesssim \tau^{-\frac{1}{2}},\ \ 
  \sum_{k+|l|=2}|\partial_{\theta}^{k}\nabla_{y}^{l}w(\cdot,\tau)| \lesssim  \tau^{-\frac{1}{2}}.
\end{align}
These, together with the decomposition of $v$ and the estimates in \eqref{eq:Best}-\eqref{eq:betaA}, make,
\begin{align}
\Big|v(y,\theta,\tau)-\sqrt{2+\tau^{-1}|y|^2}\Big|\lesssim \tau^{-\frac{7}{20}},\ \sum_{k+|l|=1,2}|\partial_{\theta}^{k}\nabla_{y}^{l}v(\cdot,\tau)| \lesssim \tau^{-\frac{1}{2}},\ \text{for} \ |y|\leq  3\tau^{\frac{1}{2}+\frac{1}{20}}.\label{eq:estV}
\end{align}

At $s=0$ and $|z|\in \big[\sqrt{2}\tau_1^{\frac{1}{2}}-\tau_1^{\frac{1}{4}}, \sqrt{2}\tau_1^{\frac{1}{2}}+\tau_1^{\frac{1}{4}}\big]$, the last identity in \eqref{eq:restart} makes the estimates on $v$ in \eqref{eq:estV} applicable, thus
\begin{align}
|p(z,\theta,0)-\sqrt{2}|\lesssim \tau_1^{-\frac{1}{20}},
\end{align}
and 
\begin{align}
|\nabla_{z}^{k}\partial_{\theta}^{l}p(z, \theta,0)| \leq \tau_1^{-\frac{2}{5}},\ |k|+l=1,2.\label{eq:oriz2}
\end{align}

Now we consider the negative time interval $s\in [-1,0]$. This, by the identity in \eqref{eq:restart}, corresponds to the time interval $\tau\in \big[-ln(1+|s|\tau_1^{\frac{1}{10}})+\tau_1,\ \tau_1\big]$ for the rescaled MCF. The estimates on $v$ imply that, for $s\in[-1,0] $ and $|z|\in \big[\sqrt{2}\tau_1^{\frac{1}{2}}-\tau_1^{\frac{1}{4}}, \sqrt{2}\tau_1^{\frac{1}{2}}+\tau_1^{\frac{1}{4}}\big],$
\begin{align}
1\leq p\leq 10,\ | \nabla_{z}^{k}\partial_{\theta}^{l}p|\leq \tau_1^{-\frac{3}{10}},\ |k|+l=1,2.
\end{align} 

The estimates above make the techniques of local smooth extension applicable. This, together with the regularity estimate, and interpolation between the estimates of the derivatives, implies that in the set 
\begin{align}
s\in [0,\ \tau_1^{-\frac{1}{10}}]\ \text{and}\ |z|\in \big[\sqrt{2}\tau_1^{\frac{1}{2}}-\frac{1}{2}\tau_1^{\frac{1}{4}}, \sqrt{2}\tau_1^{\frac{1}{2}}+\frac{1}{2}\tau_1^{\frac{1}{4}}\big],
\end{align} we have
\begin{align}
|p(z,\theta,s)-\sqrt{2}|\leq C \tau_1^{-\frac{1}{20}}; \label{eq:fiGap}
\end{align} and for any fixed $N\in \mathbb{N}$, there exists a constant $\epsilon(\tau_1)$ satisfying $\displaystyle\lim_{\tau\rightarrow \infty}\epsilon(\tau)= 0$, such that
\begin{align*}
 |\nabla_{z}^{m}\partial_{\theta}^n p(z,\theta,s)|\leq \epsilon(\tau_1), \ 1\leq |m|+n\leq N,
\end{align*} and hence, there exists a constant $C_N$ such that
\begin{align}
p^{|m|-1}|\partial_{\theta}^{n}\nabla_{y}^{m}p(z,\theta,s)|\leq C_{N}\epsilon(\tau_1).\label{eq:invar}
\end{align}

These directly enable us to estimate the function $u$ through the second identity in \eqref{eq:restart}. Indeed, for any $z$ and $t$ satisfying 
\begin{align}
 \tau^{\frac{1}{20}}_1\sqrt{T-t(\tau_1)}\Big[\sqrt{2}\tau_1^{\frac{1}{2}}-\frac{1}{2}\tau_1^{\frac{1}{4}}\Big]\leq |z|&\leq \tau^{\frac{1}{20}}_1\sqrt{T-t(\tau_1)}\Big[\sqrt{2}\tau_1^{\frac{1}{2}}+\frac{1}{2}\tau_1^{\frac{1}{4}}\Big],\label{eq:setZ}
\end{align} and
\begin{align}\label{eq:setT}
t\in & [t(\tau_1),\ T],
\end{align}
we have that, for example, if $|k|=3$, then by \eqref{eq:invar},
\begin{align}
u^2|\nabla_{z}^{k}u(z, \theta,t)|=p^2\Big|\nabla_{x}^{k}p(x,\theta,s)\Big|_{z=\tau_1^{\frac{1}{20}}\sqrt{T-t(\tau_1)}x, \ s=t_1+\tau_1^{\frac{1}{10}}(T-t(\tau_1))}\leq C_{N}\epsilon(\tau_1) , \label{eq:invar2}
\end{align} and the other estimates in \eqref{eq:oriz} will be derived similarly. The estimate in \eqref{eq:oriz3} will be derived easily, hence will be skipped.

Until now we only proved part of Item B of Theorem \ref{prob:converge}, specifically we need to prove the estimates also hold for time $t\in [t_1, T]$ and for any $z$ satisfying $|z|\leq \sqrt{2}\tau^{\frac{1}{20}}_1\sqrt{T-t(\tau_1)} \tau_1^{\frac{1}{2}},$ which is obviously larger than that in \eqref{eq:setZ} and \eqref{eq:setT}.

It turns out that what is left can be obtained similarly and more easily. The reason is that for each fixed small $z\not=0$, by the rescaling $y=\frac{1}{\sqrt{T-t(\tau)}}z=e^{\frac{1}{2}\tau}z$ and that $e^{\frac{1}{2}\tau}$ grows much faster than $\tau$, it will become a $y$ with $|y|=2\tau^{\frac{1}{2}+\frac{1}{20}}$ at some time $t(\tau)\geq t(\tau_1).$ After the time $t(\tau)$ we apply the arguments above. 
Before $t(\tau)$, i.e. $|y|^2\leq 2\tau^{\frac{1}{2}+\frac{1}{20}}$, we derive the desired estimates for $u$ from that of $v$, for the latter we derived very detailed estimates, see \eqref{eq:decPr} and \eqref{eq:InftyEst}.
For example, for $|k|=2$, we have, by \eqref{eq:higherDer},
\begin{align}
u(z,\theta,t)\Big|\nabla_{z}^{k}u(z, \theta,t)\Big|=v(y,\theta,\tau) \Big|\nabla_{y}^{k}v(y,\theta,\tau(t))\Big|_{z=e^{-\frac{1}{2}\tau}y}\lesssim  \tau^{-\frac{2}{5}}.
\end{align} The other estimates in \eqref{eq:oriz}, for $|k|+l=1,2$ will be derived similarly. For $|k|+l>2$ we follow the steps in analyzing the regime $|y|\geq 2\tau^{\frac{1}{2}+\frac{1}{20}}$, namely defining a new flow as in \eqref{eq:restart} and choosing $\lambda$ according to the considered region, and extending the solution by a small time interval. The process is tedious, besides a sophisticated version will be used in the proof of Lemma \ref{LM:regularity}. We choose to skip the details here.

This completes the proof of Item B.

Now we prove Item A.

To see that $u(\cdot,t)$ is decreasing in $t$, we derive, from the equation for $u$ in \eqref{eq:MCF} and the estimates in \eqref{eq:oriz}, 
\begin{align}
\partial_{t}u=-\frac{1}{u}\Big(1+\mathcal{O}\big(\delta(|z|,t)
\big)\Big).
\end{align} By the smallness of $\delta$ and the positivity of $u$ we find the desired result, namely $u$ is decreasing in $t$.

%%%%%%%%%%%%%%%%%%%%%%%%%%%%%

%%%%%%%%%%%%%%%%%%%%%%%%%%%%%%%%%%%%%%%%%%%%%%%%%%%%%%%%%

\section{Proof of Theorem \ref{THM:TwoReg}}\label{sec:ReforWeightLInf}

We start with deriving an effective equation for $\chi_{\Omega}w$, almost identically to those in \cite{GZ2017}.

Plug the decomposition of $v$ in \eqref{eq:decomVToW} into \eqref{eq:scale1} to find
\begin{align}
\partial_{\tau}w=-Lw+F(B,a)+G(\vec\beta_1, \vec\beta_2,\ \vec\beta_3,\ \alpha_1,\ \alpha_2)+N_1(v)+N_2(\eta),\label{eq:wEqn}
\end{align}
where the linear operator $L$ is defined as
\begin{align}
L:=-\Delta_y+\frac{1}{2}y\cdot \nabla_{y}-V^{-2}_{a,B}\partial_{\theta}^2-\frac{1}{2}-V^{-2}_{a,B} ,\label{eq:aBeqn}
\end{align} 
the nonlinearity $N_2(\eta)$ is defined as
\begin{align}
\begin{split}\label{eq:defN2eta}
N_2(\eta):= &-v^{-1}+V_{a,B}^{-1}-V_{a,B}^{-2}\eta+\big(v^{-2}-V^{-2}_{a,B}\big)\partial_{\theta}^2 \eta\\
=&-V_{a,B}^{-2}v^{-1} \eta^2-v^{-2}V^{-2}_{a,B}(v+V_{a,B})\eta \partial_{\theta}^2 \eta,
\end{split}
\end{align}
the function $\eta$ is defined as
\begin{align}
\eta(y,\theta,\tau):=&{\vec\beta}_1(\tau)\cdot y +{\vec\beta}_2(\tau)\cdot y cos\theta + {\vec\beta}_3(\tau)\cdot y sin\theta +\alpha_1(\tau) cos\theta +\alpha_2(\tau) sin\theta 
+w(y,\theta,\tau),\label{eq:decomW}
\end{align} 
and
the function $F(B,\alpha)$ is defined as
\begin{align}
\begin{split}\label{eq:source}
F(B,\alpha)
:=& -\frac{y^{T}(\partial_{\tau}B +B^{T}B) y}{2 \sqrt{2a} \sqrt{2+y^{T}B y}}+\frac{1}{\sqrt{2a}\sqrt{2+y^{T}By}}\big[\frac{a_{\tau}}{a}+1-2a+b_{11}+b_{22}+b_{22}\big]\\
&+\frac{(y^{T}B^{T}B y) \ (y^{T}B y)}{2 \sqrt{2a} (2+y^{T}B y)^{\frac{3}{2}}}+\frac{a_{\tau}}{(2a)^{\frac{3}{2}}} \frac{y^{T}By}{\sqrt{2+y^{T}By}},
\end{split}
\end{align}
and $G(\vec\beta_1, \vec\beta_2,\ \vec\beta_3,\ \alpha_1,\ \alpha_2)$ is defined as
\begin{align}
G(\vec\beta_1, \vec\beta_2, \vec\beta_3, \alpha_1, \alpha_2)
:=&\big[\frac{2a}{2+y^{T}By}\vec\beta_1-\frac{d}{d\tau}\vec\beta_1\big]\cdot y-\frac{d}{d\tau}\vec\beta_2 \cdot y cos\theta-\frac{d}{d\tau}\vec\beta_3\cdot y sin\theta\nonumber\\
&+\big[\frac{1}{2}\alpha_1-\frac{d}{d\tau}\alpha_1\big] cos\theta+\big[\frac{1}{2}\alpha_2-\frac{d}{d\tau}\alpha_2\big] sin\theta.\nonumber
\end{align}

Impose the cutoff function $\chi_{\Omega}$ onto \eqref{eq:wEqn} and find
\begin{align}
\partial_{\tau}(\chi_{\Omega}w)=-L(\chi_{\Omega}w)+\chi_{\Omega}\Big[F(B,a)+G(\vec\beta_1, \vec\beta_2, \vec\beta_3, \alpha_1, \alpha_2)
+N_1(v)+N_2(\eta)\Big]+\mu(w),\label{eq:tildew3}
\end{align} here the term $\mu(w)$ is defined as 
\begin{align}
\mu(w):=\frac{1}{2}\big(y\cdot\nabla_{y}\chi_{\Omega}\big)w+\big(\partial_{\tau}\chi_{\Omega}\big)w-\big(\Delta_{y}\chi_{\Omega}\big)w-2\nabla_{y}\chi_{\Omega}\cdot  \nabla_{y}w.\label{eq:Tchi3}
\end{align}

In what follows we prove \eqref{eq:orthow}-\eqref{eq:expWei} of Theorem \ref{THM:TwoReg}. Here instead of going through the lengthy process as in \cite{GZ2017}, we would like to argue that, loosely speaking, we can just take the estimates in  \cite{GZ2017}, since the estimates in the present situation are only different from those in \cite{GZ2017} by an order of $\mathcal{O}(\tau^{-20})$. 

To justify this, we recall that to prove the existence and uniqueness of the decomposition in \eqref{eq:decomVToW}, we look for $a$, $B$, $\vec\beta_k,\ k=1,2,3,$ and $\alpha_l,\ l=1,2,$ to fulfill the orthogonality conditions \eqref{eq:orthow},
\begin{align}
\Big\langle e^{-\frac{1}{8}|y|^2}\chi_{\Omega}\Big[v-\big(V_{a, B}+\vec\beta_1\cdot y+{\vec\beta}_2\cdot y cos\theta + {\vec\beta}_3\cdot y sin\theta +\alpha_1 cos\theta +\alpha_2 sin\theta\big)\Big],\ e^{-\frac{1}{8}|y|^2}g_{k}\Big\rangle=0
\end{align}  where $e^{-\frac{1}{8}|y|^2}g_{k}, \ k=1,2,\cdots, 18,$ are the 18 functions listed in \eqref{eq:orthow}. Hence $g_k=P(y) e^{im\theta}$ where $P(y)$ is a polynomial with degree less than or equal to $2$, $m=-1,\ 0,\ 1.$ We can prove the existence by a standard fixed point argument.

While in the previous paper \cite{GZ2017} the condition is
\begin{align}
\Big\langle e^{-\frac{1}{8}|y|^2}\chi_{R}\Big[v-\big(V_{a, B}+\vec\beta_1\cdot y+{\vec\beta}_2\cdot y cos\theta + {\vec\beta}_3\cdot y sin\theta +\alpha_1 cos\theta +\alpha_2 sin\theta\big)\Big],\ e^{-\frac{1}{8}|y|^2}g_{k}\Big\rangle=0.
\end{align} where $\chi_{R}(y)=\chi(\frac{y}{R})$ and $R(\tau)$ is defined in \eqref{eq:defRTau}. The two conditions are only different by the cutoff functions. Since the function $e^{-\frac{1}{8}|y|^2}$ decays rapidly, and $v$ grows modestly, we have that the differences between two sets of parameters is of an order $\tau^{-20}$, as claimed. These together with the results in \cite{GZ2017} imply the desired estimates in \eqref{eq:orthow}-\eqref{eq:betaA} and \eqref{eq:expWei}.

Similarly every equation in \eqref{eq:scalarEqn} is derived by taking inner product $\langle e^{-\frac{1}{8}|y|^2}\eqref{eq:tildew3},\ e^{-\frac{1}{8}|y|^2}g_k\rangle$. Together with the analysis above, we see that the differences between those proved in \cite{GZ2017} and the present are of order $\tau^{-20}$, hence is negligibly small.

Next we prove \eqref{eq:weighL1}-\eqref{eq:weighL3} of Theorem \ref{THM:TwoReg}.

Decompose $w$ into four parts, according to the frequencies in $\theta$, 
\begin{align}
w(y,\theta,\tau)=w_0(y,\tau)+e^{i\theta} w_{1}(y,\tau)+e^{-i\theta} w_{-1}(y,\tau)+P_{\theta, \geq 2} w(y,\theta,\tau)\label{eq:fourParts}
\end{align}
where the functions $w_m,\ m=-1,0,1$ are defined as,
\begin{align} 
w_m(y,\tau):=\frac{1}{2\pi}\langle w(y,\cdot,\tau),\ e^{im\theta} \rangle_{\theta}.
\end{align}
Here $P_{\theta, \geq 2}$ is the orthogonal projection onto the subspace orthogonal to $1, \ e^{\pm i\theta}:$
for any smooth function $f(\theta)=\displaystyle\sum_{n=-\infty}^{\infty}e^{in\theta}f_{n}$,
\begin{align*}
P_{\theta,\geq 2}f(\theta)=\sum_{|n|\geq 2}e^{in\theta}f_{n}.
\end{align*}

The reason in decomposing $w$ is that we will apply different techniques to estimate these components. Specifically, we will estimate $\chi_{\Omega}w_0$ and $\chi_{\Omega}w_{\pm 1}$ by propagator estimates; and apply the maximum principle to estimate $\chi_{\Omega}P_{\theta, \geq 2} w(y,\theta,\tau)$.

To facilitate later discussions, we define controlling functions $\mathcal{M}_{k},\ k=1,2,3,4.$ Recall that we start our rescaled MCF at time $\tau= \xi_0.$
\begin{align}\label{def:M1}
\begin{split}
\mathcal{M}_{1}(\tau):=\max_{\xi_0\leq s\leq\tau} \frac{1}{\kappa(\epsilon)\Omega^{-4}(s)+s^{-2}}\bigg(&\sum_{m=-1,0,1}\|\langle y\rangle^{-3} \chi_{\Omega(s)}w_{m}(\cdot,s)\|_{\infty}\\
&+\big\| (100+|y|^2)^{-\frac{3}{2}} \|\partial_{\theta}^3 P_{\theta,\geq 2}\chi_{\Omega(s)}w(\cdot,s)\|_{L_{\theta}^2}\big\|_{\infty}\bigg),
\end{split}
\end{align}
\begin{align}\label{def:M2}
\begin{split}
\mathcal{M}_{2}(\tau):=\max_{\xi_0\leq s\leq\tau} \frac{\Omega^{3}(s)}{\kappa(\epsilon)}\bigg(&\sum_{m=-1,0,1}\|\langle y\rangle^{-2} \nabla_y \chi_{\Omega(s)}w_{m}(\cdot,s)\|_{\infty}\\
&+\big\| (100+|y|^2)^{-1}  \|\partial_{\theta}^2 P_{\theta,\geq 2}\nabla_y\chi_{\Omega(s)}w(\cdot,s)\|_{L_{\theta}^2}\big\|_{\infty}\bigg),
\end{split}
\end{align}
\begin{align}
\begin{split}
\mathcal{M}_{3}(\tau):=\max_{\xi_0\leq s\leq\tau} \frac{\Omega^{2}(s)}{\kappa(\epsilon)}\sum_{|l|=2}\bigg(&\sum_{m=-1,0,1}\|\langle y\rangle^{-1} \nabla_y^{l} \chi_{\Omega(s)}w_m(\cdot,s)\|_{\infty}\\
 &+\big\| (100+|y|^2)^{-\frac{1}{2}}  \|\partial_{\theta} P_{\theta,\geq 1}\nabla_y^{l}\chi_{\Omega(s)}w(\cdot,s)\|_{L_{\theta}^2}\big\|_{\infty}\bigg),
 \end{split}
\end{align}
\begin{align}
\begin{split}\label{def:M4}
\mathcal{M}_{4}(\tau):=\max_{\xi_0\leq s\leq\tau} \frac{\Omega^{3}(s)}{\kappa(\epsilon)}\bigg(&\sum_{m=\pm 1}\|\langle y\rangle^{-2} \chi_{\Omega(s)}w_{m}(\cdot,s)\|_{\infty}\\
&+\big\| (100+|y|^2)^{-1}  \|\partial_{\theta}^3 P_{\theta,\geq 2}\chi_{\Omega(s)}w(\cdot,s)\|_{L_{\theta}^2}\big\|_{\infty}\bigg).
\end{split}
\end{align}
Here and in the rest of the paper the norm $\|\cdot\|_{L_{\theta}^2}$ and the inner product $\langle \cdot,\ \cdot\rangle_{\theta}$ signify the $L^2-$norm and $L^2$-inner product in the $\theta-$variable.

Our previous paper \cite{GZ2017} provides the initial conditions for some of $\mathcal{M}_k,\ k=1,2,3,4.$ Specifically, for some constant $C$ and for any $\tau$ sufficiently large, with $R(\tau)$ defined in \eqref{eq:defRTau},
\begin{align}
\begin{split}\label{eq:prev}
\|\langle y\rangle^{-3}\chi_{R}w(\cdot,\tau)\|_{\infty}\leq &C \kappa(\epsilon)R^{-4}(\tau),\\
\|\langle y\rangle^{-2}\nabla_{y}^{m}\partial_{\theta}^{n}\chi_{R}w(\cdot,\tau)\|_{\infty}\leq &C \kappa(\epsilon)R^{-3}(\tau),\ |m|+n=1,\\
\|\langle y\rangle^{-1}\nabla_{y}^{m}\partial_{\theta}^{n}\chi_{R}w(\cdot,\tau)\|_{\infty}\leq &C \kappa(\epsilon)R^{-2}(\tau),\ |m|+n=2.
\end{split}
\end{align}  

Besides these we need the following estimates, 
\begin{proposition}\label{prop:332211}
There exists some constant $C$ such that, if $\tau$ is sufficiently large, then
\begin{align}
\|\langle y\rangle^{-3}\partial_{\theta}^3 P_{\theta,\geq 2}\chi_{R}w(\cdot,\tau)\|_{L_{\theta}^{2}}\leq & C \kappa(\epsilon) R^{-4}(\tau), \label{eq:33} \\
\|\langle y\rangle^{-2}\partial_{\theta}^2 \nabla_{y}P_{\theta,\geq 2}\chi_{R}w(\cdot,\tau)\|_{L_{\theta}^{2}}\leq &C\kappa(\epsilon) R^{-3}(\tau),\label{eq:22}\\
 \sum_{|k|=2}\|\langle y\rangle^{-1}\partial_{\theta} \nabla_{y}^{k}P_{\theta,\geq 2}\chi_{R}w(\cdot,\tau)\|_{L_{\theta}^{2}}\leq & C\kappa(\epsilon) R^{-2}(\tau)\label{eq:11}.
\end{align}
\end{proposition}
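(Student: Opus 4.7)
The plan is to reduce the three estimates to a Duhamel argument driven by the equation \eqref{eq:tildew3} (with $\Omega$ replaced by $R$), exploiting the fact that $P_{\theta,\geq 2}$ commutes with the entire linear structure. Since the cutoff $\chi_R$ and every coefficient of the linear operator $L$ are independent of $\theta$, the projection $P_{\theta,\geq 2}$ commutes with $L$, $\nabla_y$, $\chi_R$, and the cutoff remainder $\mu$. Moreover, the drift terms $F(B,a)$ and $G(\vec\beta_1,\vec\beta_2,\vec\beta_3,\alpha_1,\alpha_2)$ in \eqref{eq:tildew3} are supported entirely in $\theta$-Fourier modes $|n|\leq 1$, so they vanish under $P_{\theta,\geq 2}$. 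Consequently $\partial_\theta^k P_{\theta,\geq 2}\chi_R w$ satisfies a homogenized equation whose right-hand side consists only of the $P_{\theta,\geq 2}$-projected nonlinearities $\chi_R(N_1(v) + N_2(\eta))$ and the cutoff remainder $\mu(w)$.

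I would then apply $\partial_\theta^3$, $\partial_\theta^2\nabla_y$, or $\partial_\theta\nabla_y^2$ to this reduced equation, write the Duhamel formula using the semigroup $e^{-sL}$, and invoke Lemma \ref{LM:propagator} adapted to the weighted $L^\infty_y L^2_\theta$ norm via Plancherel in $\theta$. The initial data at $\tau=\xi_0$ are controlled by standard parabolic regularity applied to the smooth rescaled MCF, together with the pointwise bounds \eqref{eq:prev} from \cite{GZ2017}. For the source terms, I would expand $v = V_{a,B} + \eta$ inside $N_1$ and $N_2$, absorb the $V_{a,B}$-dependent contributions into a modified linear operator, and bound the remaining nonlinear pieces by H\"older in $\theta$: the $L^\infty_y L^2_\theta$ norm of a product reduces to the $L^\infty$ norm of the top-derivative factor times the $L^2_\theta$ norm of the high-frequency factor. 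The cutoff error $\mu(w)$ is localized near $|y|\approx R$ and carries a factor of $\kappa(\epsilon)$ from \eqref{eq:defKappa}, which together with the decay factors $R^{-k}$ coming from the $\langle y\rangle^{-k}$ weights on the support $|y|\sim R$ yields the right-hand sides of \eqref{eq:33}, \eqref{eq:22}, and \eqref{eq:11}.

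The main obstacle is that $N_1(v)$ contains products of first and second $y$-derivatives of $v$, so after applying three derivatives (whether in $\theta$ or $y$), one would naively need regularity of $v$ at five derivatives, beyond what \eqref{eq:prev} supplies. To close the argument, I would use the high-frequency structure of $P_{\theta,\geq 2}$: in the Fourier-in-$\theta$ convolution for a product, any top-order factor whose Fourier support sits in $|n|\leq 1$ is absorbed by the pointwise bounds of \eqref{eq:prev}, while the complementary factor must then carry the high-frequency content and is estimated in $L^2_\theta$ using the derivatives already controlled by \eqref{eq:prev}. Combining this Fourier splitting with standard interior parabolic regularity (to bridge between the pointwise and $L^2_\theta$ estimates for $w$) closes all three inequalities simultaneously.
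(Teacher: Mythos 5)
Your reduction of the equation (projecting with $P_{\theta,\geq 2}$ so that $F$ and $G$ drop out, as in \eqref{eq:chRw}) is fine, but the Duhamel route you then take has two genuine gaps, and the paper in fact proceeds differently: it applies the maximum principle to the weighted energy densities $(100+|y|^2)^{-3}\|P_{\theta,\geq 2}\partial_{\theta}^3\chi_R w\|_{L^2_\theta}^2$ (and the analogues with $\nabla_y\partial_\theta^2$, $\nabla_y^2\partial_\theta$), precisely because that formulation supplies good negative terms into which the problematic contributions are absorbed. The first gap is your treatment of the cutoff remainder $\mu_R(w)$: its dominant piece $\tfrac12\,(y\cdot\nabla_y\chi_R)\,w$ is \emph{not} a harmless localized source. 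On the support $|y|\sim R$ one only has $\langle y\rangle^{-3}\,|y\cdot\nabla_y\chi_R|\,\|\partial_\theta^3 P_{\theta,\geq 2}w\|_{L^2_\theta}\lesssim \delta\,\kappa(\epsilon)\,R^{-3}$ (and even the fractional-power trick of \eqref{eq:fractional} only improves this to roughly $R^{-15/4}$), which after time integration falls short of the target $\kappa(\epsilon)R^{-4}$ in \eqref{eq:33} by a power of $R$. The paper never estimates this term as a source: in the propagator sections the main part is absorbed into the linear operator after the $\tilde\chi$ splitting, and in the proof of this proposition it is discarded using its favorable non-positive sign (see the treatment of $D_2$ leading to \eqref{eq:estD3R}). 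Your proposal uses neither mechanism, so the argument does not close at the claimed rate.

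The second gap is the derivative count. Applying $\partial_\theta^3$, $\partial_\theta^2\nabla_y$, or $\partial_\theta\nabla_y^2$ directly to $N_1(v)$, which contains products involving $\nabla_y^2 v$, produces genuine fifth-order terms such as $\partial_\theta\nabla_y^4 v$, while \eqref{eq:smallRw} and \eqref{eq:prev} control at most four derivatives. Your Fourier-in-$\theta$ splitting does not remove this obstruction: the restriction to modes $|n|\leq 1$ constrains the $\theta$-frequency of a factor, not the total number of derivatives it carries, and when the top-order factor lies in the high-frequency sector you have no quantitative bound for it; invoking unquantified ``interior parabolic regularity'' does not produce the weighted $R^{-k}$ decay the proposition asserts. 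The paper avoids the fifth derivative altogether by integrating by parts in $\theta$ inside the $L^2_\theta$ pairing (so only $\partial_\theta^2 N_1$, resp.\ $\nabla_y^2 N_1$, appears, i.e.\ at most four derivatives of $v$) and by absorbing the resulting top-order factor $\partial_\theta^4\chi_R w$ or $\nabla_y\partial_\theta^3\chi_R w$ via Young's inequality into the negative terms generated by the equation for the energy density --- a mechanism that a plain Duhamel formula does not offer unless you additionally prove smoothing estimates for the propagator in the mixed weighted $L^\infty_y L^2_\theta$ norm, which neither the paper's Lemma \ref{LM:propagator} nor your outline provides.
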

This proposition will be proved in Appendix \ref{sec:improved}, based on the results proved in \cite{GZ2017}.

Note here we slightly abuse the notations: in \cite{GZ2017}, we require $e^{-\frac{1}{8}|y|}\chi_{R}w$ to be orthogonal to the functions listed in \eqref{eq:orthow}, while in the present paper we require that $e^{-\frac{1}{8}|y|}\chi_{\Omega}w$ to satisfy the same orthogonal conditions. As argued in the estimates of parameters above, the difference between $\chi_{R}w$ and $\chi_{\Omega}w$ in the overlapping part is of order $\tau^{-20}$, which is negligibly small for the present purpose.

The estimates in \eqref{eq:prev} and Proposition \ref{prop:332211} directly imply that, for some $C>0,$ if $\xi_0$ is sufficiently large, then
\begin{align}
\mathcal{M}_{k}(\xi_0)\leq C.\label{eq:IniWeighted}
\end{align}

For the time $\tau\geq \xi_0$, the functions $\mathcal{M}_{l},\ k=1,2,3,4,$ satisfy the following estimates:
\begin{proposition}\label{prop:weight}
Under the conditions in Theorem \ref{THM:TwoReg}, there exists a constant $C>0$, independent of $\delta,$ such that for any $\tau\in [\xi_0,\tau_1],$ $l=1,2,3,4,$
\begin{align}
\mathcal{M}_l(\tau)\leq C+C\delta \sum_{l=1,2}\sum_{k=1}^{4}\mathcal{M}_{k}^{l},\label{eq:estM1234}
\end{align} where $\tau_1$ is the same time to that in \ref{THM:TwoReg}.
\end{proposition}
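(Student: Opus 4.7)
The plan is to bound each of $\mathcal{M}_1,\ldots,\mathcal{M}_4$ by running Duhamel's formula on the cut-off equation \eqref{eq:tildew3} and estimating the source term by term. Because the operator $L$ has different spectral properties on the three pieces in the decomposition \eqref{eq:fourParts}, I would treat $w_0$, $w_{\pm 1}$, and $P_{\theta,\geq 2}w$ by different mechanisms. Throughout, all weighted $L^\infty$ norms (or angular-$L^2$ norms, for the $P_{\theta,\geq 2}$ part) are estimated in the same pattern: initial data at $\tau=\xi_0$ are controlled via \eqref{eq:IniWeighted}, linear inhomogeneities give the constant $C$, and nonlinear pieces give a contribution that is at most $\delta$ times a polynomial in the $\mathcal{M}_k$.

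The spectral setup is this. Conjugating by $e^{-|y|^2/8}$ turns $L$ into a harmonic-oscillator-type operator. Because of the orthogonality conditions \eqref{eq:orthow} imposed on $e^{-|y|^2/8}\chi_\Omega w$, the projections $w_0, w_{\pm 1}$ land in the spectral subspace of $L$ corresponding to eigenvalues $\geq 3/2$, so the propagator estimate (an application of Lemma \ref{LM:propagator}) yields
\begin{equation*}
\|\langle y\rangle^{-3}e^{-tL}f\|_\infty\leq C e^{-\tfrac{3}{2}t}\|\langle y\rangle^{-3}f\|_\infty
\end{equation*}
and its analogues for the other weights occurring in $\mathcal{M}_2,\mathcal{M}_3,\mathcal{M}_4$. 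For $P_{\theta,\geq 2}w$ the extra term $-V_{a,B}^{-2}\partial_\theta^2$ produces a strictly positive ``mass'' $\geq 2V_{a,B}^{-2}\geq 1-o(1)$, so I would instead invoke a maximum-principle argument (in angular $L^2$) for the parabolic equation satisfied by $\partial_\theta^k\chi_\Omega P_{\theta,\geq 2}w$. In both regimes one gets a Duhamel representation whose kernel decays exponentially fast in $\tau$, so that time integrals reduce essentially to $\sup_{s\leq\tau}$ of the source.

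Next I would classify the source terms in \eqref{eq:tildew3} into three groups. First, the ``external'' linear sources $\chi_\Omega F(B,a)$ and $\chi_\Omega G$ are uniformly bounded in the appropriate weighted norms by $C\tau^{-2}$ (using \eqref{eq:Best}--\eqref{eq:betaA} and \eqref{eq:scalarEqn}); these give the constant $C$ on the right-hand side of \eqref{eq:estM1234}. Second, the cut-off commutator $\mu(w)$ defined in \eqref{eq:Tchi3} is supported in the thin annulus $\Omega\leq |y|\leq (1+\epsilon)\Omega$; on that set the derivatives of $\chi_\Omega$ produce factors $\kappa(\epsilon)\Omega^{-k}$, and in combination with the bootstrap bound on $w$ these contribute to the $\kappa(\epsilon)\Omega^{-4}$ part of the definitions \eqref{def:M1}--\eqref{def:M4}; these too collapse into the additive constant $C$. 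Third, the true nonlinearities $\chi_\Omega N_1(v)$ and $\chi_\Omega N_2(\eta)$ are genuinely quadratic (and higher) in the derivatives of $w$, $\eta$, and their polynomial weights; writing each of the factors $\partial^\alpha w$ as $\langle y\rangle^{j}\cdot(\langle y\rangle^{-j}\partial^\alpha w)$ and extracting the $\mathcal{M}_k$'s leaves a leftover polynomial factor which, thanks to the hypothesis \eqref{eq:SmCon} giving $\delta$-smallness of $v^{-1}\partial_\theta v$ and $\nabla_y v$, is bounded by $\delta$ times $\mathcal{M}_{k_1}\mathcal{M}_{k_2}$ (or a single $\mathcal{M}_k$ with a $\delta$ coefficient when one factor is absorbed by a derivative of $V_{a,B}$). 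This produces the $C\delta\sum_{l=1,2}\sum_k\mathcal{M}_k^l$ term.

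The main obstacle is the nonlinear bookkeeping in step three: one must split each quadratic term in $N_1, N_2$ into a product of factors each of which appears in some $\mathcal{M}_k$, and check that the polynomial weights $\langle y\rangle^j$ on the two factors multiply to a weight no heavier than the one appearing in the $\mathcal{M}_l$ being estimated — otherwise the resulting bound would require an additional power of $\Omega$. I expect this matching to go through because the weight $\langle y\rangle^{-3}$ in $\mathcal{M}_1$ and $\langle y\rangle^{-2},\langle y\rangle^{-1}$ in $\mathcal{M}_2,\mathcal{M}_3$ were tuned precisely so that a product of two factors from the $\mathcal{M}_k$'s, each carrying at most one derivative and divided by its weight, recovers the correct weight after using the algebraic structure of $N_1,N_2$ (where each term contains at least two $y$-derivatives of $v$ or one $\partial_\theta$ and one $\nabla_y$). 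The auxiliary factor $\mathcal{M}_4$ was introduced to handle the $w_{\pm 1}$ pieces with the stronger $\langle y\rangle^{-2}$ weight needed in this product estimate. Once this matching is verified term by term in $N_1$ and $N_2$, applying the respective propagator or maximum-principle estimate and passing to the supremum over $s\in[\xi_0,\tau]$ yields \eqref{eq:estM1234}.
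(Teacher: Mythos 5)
Your outline reproduces the paper's global architecture -- the angular-frequency splitting \eqref{eq:fourParts}, Duhamel plus propagator estimates for $w_0$ and $w_{\pm1}$, a weighted maximum principle for the $P_{\theta,\geq 2}$ piece, and the three-way classification of sources -- but two steps would fail as you state them, and they are precisely the new difficulties of working in the large region $|y|\leq(1+\epsilon)\Omega\sim\tau^{\frac12+\frac1{20}}$. First, you treat the whole cutoff commutator $\mu(w)$ as if every derivative of $\chi_{\Omega}$ contributed a factor $\kappa(\epsilon)\Omega^{-k}$. The term $\frac12(y\cdot\nabla_y\chi_{\Omega})w$ does not: on its support $\Omega\leq|y|\leq(1+\epsilon)\Omega$ one only has $|y\cdot\nabla_y\chi_{\Omega}|=O(1)$ and $|w|\lesssim\delta\sqrt{2+\tau^{-1}|y|^2}\sim\delta\tau^{-1/2}\Omega$ from \eqref{eq:SmCon}, so the naive bound for $\|\langle y\rangle^{-3}(y\cdot\nabla_y\chi_{\Omega})w\|_\infty$ is of order $\delta\kappa(\epsilon)\Omega^{-2}\tau^{-1/2}$, which is far larger than the target $\kappa(\epsilon)\Omega^{-4}+\tau^{-2}$ since $\Omega^{2}\sim(\tau-\xi_0)^{11/10}$; nor can you feed it into the $\mathcal{M}_k$'s, because $|y\cdot\nabla_y\chi_{\Omega}/\chi_{\Omega}|$ is unbounded at the outer edge, so the term is not a bounded multiple of $\chi_{\Omega}w$. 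The paper's proof hinges on the split with the auxiliary cutoff $\tilde\chi_{\Omega}$ in \eqref{eq:unboundtwo}: the $\tilde\chi_{\Omega}$-part is absorbed into the linear operator using the sign $y\cdot\nabla_y\chi_{\Omega}\leq0$, the leftover is $O(\Omega^{-5})$ only because of the prescribed degenerate vanishing \eqref{eq:properties} of $\chi$ (giving \eqref{eq:TwoCut}), and the related commutator terms such as $w\nabla_y\chi_{\Omega}$ require the fractional-power device \eqref{eq:fractional} built into the definition \eqref{eq:defKappa} of $\kappa(\epsilon)$. Without these ingredients the Duhamel step does not close.

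Second, for $P_{\theta,\geq2}w$ you justify the maximum principle by a uniform mass ``$\geq 2V_{a,B}^{-2}\geq 1-o(1)$''. That is only true where $\tau^{-1}|y|^2\ll1$; here $|y|$ ranges up to $\Omega$, where $V_{a,B}^{-2}\approx(2+\tau^{-1}|y|^2)^{-1}=o(1)$, so the angular term alone provides no coercivity in the outer region and the zeroth-order coefficient coming from $-\frac12-V_{a,B}^{-2}$ is in fact negative there. In the paper the missing positivity is generated by the polynomial weight itself: conjugating by $(100+|y|^2)^{-3}$ produces the multiplier $V_3$, which tends to $2$ as $|y|\to\infty$, and strict positivity of $\Upsilon=\frac{28}{5}V_{a,B}^{-2}-\frac1{50}+V_3\geq1$ is checked by splitting $|y|\leq\tau^{1/4}$ and $|y|\geq\tau^{1/4}$; moreover the dangerous $(y\cdot\nabla_y\chi_{\Omega})$-term reappears in $\Psi_{33}$ and is discarded only via its sign. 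Once these two points are repaired, the remaining weight-matching bookkeeping you describe (one factor absorbed by the $\delta$-smallness of \eqref{eq:SmCon}, the other read off the $\mathcal{M}_k$'s, with the cancellations $\langle N_2,1\rangle_\theta$, $\langle N_2,e^{i\theta}\rangle_\theta$ after integration by parts in $\theta$) is indeed how the paper completes the proof.
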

This proposition will be proved in subsequent sections.

Now assuming Proposition \ref{prop:weight}, we are ready to prove \eqref{eq:weighL1}-\eqref{eq:weighL3}, which is part of Theorem \ref{THM:TwoReg}.

\begin{proof}
\eqref{eq:IniWeighted} and \eqref{eq:estM1234} imply that, if $\delta$ is sufficiently small, then for all $\tau\in [\xi_0,\ \tau_1]$
\begin{align}
\mathcal{M}_{k}(\tau)\lesssim 1,\ k=1,2,3,4.
\end{align} 

This together with Lemma \ref{LM:poinEmbed} below implies that, for example, 
\begin{align}
\|\langle y\rangle^{-3} \chi_{\Omega}w(\cdot,\tau)\|_{\infty}\lesssim \Big(\tau^{-2}+\kappa(\epsilon)\Omega^{-4}(\tau)\Big) \mathcal{M}_1(\tau)\lesssim \tau^{-2}+\kappa(\epsilon)\Omega^{-4}(\tau).
\end{align} This is part of \eqref{eq:weighL1}.

The other estimates in \eqref{eq:weighL1}-\eqref{eq:weighL3} will be derived similarly.

The proof is complete.
\end{proof}

In the proof above the following lemma was used.
\begin{lemma}\label{LM:poinEmbed}
There exists a constant $C$ such that for any smooth function $f: \mathbb{T}\rightarrow \mathbb{C}$, we have that, for any $l\in \mathbb{N},$
\begin{align}
\|P_{\theta,\geq 2} f\|_{L^{\infty}_{\theta}}\leq C \|\partial_{\theta}^{l}P_{\theta,\geq 2} f\|_{L^{2}_{\theta}},\ \text{and}\ \|P_{\theta,\geq 2}f\|_{L_{\theta}^2}\leq \|\partial_{\theta}^{l}P_{\theta,\geq 2} f\|_{L^{2}_{\theta}}.\label{eq:embedding}
\end{align}

\end{lemma}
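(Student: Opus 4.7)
The plan is to work on the Fourier side, since $P_{\theta,\geq 2}$ is precisely the cutoff to frequencies $|n|\geq 2$ of the Fourier expansion. Write
\begin{align*}
P_{\theta,\geq 2}f(\theta)=\sum_{|n|\geq 2}f_n e^{in\theta},\qquad f_n=\frac{1}{2\pi}\int_{\mathbb{T}} f(\theta)e^{-in\theta}\,d\theta,
\end{align*}
so that by Parseval's identity one has
\begin{align*}
\|P_{\theta,\geq 2}f\|_{L^2_\theta}^2=2\pi\sum_{|n|\geq 2}|f_n|^2,\qquad \|\partial_\theta^l P_{\theta,\geq 2}f\|_{L^2_\theta}^2=2\pi\sum_{|n|\geq 2}n^{2l}|f_n|^2.
\end{align*}

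For the second inequality in \eqref{eq:embedding}, I would simply observe that $n^{2l}\geq 2^{2l}\geq 1$ for every $|n|\geq 2$ and $l\in\mathbb{N}$, and hence the Parseval identities above give at once $\|P_{\theta,\geq 2}f\|_{L^2_\theta}\leq \|\partial_\theta^l P_{\theta,\geq 2}f\|_{L^2_\theta}$.

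For the first inequality, I would estimate $P_{\theta,\geq 2}f$ pointwise by the $\ell^1$ norm of its Fourier coefficients and then split off a weight $|n|^{-l}|n|^{l}$ inside the sum before applying the Cauchy-Schwarz inequality:
\begin{align*}
|P_{\theta,\geq 2}f(\theta)|\leq \sum_{|n|\geq 2}|f_n|=\sum_{|n|\geq 2}\frac{1}{|n|^l}\cdot |n|^l |f_n|\leq \Bigl(\sum_{|n|\geq 2}\tfrac{1}{|n|^{2l}}\Bigr)^{\!1/2}\Bigl(\sum_{|n|\geq 2}n^{2l}|f_n|^2\Bigr)^{\!1/2}.
\end{align*}
The first factor is a finite constant $C_l$ as soon as $l\geq 1$ (and one checks in the paper that $l$ is always taken to be a positive integer where the lemma is applied), while the second factor equals $(2\pi)^{-1/2}\|\partial_\theta^l P_{\theta,\geq 2}f\|_{L^2_\theta}$; taking $\sup$ over $\theta$ yields the claimed bound.

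The lemma is elementary once it is rephrased in terms of Fourier coefficients, so there is no real obstacle; the only mild subtlety is to notice that the $\ell^1$-summability used in the Cauchy-Schwarz step requires $l\geq 1$, which is precisely why the statement is restricted to $l\in\mathbb{N}$ and why removing the low frequencies $|n|\leq 1$ via $P_{\theta,\geq 2}$ is important (it also simultaneously kills the divergence at $n=0$).
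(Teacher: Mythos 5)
Your argument is correct and follows essentially the same Fourier-side route as the paper: the paper proves the $L^{\infty}$ bound only for $l=1$, using a parameter-optimized Young-type inequality with $\zeta=\|\partial_{\theta}P_{\theta,\geq 2}f\|_{L^2_{\theta}}$ and then reduces general $l$ to $l=1$ via the second estimate, whereas you apply Cauchy--Schwarz with the weight $|n|^{-l}|n|^{l}$ directly for each $l$. The only point to note is that the lemma asserts a single constant $C$ valid for all $l\in\mathbb{N}$, which your version also delivers since $C_l=\bigl(\sum_{|n|\geq 2}|n|^{-2l}\bigr)^{1/2}\leq C_1$.
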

\begin{proof}
It is easy to prove the second estimate, by Fourier expanding $f$ and analyzing each frequency.
We only prove the first one with $l=1.$  The others follow after applying the second one.

Apply $P_{\theta,\geq 2}$ to $f$, whose Fourier expansion takes the form
$
f(\theta)=\displaystyle\sum_{n=-\infty}^{\infty} e^{in\theta} f_{n},
$  to have
\begin{align}
P_{\theta,\geq 2} f(\theta)=\sum_{|n|\geq 2} e^{in\theta} f_{n}.
\end{align}

Compute directly to obtain, for any $\zeta>0,$
\begin{align}
|P_{\theta,\geq 2} f(\theta)|\leq \sum_{|n|\geq 2} |f_{n}| =\sum_{|n|\geq 2} \frac{1}{|n|} |n f_{n}|\leq \zeta\sum_{|n|\geq 2}\frac{1}{|n|^2}+\frac{1}{\zeta} \sum_{|n|\geq 2}  |n f_{n}|^2
\end{align} Now we choose
$
\zeta:=\|\partial_{\theta}P_{\theta,\geq 2} f\|_{L^{2}}=\sqrt{\sum_{|n|\geq 2}  |n f_{n}|^2},
$
and use that $\sum_{|n|\geq 2}\frac{1}{|n|^2}<\infty$ to find the desired result.

\end{proof}

\section{Local Smooth Extension: Proof of Lemma \ref{LM:regularity}}\label{sec:regul}

The main tools here are the local smooth extension, see \cite{Eckerbook}, and comparing the rescaled MCF to MCF, which were used in \cite{ColdingMiniUniqueness}. See also \cite{MR485012, white05, MR1196160, CIM13}.

To facilitate later discussions we define a new MCF by rescaling the old one, with a fixed constant, and shifting time. Let $\tau_1$ be the chosen time in Lemma \ref{LM:regularity}. We define
\begin{align}
t_1:=t(\tau_1)
\end{align}
recall that $\tau$ and $t$ are related (one to one) by $\tau:=-\ln (T-t)$. We rescale the MCF such that the part of the manifold being a graph parametrized by $u$ in \eqref{eq:origPara} is parametrized as
\begin{align}
\sqrt{T-t_1}\left[
\begin{array}{ccc}
z\\
q(z, \theta, s)\cos\theta\\
q(z,\theta,s) \sin \theta
\end{array}
\right],
\end{align}
and it is related to the original evolution by
\begin{align}
q(z,\theta,s)=\frac{1}{\lambda} u(\lambda z, \theta, \lambda^2 s+t_1).\label{eq:restart0}
\end{align} Here $s$ and $\lambda$ are defined as \begin{align}
s:=\frac{t-t_1}{\lambda^2}, \ \lambda:=\sqrt{T-t_1}.
\end{align} This is a well defined MCF since MCF is scaling invariant, which means that if $u(z,\theta,t)$ is a solution to \eqref{eq:MCF}, then so is $\lambda^{-1} u(\lambda z,\theta, \lambda^2 t)$ for any $\lambda>0$. 

The evolution of $q$ is related to that of $v$ by the identity, 
\begin{align}
q(z,\theta,s)=&\frac{1}{\lambda} u(\lambda z, \theta, \lambda^2 s+t_1)\nonumber\\
=&\sqrt{1-s}\ v\big(\frac{z}{\sqrt{1-s}},\theta, -\ln(1-s)+\tau_1\big).\label{eq:newRes}
\end{align} 
resulted by the identity relating $v$ to $u$ in \eqref{eq:rescale} and elementary calculations.

\eqref{eq:newRes} implies that the new MCF will blow up at time $s=1$ and at $z=0$.

Now we start proving Lemma \ref{LM:regularity}.

Similar results were proved in our previous paper \cite{GZ2017}, where the considered region is $|y|\leq R(\tau)=\mathcal{O}(\sqrt{ln \tau})$, thus the considered part of hypersurface is very close to a cylinder with radius $\sqrt{2}.$ In the present situation, the considered region is $|y|\leq \Omega(\tau)=\mathcal{O}(\tau^{\frac{1}{2}+\frac{1}{20}})$, the radius are very different on the different scales if at least one of $b_k$'s in \eqref{eq:Best} is $1$. Hence the difficulty here is to show that the estimates hold uniformly across all parts.

For that purpose, we consider the different regions separately. Let $N$ be the smallest integer such that 
\begin{align}
2^{N}\sqrt{2}\geq \sup_{|y|\leq\Omega(\tau)}\sqrt{2+
\tau_1^{-1}y^{T}\tilde{B}y},\label{eq:Nn}
\end{align} Then for $n=  1,\ 2,\cdots, \ N$, we define the following overlapping regions $\Lambda_{n}\subset \mathbb{R}^3$,
\begin{align}\label{eq:Lambdan}
\Lambda_{n}:=\Big\{y\ \big|\ |y|\leq \Omega(\tau_1),\ \text{and }\ \sqrt{2+
\tau_1^{-1}y^{T}\tilde{B}y}\in \Big[2^{n-1}\sqrt{2}, 2^{n+1}\sqrt{2}\Big]\Big\}.
\end{align}

It is easy to prove the local extension for the region $\Lambda_1$ as in \cite{GZ2017}. 

To study the evolution of $q$ for positive time $s\geq 0$, by the technique of smooth local extension, we need information of $q$ when $s\in [-1,\ 0].$ These will be provided, through the identity \eqref{eq:newRes}, by the estimates on $v$ in the region $|y|\leq \Omega(\tau)$. 

$v(y,\theta,\tau)$, restricted to the region $|y|\leq \Omega(\tau)$, can provides information for $q(z,\theta,s)$ in the region
\begin{align}
 -1\leq s\leq 0,\ \text{and},\ |z|\leq \Omega(\tau_1) ,\label{eq:growR}
\end{align} since $\frac{\Omega(\tau_1)}{\sqrt{1+|s|}}\leq \Omega(-\ln(1+|s|)+\tau_1)$ when $s\in [-1,\ 0]$ by the definition of $\Omega$ in \eqref{eq:defOmega}, and the different growth rates of $\Omega(-\ln(1+|s|)+\tau_1)$ and $\sqrt{1+|s|}$ for $s\leq 0$.

And we observe that, by setting $s=0$ in \eqref{eq:newRes},
\begin{align}
q(z,\theta,0)=v(z,\theta, \tau_1).\label{eq:setS0}
\end{align}

By the estimates for $v$ and its derivatives in \eqref{eq:PointSmall}, we have that when
\begin{align}
z\in  \Lambda_1\ \text{and}\ s\in [-1,\ 0],
\end{align} the function $q$ satisfies the estimate
\begin{align}
1\leq q\leq 10, \ |\nabla_{z}^{k}\partial_{\theta}^{l}q|\leq \Omega^{-\frac{1}{2}}(\tau_1),\  |k|+l=1,2.
\end{align}
This, together with the techniques of regularity estimate and of interpolation between the estimates of derivatives, implies that when $$s=0,\ z\in \{z| z\in \Lambda_1,\  dist(z,\partial\Lambda_1)\geq 1\},$$ we have, there exists some $\tilde\Omega(\tau_1)$ satisfying $\tilde{\Omega}(\tau_1)\rightarrow \infty$ as $\Omega(\tau_1)\rightarrow \infty,$ such that
\begin{align}
\Big|q(\cdot,0)-\sqrt{2+\tau_1^{-1}z^{T}\tilde{B}z}\Big|, \ \Big|\nabla_{z}^{k}\partial_{\theta}^{l} q(\cdot,0)\Big|\leq \frac{1}{\tilde{\Omega}(\tau_1)},\ \text{if}\ |k|+l=1, 2, 3,4,5.\label{eq:1To5}
\end{align} Here $\partial \Lambda_1$ signifies the boundary of the set $\Lambda_1.$

Let $\xi_0$ be the constant in the definition of $\Omega(\tau)$. For the positive time, $s\geq 0,$
we choose $\xi_0$, the initial time, to be large enough to make $ \frac{1}{\tilde{\Omega}(\tau_1)}\leq \frac{1}{40}\delta $, with $\delta$ being the chosen small constant in Lemma \ref{LM:regularity}. 
By the local smooth extension, there exists a small constant $\kappa=\kappa(\delta)$ such that 
\begin{align}
\text{for}\ s\leq 10\kappa\ \text{and}\ z\in \Big\{z| z\in \Lambda_1,\  dist(z,\partial\Lambda_1)\geq 10\Big\}, 
\end{align}
the following estimates hold, for $|k|+l=1, 2, 3,4,5,$
\begin{align}
\Big|\frac{1}{\sqrt{1-s}}q(\cdot,s)-\sqrt{2+\big(\tau_1-\ln(1-s)\big)^{-1}(1-s)^{-1} z^{T}\tilde{B}z}\Big|,\  \ \Big|\nabla_{z}^{k}\partial_{\theta}^{l} q(\cdot,s)\Big|\leq \frac{1}{20}\delta.\label{eq:1fifDe}
\end{align}

Now we convert the estimates to that on $v$ by the identity \eqref{eq:newRes}, for $\tau=\tau_1-\ln(1-s)$ and $y=\frac{z}{\sqrt{1-s}}$, with $0\leq s\leq 10\kappa$, and, 
\begin{align}
(1-s)y\in\tilde\Lambda_1:= \{z| z\in \Lambda_1,\  dist(z,\partial\Lambda_1)\geq 10\}
\end{align} we have
\begin{align}
\Big|v(y,\theta,\tau)-\sqrt{2+\tau^{-1}y^{T}\tilde{B}y}\Big|, \  v^{|k|-1}\Big|\nabla_{y}^{k}\partial_{\theta}^{l} v(y,\theta,\tau)\Big|\lesssim \delta.\label{eq:PosiSma}
\end{align}

Now we consider the region $\Lambda_n,\ n\geq 2$.

To make the arguments comparable to that for $ \Lambda_1$, we rescale $q$ in \eqref{eq:newRes} by defining $\lambda_n:=2^{n}$ and,
\begin{align}
\begin{split}\label{eq:scalN}
q_n(z,\theta,s_1):=&\lambda_n^{-1} q(\lambda_n z,\theta, \lambda_n^2 s_1),\\
=&\frac{\sqrt{1-s_1}}{\lambda_n} v\big(\frac{\lambda_n}{\sqrt{1-s_1}}z,\theta, -\ln(1-\lambda_n^2 s_1)+\tau_1\big).
\end{split}
\end{align}
Note that this is a new MCF.

We observe that:
\begin{itemize}
\item[(1)] When $n\geq 2$, we only need to extend the solution into the positive time $s_1\leq 10\lambda_n^{-2}\kappa$ since the purpose is to extend $q$ into positive time $10\kappa$, see the identity in \eqref{eq:scalN} .
\item[(2)]
when $s_1\in [-1,\ 0]$ and $z\in \lambda_n^{-1}\Lambda_n\subset \lambda^{-1}_{n}\Big\{y\ \big|\ |y|\leq \Omega(-\ln(1-\lambda_n^2 s_1)+\tau_1)\Big\}$, the informations for $q_n(z,\theta,s_1)$ can still be provided by $v(y,\theta,\tau),$ $\tau\in [\xi_0,\ \tau_1]$, $|y|\leq \Omega(\tau)$, through the second identity in \eqref{eq:scalN} by the following reasons. 

If the set $\Lambda_n,\ n\geq 2,$ is not empty, i.e. $\sqrt{2+
\tau_1^{-1}y^{T}\tilde{B}y}\geq 2\sqrt{2}$ for some $|y|\leq \Omega(\tau_1)$ then the definition of $\Omega(\tau)$ forces that $\tau_1-\xi_0\geq \tau_1^{\frac{1}{2}}.$ By the second identity in \eqref{eq:scalN} and that $\lambda_n$ is at most (a modest) $4\tau_1^{\frac{1}{20}}$ by \eqref{eq:Lambdan}, $s_1$ can take any value in $[-1,\ 0]$ since for the corresponding time variable $\tau$ of $v$, we have $\tau=-\ln(1-\lambda_n^2 s_1)+\tau_1\in (\xi_0,\tau_1]$.

For the $z$ variable, we observe that even though the sets $\{y\big| \ |y|\leq \Omega(\tau)\}$ are increasing in $\tau$, the rescaling relation $y=\frac{z}{\sqrt{T-t}}=e^{\frac{1}{2}\tau}z$ (when we rescale $u$ to $v$ in \eqref{eq:rescale}) renders that the corresponding set for $z$ is actually (favorably) shrinking. Recall that $q_n$ is a rescaled version of $q$, and hence is a rescaled version of $u$.
Hence $v$ can provide information for $q_n$ for $s_1\in [-1,\ 0]$ and $z\in \lambda_n^{-1}\Lambda_n\subset \lambda_{n}^{-1}\{y\ |\ |y|\leq \Omega(\tau_1)\}.$
\item[(3)] The estimates in \eqref{eq:PointSmall} imply that, at $s_1=0$, for any $z\in \lambda_n^{-1}\Lambda_n,$ since $1\leq \lambda_n< 4\tau_1^{\frac{1}{20}},$
\begin{align}
\Big| q_n(z,\theta,0)-\lambda_n^{-1}\sqrt{2+\tau_1^{-1}\lambda_n^2 z^T\tilde{B}z} \Big|\leq \Omega^{-\frac{2}{5}}(\tau_1),\ \text{and} \  |\nabla_{z}^{k}\partial_{\theta}^{l}q_n(z,\theta,0)|\leq \Omega^{-\frac{3}{4}}(\tau_1).
\end{align}
And for $s_1\in [-1,0]$, and for any $z \in \lambda_n^{-1}  \Lambda_n$,
\begin{align}
1\leq q_n(z,\theta,s_1)\leq 10, \ |\nabla_{z}^{k}\partial_{\theta}^{l}q_n(z,\theta,s_1)|\leq \Omega^{-\frac{1}{2}}(\tau_1),\  |k|+l=1,2.
\end{align}
\end{itemize}

Now we run the same arguments as those in \eqref{eq:1To5}-\eqref{eq:1fifDe}, and find that, 
\begin{align}
\text{for}\ s_1\leq 10 \lambda_n^{-2}\kappa\ll 1\ \text{and}\ z\in \Big\{z| z\in \lambda_n^{-1}\Lambda_n,\  dist(z,\partial\lambda_n^{-1}\Lambda_n)\geq 10\Big\}, 
\end{align} the following estimates hold
\begin{align}\label{eq:smallV}
|q_n(z,\theta,s_1)-q_n(z,\theta,0)|\leq \frac{1}{20}\delta
\end{align}
and hence
\begin{align}
\Big|\frac{1}{\sqrt{1-s_1}}q_n(z,\theta,s_1)-\lambda_n^{-1}\sqrt{2+\big(\tau_1-\ln(1-s_1)\big)^{-1}(1-s_1)^{-1} \lambda_n^2 z^{T}\tilde{B}z}\Big|\leq & \frac{1}{5}\delta,\label{eq:smallDel}
\end{align}
and similarly
\begin{align}
|\partial_{\theta}^{l}\nabla_{z}^{k}q_n(z,\theta,s_1)|\leq &\frac{1}{5}\delta, \ |k|+l=1,2,3,4,5.
\end{align} 

The estimates on $q_n$ imply those for $q$, and hence $v$ through the identities in \eqref{eq:scalN}.

Here we have a minor difficulty. After rescaling back, some estimates will become adversely large. For example, $|\partial_{\theta}^{k} q|, \ k=1,\cdots,5,$ will be of the order $\lambda_n \delta.$ To offset this, we apply a factor $q^{-1}$ or $\frac{1}{\sqrt{2+\tau^{-1}y^{T}\tilde{B}y}}$ to make
\begin{align}
v^{-1}|\partial_{\theta}^k v(y,\theta,\tau)|=q^{-1}|\partial_{\theta}^{k} q(z,\theta,t)|=q_n^{-1}|\partial_{\theta}^{k} q_n(x,\theta,\lambda_{n}^{-2}t)|_{z=\lambda_n^{-1} x}\lesssim \delta
\end{align}
and
\begin{align}
 \frac{1}{\sqrt{2+\tau^{-1}y^{T}\tilde{B}y}}|\partial_{\theta}^{k} v|\lesssim \delta
\end{align}
since $\lambda_{n}=2^{n}$ and by \eqref{eq:Nn}, \eqref{eq:smallV} and \eqref{eq:smallDel}, $\frac{\sqrt{2+\tau^{-1}y^{T}\tilde{B}y}}{2^{n}}, \ \frac{v}{\sqrt{2+\tau^{-1}y^{T}\tilde{B}y}}\in [\frac{1}{4},\ 4]$ in the region $\Lambda_n$.

And to some of the functions, for example $\nabla_{z}q(z,\theta,t),$ we do not need such help, since itself is ``scaling invariant" in the sense that 
 $$\nabla_{z}q(z,\theta,t)=\nabla_{x} q_n( x,\theta,\lambda_{n}^{-2}t)\Big|_{z=\lambda_n^{-1} x},$$ and similarly for $\nabla_{z}^{k}\partial_{\theta}^{m}q,\ |k|\geq 2$, we have $$q^{|k|-1}|\nabla_{z}^{k}q(z,\theta,t)|= \Big( q_n( x,\theta,\lambda_{n}^{-2}t)\Big)^{|k|-1}\Big|\nabla_{x}^{k} q_n( x,\theta,\lambda_{n}^{-2}t)\Big|_{z=\lambda_n^{-1} x}.$$

Consequently for $\tau=\tau_1-\ln(1-s)$ with $0\leq s\leq 10\kappa,$ and
\begin{align}\label{eq:defTLam}
(1-s)y\in\tilde{\Lambda}_n:= \{z| z\in \Lambda_n,\  dist(z,\partial\Lambda_n)\geq 10 \lambda_n \}
\end{align}
$v$ satisfies the desired \eqref{eq:SmoExte}, namely for $m=1,\cdots, 5,$ and $|k|+l=1,\cdots,5$, and $|k|\geq 1$,
\begin{align}
|\frac{v(\cdot,\tau)}{\sqrt{2+\tau^{-1}y^{T}\tilde{B}y}}-1|,\ v^{-1}\Big|\partial_{\theta}^m v\Big|,\ v^{|k|-1}\Big|\nabla_{y}^{k}\partial_{\theta}^{l} v(\cdot,\tau)\Big|\lesssim \delta.
\end{align}

Now we need to prove that the union of the sets $\tilde\Lambda_n$, defined in \eqref{eq:defTLam}, includes the desired set \eqref{eq:SmoExte}. This is easy, since in making sure there will not be a ``hole" inside, we make the overlapping of different sets $\Lambda_n$ large enough. Recall that $\lambda_n$ is at most $4\tau_1^{\frac{1}{20}}$, see \eqref{eq:Lambdan}, while the overlapping of different $\Lambda_n$ is of order $\tau_1.$

The proof is complete.

%%%%%%%%%%%%%%%%%%%%%%%%%%%%%%%%%%%%%%%%%%%%%%%%%

%%%%%%%%%%%%%%%%%%%%%%%%%%%%%%%%%%%%%%

\section{Estimates for $\mathcal{M}_1$ and $\mathcal{M}_4$, Proof of part of \eqref{eq:estM1234}}\label{sec:estM1}
We reformulate the results to Proposition \ref{prop:M14}, specifically estimates in \eqref{eq:estM14} below.

To simplify the notations, in the rest of paper we use $P(M)$ to denote
\begin{align}
P(M):=1+\sum_{k=1}^{4}\mathcal{M}_{k}+\sum_{k=1}^{4}\mathcal{M}_{k}^{2}.\label{eq:defPM}
\end{align}
Choose $\xi_0$, which is the initial time for our rescaled MCF, to be sufficiently large such that
\begin{align}
 \Big(\kappa(\epsilon)+1\Big)\Omega^{-\frac{1}{10}}(\tau)\leq \delta,\ \text{for}\ \tau\geq \xi_0.\label{eq:assum}
\end{align} 
Recall that $\Omega$ and $\kappa(\epsilon)$ are defined in \eqref{eq:defOmega} and \eqref{eq:defKappa} respectively.

In the rest of this section we prove the following estimates for the different components in $\mathcal{M}_1$ and $\mathcal{M}_4$, 
\begin{proposition}\label{prop:M14}
\begin{align}
\|\langle y\rangle^{-3} w_0(\cdot,\tau)\|_{\infty}\lesssim \Big(\kappa(\epsilon) \Omega^{-4}(\tau)  +\tau^{-2}\Big)\big(1+\delta P(M)\big),\label{eq:30w0}
\end{align}
\begin{align}
\begin{split}\label{eq:3020wpm}
\|\langle y\rangle^{-3} w_{\pm}(\cdot,\tau)\|_{\infty}\lesssim & \Big(\kappa(\epsilon)\Omega^{-4}  +\tau^{-2}\Big)\big(1+\delta P(M)\big),\\
\|\langle y\rangle^{-2} w_{\pm}(\cdot,\tau)\|_{\infty}\lesssim & \delta \kappa(\epsilon)\Omega^{-3}  P(M),
\end{split}
\end{align}
and 
\begin{align}
\begin{split}\label{eq:3020Wtheta}
\Big\| (100+|y|^2)^{-\frac{3}{2}} \| P_{\theta,\geq 2}\partial_{\theta}^3 \chi_{\Omega}w(\cdot,\tau)\|_{L_{\theta}^2}\Big\|_{\infty}\lesssim &\delta \Big(\kappa(\epsilon)\Omega^{-4} +\tau^{-2}\Big)P(M),\\
\Big\| (100+|y|^2)^{-1} \| P_{\theta,\geq 2}\partial_{\theta}^3 \chi_{\Omega}w(\cdot,\tau)\|_{L_{\theta}^2}\Big\|_{\infty}\lesssim &\delta \kappa(\epsilon)\Omega^{-3} P(M).
\end{split}
\end{align}
Consequently, by the definitions of $\mathcal{M}_1$ and $\mathcal{M}_4$ in \eqref{def:M1} and \eqref{def:M4},
\begin{align}\label{eq:estM14}
\mathcal{M}_1\lesssim 1+\delta P(\mathcal{M}),\ \text{and}\ 
\mathcal{M}_4\lesssim \delta \kappa(\epsilon)P(\mathcal{M}).
\end{align}
\end{proposition}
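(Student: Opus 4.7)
The plan is to project the effective equation \eqref{eq:tildew3} onto the four $\theta$-modes appearing in the decomposition \eqref{eq:fourParts}, apply Duhamel's formula, and then estimate each term by the weighted $L^\infty$ propagator estimate (Lemma \ref{LM:propagator}) together with the smallness assumption \eqref{eq:assum}. Because the four orthogonality conditions in \eqref{eq:orthow} are tailored frequency by frequency, each mode will effectively see a shifted linear operator whose $L^\infty$ decay rate is strictly faster than the growth of the weight $\langle y\rangle^{-3}$ along rescaled MCF; this is what makes a contraction-type bootstrap with $P(M)$ possible.

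First I would derive component equations. Fourier projecting \eqref{eq:tildew3} in $\theta$ gives one parabolic equation for $\chi_\Omega w_0$, one system (coupled only through the nonlinearity) for $\chi_\Omega w_{\pm 1}$, and one for $\chi_\Omega P_{\theta,\geq 2}w$. The linear part on each frequency is $L$ with $\partial_\theta^2$ replaced by $-m^2$; because \eqref{eq:orthow} ensures $e^{-|y|^2/8}\chi_\Omega w_0 \perp$ all Hermite modes of order $\le 2$, and $e^{-|y|^2/8}\chi_\Omega w_{\pm 1}\perp$ all Hermite modes of order $\le 1$, the relevant propagator is bounded by $Ce^{-\frac{3}{2}(\tau-s)}$ on the weighted $L^\infty$ space (resp.\ $Ce^{-(\tau-s)}$ with the weaker weight $\langle y\rangle^{-2}$ used in $\mathcal{M}_4$), via the Lemma \ref{LM:propagator} prototype indicated in the introduction. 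For $P_{\theta,\geq 2}w$ there is no orthogonality, but the term $-V_{a,B}^{-2}\partial_\theta^2$ contributes at least $2\,V_{a,B}^{-2}\ge c\,\Omega^{-2}$ of favorable decay; I will use it combined with a maximum-principle argument in the $L^2_\theta$ norm (so that one avoids losing the dependence on $\theta$-derivatives in $\mathcal{M}_1,\mathcal{M}_4$).

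Next I would estimate the source terms in \eqref{eq:tildew3}. The forcing $F(B,a)$ is bounded by $C\tau^{-2}\langle y\rangle^2$ using \eqref{eq:scalarEqn} and \eqref{eq:Best}, so $\langle y\rangle^{-3}\chi_\Omega F\lesssim \tau^{-2}$; analogously $\chi_\Omega G\lesssim \tau^{-3}$ by \eqref{eq:betaA}, and both land entirely on the $m=0,\pm 1$ modes. The cutoff remainder $\mu(w)$ in \eqref{eq:Tchi3} is supported on $|y|\in[\Omega,(1+\epsilon)\Omega]$, where every factor $\nabla_y^k\chi_\Omega$ produces either $\kappa(\epsilon)\Omega^{-k}$ or, via \eqref{eq:defKappa}, a $\chi^{3/4}$ weight; pairing with the controlled norms in $\mathcal{M}_k$ gives the prototype $\kappa(\epsilon)\Omega^{-4}$ bound. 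The nonlinearities $N_1(v)$ and $N_2(\eta)$ carry at least two derivatives of $\eta$ (or one derivative and a factor of $\eta$ itself), so by \eqref{eq:decomW} together with the parameter estimates \eqref{eq:betaA} and the definitions \eqref{def:M1}--\eqref{def:M4} they contribute bounds of the shape $\delta\cdot(\kappa(\epsilon)\Omega^{-4}+\tau^{-2})P(M)$ and $\delta\cdot\kappa(\epsilon)\Omega^{-3}P(M)$ respectively; the assumption \eqref{eq:assum} is what swallows the constants $\kappa(\epsilon)$ whenever they would otherwise multiply themselves.

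Plugging these bounds into the Duhamel formula and integrating the exponential $e^{-\frac{3}{2}(\tau-s)}$ (resp.\ $e^{-(\tau-s)}$) against the slowly varying majorants $\kappa(\epsilon)\Omega^{-4}(s)+s^{-2}$ yields the four displayed estimates \eqref{eq:30w0}, \eqref{eq:3020wpm}, \eqref{eq:3020Wtheta}, and hence \eqref{eq:estM14} after taking $\sup_{s\le\tau}$ and recalling the initial bound \eqref{eq:IniWeighted}. The main obstacle I anticipate is the mode $P_{\theta,\geq 2}w$: it carries no orthogonality, so the usual eigenvalue-gap argument fails, and one must replace the propagator estimate by a $\partial_\theta^3$-weighted $L^2_\theta\,L^\infty_y$ maximum principle in which the gain from $\partial_\theta^2 V_{a,B}^{-2}$ is only $O(\Omega^{-2})$. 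Extracting the factor $\Omega^{-4}$ (respectively $\Omega^{-3}$) required by $\mathcal{M}_1$ (respectively $\mathcal{M}_4$) therefore demands using the $\langle y\rangle^{-3}$ (resp.\ $\langle y\rangle^{-2}$) weight crucially when commuting through $\nabla_y\chi_\Omega$ terms, and then invoking Lemma \ref{LM:poinEmbed} to convert the $L^2_\theta$ control into the $L^\infty_\theta$ control needed to close the system.
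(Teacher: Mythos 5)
Your overall architecture does match the paper's: split $w$ into the modes of \eqref{eq:fourParts}, treat $w_0$ and $w_{\pm1}$ by Duhamel plus weighted-$L^\infty$ propagator estimates after using the orthogonality \eqref{eq:orthow}, treat $P_{\theta,\ge2}w$ by a maximum principle for a weighted $L^2_\theta$ quantity, and close with Lemma \ref{LM:poinEmbed}. But two mechanisms that actually make the estimates close are missing. First, the boundary term $\tfrac12(y\cdot\nabla_y\chi_\Omega)w$ in $\mu(w)$ (see \eqref{eq:Tchi3}) is \emph{not} of size $\kappa(\epsilon)\Omega^{-1}$: on its support $|y|\sim\Omega$, the factor $|y|$ cancels the $\Omega^{-1}$ from differentiating the cutoff, so the coefficient is $O(1)$, and since a priori $|w|\lesssim\delta\sqrt{1+\tau^{-1}|y|^2}\sim\delta\tau^{1/20}$ there by \eqref{eq:SmCon}, a direct bound gives $\langle y\rangle^{-3}|(y\cdot\nabla_y\chi_\Omega)w|\lesssim\delta\,\Omega^{-3}\tau^{1/20}$, which is far larger than the target $\kappa(\epsilon)\Omega^{-4}+\tau^{-2}$; moreover $\chi_\Omega w\mapsto(y\cdot\nabla_y\chi_\Omega)w$ is unbounded relative to $\chi_\Omega w$. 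The paper handles this by exploiting that $\chi$ is decreasing, so $y\cdot\nabla_y\chi_\Omega\le0$: it introduces the auxiliary cutoff $\tilde\chi_\Omega$, absorbs $\tfrac12\frac{(y\cdot\nabla_y\chi_\Omega)\tilde\chi_\Omega}{\chi_\Omega}$ as a nonnegative potential into the linear operators $H_2$, $H_1$ (which is precisely why Lemma \ref{LM:propagator} must be proved for these modified operators, with the rate $e^{-\frac25(\tau-s)}$ rather than the Hermite rates $e^{-\frac32(\tau-s)}$, $e^{-(\tau-s)}$ you quote), kills the leftover $(y\cdot\nabla_y\chi_\Omega)(1-\tilde\chi_\Omega)$ by the high-order vanishing \eqref{eq:properties}, and treats the remaining $w\,\nabla_y\chi_\Omega$-type terms by the fractional-power interpolation $|\chi_\Omega w|^{3/4}|w|^{1/4}$ via \eqref{eq:defKappa} and \eqref{eq:assum}. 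Your sketch invokes only the last ingredient and would stall at the $O(1)$ boundary term.

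Second, for $P_{\theta,\ge2}w$ your proposed damping is only the degenerate gain from $V_{a,B}^{-2}\partial_\theta^2$ (which you estimate as $O(\Omega^{-2})$ where $V_{a,B}$ is large), and a maximum principle with a damping rate that degrades in $|y|$ cannot absorb source terms of size $\delta^2(\tau^{-2}+\kappa(\epsilon)\Omega^{-4})^2P^2(M)$ into a bound of the same size. The paper's resolution is that conjugating the drift $\tfrac12 y\cdot\nabla_y$ by the polynomial weight $(100+|y|^2)^{3}$ produces the multiplier $V_3$, whose main piece $\frac{3|y|^2}{100+|y|^2}$ is close to $3$ exactly in the outer region where $V_{a,B}^{-2}$ is small, while for $|y|\le\tau^{1/4}$ one has $V_{a,B}^{-2}\approx\frac12$ and the spectral gap $\|P_{\theta,\ge2}\partial_\theta^4f\|_{L^2_\theta}\ge2\|P_{\theta,\ge2}\partial_\theta^3f\|_{L^2_\theta}$ supplies the factor $\frac{28}{5}V_{a,B}^{-2}$; together $\Upsilon\ge1$ uniformly, and the maximum principle closes at an $O(1)$ rate. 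In addition, the same sign observation as above is needed to discard the $O(1)$ cutoff term inside $\Psi_{33}$ (namely $\langle P_{\theta,\ge2}\partial_\theta^3\chi_\Omega v,(y\cdot\nabla_y\chi_\Omega)\partial_\theta^3v\rangle_\theta\le0$), and the top-order pieces of $\partial_\theta^3N_1$ and of the $m=\pm1$ contamination must be absorbed by the positive $\partial_\theta^4$ and $\nabla_y\partial_\theta^3$ terms through Young's inequality, as in the paper's treatment of $\Psi_{31}$ and $\Psi_{32}$. Without these points identified, the plan does not yet constitute a proof of \eqref{eq:3020Wtheta}, and hence of \eqref{eq:estM14}.
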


The proposition will be proved in subsequent subsections. \eqref{eq:estM14} is directly implied by the estimate before it.

The following results will be used frequently in controlling various terms in nonlinearity, see e.g. \eqref{eq:nonl} below. For convenience we state them here.
\begin{lemma}\label{LM:appliM1234}
For $l_1=0,1,2,$ and $l_2=0,1,2,3,$
\begin{align}
\|\langle y\rangle^{-3} \partial_{\theta}^{l_1}\chi_{\Omega}w(\cdot,\tau)\|_{\infty}, \ \big\|\langle y\rangle^{-3} \|\partial_{\theta}^{l_2}\chi_{\Omega}w(\cdot,\tau)\|_{L_{\theta}^{2}}\big\|_{\infty}\lesssim \big(\tau^{-2}+\kappa(\epsilon) \Omega^{-4} \big) \mathcal{M}_1(\tau),\label{eq:embeddM1}
\end{align}
for $n_1=1,2,$ and $n_2=1,2,3,$
\begin{align}
\|\langle y\rangle^{-2} \partial_{\theta}^{n_1}\chi_{\Omega}w(\cdot,\tau)\|_{\infty},\ \big\|\langle y\rangle^{-2} \|\partial_{\theta}^{n_2}\chi_{\Omega}w(\cdot,\tau)\|_{L_{\theta}^2}\big\|_{\infty}\lesssim \kappa(\epsilon) \Omega^{-3}  \mathcal{M}_4(\tau),\label{eq:embeddM4}
\end{align}
for $m_1=0,1$, and $m_2=0,1,2,$
\begin{align}
\|\langle y\rangle^{-2} \partial_{\theta}^{m_1}\nabla_{y}\chi_{\Omega}w(\cdot,\tau)\|_{\infty},\ \big\|\langle y\rangle^{-2} \|\partial_{\theta}^{m_2}\nabla_{y}\chi_{\Omega}w(\cdot,\tau)\|_{L_{\theta}^2}\big\|_{\infty}\leq \kappa(\epsilon) \Omega^{-3}  \mathcal{M}_2(\tau),\label{eq:embeddM2}
\end{align}
and for $|k|=2,$ and $d=0,1,$
\begin{align}
\|\langle y\rangle^{-1}\nabla_{y}^k\chi_{\Omega}w(\cdot,\tau)\|_{\infty},\ \big\|\langle y\rangle^{-1}\|\nabla_{y}^k\partial_{\theta}^{d}\chi_{\Omega}w(\cdot,\tau)\|_{L_{\theta}^2}\big\|_{\infty}\lesssim \kappa(\epsilon) \Omega^{-2}  \mathcal{M}_3(\tau).\label{eq:embeddM3}
\end{align}
\end{lemma}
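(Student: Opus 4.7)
The plan is to reduce each inequality to direct application of the definitions of $\mathcal{M}_1,\ldots,\mathcal{M}_4$ combined with the embedding Lemma \ref{LM:poinEmbed}. The main tool is the Fourier decomposition in $\theta$ introduced in \eqref{eq:fourParts},
\begin{align*}
w=w_0+e^{i\theta}w_1+e^{-i\theta}w_{-1}+P_{\theta,\geq 2}w,
\end{align*}
which commutes with both multiplication by $\chi_\Omega(y)$ (independent of $\theta$) and with $\nabla_y$. Thus the decomposition $\chi_\Omega w=\chi_\Omega w_0+e^{i\theta}\chi_\Omega w_1+e^{-i\theta}\chi_\Omega w_{-1}+P_{\theta,\geq 2}\chi_\Omega w$ exactly matches the four pieces already appearing in the definitions \eqref{def:M1}--\eqref{def:M4}.

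First I would prove \eqref{eq:embeddM1}. Each derivative $\partial_\theta^{l_1}$ acts diagonally on the Fourier modes, so the three low-frequency pieces contribute at most $|\chi_\Omega w_0|+|\chi_\Omega w_1|+|\chi_\Omega w_{-1}|$ pointwise in $\theta$, and multiplying by $\langle y\rangle^{-3}$ and taking the sup in $y$ gives a bound by $(\tau^{-2}+\kappa(\epsilon)\Omega^{-4})\mathcal{M}_1(\tau)$ directly from the first term in \eqref{def:M1}. For the tail $\partial_\theta^{l_1}P_{\theta,\geq 2}\chi_\Omega w$ (which is itself in the range of $P_{\theta,\geq 2}$), I would apply Lemma \ref{LM:poinEmbed} at order $l=3-l_1\in\{1,2,3\}$ for $l_1\in\{0,1,2\}$ to convert the $L^\infty_\theta$ norm into the $L^2_\theta$ norm of $\partial_\theta^3 P_{\theta,\geq 2}\chi_\Omega w$, after which the second term in \eqref{def:M1} (noting that $(100+|y|^2)^{-3/2}$ and $\langle y\rangle^{-3}$ are comparable) delivers the bound. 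The mixed-norm estimate in \eqref{eq:embeddM1} is even simpler: orthogonality of Fourier modes splits the squared $L^2_\theta$ norm into a sum of squares over frequencies, and the second inequality in \eqref{eq:embedding} shows that $\|\partial_\theta^{l_2}P_{\theta,\geq 2}\chi_\Omega w\|_{L^2_\theta}\leq\|\partial_\theta^3 P_{\theta,\geq 2}\chi_\Omega w\|_{L^2_\theta}$ for $l_2\leq 3$.

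For \eqref{eq:embeddM4} the crucial observation is that the constraints $n_1\geq 1$ and $n_2\geq 1$ annihilate the zero-mode $w_0$, leaving only $w_{\pm 1}$ and the high-frequency tail; these are exactly the objects that $\mathcal{M}_4$ was designed to control at the weaker weight $\langle y\rangle^{-2}$. The low-frequency contributions $|\chi_\Omega w_{\pm 1}|$ are directly bounded by $\kappa(\epsilon)\Omega^{-3}\mathcal{M}_4(\tau)\langle y\rangle^{2}$, and the tail is handled by Lemma \ref{LM:poinEmbed} applied at order $3-n_1$ to reach the $\partial_\theta^3 P_{\theta,\geq 2}\chi_\Omega w$ piece of $\mathcal{M}_4$.

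The remaining estimates \eqref{eq:embeddM2} and \eqref{eq:embeddM3} follow by the identical procedure with $\nabla_y$ or $\nabla_y^k$ inserted at the very start, since $\nabla_y$ commutes with both $\partial_\theta$-Fourier projection and (at the cost of a harmless $\nabla_y\chi_\Omega$ factor that is absorbed into $\kappa(\epsilon)\Omega^{-3}$) with $\chi_\Omega$; here one uses the second terms in the definitions of $\mathcal{M}_2$ and $\mathcal{M}_3$, which carry $\partial_\theta^2$ and $\partial_\theta$ respectively, and Lemma \ref{LM:poinEmbed} at orders $2-m_1$ and $1-d$. I do not anticipate any genuine obstacle: the lemma is essentially a bookkeeping statement unpacking the definitions of the controlling functions. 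The only care required is to verify, case by case, that the ranges of $l_i, n_i, m_i, d$ listed in the lemma are compatible with the number of $\theta$-derivatives available in each $\mathcal{M}_k$ (three for $\mathcal{M}_1, \mathcal{M}_4$, two for $\mathcal{M}_2$, one for $\mathcal{M}_3$), which is immediate from the stated hypotheses.
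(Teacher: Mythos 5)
Your proposal is correct and follows essentially the same route as the paper: decompose $w$ into the Fourier pieces of \eqref{eq:fourParts}, bound the modes $w_{0},w_{\pm1}$ directly by the first terms in the definitions \eqref{def:M1}--\eqref{def:M4}, handle the tail with Lemma \ref{LM:poinEmbed}, and use $\partial_{\theta}w_{0}=0$ when at least one $\theta$-derivative is present (the case of \eqref{eq:embeddM4}). The only cosmetic remark is that no commutation of $\nabla_{y}$ past $\chi_{\Omega}$ is actually needed, since the lemma and the definitions of $\mathcal{M}_2,\mathcal{M}_3$ involve the same expressions $\nabla_{y}\chi_{\Omega}w$; this does not affect the validity of your argument.
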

\begin{proof}
To prove the first estimate in \eqref{eq:embeddM1} 
we decompose $w$ as in \eqref{eq:fourParts}, and apply Lemma \ref{LM:poinEmbed}, to find, when $l=0,1,2,$
\begin{align}
\begin{split}
\|\langle y\rangle^{-3} \partial_{\theta}^{l}\chi_{\Omega}w\|_{\infty}
\lesssim &  \sum_{m=-1,0,1}\|\langle y\rangle^{-3} \chi_{\Omega}w_{m}\|_{\infty}+\Big\|(100+|y|^2)^{-\frac{3}{2}} \|P_{\theta,\geq 2}\partial_{\theta}^3\chi_{\Omega}w\|_{L_{\theta}^2}\Big\|_{\infty},
\end{split}
\end{align}
which, together with the definitions of $\mathcal{M}_{1}$, implies the desired estimate.

Similarly, for the second in \eqref{eq:embeddM1} 
\begin{align}
\|\chi_{\Omega}\partial_{\theta}^{l_2}w\|_{L_{\theta}^2}\lesssim \sum_{m=-1,0,1}|w_m|+\|P_{\theta,\geq 2}\partial_{\theta}^3\chi_{\Omega}w\|_{L_{\theta}^2}.
\end{align}
then we consider the weighted $L^{\infty}-$norm to obtain the desired result.

The proof of the others are similar, except that we need to use, for example, in the proof of \eqref{eq:embeddM4}, that $\partial_{\theta}w_0=0$ since $w_0$ is $\theta-$independent. Hence we skip the details here.

\end{proof}

In the rest of this section we prove Proposition \ref{prop:M14}. Since similar techniques will be used frequently in the rest of the paper, we present the details here.

\subsection{Proof of \eqref{eq:30w0}}

From \eqref{eq:tildew3} we derive an equation for $\chi_{\Omega}w_0$ as 
\begin{align}\label{eq:eqnW0}
\begin{split}
\partial_{\tau}\chi_{\Omega}w_0=-H \chi_{\Omega}w_0+   \chi_{\Omega}\Sigma 
&+\frac{1}{2\pi}\chi_{\Omega}\langle N_{1}(v)+N_2(\eta),\ 1\rangle_{\theta}+\mu(w_0),
\end{split}
\end{align} where the function $\Sigma$ is defined as
\begin{align*}
\Sigma:=&\frac{1}{2\pi}\langle F(B,a)+G(\vec\beta_1, \vec\beta_2, \vec\beta_3, \alpha_1, \alpha_2),\ 1\rangle_{\theta}
=F(B,a)+ [\frac{2a}{2+y^{T}By}\vec\beta_1-\frac{d}{d\tau}\vec\beta_1] \cdot y,
\end{align*} by using that $F(B,a)$ is independent of $\theta;$ and the $\theta-$dependent part of $G$ is orthogonal to 1, hence does not contribute;
and the linear operator $H$ is defined as 
\begin{align}
\begin{split}\label{eq:defML}
H:=&-\Delta_y+\frac{1}{2}y\cdot \nabla_{y}-\frac{1}{2}-a-\tau^{-\frac{1}{2}}+V_1 ,\\
V_1:=&\Big[\frac{a y^{T}By}{2+y^{T}B y}+\tau^{-\frac{1}{2}}\Big]\chi_{2\Omega},
\end{split}
\end{align} resulted from the observations $$
\langle L \chi_{\Omega}w, \ 1\rangle_{\theta}=H \chi_{\Omega}w_{0},\ \text{and}\ \chi_{2\Omega}\chi_{\Omega}=\chi_{\Omega}.
$$
Here we use $\langle V_{a,B}^{-2}\partial_{\theta}^2 w,\ 1\rangle_{\theta}=0$ since $V_{a,B}$ is independent of $\theta$, the function $\chi_{D}$ is defined as
\begin{align}
\chi_{D}(|y|):=\chi(\frac{|y|}{D}),\ \text{for any} \ D>0.
\end{align}

In the definition of $V_1$ in \eqref{eq:defML}, we impose the cutoff function $\chi_{2\Omega}$ to make sure that $V_1$ is well defined for all $y\in \mathbb{R}^3$ since, by \eqref{eq:Best}, the denominator $2+y^{T}B y\geq 1$ when $|y|\leq 2(1+\epsilon)\Omega.$

Now we study $\mu(w_0)$ by dividing it into two parts:
\begin{align}
\mu(w_0)=\frac{1}{2}(y\cdot\nabla_{y}\chi_{\Omega})w_0+\Gamma(w_0)
\end{align} with $\Gamma(w_0)$ defined as
\begin{align*}
\Gamma(w_0):=(\partial_{\tau}\chi_{\Omega})w_0-(\Delta_{y}\chi_{\Omega})w_0-2\nabla_{y}\chi_{\Omega}\cdot  \nabla_{y}w_0.
\end{align*}

The term $\Gamma(w_0)$ is small, by
the definition $\chi_{\Omega}(|y|)=\chi(\frac{|y|}{\Omega})$ and the estimates in \eqref{eq:SmCon}.

It is more involved to control the second term, resulted by two observations, as pointed out in \cite{GZ2017}: (1) the $L^{\infty}$-norm of the function $\frac{1}{2}y\cdot\nabla_{y}\chi_{\Omega}$ will not converge to $0$ as $\tau\rightarrow\infty$, instead, the definition $\chi_{\Omega}(|y|)=\chi(\frac{|y|}{\Omega})$ implies
\begin{align}
\sup_{y}\Big|\frac{1}{2}y\cdot\nabla_{y}\chi_{\Omega}(y)\Big|=\frac{1}{2}\sup_{x\geq 0}\Big|\chi^{'}(x)\Big|,
\end{align}  (2) the mapping $\chi_{\Omega}w\rightarrow \frac{1}{2}(y\cdot \nabla_{y}\chi_{\Omega})w=\frac{1}{2}\frac{y\cdot \nabla_{y}\chi_{\Omega}}{\chi_{\Omega}}\chi_{\Omega}w$ is unbounded since $|\frac{y\cdot \nabla_{y}\chi_{\Omega}}{\chi_{\Omega}}|\rightarrow \infty$ as $|y|\rightarrow (1+\epsilon)\Omega$.

To overcome this difficulty, the key observation is that $\frac{1}{2}y\cdot\nabla_{y}\chi_{\Omega}$ has a favorable non-positive sign, by the requirement $\chi(z)=\chi(|z|)$ being decreasing in $|z|$ (see \eqref{eq:defChi3}).
Our strategy is to absorb ``most" of it into the linear operator. For that purpose we define a new non-negative smooth cutoff function $\tilde\chi_{\Omega}(y)$ such that
\begin{align}
\tilde\chi_{\Omega}(y)=\left[
\begin{array}{ll}
1 ,\ \text{if}\ |y|\leq \Omega(1+\epsilon- \Omega^{-\frac{1}{4}}),\\
0,\ \text{if}\ |y|\geq \Omega(1+\epsilon-2 \Omega^{-\frac{1}{4}})
\end{array}
\right.
\end{align} and require it satisfies the estimate
\begin{align}
|\nabla_{y}^{k}\tilde{\chi}_{\Omega}(y)|\lesssim \Omega^{-\frac{3}{4}|k|},\ |k|=1, \ 2.
\end{align} Such a function is easy to construct, hence we skip the details. 

Then we decompose $\frac{1}{2} (y\cdot\nabla_y  \chi_{\Omega}) w_0 $ into two parts
\begin{align}
\frac{1}{2} (y\cdot\nabla_y \chi_{\Omega}) w_0=\frac{1}{2} \frac{(y\cdot\nabla_y \chi_{\Omega}) \tilde\chi_{\Omega}}{\chi_{\Omega}}\chi_{\Omega} w_0+\frac{1}{2} (y\cdot\nabla_y \chi_{\Omega}) (1-\tilde\chi_{\Omega})w_0.\label{eq:unboundtwo}
\end{align} 

The following three observations will be used in later development: 
\begin{itemize}
\item[(A)]
The first part in \eqref{eq:unboundtwo} is a bounded (but not uniformly bounded) multiplication operator since, for some $c(\epsilon)>0,$
\begin{align}
\Big|\frac{y\cdot\nabla_{y} \chi_{\Omega}\  \tilde\chi_{\Omega}}{\chi_{\Omega}}\Big|\leq c(\epsilon) \Omega^{\frac{1}{4}}.\label{eq:NewPoten}
\end{align}
\item[(B)]
If $\Omega$ is sufficiently large, then by that $|\frac{d}{d|z|}\chi(|z|)|\rightarrow 0$ rapidly as $|z|\rightarrow 1+\epsilon$, see \eqref{eq:properties}, we have, since $\nabla_{y}^{l}\chi_{\Omega}(y)=\Omega^{-|l|}\ \nabla_{z}^{l}\chi(z)\Big|_{z=\frac{y}{\Omega}},$
\begin{align}
\Big|\nabla_{y}^{k}\big(y\cdot\nabla_{y} \chi_{\Omega}) (1-\tilde\chi_{\Omega})\big)\Big|\leq \Omega^{-5},\ |k|=0,1,2 .\label{eq:TwoCut}
\end{align}  
\item[(C)]
If $\Omega$ is sufficiently large, then by the properties of $\chi$ in \eqref{eq:properties}, we have that,
\begin{align}
\Big|\nabla_{y}^{k}\big[\frac{y\cdot\nabla_{y} \chi_{\Omega}\  \tilde\chi_{\Omega}}{\chi_{\Omega}}\big]\Big|\leq  \Omega^{-\frac{1}{4}},\ |k|=1,2.\label{eq:deriveSmooth}
\end{align} 
\end{itemize}

Returning to the equation for $\chi_{\Omega}w_0$ in \eqref{eq:eqnW0}, we absorb the first part in \eqref{eq:unboundtwo} into the linear operator and leave the second to the remainder, thus
\begin{align}
\begin{split}
\partial_{\tau}(\chi_{\Omega}w_0)=-H_2(\chi_{\Omega} w_0)+\chi_{\Omega} \Sigma
+\frac{1}{2\pi}\chi_{\Omega}\langle N_{1}(v)+N_2(\eta),\ 1\rangle_{\theta}+
\Lambda(w_0)
,\label{eq:weightLinfw}
\end{split}
\end{align} with the linear operator $H_2$ defined as
\begin{align*}
H_2:=H-\frac{1}{2}\frac{\tilde\chi_{\Omega}\ y\cdot \nabla_{y} \chi_{\Omega}  }{\chi_{\Omega}}=H+\frac{1}{2}|\frac{\tilde\chi_{\Omega}\ y\cdot \nabla_{y} \chi_{\Omega}  }{\chi_{\Omega}}|,
\end{align*} and $\Lambda$ is a linear operator defined as
\begin{align}
\begin{split}\label{eq:defPsiw}
\Lambda(w_0):=
&(\frac{1}{2}y\cdot \nabla_{y} \chi_{\Omega})\ (1-\tilde\chi_{\Omega}) w_0+(\partial_{\tau}\chi_{\Omega})w_0-(\Delta_{y}\chi_{\Omega})w_0-2\nabla_{y}\chi_{\Omega}\cdot  \nabla_{y}w_0.
\end{split}
\end{align}

Observing the operator $e^{-\frac{1}{8}|y|^2}H_2 e^{\frac{1}{8}|y|^2}$, mapping $L^2$ space into itself, is self-adjoint, we transform the equation accordingly
\begin{align}
\begin{split}\label{eq:selfadw}
\partial_{\tau} (e^{-\frac{1}{8}|y|^2}\chi_{\Omega}w_0)=-\mathcal{L}(e^{-\frac{1}{8}|y|^2}\chi_{\Omega}w_0)
+&e^{-\frac{1}{8}|y|^2}\Big[\chi_{\Omega}\bigg(\Sigma+\frac{1}{2\pi}\langle N_{1}(v)+N_2(\eta),\ 1\rangle_{\theta}\bigg)+
\Lambda(w_0)
\Big],
\end{split}
\end{align}
with the linear operator $\mathcal{L}$ defined as
\begin{align*}
\mathcal{L}:=e^{-\frac{1}{8}|y|^2} H_2 e^{\frac{1}{8}|y|^2}=-\Delta_{y}+\frac{1}{16}|y|^2-\frac{3}{4}-a-\frac{1}{2}-\tau^{-\frac{1}{2}}+V_1+\frac{1}{2}\Big|\frac{\tilde\chi_{\Omega}\ y\cdot \nabla_{y} \chi_{\Omega}  }{\chi_{\Omega}}\Big|.
\end{align*}

The orthogonality conditions imposed on $e^{-\frac{1}{8}|y|^2}\chi_{\Omega}w$ in \eqref{eq:orthow} imply those for $e^{-\frac{1}{8}|y|^2}\chi_{\Omega}w_0:$
\begin{align}
\begin{split}
e^{-\frac{1}{8}|y|^2}\chi_{\Omega}w_0\perp  e^{-\frac{1}{8} |y|^2},\ & e^{-\frac{1}{8} |y|^2} y_k,\ e^{-\frac{1}{8} |y|^2}(\frac{1}{2}y_k^2-1),\  k=1,2,3,\\
& e^{-\frac{1}{8} |y|^2} y_m y_n, \ m\not=n, \ \ m,n=1,2,3.
\end{split}
\end{align}

Define the orthogonal projection onto the $L^2$ subspace orthogonal to these 13 functions by $P_{13}$, which makes
\begin{align*}
P_{13}e^{-\frac{1}{8}|y|^2}\chi_{\Omega}w_0=e^{-\frac{1}{8}|y|^2}\chi_{\Omega}w_0.
\end{align*}

Apply the operator $P_{13}$ on \eqref{eq:selfadw}, and then apply Duhamel's principle to have
\begin{align}
\begin{split}\label{eq:3w}
e^{-\frac{1}{8}|y|^2}\chi_{\Omega}w_0=&U_1(\tau, \xi_0) e^{-\frac{1}{8}|y|^2}\chi_{\Omega}w_{0}(\xi_0)+
\int_{\xi_0}^{\tau}U_1(\tau,s) P_{13}e^{-\frac{1}{8}|y|^2}
\Lambda(w_0)(s)\ ds\\
&+\int_{\xi_0}^{\tau}U_1(\tau,s) P_{13}e^{-\frac{1}{8}|y|^2}\chi_{\Omega}\Big(\Sigma(s)+\frac{1}{2\pi}\langle N_1+N_2, \ 1\rangle_{\theta}(s) \Big)\ ds,
\end{split}
\end{align} where $U_1(\sigma_1,\sigma_2)$ is the propagator generated by the linear operator $-P_{13}\mathcal{L}P_{13}$ from $\sigma_2$ to $\sigma_1$, with $\sigma_1\geq \sigma_2.$

The propagator satisfies the following estimate:
\begin{lemma}\label{LM:propagator}
There exists a constant $C$, such that for any function $g$ and for any times $\sigma_1\geq \sigma_2\geq \xi_0$, we have that
\begin{align}
\|\langle y\rangle^{-3} e^{\frac{1}{8}|y|^2} U_1(\sigma_1,\sigma_2) P_{13}g\|_{\infty}\leq C e^{-\frac{2}{5}(\sigma_1-\sigma_2)} \|\langle y\rangle^{-3}e^{\frac{1}{8}|y|^2} g\|_{\infty}.
\end{align}

\end{lemma}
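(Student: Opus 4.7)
The plan is to work in the un-conjugated picture and establish the decay first for a free Ornstein--Uhlenbeck-type semigroup, then absorb the perturbations. Setting $V(\sigma_1,\sigma_2) := e^{|y|^2/8}\, U_1(\sigma_1,\sigma_2)\, e^{-|y|^2/8}$, a direct computation using the definition of $\mathcal{L}$ shows that $V$ is the propagator of $-\tilde P_{13} H_2 \tilde P_{13}$, where $\tilde P_{13}$ is the $L^2$-orthogonal projection (in the Gaussian-weighted space with weight $e^{-|y|^2/4}$) off the 13 low Hermite modes $\{1,\ y_k,\ y_m y_n\ (m\neq n),\ y_k^2 - 2\}$. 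Writing $h := e^{|y|^2/8}g$, the claim reduces to
\begin{equation*}
\|\langle y\rangle^{-3}\, V(\sigma_1,\sigma_2)\, \tilde P_{13} h\|_{\infty} \leq C\, e^{-\frac{2}{5}(\sigma_1-\sigma_2)}\,\|\langle y\rangle^{-3} h\|_{\infty}.
\end{equation*}
I would then decompose $H_2 = L_0 + V_{\mathrm{eff}}$ with $L_0 := -\Delta + \frac{1}{2}y\cdot\nabla$ the OU operator and $V_{\mathrm{eff}} := -\frac{1}{2} - a - \tau^{-1/2} + V_1 + \frac{1}{2}\bigl|\tilde\chi_\Omega\,y\cdot\nabla\chi_\Omega/\chi_\Omega\bigr|$.

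For the free OU semigroup $e^{-tL_0}$, I would use the Mehler representation $(e^{-tL_0}f)(y) = \int f\bigl(e^{-t/2}y + \sqrt{1-e^{-t}}\,z\bigr)\,d\gamma(z)$ with $\gamma$ standard Gaussian. Since $\langle e^{-t/2}y+\sqrt{1-e^{-t}}\,z\rangle \lesssim \langle y\rangle\langle z\rangle$, the weight $\langle y\rangle^{-3}$ changes only by an absolute factor under this dynamics, giving the crude bound $\|\langle y\rangle^{-3} e^{-tL_0}f\|_\infty \lesssim \|\langle y\rangle^{-3}f\|_\infty$. To upgrade this to exponential decay on $\mathrm{range}(\tilde P_{13})$, I would expand $\tilde P_{13}h = h - \sum_{k=1}^{13}\langle h,\phi_k\rangle_\gamma \phi_k$ in Hermite modes $\phi_k$ of degree $\leq 2$; the inner products are bounded in terms of $\|\langle y\rangle^{-3}h\|_\infty$ because $\int \langle y\rangle^{3}|\phi_k(y)|\,d\gamma(y) < \infty$. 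Since $L_0$ has spectrum $\{0,\tfrac{1}{2},1,\tfrac{3}{2},\dots\}$ and the first three eigenvalues correspond to exactly the 13 removed modes, the remainder $\tilde P_{13}h$ evolves under $e^{-tL_0}$ with decay rate at least $e^{-3t/2}$. Combining the Mehler kernel bound with the resulting $L^2$ decay through a standard $L^2\to L^\infty$ short-time smoothing argument yields $\|\langle y\rangle^{-3} e^{-tL_0}\tilde P_{13}h\|_\infty \lesssim e^{-3t/2}\|\langle y\rangle^{-3}h\|_\infty$.

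Finally, I would absorb $V_{\mathrm{eff}}$ by Duhamel. The constant shift $-\tfrac{1}{2} - a - \tau^{-1/2}$ changes the exponent from $3/2$ to $3/2 - \tfrac{1}{2} - a - \tau^{-1/2} \approx \tfrac{1}{2} - \tau^{-1/2}$, still comfortably above $2/5$ for $\tau\geq\xi_0$ large (using $a=\tfrac{1}{2}+O(\tau^{-1})$ from \eqref{eq:10parame} and the choice \eqref{eq:assum}). The remaining summands $V_1$ and $\tfrac{1}{2}\bigl|\tilde\chi_\Omega\,y\cdot\nabla\chi_\Omega/\chi_\Omega\bigr|$ are both nonnegative on the support of the cutoff (using the ``almost'' semi-positivity \eqref{eq:Best} together with the $\tau^{-1/2}$ offset built into $V_1$, and the monotonicity of $\chi$), and both are bounded in $L^\infty$ (the second by \eqref{eq:NewPoten}). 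A standard Gronwall-type perturbation with these bounded nonnegative potentials preserves the decay rate and only affects the constant $C$.

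The main obstacle will be the joint handling of the polynomial weight $\langle y\rangle^{-3}$ and the projection $\tilde P_{13}$: the projection is orthogonal in Gaussian $L^2$ but not in the polynomially-weighted $L^\infty$, so one must carefully verify that subtracting the 13 low Hermite modes is a bounded operation on $\langle y\rangle^{3}L^\infty$ and that doing so leaves a remainder on which the OU spectral gap $3/2$ can be extracted pointwise. The unbounded multiplier $\tfrac{1}{2}|\tilde\chi_\Omega\,y\cdot\nabla\chi_\Omega/\chi_\Omega|$, which is only $O(\Omega^{1/4})$ by \eqref{eq:NewPoten}, must also be handled by exploiting its favorable sign (it has been added to $H_2$, not subtracted), so that in the propagator estimate it contributes nothing worse than a bounded factor; the smoothness of its derivatives \eqref{eq:deriveSmooth} plus the fact that $(1-\tilde\chi_\Omega)y\cdot\nabla\chi_\Omega$ is $O(\Omega^{-5})$ by \eqref{eq:TwoCut} justify controlling the leftover $\Lambda(w_0)$-type commutator terms in later uses of this lemma.
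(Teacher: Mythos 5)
The decisive step in your sketch—``combining the Mehler kernel bound with the resulting $L^2$ decay through a standard $L^2\to L^\infty$ short-time smoothing argument''—does not work, and it is exactly the obstruction this paper is organized around. The spectral gap of $L_0$ on the range of $\tilde P_{13}$ gives decay only in the Gaussian-weighted $L^2$ space, and a bound on $\|e^{-\frac18|y|^2}u\|_2$ says essentially nothing about $\langle y\rangle^{-3}|u(y)|$ in the intermediate region $1\ll |y|\lesssim (1+\epsilon)\Omega(\tau)$ with $\Omega(\tau)\sim\tau^{\frac{11}{20}}$, because the Gaussian weight is astronomically smaller than the polynomial weight there; no parabolic smoothing can repair a weight mismatch at spatial infinity (this is the very point made in the introduction about why Huisken-type Gaussian norms are avoided). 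Your crude Mehler estimate only yields $\langle y\rangle^{-3}\big|(e^{-tL_0}\tilde P_{13}h)(y)\big|\lesssim \langle y\rangle^{-3}+e^{-\frac32 t}$, which at, say, $|y|\sim e^{t/4}$ misses the claimed factor $e^{-\frac32 t}$ by a wide margin; the gain coming from orthogonality to the $13$ low modes must be extracted \emph{pointwise, in the polynomially weighted $L^\infty$ space, directly at the level of the Mehler kernel} (e.g.\ by the integration-by-parts/kernel-expansion arguments of \cite{BrKu, GS2008, GaKnSi, GZ2017}). The paper itself does not reprove this: its proof of the lemma is a reduction to those earlier propagator estimates, using only that $V_1,V_2\geq 0$ and that $\|(1+|y|)^{-5}V_k\|_\infty,\ |\nabla_y V_k|\to 0$. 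Your proposal, which aims to be self-contained, leaves precisely the hard kernel estimate unproved.

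The second gap is the absorption of the potentials. You describe $V_2=\frac12\big|\tilde\chi_\Omega\, y\cdot\nabla_y\chi_\Omega/\chi_\Omega\big|$ as ``bounded in $L^\infty$'' and propose a ``standard Gronwall-type perturbation,'' but by \eqref{eq:NewPoten} its sup norm is only $\mathcal{O}(\Omega^{\frac14}(\tau))$, which diverges as $\tau\to\infty$; a Duhamel/Gronwall absorption would therefore produce a factor of size $e^{C\Omega^{1/4}(\sigma_1-\sigma_2)}$ and destroy the decay rate $e^{-\frac25(\sigma_1-\sigma_2)}$. The nonnegativity of $V_1$ and $V_2$ must be used non-perturbatively (Trotter/Feynman--Kac or maximum-principle comparison: adding a nonnegative potential can only decrease the positive kernel of the unprojected semigroup), and since $U_1$ is generated by the \emph{projected} operator $-P_{13}\mathcal{L}P_{13}$, whose kernel is not sign-definite, one must in addition control the interaction of the potentials with $P_{13}$; this is where the smallness properties of $\nabla_y V_k$ and the weighted smallness of $V_k$ quoted in the paper (together with \eqref{eq:TwoCut} and \eqref{eq:deriveSmooth}) enter the arguments of \cite{GZ2017} that the paper invokes. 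Your rate bookkeeping ($\tfrac32$ reduced by $\tfrac12+a+\tau^{-1/2}\approx 1$, still above $\tfrac25$) and your identification of the sign of $V_1,V_2$ as the key structural facts are correct, but as written the weighted $L^\infty$ gain from the projection and the treatment of the unbounded cutoff multiplier are genuine gaps.
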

\begin{proof}
Recall that
\begin{align}
\mathcal{L}=-\Delta_y+\frac{1}{16}|y|^2-\frac{3}{4}-a-\frac{1}{2}-\tau^{-\frac{1}{2}} +V_1+V_2
\end{align} with the multipliers $V_{1}$ and $V_2$ are defined as
\begin{align*}
V_1:=\Big[\frac{a y^{T}B y}{2+y^{T}By}+\tau^{-\frac{1}{2}}\Big]\chi_{2\Omega},\ V_2:=\frac{1}{2}\Big|\frac{\tilde\chi_{\Omega}\ y\cdot \nabla_{y} \chi_{\Omega}  }{\chi_{\Omega}}\Big|,
\end{align*}

In the previous papers \cite{BrKu, DGSW, GS2008, GaKnSi, GaKn20142, MultiDHeat}, the identical results were proved for the propagator generated by the linear operator $\mathcal{L}_1$, defined as
\begin{align*}
\mathcal{L}_1:=-\Delta_y+\frac{1}{16}|y|^2-\frac{3}{4}-a-\frac{1}{2} +\frac{ay^{T}B_1y}{2+y^{T}B_1 y},
\end{align*} with $B_1$ is positive definite: $B_{1}=\tau^{-1}Id+\mathcal{O}(\tau^{-2})$. Then in \cite{GZ2017}, we prove the same results for the linear operator
\begin{align*}
\mathcal{L}_2:=-\Delta_y+\frac{1}{16}|y|^2-\frac{3}{4}-1 +V_2.
\end{align*}

By the same arguments in \cite{GZ2017}, one can prove the same result for the present operator $\mathcal{L}$, resulted by that both $V_1$ and $V_2$ are favorably nonnegative:
the estimates on $B$ and $a$ in \eqref{eq:Best} and \eqref{eq:10parame} imply that $V_1$ is non-negative; the function $V_2$ is non-negative by definition.
Moreover we have that $\|(1+|y|)^{-5}V_k(\tau)\|_{\infty},\ |\nabla_y V_k|\rightarrow 0$ as $\tau\rightarrow \infty,\ k=1,2$. 

Thus here we choose to skip the proof.

\end{proof}

Returning to the equation for $e^{-\frac{1}{8}|y|^2}\chi_{\Omega}w_0$ in \eqref{eq:3w}, we apply the propagator estimate to find
\begin{align}
\begin{split}\label{eq:3wEnd}
\|\langle y\rangle^{-3}\chi_{R}w_0(\cdot,\tau)\|_{\infty}\lesssim &e^{-\frac{2}{5}(\tau-\xi_0)}\|\langle y\rangle^{-3}\chi_{R}w_0(\cdot,\xi_0)\|_{\infty}+\int_{\xi_0}^{\tau} e^{-\frac{2}{5}(\tau-s)} \|\langle y\rangle^{-3} \Lambda(w_0)(s)\|_{\infty}\ ds\\
&+
\int_{\xi_0}^{\tau} e^{-\frac{2}{5}(\tau-s)} \|\langle y\rangle^{-3}(\Sigma+ \frac{1}{2\pi}\langle N_1+N_2,\ 1\rangle_{\theta})(s)\|_{\infty}\ ds.
\end{split}
\end{align}

The terms on the right hand side satisfy the following estimates:
\begin{proposition}\label{Prop:weight3}
\begin{align}
\|\langle y\rangle^{-3}\Lambda(w_0)\|_{\infty} \lesssim & \delta\kappa(\epsilon) \Omega^{-4},\label{eq:est3PsiW0}\\
\|\langle y\rangle^{-3}\chi_{R}\Sigma\|_{\infty}\lesssim &\tau^{-2},\label{eq:est3FG}\\
\|\langle y\rangle^{-3}\chi_{R}N_1\|_{\infty}\lesssim & \delta \big(\tau^{-2} +\kappa(\epsilon) \Omega^{-4}\big) P(M), \label{eq:est3N1}\\
\|\langle y\rangle^{-3}\chi_{R}\langle N_2,\ 1\rangle_{\theta}\|_{\infty}\lesssim &\delta \big(\tau^{-2} +\kappa(\epsilon) \Omega^{-4}\big)\mathcal{M}_1.\label{eq:est3N2}
\end{align}

\end{proposition}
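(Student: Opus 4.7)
The plan is to prove the four estimates of Proposition \ref{Prop:weight3} term-by-term, exploiting the structure of each piece on the right-hand side of \eqref{eq:weightLinfw}. The main tools will be the smallness estimates \eqref{eq:SmCon} for pointwise control of $v$ and its derivatives, the parameter estimates \eqref{eq:Best}-\eqref{eq:scalarEqn}, the embedding estimates of Lemma \ref{LM:appliM1234}, and the properties of the cutoff functions recorded in \eqref{eq:NewPoten}-\eqref{eq:deriveSmooth}.

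For \eqref{eq:est3PsiW0}, I would split $\Lambda(w_0)$ into the four terms of \eqref{eq:defPsiw}. The term $(\frac{1}{2}y\cdot\nabla_y\chi_\Omega)(1-\tilde\chi_\Omega)w_0$ is negligible by \eqref{eq:TwoCut} which already gives decay $\Omega^{-5}$. Each of the remaining three pieces is supported in the annulus $\Omega\leq|y|\leq (1+\epsilon)\Omega$, where $\langle y\rangle^{-3}\sim \Omega^{-3}$. The coefficients $\partial_\tau\chi_\Omega$, $\Delta_y\chi_\Omega$, $\nabla_y\chi_\Omega$ scale as $\tau^{-1}\kappa(\epsilon)$, $\Omega^{-2}\kappa(\epsilon)$, $\Omega^{-1}\kappa(\epsilon)$ respectively; pairing these with the smoothness bounds $|w_0|,|\nabla_y w_0|\leq \delta$ from \eqref{eq:SmCon}, the dominant contribution comes from $\nabla_y\chi_\Omega\cdot\nabla_y w_0$, yielding $\delta\kappa(\epsilon)\Omega^{-4}$. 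For \eqref{eq:est3FG} on $\chi_\Omega\Sigma$, I would expand $F(B,a)$ and the $\vec\beta_1$-contribution, and for each piece use the parameter equations \eqref{eq:scalarEqn}. In particular, $|\partial_\tau B + B^T B| \lesssim \tau^{-3}$ makes the first numerator of \eqref{eq:source} small, and the scalar bracket $a_\tau/a + 1 - 2a + \sum b_{kk}$ is controlled by $\tau^{-2}$ via the second line of \eqref{eq:scalarEqn}; the algebraic identity $\frac{2a}{2+y^T By}\vec\beta_1 - d_\tau\vec\beta_1 = \vec\beta_1\frac{-ay^T By}{2+y^T By} + (a\vec\beta_1 - d_\tau\vec\beta_1)$ combined with $|\vec\beta_1|\lesssim \tau^{-2}$ and $|d_\tau\vec\beta_1 - a\vec\beta_1|\lesssim\tau^{-3}$ absorbs the $y$-growth into $\langle y\rangle^{-3}$.

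The most technical step is \eqref{eq:est3N1}, which is the main obstacle. The nonlinearity $N_1(v)$ from \eqref{eq:defFu} is a sum of ``cubic'' terms in the derivatives of $v$ (schematically $(\partial v)(\partial v)(\partial^2 v)$ over a bounded denominator). I would decompose $v = V_{a,B} + \eta$ with $\eta$ as in \eqref{eq:decomW}, expand each summand, and estimate in groups: (i) purely $V_{a,B}$ contributions, using $|\nabla V_{a,B}|\lesssim 1$, $|\nabla^2 V_{a,B}|\lesssim\tau^{-1}$ on the support of $\chi_\Omega$, giving source $\lesssim\tau^{-2}$; (ii) mixed terms involving at least one factor of $\nabla \eta$ or $\partial_\theta \eta$ — for these, I would use \eqref{eq:SmCon} to bound all but one derivative factor by $\delta$ and absorb the distinguished factor using the embedding estimates \eqref{eq:embeddM1}-\eqref{eq:embeddM3} of Lemma \ref{LM:appliM1234}, producing the $\delta P(M)$ with a $(\tau^{-2}+\kappa(\epsilon)\Omega^{-4})$ prefactor; (iii) boundary/weight bookkeeping: in regions where $|y|\sim \Omega$, the power $\langle y\rangle^{-3}$ converts an $\Omega^{-1}$ decay of $\nabla\eta$ into the required $\Omega^{-4}$. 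The hard part is to organize this bookkeeping so that the various weights $\langle y\rangle^{-3}$, $\langle y\rangle^{-2}$, $\langle y\rangle^{-1}$ implicit in Lemma \ref{LM:appliM1234} combine correctly in each product; $P(M)$ in \eqref{eq:defPM} is designed exactly to absorb the quadratic combinations arising from two ``$w$-derivative'' factors.

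For \eqref{eq:est3N2}, since $N_2(\eta)$ from \eqref{eq:defN2eta} is genuinely quadratic (either $\eta^2$ or $\eta\cdot \partial_\theta^2\eta$), I would write $\eta^2 = \eta\cdot\eta$ and $\eta\cdot\partial_\theta^2\eta$ as products, then bound \emph{one} factor in $L^\infty$ by $\delta$ using the smoothness hypotheses \eqref{eq:SmCon} together with $|\vec\beta_i||y|, |\alpha_j|\lesssim \tau^{-2}$, and bound the \emph{other} factor in the weighted $L^\infty$ using $\mathcal{M}_1$ via Lemma \ref{LM:appliM1234}. Because of the $\theta$-averaging $\langle \cdot, 1\rangle_\theta$, only the $\theta$-independent cross-products survive, and one checks directly that these all contain at least one factor that can be controlled by $\mathcal{M}_1$ (with weight $\langle y\rangle^{-3}$) while the companion factor carries the $\delta$, giving the claimed bound linear (not quadratic) in $\mathcal{M}_1$. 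The factors $V_{a,B}^{-2}$ and $v^{-1}$ in the prefactors of $N_2$ are uniformly bounded by \eqref{eq:SmCon}, hence harmless.
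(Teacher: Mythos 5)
Your treatment of \eqref{eq:est3PsiW0}, \eqref{eq:est3FG} and \eqref{eq:est3N2} follows the paper's route (support of $\nabla_y\chi_\Omega$ in the annulus, the parameter equations \eqref{eq:scalarEqn}, and for $N_2$ the ``one factor gives $\delta$, the other is absorbed by $\mathcal{M}_1$'' scheme, which works there because $\partial_\theta$ commutes with $\chi_\Omega$). The genuine gap is in \eqref{eq:est3N1}, which is exactly the estimate the paper singles out as the technical core. Your plan for the mixed terms — ``bound all but one derivative factor by $\delta$ and absorb the distinguished factor using Lemma \ref{LM:appliM1234}'' — cannot produce the prefactor $\tau^{-2}+\kappa(\epsilon)\Omega^{-4}$: the lemma only controls derivatives of $\chi_\Omega w$ (cutoff inside the derivative), and a single first-derivative factor carries at best $\|\langle y\rangle^{-2}\nabla_y(\chi_\Omega w)\|_\infty\lesssim\kappa(\epsilon)\Omega^{-3}\mathcal{M}_2$, so your scheme yields $\delta\,\kappa(\epsilon)\Omega^{-3}P(M)$, short of the target by a factor $\Omega$. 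The paper instead extracts decay from \emph{both} derivative factors, via the product structure \eqref{eq:twofacto} together with \eqref{eq:Hea1yNw}, and absorbs the resulting off-diagonal terms using the standing assumption \eqref{eq:assum} ($\kappa(\epsilon)\Omega^{-1}\lesssim\delta$). Moreover, commuting $\chi_\Omega$ past $\nabla_y$ produces the boundary term $w\,\nabla_y\chi_\Omega$ in which $w$ has \emph{no} cutoff; the only available pointwise bound is $|w|\lesssim\delta\sqrt{2+\tau^{-1}|y|^2}$, which grows like $\tau^{1/20}$ on the support of $\nabla_y\chi_\Omega$, so the naive estimate overshoots the target by an unbounded factor. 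This is precisely why the paper introduces the fractional-power interpolation \eqref{eq:fractional}, $|w\nabla_y\chi_\Omega|\lesssim |\chi_\Omega w|^{3/4}|w|^{1/4}\sup_z|\chi^{-3/4}\nabla_z\chi|$, which hinges on the special construction of the cutoff in \eqref{eq:properties} and the definition of $\kappa(\epsilon)$ in \eqref{eq:defKappa}. Your ``weight bookkeeping'' item does not identify this obstruction, and without it the argument does not close.

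A secondary inaccuracy in the same estimate: for the purely $V_{a,B}$ contributions you quote $|\nabla_y V_{a,B}|\lesssim 1$, which only gives $\mathcal{O}(\tau^{-1})$ for $(\nabla V)^2\nabla^2V$, not the claimed $\tau^{-2}$; you need the sharper bound $|\nabla_y V_{a,B}|\lesssim \tau^{-1}|y|\,V_{a,B}^{-1}\lesssim\tau^{-1/2}$ (this is how the paper's reduction \eqref{eq:estN1}--\eqref{eq:twoV}, which puts the $\delta$ on the \emph{second}-derivative factor and keeps $|\nabla_y v|^2$, reaches $\tau^{-2}$ after weighting). So the overall architecture is reasonable, but the distinguishing devices of the paper's proof — the two-factor weighted product, the $\chi_\Omega$--$\nabla_y$ commutator term, and the fractional-power trick — are missing, and the estimate \eqref{eq:est3N1} as you outline it would fail by a factor of $\Omega$.
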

The proposition will be proved in Subsection \ref{subsec:weight3}. Here $\kappa(\epsilon)$ and $P(M)$ are defined in \eqref{eq:defKappa} and \eqref{eq:defOmega} respectively.

We continue to study \eqref{eq:3wEnd}. Apply the estimates in Proposition \ref{Prop:weight3} and the estimate $$\|\langle y\rangle^{-3}\chi_{\Omega}w(\cdot,\xi_0)\|_{\infty}\lesssim \kappa(\epsilon)\Omega^{-4}(\xi_0)$$ from \eqref{eq:IniWeighted} to obtain the desired result,
\begin{align}
\begin{split}\label{eq:integ}
\|\langle y\rangle^{-3}\chi_{\Omega}w_0(\cdot,\tau)\|_{\infty}
\lesssim &e^{-\frac{2}{5}(\tau-\xi_0)}\Omega^{-4}(\xi_0) +\big(\tau^{-2}+\kappa(\epsilon) \Omega^{-4} \big)\big(1+\delta P(M)\big)\\
\lesssim & \big(\tau^{-2}+\kappa(\epsilon) \Omega^{-4} \big)\big(1+\delta P(M)\big),
\end{split}
\end{align}
here we use the following facts: (1) $\mathcal{M}_k,\ k=1,2,3,4,$ are increasing functions, (2) for any $k>0,$ there exists a constant $C_k$ such that
\begin{align}
\int_{\xi_0}^{\tau}  e^{-\frac{2}{5}(\tau-s)} \Omega^{-k}(s)\ ds \leq C_{k} \Omega^{-k}(\tau),\ \ \text{and}\ 
\ \int_{\xi_0}^{\tau}  e^{-\frac{2}{5}(\tau-s)} s^{-2}\ ds \lesssim  \tau^{-2}.\label{eq:Lhos1}
\end{align}

To prove the first estimates in \eqref{eq:Lhos1}, we find a function equivalent to the function $\Omega^{-k}$, namely there exist constants $C_k$ such that, for $\tau\geq \xi_0,$
\begin{align}
\frac{1}{C_k}\leq \frac{\Omega^{-k}(\tau)}{ \min\{\Omega^{-k}(\xi_0), \big(1+\tau-\xi_0\big)^{-\frac{11 }{20}k}\}}\leq C_k.\label{eq:comparable}
\end{align}
Compute directly to obtain
\begin{align}
\int_{\xi_0}^{\tau} e^{-\frac{2}{5}(\tau-s)} \Omega^{-k}(\xi_0)ds\lesssim \Omega^{-k}(\xi_0),\label{eq:mini1}
\end{align} and apply L$'$Hospital's rule to obtain, for some $a_k>0,$
\begin{align}
\int_{\xi_0}^{\tau} e^{-\frac{2}{5}(\tau-s)} \Big(2+s-\xi_0\Big)^{-\frac{11}{20}k} ds\leq a_k\Big(2+\tau-\xi_0\Big)^{-\frac{11}{20}k}.\label{eq:mini2}
\end{align}

We take the minimum of these two estimates \eqref{eq:mini1} and \eqref{eq:mini2}. This, together with \eqref{eq:comparable}, implies the desired first estimate in \eqref{eq:Lhos1}. 

The second estimate in \eqref{eq:Lhos1} will be proved similarly, hence we skip the details here.

\begin{flushright}
$\square$
\end{flushright}

%%%%%%%%%%%%%%%%%%%%%%%%%%%%%%%%%%%%%%%%%%%%%

\subsubsection{Proof of Proposition \ref{Prop:weight3}}\label{subsec:weight3}

\begin{proof}
As said earlier, here we estimate the different terms in detail, since the techniques will be used repeated in the rest of the paper.

To prove \eqref{eq:est3PsiW0}, we recall the definition of $\Lambda(w_0)$ in \eqref{eq:defPsiw}.
For the first term, we use \eqref{eq:TwoCut} and the fact that $|\nabla_{y}\chi_{\Omega}|$ is supported by the set $|y|\in [\Omega,\ (1+\epsilon)\Omega]$, and $|w_0|\lesssim \delta \sqrt{1+\tau^{-1}|y|^2}$ in \eqref{eq:SmCon}, to find
\begin{align}
\langle y\rangle^{-3} |1-\tilde\chi_{\Omega}|\ |y\cdot \nabla_{y}\chi_{\Omega}|\ |w_0|\lesssim \kappa(\epsilon)\Omega^{-7} 1_{|y|\leq (1+\epsilon)\Omega} |w_0|\lesssim \delta \kappa(\epsilon)\Omega^{-6}.
\end{align}
where, recall the definitions of $\Omega$ and $\kappa(\epsilon)$ in \eqref{eq:defOmega} and \eqref{eq:defKappa}. For the others, besides applying the techniques above, the derivatives of $\chi_{\Omega}$ contribute some decay estimates, for example $$|\nabla_{y}\chi_{\Omega}|\leq \Omega^{-1} \chi'(\frac{|y|}{\Omega}).$$
This, together with that it is supported by the set $|y|\in [\Omega,\ (1+\epsilon)\Omega]$, and that $|\nabla_{y}w_0|\leq \delta$, $|w_0|\lesssim \delta \sqrt{1+\tau^{-1}|y|^2}$ by \eqref{eq:SmCon}, implies the desired result,
\begin{align}
\begin{split}
\|\langle y\rangle^{-3}\Lambda(w_0)\|_{\infty} \lesssim & \delta\sqrt{1+\tau^{-1}\Omega^2}\Big[ \|\langle y\rangle^{-3}\partial_{\tau}\chi_{\Omega}
\|_{\infty}+\|\langle y\rangle^{-3}\Delta_{y}\chi_{\Omega}\|_{\infty}\Big]+ \delta \kappa(\epsilon)\Omega^{-4}\\
\lesssim &\delta \kappa(\epsilon)\Omega^{-4}.
\end{split}
\end{align}

It is easy to prove \eqref{eq:est3FG}, by the estimates on the scalar functions in \eqref{eq:Best}-\eqref{eq:scalarEqn}.

To prove \eqref{eq:est3N1}, we observe that
\begin{align}\label{eq:estN1}
|N_1(v)|&\lesssim \delta |\nabla_y v|^2+v^{-1}|\partial_{\theta}v|^2.
\end{align}
Decompose $v$, and use the estimates in \eqref{eq:Best}-\eqref{eq:betaA}
to find
\begin{align}
\begin{split}\label{eq:twoV}
|\nabla_y v|^2&\lesssim  \tau^{-2}|y|^2V_{a,B}^{-2} +(1+|y|^2)\tau^{-4}+ |\nabla_y w|^2,\\
&\\
 v^{-1}|\partial_{\theta}v|^2 &\lesssim (1+|y|^2)\tau^{-4}+v^{-1}|\partial_{\theta}w|^2.
\end{split}
\end{align}

We claim that
\begin{align}
\|\langle y\rangle^{-3}\chi_{\Omega} |\nabla_{y}w|^2\|_{\infty},\ \|\langle y\rangle^{-3}\chi_{\Omega} v^{-1}|\partial_{\theta}w|^2\|_{\infty}\lesssim &\delta \kappa(\epsilon) \Omega^{-4} P(M).\label{eq:y3nablaw2}
\end{align}

Suppose these hold, then these together with \eqref{eq:estN1} imply the desired estimates \eqref{eq:est3N1}.

Now we prove \eqref{eq:y3nablaw2}. Compare to proving the first one, it is easier to prove the second, since the two operators $\partial_{\theta}$ and $\chi_{\Omega}$ commute. 
Hence we skip proving the latter here.

To prove the first one, compute directly to obtain
\begin{align}
\|\langle y\rangle^{-3}\chi_{\Omega} |\nabla_{y}w|^2\|_{\infty}\lesssim \|\langle y\rangle^{-2}\chi_{\Omega} \nabla_{y}w\|_{\infty}\|\langle y\rangle^{-1} 1_{\leq (1+\epsilon)\Omega}\nabla_{y}w\|_{\infty},\label{eq:twofacto}
\end{align} where $1_{\leq (1+\epsilon)\Omega}$ is the Heaviside function taking value $1$ for $|y|\leq (1+\epsilon)\Omega$, and $0$ otherwise.

To control the first factor, we change the order of $\nabla_{y}$ and $\chi_{\Omega}$ to find
\begin{align}
\chi_{\Omega} \nabla_{y} w =\nabla_{y}(\chi_{\Omega}w)- w \nabla_{y}\chi_{\Omega}.\label{eq:order10}
\end{align} 
Apply Lemma \ref{LM:appliM1234}
 to control the first term
\begin{align}
\|\langle y\rangle^{-2} \nabla_{y}(\chi_{\Omega}w)\|_{\infty}\lesssim \kappa(\epsilon) \Omega^{-3}\mathcal{M}_2.\label{eq:applyM2}
\end{align} 

It is more involved to estimate the second term in \eqref{eq:order10}. Since similar arguments will be used repeatedly in the rest of the paper, we make a detailed presentation here. The difficulty is that, due to technical reasons, we need a factor of cutoff function $\chi_{\Omega}$ in $\chi_{\Omega}w$ to prove it decays rapidly; without this, we can not prove that $w$ decays. In the present situation, for the term $w\nabla_{y}\chi_{\Omega}$, we do not have a factor $\chi_{\Omega}$ since $\sup_{z}\{|\frac{\nabla_{z}\chi(z)}{ \chi(z)}|\}=\infty$. Here we try to have some fractional power of it. 

The detailed estimate is the following:
\begin{align}\label{eq:fractional}
\begin{split}
\langle y\rangle^{-2}|w \nabla_{y}\chi_{\Omega}|\leq &\Omega^{-3} |\chi_{\Omega}w|^{\frac{3}{4}}   |w|^{\frac{1}{4}}
\sup_{z}\{|\frac{\nabla_{z}\chi(z)}{ \chi^{\frac{3}{4}}(z)}|\}\\
\lesssim &\kappa(\epsilon) \delta^{\frac{1}{4}} (1+\tau^{-1}\Omega^2)^{\frac{1}{8}}\Omega^{-3}\Big[\Omega^{3}\Big(\kappa(\epsilon)\Omega^{-4}+\tau^{-2}\Big)\mathcal{M}_1 \Big]^{\frac{3}{4}}\\
\lesssim &\Omega^{-\frac{1}{2}}\big(\kappa(\epsilon)\Omega^{-4}+\tau^{-2}\big)^{\frac{3}{4}}\mathcal{M}_1^{\frac{3}{4}}
\end{split}
\end{align}
based on the following facts,
\begin{itemize}
\item[(1)] in the first step we use that $\nabla_{y}\chi_{\Omega}=\Omega^{-1} (\nabla_{z}\chi)|_{z=\frac{y}{\Omega}}$ and it is supported by the set $|y|\in [\Omega, \ (1+\epsilon)\Omega]$, thus in this region we have $\langle y\rangle^{-2}\leq \Omega^{-2},$
\item[(2)] in the second step we use that $|\chi^{-\frac{3}{4}}\ \nabla_{z}\chi |\leq \kappa(\epsilon)$ in \eqref{eq:defKappa}; and $|w|\lesssim \delta V_{a,B}\lesssim \delta \sqrt{1+\tau^{-1}\Omega^2}$ by \eqref{eq:SmCon}; and in the considered region $$|\chi_{\Omega}w|\lesssim \Omega^3 \|\langle y\rangle^{-3}\chi_{\Omega}w\|_{\infty}\lesssim \Omega^{3}\Big(\kappa(\epsilon)\Omega^{-4}+\tau^{-2}\Big)\mathcal{M}_1 .
$$ 
\item[(3)] in the last step we use that $\kappa(\epsilon) \delta^{\frac{1}{4}} (1+\tau^{-1}\Omega^2)^{\frac{1}{8}}\Omega^{-\frac{1}{4}}\leq 1$ by the condition in \eqref{eq:assum} and the definition of $\Omega$ in \eqref{eq:defOmega}. 
\end{itemize}

\eqref{eq:order10}-\eqref{eq:fractional} imply that 
the first factor in in \eqref{eq:twofacto} satisfies the estimate
\begin{align}\label{eq:nablanablaw}
\|\langle y\rangle^{-2}\chi_{\Omega} \nabla_{y}w\|_{\infty}
\lesssim  \kappa(\epsilon)\Omega^{-3}\mathcal{M}_2+ \Omega^{-3-\frac{1}{4}}\mathcal{M}_1^{\frac{3}{4}}.
\end{align}

Now we estimate the second factor in \eqref{eq:twofacto}, which is $\|\langle y\rangle^{-1} 1_{\leq (1+\epsilon)\Omega}\nabla_{y}w\|_{\infty}$. Insert the identity $1=\chi_{\Omega}+1-\chi_{\Omega}$ before $\nabla_y w$ to find
\begin{align}
\begin{split}\label{eq:Hea1yNw}
\|\langle y\rangle^{-1} 1_{\leq (1+\epsilon)\Omega}\nabla_{y}w\|_{\infty}\leq& \|\langle y\rangle^{-1} \chi_{\Omega}\nabla_{y}w\|_{\infty}+\|\langle y\rangle^{-1} 1_{\leq (1+\epsilon)\Omega}(1-\chi_{\Omega})\nabla_{y}w\|_{\infty}\\
\lesssim &\Omega \|\langle y\rangle^{-2} \chi_{\Omega}\nabla_{y}w\|_{\infty}+\delta\Omega^{-1}\\
\lesssim &\kappa(\epsilon)\Omega^{-2}\mathcal{M}_2+ \Omega^{-2-\frac{1}{4}}\mathcal{M}_1^{\frac{3}{4}}+\delta\Omega^{-1},
\end{split}
\end{align} where, in the second step we use that the cutoff function $\chi_{\Omega}$ is supported on the set $|y|\leq (1+\epsilon)\Omega$, and use $|\nabla_{y}w|\lesssim \delta$ by \eqref{eq:SmCon}, and that $\langle y\rangle^{-1} 1_{\leq (1+\epsilon)\Omega}(1-\chi_{\Omega})\leq \Omega^{-1}$, and in the last step we take the estimate from \eqref{eq:nablanablaw}.

This and \eqref{eq:nablanablaw} imply the desired \eqref{eq:y3nablaw2}.

To prove \eqref{eq:est3N2} we observe some cancellations. By the definition of $N_{2}(\eta)$ in \eqref{eq:defN2eta},
\begin{align}
\langle N_2(\eta),\ 1\rangle_{\theta}=&-\langle V_{a,B}^{-2}v^{-1}\eta^2,\ 1\rangle_{\theta}+\langle (v^{-2}-V_{a,B}^{-2})\partial_{\theta}^2 \eta,\ 1\rangle_{\theta}\nonumber\\
=&-\langle V_{a,B}^{-2}v^{-1}\eta^2,\ 1\rangle_{\theta}+2\langle v^{-3}(\partial_{\theta}\eta)^2,\ 1\rangle_{\theta},\label{eq
:RewN1Int1}
\end{align}
where we use that $\langle V_{a,B}^{-2}\partial_{\theta}^2 \eta,\ 1\rangle_{\theta}=0$ since $V_{a,B}^{-2}$ is independent of $\theta$, and observe $\langle v^{-2}\partial_{\theta}^2 \eta,\ 1\rangle_{\theta}=2\langle v^{-3}(\partial_{\theta}\eta)^2,\ 1\rangle_{\theta}$ after integrating by parts in $\theta$ and using that $\partial_{\theta}v=\partial_{\theta}\eta$. 

Apply the estimates in \eqref{eq:SmCon}, decompose $\eta$ as in \eqref{eq:decomW}, and apply \eqref{eq:embeddM1} again to have the desired result,
\begin{align}\label{eq:nonl}
\|\langle y\rangle^{-3} \chi_{\Omega}\langle N_2(\eta),\ 1\rangle_{\theta} \|_{\infty}\lesssim &\delta\sum_{l=0,1} \| \langle y\rangle^{-3} \chi_{\Omega}\partial_{\theta}^{l} \eta \|_{\infty}
\lesssim  \delta \Big[\tau^{-2}+ \kappa(\epsilon)\Omega^{-4}\Big](1+\mathcal{M}_1).
\end{align}

\end{proof}

\subsection{Proof of \eqref{eq:3020wpm}}
Since $w_{-1}=\overline{w_1}$, we only need to estimate $w_1.$

We derive an equation for $w_{1}$ from \eqref{eq:tildew3}, 
\begin{align}
\partial_{\tau}\chi_{\Omega}w_1
=-\big(-\Delta_{y}+\frac{1}{2}y\cdot \nabla_{y}-\frac{1}{2}\big) \chi_{\Omega}w_{1}+ \frac{1}{2\pi}\chi_{\Omega}\Big\langle G+N_{1}(v)+N_{2}(\eta), \ e^{i\theta}\Big\rangle_{\theta}+\mu(w_1),
\end{align} where the linear operator is derived from the operator $L$ in \eqref{eq:tildew3} by
\begin{align}
\frac{1}{2\pi}\langle L w, \ e^{i\theta}\rangle_{\theta}= (-\Delta_{y}+\frac{1}{2}y\cdot \nabla_{y}-\frac{1}{2}) w_{1},
\end{align}
resulted by a cancellation $\langle -V_{a,B}^{-2} \partial_{\theta}^2 w-V_{a,B}^{-2}w,\ e^{i\theta}\rangle_{\theta}=0$ since $V_{a,B}$ is independent of $\theta$, and also we use that $\langle F(B,a),\ e^{i\theta}\rangle_{\theta}=0$ since $F(B,a)$ is independent of $\theta.$

As discussed in \eqref{eq:unboundtwo}, to control the difficult term $\frac{1}{2} (y\cdot \nabla_{y}\chi_{\Omega} )w_1$ in $\mu(w_1)$, we decompose it into two parts
\begin{align}
\frac{1}{2} (y\cdot \nabla_{y}\chi_{\Omega} )w_1= \frac{1}{2} (y\cdot \nabla_{y}\chi_{\Omega} )\tilde\chi_{\Omega} w_1+\frac{1}{2} (y\cdot \nabla_{y}\chi_{\Omega} )(1-\tilde\chi_{\Omega}) w_1
\end{align} and move the first part into the linear operator and the second to the remainder.
This makes
\begin{align}\label{eq:eqnw1}
\partial_{\tau}\chi_{\Omega}w_1
=&-H_{1} w_{1}+ \frac{1}{2\pi}\chi_{\Omega}\langle G+N_{1}(v)+N_{2}(\eta), \ e^{i\theta}\rangle_{\theta}+\Lambda(w_1),
\end{align}
where the linear operator $H_{1}$ is defined as
\begin{align*}
H_{1}:=-\Delta_{y}+\frac{1}{2}y\cdot \nabla_{y}-\frac{1}{2}+\frac{1}{2}\Big|\frac{\tilde\chi_{\Omega}\ y\cdot \nabla_{y} \chi_{\Omega}  }{\chi_{\Omega}}\Big|
\end{align*} and $\Lambda(w_1)$ is defined in the same way to that $\Lambda(w_0)$ in \eqref{eq:defPsiw}.

The orthogonality conditions imposed on $\chi_{\Omega}w$ in \eqref{eq:orthow} imply that
\begin{align}
e^{-\frac{1}{8}|y|^2} \chi_{\Omega}w_1\perp e^{-\frac{1}{8}|y|^2},\ y_k e^{-\frac{1}{8}|y|^2},\ k=1,2,3.
\end{align}
We denote, by $P_4$, the orthogonal projection onto the subspace orthogonal to these four functions, which makes
\begin{align*}
P_4 e^{-\frac{1}{8}|y|^2} \chi_{\Omega}w_1=e^{-\frac{1}{8}|y|^2} \chi_{\Omega}w_1.
\end{align*}

On \eqref{eq:eqnw1} we apply $e^{-\frac{1}{8}|y|^2}$, and then $P_4$, and then Duhamel's principle to have
\begin{align}
\begin{split}\label{eq:durP4w1}
e^{-\frac{1}{8}|y|^2} \chi_{\Omega}w_1(\tau)=&U_2(\tau,\xi_0) e^{-\frac{1}{8}|y|^2} \chi_{\Omega}w_1(\xi_0)\\ 
&+\int_{\xi_0}^{\tau} U_{2}(\tau,\sigma)P_4\Big[ \frac{1}{2\pi}\chi_{\Omega}\langle G+N_{1}(v)+N_{2}(\eta), \ e^{i\theta}\rangle_{\theta}+\Lambda(w_1)\Big](\sigma)\ d\sigma.
\end{split}
\end{align} where $U_2(\tau,\sigma)$ is the propagator generated by $-P_4e^{-\frac{1}{8}|y|^2}H_{1}e^{\frac{1}{8}|y|^2}P_4$ from $\sigma$ to $\tau.$

The propagator satisfies the following estimates, as discussed in the proof of Lemma \ref{LM:propagator}, its proof is very similar to the proved cases, thus we choose to skip it.
\begin{lemma} For $l=2,3,$ and any function $g,$
\begin{align}\label{eq:propa}
\|\langle y\rangle^{-l} e^{\frac{1}{8}|y|^2}U_{2}(\tau,\sigma)P_{4}g\|_{\infty}\lesssim e^{-\frac{2}{5}(\tau-\sigma)} \|\langle y\rangle^{-l} e^{\frac{1}{8}|y|^2}g\|_{\infty}.
\end{align}
\end{lemma}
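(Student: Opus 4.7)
The plan is to mirror the proof of Lemma \ref{LM:propagator}, adapting it to the new operator. Decompose
\begin{align*}
H_1 = L_0 - \tfrac{1}{2} + V_2, \qquad L_0 := -\Delta_y + \tfrac{1}{2} y\cdot\nabla_y, \qquad V_2 := \tfrac{1}{2}\Big|\tfrac{\tilde\chi_\Omega\, y\cdot\nabla_y\chi_\Omega}{\chi_\Omega}\Big| \geq 0.
\end{align*}
Conjugation gives $e^{-|y|^2/8} L_0 e^{|y|^2/8} = -\Delta_y + \tfrac{1}{16}|y|^2 - \tfrac{3}{4}$, the shifted 3D harmonic oscillator, with Hermite eigenfunctions $h_k(y) e^{-|y|^2/8}$ and eigenvalues $|k|/2$. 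The projection $P_4$ removes precisely the eigenspaces at levels $|k|=0$ and $|k|=1$, so on $\mathrm{Ran}(P_4)$ the operator $e^{-|y|^2/8} L_0 e^{|y|^2/8}$ has spectrum bounded below by $1$.

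First I would establish the estimate for the unperturbed piece $e^{-tL_0} P_4$, using the explicit Mehler kernel
\begin{align*}
\bigl(e^{-tL_0} f\bigr)(y) = \int K_t(y,z)\, f(z)\, dz
\end{align*}
(a translated, rescaled Gaussian in $z$), together with the Hermite expansion to extract the decay rate $e^{-t}$ after $P_4$ removes the first two spectral levels. The translation from this $L^2_{e^{-|y|^2/4}}$ spectral decay to the weighted $L^{\infty}$ bound proceeds exactly as in \cite{GZ2017, GaKnSi, GS2008}: using the polynomial weight $\langle y\rangle^{-l}$ as a Lyapunov-type multiplier and directly integrating against the Mehler kernel, one obtains $\|\langle y\rangle^{-l}e^{|y|^2/8} e^{-tL_0} P_4 g\|_\infty \lesssim e^{-t}\|\langle y\rangle^{-l}e^{|y|^2/8} g\|_\infty$ for $l=2,3$.

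Next, I would incorporate the potential $V_2$ and the shift $-\tfrac{1}{2}$. The shift costs a factor $e^{t/2}$, leaving effective decay rate $1-\tfrac{1}{2}=\tfrac{1}{2}$, which strictly exceeds the claimed $\tfrac{2}{5}$. The potential $V_2$ is non-negative, and the observations (A)-(C) made before equation \eqref{eq:deriveSmooth} give $|V_2|\lesssim \Omega^{1/4}$ with $|\nabla_y^k V_2|\to 0$ as $\tau\to\infty$. Via Duhamel's formula
\begin{align*}
e^{-tH_1} P_4 = e^{t/2} e^{-tL_0} P_4 - \int_0^t e^{(t-s)/2} e^{-(t-s)L_0} P_4 V_2\, e^{-sH_1} P_4 \, ds
\end{align*}
combined with a standard iteration / fixed-point argument in the weighted $L^\infty$ norm, the non-negativity of $V_2$ together with its decaying $C^2$-smallness guarantees that the perturbation only improves (or at worst preserves) the decay and absorbs itself, yielding the stated bound with rate $\tfrac{2}{5}$ (and in fact better, the slack being left to accommodate later arguments).

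The main obstacle I anticipate is the technical control of the commutator $[P_4, V_2]$: since $V_2$ is localized near $|y|\sim \Omega$ it does not commute with $P_4$, and a priori each Duhamel iteration could repopulate the low Hermite modes which would destroy the decay rate. This is handled by exploiting that the support of $V_2$ lies in a region where the weight $e^{-|y|^2/8}$ is extremely small, so the coupling coefficients $\langle V_2 f, h_k e^{-|y|^2/8}\rangle$ for $|k|\leq 1$ are bounded by $e^{-c\Omega^2}$ times the input weighted $L^\infty$ norm. This exponentially small error is absorbable into any fixed polynomial $\Omega$-power, and lets the argument close just as in Lemma \ref{LM:propagator}.
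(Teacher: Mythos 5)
Your treatment of the unperturbed part is consistent with what the paper has in mind (the paper itself gives no detailed argument here: it defers to Lemma \ref{LM:propagator}, whose proof in turn cites \cite{GZ2017, BrKu, GS2008, GaKnSi, GaKn20142, DGSW, MultiDHeat}): after conjugation, $P_4$ removes the Hermite levels $0$ and $\tfrac12$, the remaining spectrum of the free part is $\ge 1$, the shift $-\tfrac12$ reduces this to $\tfrac12$, and the margin down to $\tfrac25$ is slack. The genuine gap is in how you incorporate $V_2=\tfrac12\big|\tilde\chi_\Omega\, y\cdot\nabla_y\chi_\Omega/\chi_\Omega\big|$. You propose a Duhamel expansion plus a fixed-point/iteration argument and justify its closure by the ``decaying $C^2$-smallness'' of $V_2$. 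But $V_2$ is not small and does not decay in $L^\infty$: by \eqref{eq:NewPoten} it is only bounded by $c(\epsilon)\Omega^{\frac14}(\tau)$, and $\Omega(\tau)\to\infty$ by \eqref{eq:defOmega}; only its derivatives \eqref{eq:deriveSmooth} and the quantity $\|(1+|y|)^{-5}V_2\|_\infty$ tend to zero. In your Duhamel integral the relevant term is controlled only by $\|V_2\|_\infty\int_0^t e^{-c(t-s)}\|u(s)\|_w\,ds\sim\Omega^{\frac14}\int_0^t e^{-c(t-s)}\|u(s)\|_w\,ds$, since the weight $\langle y\rangle^{-l}e^{|y|^2/8}$ appears on both sides and yields no gain on the bulk of $V_2u$; the exponential smallness you invoke concerns only the pairing of $V_2u$ with the finitely many Gaussian-localized low modes (the commutator with $P_4$), not the main contribution. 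A Gronwall-type iteration then produces a factor like $e^{C\Omega^{1/4}(t-\sigma)}$ and does not close with rate $\tfrac25$.

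This is precisely why the term was absorbed into the generator in \eqref{eq:unboundtwo}--\eqref{eq:weightLinfw} and \eqref{eq:eqnw1} rather than being left as a source: the intended argument, as in the proof of Lemma \ref{LM:propagator} and in \cite{GZ2017}, keeps the large potential inside the linear operator and exploits its non-negativity through positivity of the unprojected evolution (a maximum-principle / Feynman--Kac or Trotter-type argument in which the factor $e^{-\int V_2}\le 1$ is simply dropped pointwise), while the finitely many projected modes are handled separately using only $|\nabla_yV_2|\to0$ and $\|(1+|y|)^{-5}V_2\|_\infty\to0$ --- never smallness of $\|V_2\|_\infty$. Note also that once $P_4$ is inserted between the kernels, as in your expansion, positivity is destroyed and the favorable sign of $V_2$ cannot be used at all inside the iteration; and, more minor, $V_2$ depends on $\tau$ through $\Omega(\tau)$, so $U_2(\tau,\sigma)$ is a genuine two-parameter propagator, not a semigroup $e^{-tH_1}$. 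To repair the proof you should follow the cited scheme (sign of $V_2$ used via kernel positivity before projecting, projection errors estimated separately), rather than a perturbative Duhamel series in $V_2$.
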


Now we estimate the terms on the right hand side of \eqref{eq:durP4w1}, recall that $P(M)$ is defined in \eqref{eq:defPM},
\begin{proposition}\label{prop:0yW1}
For $l=2,3,$
\begin{align}
\|\langle y\rangle^{-l} \langle N_1(v), \ e^{i\theta}\rangle_{\theta}\|_{\infty}\lesssim &\delta \tau^{-2}+\delta \kappa(\epsilon) \Omega^{-l-1} P(M),\label{eq:N1Theta1}\\
\|\langle y\rangle^{-l}\langle N_{2}(\eta), \ e^{i\theta}\rangle_{\theta}\|_{\infty}\lesssim &\tau^{-4}+\delta \kappa(\epsilon) \Omega^{-l-1}P(M),\label{eq:N2Theta1}\\
\|\langle y\rangle^{-l}\Lambda(w_1)\|_{\infty} \lesssim & \tilde\delta\kappa(\epsilon) \Omega^{-l-1},\label{eq:est3PsiW1}\\
\|\langle y\rangle^{-l} \langle G, \ e^{i\theta}\rangle_{\theta}\|_{\infty}\lesssim & \tau^{-2}.\label{eq:3GTheta}
\end{align}

\end{proposition}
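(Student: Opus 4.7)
The plan is to handle each of the four estimates separately, closely paralleling the proof of Proposition \ref{Prop:weight3}, but taking the Fourier projection onto $e^{i\theta}$ rather than onto $1$. Throughout I exploit the decomposition
\begin{align*}
v = V_{a,B} + \eta, \qquad \eta = \vec\beta_1\cdot y + \vec\beta_2\cdot y\cos\theta + \vec\beta_3\cdot y\sin\theta + \alpha_1\cos\theta + \alpha_2\sin\theta + w,
\end{align*}
together with the pointwise bound \eqref{eq:SmCon}, the scalar parameter bounds \eqref{eq:Best}--\eqref{eq:scalarEqn}, and the weighted $L^\infty$ embeddings collected in Lemma \ref{LM:appliM1234}.

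For \eqref{eq:3GTheta}, I would simply project the explicit formula for $G$: only the $-\frac{d}{d\tau}\vec\beta_2\cdot y\cos\theta$, $-\frac{d}{d\tau}\vec\beta_3\cdot y\sin\theta$, and $[\tfrac12\alpha_j - \tfrac{d}{d\tau}\alpha_j]\cos/\sin\theta$ terms contribute. The scalar estimates \eqref{eq:scalarEqn} give each piece a size $\mathcal O(\tau^{-3}\langle y\rangle)$, and $\langle y\rangle^{-l}\langle y\rangle\leq 1$ for $l\geq 1$, so the result is in fact bounded by $C\tau^{-3}\ll\tau^{-2}$. For \eqref{eq:est3PsiW1}, I would repeat verbatim the argument that gave \eqref{eq:est3PsiW0}: $\nabla_y\chi_\Omega$ is supported on $|y|\in[\Omega,(1+\epsilon)\Omega]$, the two-cutoff estimate \eqref{eq:TwoCut} handles $(1-\tilde\chi_\Omega)$, and $|w_1|\lesssim\delta\sqrt{1+\tau^{-1}|y|^2}$, $|\nabla_y w_1|\lesssim\delta$ follow from \eqref{eq:SmCon}; this yields $\delta\kappa(\epsilon)\Omega^{-l-1}$ for $l=2,3$.

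For \eqref{eq:N1Theta1}, I start from the pointwise majorization $|N_1(v)|\lesssim \delta|\nabla_y v|^2 + v^{-1}|\partial_\theta v|^2$ used in \eqref{eq:estN1}, decompose $v$ as above, and split into (i) terms from $V_{a,B}$ and the linear-in-$\vec\beta,\alpha$ part of $\eta$, whose contributions are $\mathcal O(\tau^{-2})$ after the $\langle y\rangle^{-l}$ weight, and (ii) terms involving $w$, for which I reuse the bilinear bound \eqref{eq:y3nablaw2} already established. For the second case, to upgrade the exponent from $l=3$ (already covered) to $l=2$ with the sharper weight $\Omega^{-3}$, I would factor out one derivative using the identity $\chi_\Omega\nabla_y w=\nabla_y(\chi_\Omega w)-w\nabla_y\chi_\Omega$ as in \eqref{eq:order10}, applying $\mathcal{M}_2$ to one factor and either $\mathcal{M}_1$ or $\mathcal{M}_4$ to the other via Lemma \ref{LM:appliM1234}; the fractional-power trick \eqref{eq:fractional} again converts $w\,\nabla_y\chi_\Omega$ into a well-controlled quantity.

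The most delicate piece is \eqref{eq:N2Theta1}. Using the second identity of \eqref{eq:defN2eta} and that $V_{a,B}$ is $\theta$-independent, the $e^{i\theta}$-mode of $N_2(\eta)$ is generated by cross products in $\eta^2$ and $\eta\,\partial_\theta^2\eta$. Decomposing $\eta$ into its zero mode ($\vec\beta_1\cdot y+w_0$), its $e^{\pm i\theta}$ mode ($\vec\beta_{2,3}\cdot y+\alpha_{1,2}+w_{\pm1}$), and its $P_{\theta,\geq 2}$ part, the projection $\langle N_2(\eta),e^{i\theta}\rangle_\theta$ becomes a sum of bilinear terms that are either zero-mode $\times$ first-mode or first-mode $\times$ higher-mode. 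Each bilinear term is estimated by placing one factor in $\|\langle y\rangle^{-2}\cdot\|_\infty$ and the other in $\|\langle y\rangle^{-1}\cdot\|_\infty$ (or analogously with $\|\cdot\|_{L^2_\theta}$ weights) and invoking \eqref{eq:embeddM1}--\eqref{eq:embeddM4}; the pure $\vec\beta,\alpha$ products are of size $\tau^{-5}$ or better, while the quadratic-in-$w$ contributions inherit the $\delta\kappa(\epsilon)\Omega^{-l-1}P(M)$ bound from the product structure. I expect the main obstacle to be bookkeeping: one must carefully match each factor to the appropriate $\mathcal{M}_k$ so that the two weight exponents add up to exactly $l+1$, and in particular must use $\mathcal{M}_4$ (rather than $\mathcal{M}_1$) for the $w_{\pm 1}$ factors in order to gain the needed extra $\Omega^{-1}$ over the zero-mode estimate. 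Once this pairing is set up, the conclusion follows by a direct product estimate.
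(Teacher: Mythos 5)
Your treatments of \eqref{eq:3GTheta}, \eqref{eq:est3PsiW1} and \eqref{eq:N1Theta1} are fine and are essentially the paper's own: $G$ is disposed of by \eqref{eq:betaA}--\eqref{eq:scalarEqn}, $\Lambda(w_1)$ by repeating verbatim the $\Lambda(w_0)$ argument of \eqref{eq:est3PsiW0}, and $\langle N_1,e^{i\theta}\rangle_\theta$ by the trivial bound $|\langle N_1,e^{i\theta}\rangle_\theta|\lesssim \sup_\theta|N_1|$ together with the already proved \eqref{eq:est3N1}.

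The gap is in \eqref{eq:N2Theta1}. The paper never expands the quadratic form of $N_2$ into mode pairings; it first uses the exact cancellation $\langle -V_{a,B}^{-2}\eta-V_{a,B}^{-2}\partial_\theta^2\eta,\ e^{i\theta}\rangle_\theta=0$ and then integrates by parts in $\theta$ (via $\langle v^{-1},e^{i\theta}\rangle_\theta=-\langle\partial_\theta^2 v^{-1},e^{i\theta}\rangle_\theta$ and $\partial_\theta v=\partial_\theta\eta$) to obtain the identity $\langle N_2(\eta),e^{i\theta}\rangle_\theta=-2\langle v^{-3}(\partial_\theta\eta)^2,e^{i\theta}\rangle_\theta$, see \eqref{eq:N2Theta}. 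The whole point of this identity is that $\partial_\theta\eta$ has no zero mode, so the dangerous zero-mode$\times$first-mode products that dominate your expansion --- $w_0w_{\pm1}$, $(\vec\beta_1\cdot y)\,w_{\pm1}$, and the zero-mode contributions entering through the $\theta$-dependence of the coefficients $v^{-1}$ and $v^{-2}(v+V_{a,B})$ --- simply never appear; what remains is handled by Young's inequality and \eqref{eq:y3nablaw2}. Your substitute pairing does not deliver the stated bound: none of the controlling functions bounds $\|\langle y\rangle^{-1}\chi_\Omega w_0\|_\infty$ (Lemma \ref{LM:appliM1234} provides that weight only for second $y$-derivatives of $w$), so for the $w_0$ factor you must either use the crude bound $|w_0|\lesssim\delta V_{a,B}$ from \eqref{eq:SmCon} --- which, paired with $\|\langle y\rangle^{-2}\chi_\Omega w_{\pm1}\|_\infty\lesssim\kappa(\epsilon)\Omega^{-3}\mathcal{M}_4$, yields only $\delta\kappa(\epsilon)\Omega^{-l}$, one power of $\Omega$ short of $\Omega^{-l-1}$ --- or convert via $\|\langle y\rangle^{-1}\chi_\Omega w_0\|_\infty\lesssim\Omega^{2}\|\langle y\rangle^{-3}\chi_\Omega w_0\|_\infty$, which brings in a term of size $\Omega^{2}\tau^{-2}\mathcal{M}_1$ and additionally forces a separate treatment of the annulus $\Omega\leq|y|\leq(1+\epsilon)\Omega$, where only one of the two factors can carry the cutoff; the fractional-power trick \eqref{eq:fractional} you invoke is of no use here, since $N_2$ contains no $\nabla_y\chi_\Omega$. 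Such a direct route can likely be pushed through with these extra ingredients (e.g.\ exploiting $\tau^{-2}\Omega^{3}\leq\delta$ for large $\xi_0$), but as sketched it neither closes nor reproduces \eqref{eq:N2Theta1}; note also that your mode bookkeeping is incomplete, since pairs of modes of order $\geq2$ (e.g.\ $e^{3i\theta}$ against $e^{-2i\theta}$) and the first harmonic of the $v$-dependent coefficients also feed the $e^{i\theta}$ projection. The cancellation recorded in \eqref{eq:N2Theta} is the missing idea.
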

The proposition will be proved in subsubsection \ref{subsub:0yW1}.

Suppose the proposition holds, then we prove the desired results for $\mathcal{M}_1$ and $\mathcal{M}_4$ in \eqref{eq:21w} as in \eqref{eq:integ}. Here we choose to skip the details.

\subsubsection{Proof of Proposition \ref{prop:0yW1}}\label{subsub:0yW1}
\begin{proof}
In what follows we only consider the case $l=3.$ The proof for $l=2$ is considerably easier since the need decay estimate is slower, hence easier to prove. We skip that part.

For \eqref{eq:N1Theta1}, since
$
\|\langle y\rangle^{-3} \langle N_1, \ e^{i\theta}\rangle_{\theta}\|_{\infty}\lesssim \|\langle y\rangle^{-3}N_1 \|_{\infty},
$ and the latter was estimated in \eqref{eq:est3N1} satisfactorily, we take that as the desired results.

The proof of \eqref{eq:est3PsiW1} is identical to that of \eqref{eq:est3PsiW0}, hence we skip the details here.

For \eqref{eq:N2Theta1},
it is important to observe some cancellations,
\begin{align}
\begin{split}\label{eq:N2Theta}
\langle N_{2}(\eta), \ e^{i\theta}\rangle_{\theta}=&\langle -v^{-1}+V_{a,B}^{-1}-V_{a,B}^{-2}\eta+\big(v^{-2}-V_{a,B}^{-2}\big)\partial_{\theta}^2 \eta, \ e^{i\theta}\rangle_{\theta}\\
=&\langle -v^{-1}+v^{-2} \partial_{\theta}^2 \eta, \ e^{i\theta}\rangle_{\theta}
=-2\langle v^{-3}(\partial_{\theta}\eta)^2, e^{i\theta}\rangle_{\theta},
\end{split}
\end{align} where we use that $\langle -V_{a,B}^{-2}\eta-V_{a,B}^{-2}\partial_{\theta}^2 \eta, \ e^{i\theta}\rangle_{\theta}=0$ and $\langle V_{a,B}^{-1}, \ e^{i\theta}\rangle_{\theta}=0$ by that $V_{a,B}$ is independent of $\theta$; we integrate by parts in $\theta$ to find $\langle v^{-1}, \ e^{i\theta}\rangle_{\theta}=-\langle \partial_{\theta}^2 v^{-1}, \ e^{i\theta}\rangle_{\theta}$, then use that $\partial_{\theta}v=\partial_{\theta}\eta$.

Decompose $\eta$ as in \eqref{eq:decomW} and apply Young's inequality to obtain
\begin{align}
\begin{split}
|\langle v^{-3}(\partial_{\theta}\eta)^2,\ 1\rangle_{\theta}\chi_{\Omega}| \lesssim (1+|y|^2)\tau^{-4}+  \chi_{\Omega}\langle v^{-3}(\partial_{\theta} w)^2,\ 1\rangle_{\theta}.
\end{split}
\end{align}
Then apply the results in \eqref{eq:y3nablaw2} on the last term to have the desired results.

For \eqref{eq:3GTheta}, the desired estimate follows from the estimates in \eqref{eq:Best}-\eqref{eq:scalarEqn}.

\end{proof}

\subsection{Proof of \eqref{eq:3020Wtheta}}\label{subsec:3020Wtheta}

In what follows we prove the estimate for $\Big\|(100+|y|^2)^{-\frac{3}{2}}\|\partial_{\theta}^3 P_{\theta, \geq 2} \chi_{\Omega}w\|_{L_{\theta}^2}\Big\|_{\infty}$. It is easier to estimate $\Big\|(100+|y|^2)^{-1}\|\partial_{\theta}^3 P_{\theta, \geq 2} \chi_{\Omega}w\|_{L_{\theta}^2}\Big\|_{\infty}$ since its desired decay estimate is considerably slower. Hence we skip this part.

The main tool here is the maximum principle.
To make it applicable we start with deriving an equation for $\partial_{\theta}^3 P_{\theta, \geq 2} \chi_{\Omega}w$ from the equation for $v$ in \eqref{eq:scale1}. We do not start from the equation for $w$ since \eqref{eq:scale1} makes it easier to observe some positivity and cancellations.

To simplify the notation we define a new function $\Phi_3$ as
\begin{align}
\Phi_3:=(100+|y|^2)^{-3}\langle P_{\theta,\geq 2}\partial_{\theta}^3\chi_{\Omega} v,\ \partial_{\theta}^3\chi_{\Omega} v\rangle_{\theta}=(100+|y|^2)^{-3}\langle P_{\theta,\geq 2}\partial_{\theta}^3\chi_{\Omega} w,\ \partial_{\theta}^3\chi_{\Omega} w\rangle_{\theta}, 
\end{align} 
where we use that, by the decomposition of $v$ in \eqref{eq:decomVToW},
\begin{align}
P_{\theta, \geq 2} w=P_{\theta, \geq 2} v.\label{eq:P2wv}
\end{align}

$\Phi_3$ satisfies the equation
\begin{align}
\begin{split}\label{eq:Phi3eqn}
\partial_{\tau}\Phi_3=-(L_3+V_3) \Phi_{3}-2(100+|y|^2)^{-3}\|P_{\theta,\geq 2}\partial_{\theta}^3\nabla_{y}\chi_{\Omega}v\|_{L^2_{\theta}}^2 +2\sum_{k=1}^3\Psi_{3k},
\end{split}
\end{align}
where the linear operator $L_3+V_3$ is related to $-\Delta+\frac{1}{2}y\cdot \nabla_{y}-1$ by $$L_3+V_3:=(100+|y|^2)^{-3} (-\Delta+\frac{1}{2}y\cdot \nabla_{y}-1) (100+|y|^2)^{3},$$ and $L_3$ is a differential operator, and $V_3$ is a multiplier, defined as
\begin{align}
\begin{split}
L_3:=& -\Delta+\frac{1}{2}y\cdot \nabla_{y}-2(100+|y|^2)^{-3} \big(\nabla_y (100+|y|^2)^{3}\big)\cdot \nabla_{y},\\
V_{3}:=&-1+\frac{3|y|^2}{100+|y|^2}-\frac{18}{100+|y|^2}-\frac{24|y|^2}{(100+|y|^2)^2}.
\end{split}
\end{align} The functions $\Psi_{3k},\ k=1,2,3,$ are defined as
\begin{align}
\begin{split}
\Psi_{31}:=&(100+|y|^2)^{-3}\langle P_{\theta,\geq 2}\partial_{\theta}^3\chi_{\Omega} v,\ \partial_{\theta}^3\chi_{\Omega} \big(v^{-2}\partial_{\theta}^2 v-v^{-1}\big)
\rangle_{\theta},\\
\Psi_{32}:=&(100+|y|^2)^{-3}\langle P_{\theta,\geq 2}\partial_{\theta}^3\chi_{\Omega} v,\ \partial_{\theta}^3\chi_{\Omega} N_{1}(v)\rangle_{\theta},\\
\Psi_{33}:=&(100+|y|^2)^{-3}\langle P_{\theta,\geq 2}\partial_{\theta}^3\chi_{\Omega} v,\ \mu(P_{\theta,\geq 2}\partial_{\theta}^3 v)\rangle_{\theta}.
\end{split}
\end{align}
Here the $\mu-$term in $\Psi_{33}$ is defined in the same fashion to that in \eqref{eq:Tchi3}.

It is important to observe that $\Psi_{31}$ contains some negative terms and they will be used to cancel some difficult terms in $\Psi_{32}$ and $\Psi_{33}$. For the terms in $\Psi_{32}$ and $\Psi_{33}$ there are only two possibilities: (1) they can be cancelled by the negative terms in $\Psi_{31}$ and the second term in \eqref{eq:Phi3eqn}, (2) they decay rapidly.

The result is the following, recall that $P(M)$ is defined in \eqref{eq:defPM},
\begin{proposition}\label{prop:03w}
There exists some constant $C>0$ such that
\begin{align}
\Psi_{31}\leq &-(\frac{18}{25}-C\delta ) (100+|y|^2)^{-3}V_{a,B}^{-2}\| P_{\theta,\geq 2}\partial_{\theta}^4\chi_{\Omega} v\|^2_{L_{\theta}^2} +C\delta^2 \Big[ \tau^{-2}+\kappa(\epsilon) \Omega^{-4} \Big]^2 P^2(M),\label{eq:03wNega}\\
\Psi_{33}\leq &\frac{1}{100} \Phi_3+C\delta^2 \kappa^2(\epsilon)\Omega^{-8}P^2(M),\label{eq:03Lambda}\\
\Psi_{32}\leq &\frac{1}{100}(100+|y|^2)^{-3}\Big[V_{a,B}^{-2}\| P_{\theta,\geq 2}\partial_{\theta}^4\chi_{\Omega} v\|_{L_{\theta}^2}^2+\| P_{\theta,\geq 2}\partial_{\theta}^3\nabla_{y}\chi_{\Omega} v\|_{L_{\theta}^2}^2\Big]\nonumber\\
& +C\delta^2 \Big[ \tau^{-2}+\kappa(\epsilon) \Omega^{-4} \Big]^2P^2(M). \label{eq:03wN1}
\end{align}

\end{proposition}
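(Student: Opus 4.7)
The plan is to treat the three quantities $\Psi_{31}, \Psi_{32}, \Psi_{33}$ separately, exploiting the structure of each. The main source of the negative sign in the first estimate will come from integration by parts in $\theta$ applied to the leading-order contribution of $v^{-2}\partial_\theta^2 v$, while $-v^{-1}$ contributes a positive but strictly smaller quantity that can be absorbed using the spectral gap $|n|\ge 2$ built into $P_{\theta,\geq 2}$. The remaining two estimates are essentially pointwise Cauchy--Schwarz estimates: $\Psi_{32}$ is controlled using the smallness of the derivatives of $v$ from \eqref{eq:SmCon}, and $\Psi_{33}$ is handled by isolating the problematic $\frac{1}{2}(y\cdot\nabla_y\chi_\Omega)$ term as in the earlier sections.

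For $\Psi_{31}$, first commute $\chi_\Omega$ with $\partial_\theta^3$ (they commute since $\chi_\Omega$ is $\theta$-independent), and note that $\partial_\theta^k v=\partial_\theta^k \eta$ for $k\ge 1$ because $V_{a,B}$ and the $\vec\beta_1\cdot y$ term in \eqref{eq:decomVToW} are $\theta$-independent. Expanding $\partial_\theta^3(v^{-2}\partial_\theta^2 v)$ by Leibniz, the principal term is $v^{-2}\partial_\theta^5 v$; integrating by parts once in $\theta$ against $P_{\theta,\geq 2}\partial_\theta^3\chi_\Omega v$ produces $-v^{-2}\|P_{\theta,\geq 2}\partial_\theta^4\chi_\Omega v\|_{L^2_\theta}^2$ plus commutator terms involving $\partial_\theta v^{-2}=-2v^{-3}\partial_\theta\eta$ which are of size $\delta$. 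Similarly $\partial_\theta^3(-v^{-1})$ has leading piece $v^{-2}\partial_\theta^3 v$, which pairs to give $+v^{-2}\|P_{\theta,\geq 2}\partial_\theta^3\chi_\Omega v\|_{L^2_\theta}^2$. Replacing $v^{-2}$ by $V_{a,B}^{-2}$ costs a factor of $\delta$ since $|v-V_{a,B}|\lesssim \delta V_{a,B}$. On the range of $P_{\theta,\geq 2}$ one has the Plancherel inequality $\|P_{\theta,\geq 2}\partial_\theta^3 f\|_{L^2_\theta}^2\leq \tfrac14\|P_{\theta,\geq 2}\partial_\theta^4 f\|_{L^2_\theta}^2$, and combining these two contributions yields a net coefficient $-(1-\tfrac14)+O(\delta)=-\tfrac34+O(\delta)\le-\tfrac{18}{25}+O(\delta)$, where the slack of $\tfrac{3}{4}-\tfrac{18}{25}$ absorbs the $O(\delta)$ commutators. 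The lower-order cross terms produced by the Leibniz expansion involve at most $\partial_\theta^3 v$ multiplied by two other $\theta$-derivatives of $v$, each $O(\delta)$, and are bounded pointwise using Lemma \ref{LM:appliM1234}, accounting for the $C\delta^2[\tau^{-2}+\kappa(\epsilon)\Omega^{-4}]^2P^2(M)$ error.

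For $\Psi_{32}$, expand $\partial_\theta^3 N_1(v)$ term-by-term using \eqref{eq:defFu}. Every summand of $N_1(v)$ is at least quadratic in $\partial_\theta v$ or $\nabla_y v$, which are pointwise $O(\delta)$ by \eqref{eq:SmCon}. After distributing $\partial_\theta^3$ and pairing against $P_{\theta,\geq 2}\partial_\theta^3\chi_\Omega v$, the worst-case term places all three $\theta$-derivatives on a single derivative-factor of $v$, producing either $\partial_\theta^4 v$ or $\partial_\theta^3\nabla_y v$ paired with $P_{\theta,\geq 2}\partial_\theta^3 v$ times an $O(\delta)$ coefficient. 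Applying Cauchy--Schwarz with a small weight $\tfrac{1}{100}$ absorbs these into $V_{a,B}^{-2}\|P_{\theta,\geq 2}\partial_\theta^4\chi_\Omega v\|_{L^2_\theta}^2$ and $\|P_{\theta,\geq 2}\partial_\theta^3\nabla_y\chi_\Omega v\|_{L^2_\theta}^2$ respectively, while the remaining factors are pointwise bounded in terms of the $\mathcal{M}_k$'s through Lemma \ref{LM:appliM1234}, yielding the claimed $C\delta^2[\tau^{-2}+\kappa(\epsilon)\Omega^{-4}]^2 P^2(M)$ remainder. For $\Psi_{33}$, one splits the $y\cdot\nabla_y\chi_\Omega$ piece as in \eqref{eq:unboundtwo}: the part with the factor $\tilde\chi_\Omega/\chi_\Omega$ carries a favorable non-positive sign (since $\chi$ is radially decreasing) and after Cauchy--Schwarz contributes at most $\tfrac{1}{100}\Phi_3$; the complementary part $(1-\tilde\chi_\Omega)(y\cdot\nabla_y\chi_\Omega)$ together with $\partial_\tau\chi_\Omega$, $\Delta_y\chi_\Omega$, and $\nabla_y\chi_\Omega\cdot\nabla_y$ is supported in the annulus $|y|\sim\Omega$ and, via the cutoff decay estimates \eqref{eq:TwoCut}--\eqref{eq:deriveSmooth} and the derivative bounds from Lemma \ref{LM:appliM1234}, gives the $C\delta^2\kappa^2(\epsilon)\Omega^{-8}P^2(M)$ contribution.

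The main obstacle is the bookkeeping: correctly distributing $\partial_\theta^3$ across the denominator-valued quantities $v^{-2}$ and the rational expressions of $N_1(v)$, tracking all Leibniz commutators, and showing that each lower-order remainder can be closed by one of the controlling norms $\mathcal{M}_k$ through Lemma \ref{LM:appliM1234}. In particular, attaining the sharp constant $\tfrac{18}{25}$ in the first estimate is delicate: one must choose the Cauchy--Schwarz parameters absorbing the $O(\delta)$ commutator terms carefully, and use the spectral gap $|n|\ge 2$ exactly once to convert $\|P_{\theta,\geq 2}\partial_\theta^3\chi_\Omega v\|^2$ into a small fraction of $\|P_{\theta,\geq 2}\partial_\theta^4\chi_\Omega v\|^2$, with enough margin left over to also absorb the $v^{-2}\to V_{a,B}^{-2}$ replacement error.
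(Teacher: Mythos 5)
Your proposal is correct and follows essentially the same route as the paper's proof: integration by parts in $\theta$ plus the spectral gap on $P_{\theta,\geq 2}$ and the $O(\delta)$ replacement $v^{-2}\to V_{a,B}^{-2}$ for $\Psi_{31}$, absorption of the top-derivative pieces of $\partial_{\theta}^3 N_1$ into the two positive quantities with the remaining factors controlled through \eqref{eq:SmCon} and Lemma \ref{LM:appliM1234} for $\Psi_{32}$, and the favorable sign of $y\cdot\nabla_y\chi_{\Omega}$ together with the cutoff-decay bounds for $\Psi_{33}$. The only bookkeeping you gloss over is the cross-frequency contribution (the paper's $D_2$: $P_{\theta,\geq 2}$ tested against $v^{-2}(1-P_{\theta,\geq 2})$, which is $O(\delta)$ only because the $\theta$-dependent part of $v$ is small and whose frequency-$\pm1$ data must be fed through the decomposition and $\mathcal{M}_1$) and the $\partial_{\theta}^5 v$, $\partial_{\theta}^4\nabla_y v$ terms produced by distributing $\partial_{\theta}^3$ onto $N_{12}$ — the paper avoids these by one further integration by parts in $\theta$ (see \eqref{eq:neww2}), although they could also be handled pointwise since \eqref{eq:SmCon} reaches fifth-order derivatives — and it is precisely the fixed-parameter Young absorption of these pieces (costing $\tfrac{1}{50}+\tfrac{1}{100}$ of the leading term), rather than the $O(\delta)$ commutators, that accounts for the constant $\tfrac{18}{25}=\tfrac34-\tfrac{3}{100}$.
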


The proposition will be proved in subsubsection \ref{subsub:03w}.

Assuming the proposition holds, we use the facts that $\|P_{\theta,\geq 2}\partial_{\theta}f\|_{L_{\theta}^2}^2\geq 4\|P_{\theta,\geq 2}f\|_{L_{\theta}^2}^2$ for any smooth function $f$, and that $\delta$ is sufficiently small, to find that, for some $C>0,$
\begin{align*}
2\sum_{k=1,2,3}\Psi_{3k}\leq &-\frac{28}{5} (100+|y|^2)^{-3}V_{a,B}^{-2}\| P_{\theta,\geq 2}\partial_{\theta}^3\chi_{\Omega} v\|_{L_{\theta}^2}+\frac{1}{50}(100+|y|^2)^{-3}\| P_{\theta,\geq 2}\partial_{\theta}^3\nabla_{y}\chi_{\Omega} v\|_{L_{\theta}^2}^2\\
&+\frac{1}{50} \Phi_3+C\delta^2 \Big[\tau^{-2}+\kappa(\epsilon)\Omega^{-4}\Big]^2 P^2(M).
\end{align*}

Return to the equation for $\Phi_{3}$ in \eqref{eq:Phi3eqn}, and find that it takes a new form
\begin{align}
\partial_{\tau}\Phi_3\leq -(L_3+V_3) \Phi_{3}-\Upsilon \Phi_3+C\delta^2 [\tau^{-2}+\kappa(\epsilon)\Omega^{-4}]^2 P^2(M),
\end{align} where $\Upsilon$ is a function defined as
\begin{align*}
\Upsilon(y):=\frac{28}{5}V_{a,B}^{-2}(y)-\frac{1}{50}+V_3(y).
\end{align*}
It is critically important that $\Upsilon$ is strictly positive. This is indeed true, since
in the region $|y|\leq \tau^{\frac{1}{4}}$, we have that $V_{a,B}\approx \frac{1}{\sqrt{2}}$ and $V_{3}\geq -\frac{3}{2}$, and when $|y|\geq \tau^{\frac{1}{4}}$, $V_3\approx 2$ and $V_{a,B}\geq 0$, consequently
\begin{align}
\Upsilon\geq 1.
\end{align}
Thus, for some constant $C>0,$
\begin{align}
\partial_{\tau}\Phi_3\leq -(L_3+1) \Phi_3+C\delta^2 \Big[\tau^{-2}+\kappa(\epsilon)\Omega^{-4}
\Big]^2 P^2(M).\label{eq:deriPhi3}
\end{align}

Before applying the maximum principle, we need to check the boundary condition. This is indeed easy since cutoff function $\chi_{\Omega}$ in the definition $\Phi_3$ makes $\Phi_{3}(y,\tau)=0$ if $|y|\geq (1+\epsilon)\Omega.$ 

A standard application of the maximum principle yields, 
\begin{align}
\begin{split}
\Phi_3(\tau)\lesssim & e^{-(\tau-\xi_0)}\Phi_3(\xi_0)+ \Big[\tau^{-2}+\kappa(\epsilon)\Omega^{-4}
\Big]^2 P^2(M)\\
\lesssim &\Big(1+\delta^2 P^2(M)\Big)\Big[\tau^{-2}+\kappa(\epsilon)\Omega^{-4}
\Big]^2
\end{split}
\end{align} 
where in the last step we use that $\Phi_3(\xi_0)\lesssim \kappa^2(\epsilon)\Omega^{-8}(\xi_0)$ by \eqref{eq:33}, and recall the definitions of $R(\tau)$ and $\Omega(\tau)$ in \eqref{eq:defRTau} and \eqref{eq:defOmega} respectively, and recall that we choose $\xi_0\gg \tau_0$.

This implies the desired estimate after taking a square root on both sides.

\subsubsection{Proof of Proposition \ref{prop:03w}}\label{subsub:03w}

\begin{proof}
We start proving \eqref{eq:03wNega} by separating the negative part of $\Psi_{31}$ from the rest,
\begin{align}
\begin{split}\label{eq:Psi0123}
\Psi_{31}(v)=&(100+|y|^2)^{-3}\Big[-\langle P_{\theta,\geq 2}\partial_{\theta}^4\chi_{\Omega} v,\ \partial_{\theta}^2\chi_{\Omega} v^{-2}\partial_{\theta}^2 v\rangle_{\theta}-\langle P_{\theta,\geq 2}\partial_{\theta}^3\chi_{\Omega} v,\ \partial_{\theta}^3\chi_{\Omega} v^{-1}\rangle_{\theta}
\Big]\\
=&-(100+|y|^2)^{-3}\Big[D_{1}+D_{2}+D_{3}\Big],
\end{split}
\end{align}
where in the first step and in the first term we integrate by parts in $\theta$, and the terms $D_l,\ l=1,2,3,$ are defined as
\begin{align*}
D_{1}:=&\langle P_{\theta,\geq 2}\partial_{\theta}^4\chi_{\Omega} v,\  v^{-2}P_{\theta,\geq 2}\partial_{\theta}^4 \chi_{\Omega}v\rangle_{\theta}-\langle P_{\theta,\geq 2}\partial_{\theta}^3\chi_{\Omega} v,\ v^{-2}P_{\theta,\geq 2}\partial_{\theta}^3\chi_{\Omega} v\rangle_{\theta},\\
D_{2}:=&\langle P_{\theta,\geq 2}\partial_{\theta}^4\chi_{\Omega} v,\  v^{-2}(1-P_{\theta,\geq 2})\partial_{\theta}^4 \chi_{\Omega}v\rangle_{\theta}+\langle P_{\theta,\geq 2}\partial_{\theta}^3\chi_{\Omega} v,\ v^{-2}(1-P_{\theta,\geq 2})\partial_{\theta}^3\chi_{\Omega} v\rangle_{\theta},\\
D_{3}:=&\langle P_{\theta,\geq 2}\partial_{\theta}^4\chi_{\Omega} v,\ \chi_{\Omega} \big[\partial_{\theta}^2 (v^{-2}\partial_{\theta}^2 v)-v^{-2}\partial_{\theta}^4v\big]\rangle_{\theta}-\langle P_{\theta,\geq 2}\partial_{\theta}^3\chi_{\Omega} v,\ \chi_{\Omega}(\partial_{\theta}^3 v^{-1}+v^{-2}\partial_{\theta}^3 v)\rangle_{\theta}.
\end{align*}

$D_1$ is important since it contains positive terms. Apply $|\frac{V_{a,B}}{v}-1|\lesssim \delta$ in \eqref{eq:SmCon}, and use that for any smooth function $f$, $\|P_{\theta,\geq 2}\partial_{\theta}^{4}f\|^2_{L_{\theta}^2}\geq 4 \|P_{\theta,\geq 2}\partial_{\theta}^{3}f\|^2_{L_{\theta}^2}$ to find, for some $C>0,$
\begin{align}
D_{1}\geq &(\frac{3}{4}-C\delta ) V_{a,B}^{-2}\Big\langle P_{\theta,\geq 2}\partial_{\theta}^4\chi_{\Omega} v,\ P_{\theta,\geq 2}\partial_{\theta}^4\chi_{\Omega} v\Big\rangle_{\theta}.\label{eq:negativeTerm}
\end{align}

For $D_2$, since the operator $(1-P_{\theta,\geq 2})\partial_{\theta}^{4}$ removes all the frequencies except $e^{\pm i\theta},$
\begin{align*}
(1-P_{\theta,\geq 2})\partial_{\theta}^4 \chi_{\Omega}v=\frac{1}{2\pi}\chi_{\Omega}\Big[e^{i\theta}\langle v, e^{i\theta}\rangle_{\theta}+e^{-i\theta}\langle v, e^{-i\theta}\rangle_{\theta}\Big]
\end{align*} thus
\begin{align}
\langle P_{\theta,\geq 2}\partial_{\theta}^4 \chi_{\Omega} v,\ v^{-2} (1-P_{\theta,\geq 2})\partial_{\theta}^4 \chi_{\Omega}v\rangle_{\theta}=\frac{1}{2\pi} \Big[K\chi_{\Omega} \langle v, \ e^{i\theta}\rangle_{\theta}+\chi_{\Omega}\overline{K \langle v, \ e^{-i\theta}\rangle_{\theta}}\Big].\label{eq:deltaK}
\end{align}
Here the term $K$ is defined as, 
\begin{align*}
K:=\langle P_{\theta,\geq 2}\partial_{\theta}^4 \chi_{\Omega} v,\ v^{-2} e^{i\theta}\rangle_{\theta}=\langle P_{\theta,\geq 2}\partial_{\theta}^4 \chi_{\Omega} v,\ P_{\theta,\geq 2}v^{-2} e^{i\theta}\rangle_{\theta}.
\end{align*} 
Since $v=V_{a,B}+\eta$, $V_{a,B}$ is independent of $\theta$, and $\frac{|\eta|}{V_{a,B}}\lesssim \delta$ in \eqref{eq:SmCon}, we have
\begin{align}\label{eq:estKdelta}
|P_{\theta,\geq 2}v^{-2} e^{i\theta}|\lesssim \delta V_{a,B}^{-2},\  \ \text{and hence}\ \ \ 
|K| \lesssim & \delta V_{a,B}^{-2} \|P_{\theta,\geq 2}\partial_{\theta}^4 \chi_{\Omega} v\|_{L_{\theta}^2}.
\end{align} 

Returning to \eqref{eq:deltaK}, we decompose $v$ and apply \eqref{eq:embeddM1} to find that
\begin{align*}
(100+|y|^2)^{-\frac{3}{2}}\chi_{\Omega}|\langle v, \ e^{i\theta}\rangle|\lesssim \tau^{-2}+(100+|y|^2)^{-\frac{3}{2}} \|\chi_{\Omega}w\|_{L_{\theta}^2}\lesssim (\tau^{-2}+\kappa(\epsilon)\Omega^{-4})(1+\mathcal{M}_1).
\end{align*}
These together with Young's inequality make, for some $C>0,$
\begin{align}
\begin{split}
&(100+|y|^2)^{-3}\Big|\langle P_{\theta,\geq 2}\partial_{\theta}^4 \chi_{\Omega} v,\ v^{-2} (1-P_{\theta,\geq 2})\partial_{\theta}^4 v\rangle_{\theta}\Big|\\
\leq &\frac{1}{100} V_{a,B}^{-2}(100+|y|^2)^{-3} \|P_{\theta,\geq 2}\partial_{\theta}^4 \chi_{\Omega} v\|_{L_{\theta}^2}^2+C\delta^2(\tau^{-2}+\kappa(\epsilon)\Omega^{-4})^2(1+\mathcal{M}_1)^2.
\end{split}
\end{align}

We estimate the second term in $D_2$ similarly and find
\begin{align}
 |(100+|y|^2)^{-3}D_{2}|\leq \frac{1}{50} (100+|y|^2)^{-3}V_{a,B}^{-2}& \|P_{\theta,\geq 2}\partial_{\theta}^4 \chi_{\Omega} v\|_{L_{\theta}^2}^2+C\delta^2 (1+\mathcal{M}_1^2) (\tau^{-2}+\kappa(\epsilon)\Omega^{-4})^2.\label{eq:Wei3D2}
\end{align}

To estimate $D_{3}$ we find parts of the integrands satisfy the estimate
\begin{align*}
v|\partial_{\theta}^2 (v^{-2}\partial_{\theta}^2 v)-v^{-2}\partial_{\theta}^4 v|,\ v|\partial_{\theta}^3 v^{-1} +v^{-2}\partial_{\theta}^3 v|\lesssim &\delta \sum_{k=1,2}|\partial_{\theta}^k v|\\
\lesssim &\delta\Big[ (1+|y|)\tau^{-2}+  \sum_{k=1,2} |\partial_{\theta}^k w|\Big].
\end{align*}

This, together with Lemma \ref{LM:poinEmbed} and Young's inequality,  implies, for some $C>0,$
\begin{align}
|(100+|y|^2)^{-3}D_{3}|\leq \frac{1}{100} (100+|y|^2)^{-3}V_{a,B}^{-2} \|P_{\theta,\geq 2}\partial_{\theta}^4 \chi_{\Omega} v\|_{L_{\theta}^2}^2+C\delta^2 \Big[ \tau^{-2}+\kappa(\epsilon) \Omega^{-4} \Big]^2\mathcal{M}_1^2.
\end{align}

This, together with \eqref{eq:Psi0123}, \eqref{eq:negativeTerm} and \eqref{eq:Wei3D2}, implies the desired estimate \eqref{eq:03wNega}.

Now we prove \eqref{eq:03wN1}. We divide $\Psi_{32}$ into two parts
\begin{align}
\Psi_{32}=&(100+|y|^2)^{-3}\Big[\langle P_{\theta,\geq 2}\partial_{\theta}^3\chi_{\Omega} v, \partial_{\theta}^3\chi_{\Omega} N_{11}\rangle_{\theta}+\langle P_{\theta,\geq 2}\partial_{\theta}^3\chi_{\Omega} v,\ \partial_{\theta}^3\chi_{\Omega} N_{12}\rangle_{\theta}\Big]\nonumber\\
:=&(100+|y|^2)^{-3}\Big[W_1+W_2\Big], \label{eq:psi32w12}
\end{align} where $W_l,\ l=1,2,$ are naturally defined, and $N_{1l},\ l=1,2,$ are two parts of $N_1$:
\begin{align}
N_{1}=N_{11}+N_{12},\label{eq:N112}
\end{align}
where $N_{11}$ is defined as
\begin{align*}
N_{11}:=-\frac{\sum_{k=1}^{3}(\partial_{y_k}v)^{2} \partial^{2}_{y_k}v}{1+|\nabla_y v|^2+(\frac{\partial_{\theta}v}{v})^2}-\sum_{i\not= j}\frac{\partial_{y_i} v \partial_{y_j} v}{1+|\nabla_y v|^2+(\frac{\partial_{\theta}v}{v})^2}\partial_{y_i}\partial_{y_j}v,
\end{align*}
and 
$N_{12}$ is different from $N_{11}$ by that each term has a factor $v^{-l}, \ l=2,3,4,$
\begin{align*}
N_{12}:=&-v^{-4} \frac{(\partial_{\theta}v)^{2} \partial^{2}_{\theta}v}{1+|\nabla_y v|^2+(\frac{\partial_{\theta}v}{v})^2}
+v^{-2}\frac{2\partial_{\theta}v}{1+|\nabla_y v|^2+(\frac{\partial_{\theta}v}{v})^2}\sum_{l=1}^{3}\partial_{y_l}v\partial_{y_l}\partial_{\theta}v\\
&+v^{-3}\frac{(\partial_{\theta}v)^2}{1+|\nabla_y v|^2+(\frac{\partial_{\theta}v}{v})^2}.
\end{align*}

We start with estimating $W_1.$

Observe that all the terms in $N_{11}$ are of the form $\frac{\partial_{y_k}v\partial_{y_l}v}{1+|\partial_{y}v|^2+v^{-2}(\partial_{\theta}v)^2}  
 \ \partial_{y_k}\partial_{y_l}v$, $ k,l=1,\cdots,3.$ We apply the estimates $|\partial_{\theta}^{m}\nabla_{y}^{k}v|,\ v^{-1}|\partial_{\theta}^{n}v|\lesssim \delta$ when $m+|k|\leq 5$ and $|k|\geq 1$, $n=1,2,$ in \eqref{eq:SmCon} and compute directly to obtain
\begin{align}\label{eq:distribu3Thet}
\begin{split}
\chi_{\Omega}|\partial_{\theta}^3 N_{11}|\lesssim & \chi_{\Omega} \sum_{m=0}^{3}\sum_{k,l=1,2,3} \Big|\partial_{\theta}^m \big(\partial_{y_k}v\partial_{y_l}v \partial_{y_k}\partial_{y_l}v \big)
\Big|
\lesssim \delta \chi_{\Omega}\sum_{m+n=0,1,2,3} |\partial_{\theta}^m\nabla_{y}v||\partial_{\theta}^n\nabla_{y}v|.
\end{split}
\end{align}

Now we estimate $\chi_{\Omega}|\nabla_y v||\partial_{\theta}^3\nabla_y v|$, which is the only term containing a fourth order derivative. Compute directly to find
\begin{align}
D:=&(100+| y|^2)^{-\frac{3}{2}} \chi_{\Omega}\| \nabla_y v\ \partial_{\theta}^3\nabla_y v \|_{L_{\theta}^2}
\lesssim  (100+| y|^2)^{-\frac{3}{2}}    \|\nabla_{y}v\|_{L^{\infty}_{\theta}}\|\chi_{\Omega} \partial_{\theta}^3 \nabla_{y}v\|_{L_{\theta}^2}
\end{align}
For the factor $\|\chi_{\Omega} \partial_{\theta}^3 \nabla_{y}v\|_{L_{\theta}^2}$ we decompose $v$ and change the order $\chi_{\Omega}$ and $\nabla_y$, to find,
\begin{align*}
\begin{split}
D\lesssim &\delta (100+| y|^2)^{-\frac{3}{2}}
\|P_{\theta,\geq 2}\partial_{\theta}^3\nabla_y\chi_{\Omega} w\|_{L_{\theta}^2}+\|\langle y\rangle^{-1}1_{\leq (1+\epsilon)\Omega}\nabla_{y}v\|_{\infty} \sum_{m=\pm 1} \|\langle y\rangle^{-2} \nabla_y\chi_{\Omega} w_{m}\|_{\infty}\\
&\hskip 2cm+ \delta (100+| y|^2)^{-\frac{3}{2}} |\nabla_{y}\chi_{\Omega}| \|\partial_{\theta}^3 w\|_{L_{\theta}^2}+\delta\tau^{-2}\\
\lesssim &\delta (100+| y|^2)^{-\frac{3}{2}}
\|P_{\theta,\geq 2}\partial_{\theta}^3\nabla_y\chi_{\Omega} v\|_{L_{\theta}^2} +( \tau^{-2}+\kappa(\epsilon)\Omega^{-4})P(M),
\end{split}
\end{align*}
where, we use that $|\nabla_{y}v|\lesssim \delta$ in \eqref{eq:SmCon}, and we control $\|\langle y\rangle^{-1}1_{\leq (1+\epsilon)\Omega}\nabla_{y}v\|_{\infty}$ as in \eqref{eq:Hea1yNw} after decomposing $v$; and we 
observe that, for $m=\pm 1$, by the definition of $\mathcal{M}_4$,
$$\|\langle y\rangle^{-2} \nabla_y\chi_{\Omega} w_{m}\|_{\infty}\lesssim \|\langle y\rangle^{-2} \nabla_y\chi_{\Omega} \partial_{\theta}w\|_{\infty}\lesssim \kappa(\epsilon)\Omega^{-3}\mathcal{M}_4,$$
and moreover in the last step we use $P_{\theta,\geq 2}w=P_{\theta,\geq 2}v,$ and we argue as in \eqref{eq:fractional} to find
\begin{align}
(100+| y|^2)^{-\frac{3}{2}} |\nabla_{y}\chi_{\Omega}| \|\partial_{\theta}^3 w\|_{L_{\theta}^2}\lesssim \kappa(\epsilon) \Omega^{-\frac{9}{2}}\mathcal{M}_4^{\frac{3}{4}},
\end{align} 

Compare to estimating $D$, it is easier to control the other one in \eqref{eq:distribu3Thet} resulted by the lower number of derivatives. Compute directly to have, for $m+n=0,1,2,3$ and $m,n\leq 2,$
\begin{align}\label{eq:lowOrder}
(100+| y|^2)^{-\frac{3}{2}}\Big\|\chi_{\Omega} \partial_{\theta}^m\nabla_{y}v\ \partial_{\theta}^n\nabla_{y}v \Big\|_{L_{\theta}^2}
\lesssim \big(\tau^{-2}+\kappa(\epsilon)\Omega^{-4}\big)P(M).
\end{align}

Collect the estimates above to obtain
\begin{align}
\begin{split}\label{eq:Theta2N11}
(100+|y|^2)^{-\frac{3}{2}} \chi_{\Omega}\|\partial_{\theta}^3 N_{11}\|_{L^{2}_{\theta}}
\lesssim \delta (100+|y|^2)^{-\frac{3}{2}} \|\partial_{\theta}^3 \nabla_{y}P_{\theta,\geq 2}\chi_{\Omega}w\|_{L^{2}_{\theta}}+\delta \Big(\tau^{-2}+ \kappa(\epsilon)\Omega^{-4}\Big) P(M).
\end{split}
\end{align} 

Returning to the definition of $W_1$ in \eqref{eq:psi32w12}, and applying Young's inequality, we find
\begin{align}\label{eq:estW1}
(100+|y|^2)^{-3}|W_1|\leq& \frac{1}{100}(100+|y|^2)^{-3}\|\partial_{\theta}^3 \nabla_{y}P_{\theta,\geq 2}\chi_{\Omega}w\|_{L^{2}_{\theta}}^2
 +C\delta^2 \Big(\tau^{-2}+\kappa(\epsilon)\Omega^{-4}\Big)^2 P^2(M).
\end{align}

Now we estimate $W_2$, which, after integrating by parts in $\theta$, becomes
\begin{align}
W_2=-\langle v^{-1}P_{\theta,\geq 2}\partial_{\theta}^4\chi_{\Omega} v,\ v\partial_{\theta}^2 \chi_{\Omega} N_{12}\rangle_{\theta}\label{eq:neww2}
\end{align}
Compute directly and use \eqref{eq:SmCon} to find
\begin{align}\label{eq:distribu3Thet2}
\begin{split}
|v\partial_{\theta}^2 N_{12}|\lesssim & \sum_{m=0,1,2}\Big[v^{-1}|\partial_{\theta}^{m}\big(\nabla_{y}v \partial_{\theta}v \nabla_{y}\partial_{\theta}v\big)|+v^{-3}|\partial_{\theta}^{m}\big( (\partial_{\theta}v)^2 \partial_{\theta}^2v\big)|+v^{-2}|\partial_{\theta}^{m}(\partial_{\theta}v)^2|\Big]\\
\lesssim &\delta  \sum_{m=1,2,3}|\partial_{\theta}^{m} v|.
\end{split}
\end{align}
Decompose $v$, and consider it in the space $\|\cdot \|_{L_{\theta}^2}$ and apply Lemma \ref{LM:appliM1234} to find
\begin{align*}
\begin{split}
\chi_{\Omega} \langle y\rangle^{-3}\|v\partial_{\theta}^2 N_{12}\|_{L_{\theta}^2}
\lesssim \delta \big(\tau^{-2}+\kappa(\epsilon)\Omega^{-4} \Big) \big(1+\mathcal{M}_1\big).
\end{split}
\end{align*}

Return to \eqref{eq:neww2} and find, for some $C>0,$
\begin{align}
(100+|y|^2)^{-3}|W_2|\leq \frac{1}{100}(100+|y|^2)^{-3}V_{a,B}^{-2}\|P_{\theta,\geq 2}\partial_{\theta}^4\chi_{\Omega} v\|_{L_{\theta}^2}^2+C\delta^2 (\tau^{-2}+\kappa(\epsilon)\Omega^{-4})^2 P^2(M).
\end{align}

This, together with the estimate for $W_1$ in \eqref{eq:estW1} and the identity in \eqref{eq:psi32w12}, implies the desired \eqref{eq:03wN1}.

Now we prove \eqref{eq:03Lambda}. The key observation is that the difficult term has a favorable sign, specifically, the fact $y\cdot \nabla_{y}\chi_{\Omega}\leq 0$ makes
$$\Big\langle P_{\theta,\geq 2}\partial_{\theta}^3\chi_{\Omega}v, \ (y\cdot \nabla_{y}\chi_{\Omega}) \partial_{\theta}^3v\Big\rangle\leq 0.$$
Compute directly to find
\begin{align}
\begin{split}
\Psi_{33}\leq& (100+|y|^2)^{-3}\langle P_{\theta,\geq 2}\partial_{\theta}^3\chi_{\Omega} v,\ P_{\theta\geq 2}\partial_{\theta}^3\Big((\partial_{\tau}\chi_{\Omega})v-(\Delta_{y}\chi_{\Omega})v-2\nabla_{y}\chi_{\Omega}\cdot  \nabla_{y}v\Big)\rangle_{\theta}\\
\leq & \frac{1}{100}(100+|y|^2)^{-3}\| P_{\theta,\geq 2}\partial_{\theta}^3\chi_{\Omega} v\|_{L_{\theta}^2}^2+\delta^2\kappa^2(\epsilon)\Omega^{-8} ,
\end{split}
\end{align}
where the decay estimates are from the derivatives of $\chi_{\Omega}$ and that they are supported by the set $|y|\in [\Omega,\ (1+\epsilon)\Omega]$, and we use that
$|P_{\theta,\geq 2}w|=|P_{\theta,\geq 2}\eta|\lesssim \max_{\theta}|\eta(\theta)|\lesssim \delta \sqrt{1+\tau^{-1}\Omega^2}$, and $|\nabla_{y}v|\lesssim \delta$, see \eqref{eq:SmCon}.
\end{proof}

%%%%%%%%%%%%%%%%%%%%%%%%%%%%%%%%%%%%%%%%%%%%%%%%%%%%%%%%%%%%%%%%%%%%%%

\section{Estimate for $\mathcal{M}_2$, Proof of part of \eqref{eq:estM1234} }\label{sec:estM2}
The following results obviously imply the desired estimate for $\mathcal{M}_2$.
\begin{proposition} 
\begin{align}
\begin{split}
\sum_{m=\pm 1,\ 0}\|\langle y\rangle^{-2}\nabla_y\chi_{\Omega}w_{m}(\cdot,\tau)\|_{\infty}, \ 
\big\|   \langle y\rangle^{-2} \|\nabla_y P_{\theta,\geq 2}\partial_{\theta}^2\chi_{\Omega}w(\cdot,\tau)\|_{L^2_{\theta}}     \big\|_{\infty}
\lesssim \delta \kappa(\epsilon)\Omega^{-3} P(M).\label{eq:21w}
\end{split}
\end{align}
\end{proposition}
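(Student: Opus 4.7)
The plan is to follow the pattern established in Section \ref{sec:estM1} for $\mathcal{M}_1$ and $\mathcal{M}_4$, now tracking one additional derivative in $y$. For the scalar frequency components $w_m$, $m=0,\pm 1$, I would take as a starting point the same Duhamel representations \eqref{eq:3w} and \eqref{eq:durP4w1} already derived in that section, and simply apply $\nabla_y$ on both sides. The key ingredient is a smoothing version of Lemma \ref{LM:propagator}, namely an estimate of the schematic form
\begin{align*}
\|\langle y\rangle^{-2} e^{\frac{1}{8}|y|^2}\nabla_y U_{l}(\sigma_1,\sigma_2) P g\|_\infty \lesssim (\sigma_1-\sigma_2)^{-\frac{1}{2}}\ e^{-\alpha(\sigma_1-\sigma_2)}\|\langle y\rangle^{-3} e^{\frac{1}{8}|y|^2} g\|_\infty,
\end{align*}
which is the usual parabolic smoothing for the harmonic-oscillator propagator on the relevant spectral subspace. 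The integrable singularity $(\sigma_1-\sigma_2)^{-1/2}$ is harmless when substituted into the Duhamel integral: repeating the L'H\^opital/comparison argument between \eqref{eq:integ} and \eqref{eq:Lhos1} yields the desired $\kappa(\epsilon)\Omega^{-3}$ decay with one less power of $\Omega$ than in \eqref{eq:weighL1}.

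For the high-frequency part I would mimic the maximum-principle argument of Section \ref{subsec:3020Wtheta}, defining the quadratic quantity
\begin{align*}
\Phi_{2}:=(100+|y|^2)^{-2}\langle P_{\theta,\geq 2}\partial_{\theta}^2\nabla_y\chi_{\Omega}v,\ P_{\theta,\geq 2}\partial_{\theta}^2\nabla_y\chi_{\Omega}v\rangle_{\theta}.
\end{align*}
Differentiating \eqref{eq:scale1} in $y$ and then projecting onto $P_{\theta,\geq 2}\partial_\theta^2$ produces an evolution of the form analogous to \eqref{eq:Phi3eqn}, whose first variation of the term $v^{-2}\partial_\theta^2 v$ supplies a favorable negative contribution proportional to $-V_{a,B}^{-2}\|P_{\theta,\geq 2}\partial_\theta^3\nabla_y\chi_\Omega v\|_{L^2_\theta}^2$, paralleling the role of \eqref{eq:negativeTerm}. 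The cut-off commutator $\mu$ keeps its favorable sign from $y\cdot\nabla_y\chi_\Omega\le 0$, and its cross terms decay as powers of $\Omega^{-1}$ by the now-standard arguments using $\tilde\chi_\Omega$ and \eqref{eq:fractional}. Applying the maximum principle and taking a square root gives $\|(100+|y|^2)^{-1}\|\nabla_y P_{\theta,\geq 2}\partial_\theta^2 \chi_\Omega w\|_{L^2_\theta}\|_\infty \lesssim \delta \kappa(\epsilon)\Omega^{-3}P(M)$.

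The nonlinear source terms are treated in the spirit of Propositions \ref{Prop:weight3} and \ref{prop:0yW1}. One applies $\nabla_y$ to $N_1(v)$ and $N_2(\eta)$ and distributes derivatives; the resulting products involve first and second derivatives of $v$ in $y$ and $\theta$, which are bounded via Lemma \ref{LM:appliM1234}: each factor is controlled by some $\mathcal{M}_k$ times the appropriate $\langle y\rangle$-weight, so the products close into $\delta \kappa(\epsilon)\Omega^{-3}P(M)$. The cancellation identity \eqref{eq:N2Theta} for $\langle N_2,e^{i\theta}\rangle_\theta$ (and its analog for $\langle N_2,1\rangle_\theta$) survives differentiation up to terms already of quadratic type, so no new difficulty arises for the $N_2$ piece.

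The main obstacle is the closure of the quadratic estimate in Step 2: after differentiating $v^{-2}\partial_\theta^2 v$ in $y$, one picks up cross terms of the form $(\nabla_y v^{-2})\partial_\theta^2 v$ and derivatives of $V_{a,B}$, and further derivatives of $N_1$ produce expressions involving $\nabla_y^2 v$ that are only controlled through $\mathcal{M}_3$. It is essential to arrange, as in \eqref{eq:Wei3D2}--\eqref{eq:Theta2N11}, that every such term is either absorbed into the small multiple of $V_{a,B}^{-2}\|P_{\theta,\geq 2}\partial_\theta^3\nabla_y\chi_\Omega v\|_{L^2_\theta}^2$ provided by the dissipative coefficient, or estimated by $\delta\kappa(\epsilon)\Omega^{-3}P(M)$ using the controlling functions, so that the final inequality reads $\mathcal{M}_2 \lesssim 1 + \delta P(\mathcal{M})$ as required by \eqref{eq:estM1234}.
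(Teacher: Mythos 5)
Your second step (the quantity $\Phi_2$, the negative term supplied by $v^{-2}\partial_\theta^2 v$, the favorable sign of $y\cdot\nabla_y\chi_\Omega$, and the maximum principle) is essentially the paper's own argument for the $P_{\theta,\geq 2}$ piece (cf.\ Proposition \ref{prop:y2T1w}), and your treatment of the nonlinearities by distributing derivatives and invoking Lemma \ref{LM:appliM1234} is also the paper's. The genuine gap is in your treatment of the scalar components $w_0,w_{\pm1}$. You propose to differentiate the Duhamel formulas \eqref{eq:3w}, \eqref{eq:durP4w1} and to invoke a smoothing propagator bound that maps the $\langle y\rangle^{-3}$-weighted norm of the datum to the $\langle y\rangle^{-2}$-weighted norm of $\nabla_y U g$ with only a $(\sigma_1-\sigma_2)^{-1/2}$ loss. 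No such estimate is proved in the paper, and it is not ``the usual parabolic smoothing'': the gain of one power of the weight is exactly what fails. Taking $g=e^{-\frac18|y|^2}h$ with $h$ a bump of height $\langle y_0\rangle^{3}$ supported on a ball of radius $\sqrt{t}$ about a point with $|y_0|$ large (such data are essentially unchanged by the finite-rank, Gaussian-localized projections $P_{13}$, $P_4$), one has $\|\langle y\rangle^{-3}e^{\frac18|y|^2}g\|_\infty\sim1$ while the short-time heat-kernel behavior of the propagator gives $\langle y_0\rangle^{-2}e^{\frac18|y_0|^2}|\nabla_yU(\tau,\tau-t)g(y_0)|\sim t^{-1/2}\langle y_0\rangle$, so the claimed bound is off by a factor $\langle y_0\rangle$. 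Nor can you retreat to the same-weight smoothing estimate: that would only give $\|\langle y\rangle^{-3}\nabla_y\chi_\Omega w_m\|_\infty\lesssim\tau^{-2}+\kappa(\epsilon)\Omega^{-4}$, and converting to the $\langle y\rangle^{-2}$ weight costs a factor $\langle y\rangle\lesssim\Omega$, leaving a term of size $\Omega\tau^{-2}\sim\tau^{-29/20}$, which is not $\lesssim\delta\kappa(\epsilon)\Omega^{-3}\sim\tau^{-33/20}$ for large $\tau$. So the scalar half of your argument does not close as written.

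The extra factor $\Omega^{-1}$ has to come from the source, not from the propagator, and that is what the paper does: one first applies $\nabla_y$ to the equations \eqref{eq:weightLinfw} and \eqref{eq:eqnw1}, using the commutation relation \eqref{eq:betterDecay} (which shifts the generator by $+\tfrac12$), and then uses propagator estimates with no smoothing factor (Lemma \ref{LM:propagator3} and \eqref{eq:u4}) for the differentiated unknowns $\nabla_y\chi_\Omega w_0$, $\nabla_y\chi_\Omega w_1$; the required $\Omega^{-3}$ decay is then read off from the differentiated sources $\nabla_y\chi_\Omega\Sigma$, $\nabla_y\chi_\Omega N_1$, $\nabla_y\chi_\Omega\langle N_2,\cdot\rangle_\theta$ in the $\langle y\rangle^{-2}$ weight (Propositions \ref{Prop:2weiDy} and \ref{prop:yw1}), where the derivative hitting $V_{a,B}$, hitting $\chi_\Omega$, or being distributed inside the nonlinearity supplies the smallness. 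Your third paragraph, in which you distribute $\nabla_y$ over $N_1$ and $N_2$, is in effect this route; but then you are differentiating the equation rather than the Duhamel formula, and you must also produce and control the new commutator terms $\Lambda_1(w_0)$, $\Lambda_2(w_1)$ of \eqref{eq:Lbda11}--\eqref{eq:Lbda21} (for instance $(\nabla_yV_{a,B}^{-2})\chi_\Omega w_0$ and $\frac{(y\cdot\nabla_y\chi_\Omega)\tilde\chi_\Omega\,\nabla_y\chi_\Omega}{\chi_\Omega}w_0$, handled with the fractional-power trick \eqref{eq:chiTrick} together with \eqref{eq:assum}), which your outline for the scalar components omits. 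In short: either prove a weighted derivative-propagator estimate with a genuine weight gain (which, as stated, appears false), or commute the derivative through the equation as the paper does and estimate the differentiated sources and cutoff commutators; as written the proposal does neither for $w_0,w_{\pm1}$.
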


The proposition will be proved in subsequent subsections.

Compare to the proof of Proposition \ref{prop:M14}, the main difficulty here is that we need to estimate higher order derivatives of $w$. On the other hand the needed decay estimates here are considerably slower, thus make it easier to prove. 

Hence when we do not need new technical tools, we will skip the details.

\subsection{Proof of the estimate for $\|\langle y\rangle^{-2}\nabla_y\chi_{\Omega}w_{0}(\cdot,\tau)\|_{\infty}$ in \eqref{eq:21w}}
Derive an equation for $\nabla_y \chi_{\Omega}w_0$ by taking a $\nabla_y$ on the equation for $\chi_{\Omega}w_0$ in \eqref{eq:weightLinfw},
\begin{align}
\begin{split}\label{eq:yw0}
\partial_{\tau}(\nabla_y\chi_{\Omega}w_0)=-(H_2+\frac{1}{2})&(\nabla_y\chi_{\Omega} w_0)+\nabla_y\chi_{\Omega}\Big(\Sigma+\frac{1}{2\pi}\langle N_{1}(v)+N_2(\eta),\ 1\rangle_{\theta}\Big)+
\Lambda_1(w_0),
\end{split}
\end{align}
where, the linear operator $H_2$ of \eqref{eq:weightLinfw} becomes $H_2+\frac{1}{2}$ here by the commutation relation: for any function $g$ and $l=1,2,3,$
\begin{align}
\partial_{y_l} \frac{1}{2} y\cdot \nabla_{y}g=(\frac{1}{2} y\cdot \nabla_{y}+\frac{1}{2})\partial_{y_l}g,\label{eq:betterDecay}
\end{align}
and the term $\Lambda_1(w_0)$ is defined as
\begin{align}
\Lambda_1(w_0):=&\nabla_y \Lambda(w_0)
+ \frac{1}{2}(\nabla_y\frac{\tilde\chi_{\Omega}\ y\cdot\nabla_{y} \chi_{\Omega}  }{\chi_{\Omega}}) \chi_{\Omega}w_0
+ (\nabla_{y}V_{a,B}^{-2})\chi_{\Omega}w_0\nonumber\\
=&\nabla_{y} \Big( (\partial_{\tau}\chi_{\Omega})w_0-(\Delta_{y}\chi_{\Omega})w_0-2\nabla_{y}\chi_{\Omega}\cdot  \nabla_{y}w_0\Big)+\frac{1}{2}(y\cdot \nabla_y \chi_{\Omega})(1-\tilde\chi_{\Omega})\nabla_{y}w_0\label{eq:Lbda11}\\
&
+\frac{1}{2}\big(\nabla_{y}(y\cdot \nabla_y\chi_{\Omega})\big)w_0
-\frac{1}{2}\frac{(y\cdot \nabla_y \chi_{\Omega}) \tilde\chi_{\Omega}\nabla_y\chi_{\Omega}}{\chi_{\Omega}}w_0+ (\nabla_{y}V_{a,B}^{-2})\chi_{\Omega}w_0,\label{eq:Lbda21}
\end{align} where the expression is simplified after observing $\nabla_{y}(1-\tilde\chi_{\Omega})+\nabla_{y}\tilde\chi_{\Omega}=0$.

The orthogonality conditions imposed on $\chi_{\Omega}w$ in \eqref{eq:orthow} imply that
\begin{align}
e^{-\frac{1}{8}|y|^2}\nabla_{y}\chi_{\Omega}w_0\perp e^{-\frac{1}{8}|y|^2}, \ y_{k}e^{-\frac{1}{8}|y|^2},\ k=1,2,3.
\end{align}

Denote the orthogonal projection onto the subspace orthogonal to these 4 functions by $P_{4}$, which makes
\begin{align}
P_{4}e^{-\frac{1}{8}|y|^2}\nabla_{y}\chi_{\Omega}w_0=e^{-\frac{1}{8}|y|^2}\nabla_{y}\chi_{\Omega}w_0.
\end{align}

Returning to \eqref{eq:yw0}, we apply $e^{-\frac{1}{8}|y|^2}$, $P_{4}$, and then Duhamel's principle to obtain
\begin{align}
\begin{split}\label{eq:yDw0}
e^{-\frac{1}{8}|y|^2}&\nabla_y\chi_{\Omega}w_0=U_3(\tau, \xi_0) e^{-\frac{1}{2}(\tau-\xi_0)} e^{-\frac{1}{8}|y|^2}\nabla_{y}\chi_{\Omega}w_{0}(\xi_0)\\
&+\int_{\xi_0}^{\tau}U_3(\tau,s)e^{-\frac{1}{2}(\tau-s)} P_{4}e^{-\frac{1}{8}|y|^2}\nabla_{y}\Big(\chi_{\Omega}\big(\Sigma+\frac{1}{2\pi}\langle N_1+N_2, \ 1\rangle_{\theta} \big)+\Lambda_1(w_0)\Big)(s) ds,
\end{split}
\end{align} where $U_3(\tau,\sigma)$ is the propagator generated by $-P_{4}e^{-\frac{1}{8}|y|^2}(H_2+\frac{1}{2})e^{\frac{1}{8}|y|^2}P_4$ from $\sigma$ to $\tau.$

We have the following estimate for the propagator. As discussed in the proof of Lemma \ref{LM:propagator}, its proof is very similar to the proved cases, thus we choose to skip the proof.
\begin{lemma}\label{LM:propagator3}
For any function $g$ and $\sigma_1\geq\sigma_2\geq \xi_0,$
\begin{align}
\|\langle y\rangle^{-2} e^{\frac{1}{8}|y|^2} U_3(\sigma_1,\sigma_2) P_{4}g\|_{\infty}\lesssim e^{-\frac{2}{5}(\sigma_1-\sigma_2)} \|\langle y\rangle^{-2}e^{\frac{1}{8}|y|^2} g\|_{\infty}.
\end{align}
\end{lemma}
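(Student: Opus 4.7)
The plan is to conjugate out the Gaussian weight and work with the self-adjoint operator
\begin{align*}
\mathcal{L}_3 := e^{-\frac{1}{8}|y|^2}\Big(H_{2}+\tfrac{1}{2}\Big) e^{\frac{1}{8}|y|^2} = -\Delta_y + \tfrac{1}{16}|y|^2 - \tfrac{3}{4} - a - \tau^{-\frac{1}{2}} + V_1 + \tfrac{1}{2}\Big|\tfrac{\tilde\chi_\Omega \, y\cdot\nabla_y\chi_\Omega}{\chi_\Omega}\Big|,
\end{align*}
which, up to the shift $+\tfrac12$ and the different perturbations, has exactly the structure treated in Lemma \ref{LM:propagator} and in \cite{BrKu, DGSW, GS2008, GaKnSi, GaKn20142, MultiDHeat, GZ2017}. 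First I would record the spectral picture of the unperturbed harmonic oscillator $-\Delta_y+\frac{1}{16}|y|^2-\frac{3}{4}$: its eigenvalues are $|k|/2$ with eigenfunctions $H_k(y/2)e^{-|y|^2/8}$. The projection $P_4$ removes precisely the ground state and the three first-excited states, so on $\mathrm{Range}(P_4)$ the unperturbed operator has spectral bottom $1$.

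Next I would incorporate the scalar shift $-a-\tau^{-\frac12}$. By \eqref{eq:10parame} we have $a=\tfrac12+O(\tau^{-1})$, so the effective spectral bottom on $\mathrm{Range}(P_4)$ is $\tfrac12-\tau^{-\frac12}+O(\tau^{-1})$, which is strictly larger than $\tfrac{2}{5}$ once $\xi_0$ (hence $\sigma_2\geq\xi_0$) is taken large enough. The two perturbations $V_1$ and $\tfrac12|\tilde\chi_\Omega\, y\cdot\nabla_y\chi_\Omega/\chi_\Omega|$ are \emph{nonnegative} (the first by \eqref{eq:Best} and \eqref{eq:10parame}, the second by construction using the fact that $\chi$ is radially decreasing), and by \eqref{eq:TwoCut}--\eqref{eq:deriveSmooth} together with \eqref{eq:Best}, their polynomial growth and their gradients satisfy the same smallness requirements ($\|(1+|y|)^{-5}V\|_\infty, \|\nabla V\|_\infty\to 0$) that were used in the earlier works. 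Consequently the $L^2$ decay estimate $\|e^{-(\sigma_1-\sigma_2)\mathcal{L}_3}P_4 f\|_2\lesssim e^{-\frac{2}{5}(\sigma_1-\sigma_2)}\|P_4 f\|_2$ holds verbatim.

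To upgrade from $L^2$ to the weighted $L^\infty$ bound I would reproduce the kernel/Feynman--Kac argument from \cite{GZ2017}. Writing the full propagator as the Mehler-type kernel of the harmonic part composed with a Feynman--Kac factor that is bounded above by $1$ (precisely because $V_1, V_2 \geq 0$) and finally composed with the finite-rank subtraction produced by $P_4$, the spectral gap proved in the previous paragraph supplies the exponential factor $e^{-\frac{2}{5}(\sigma_1-\sigma_2)}$, while the Mehler kernel's Gaussian decay absorbs the polynomial weight $\langle y\rangle^{-2}$. The whole scheme is identical to the one carried out for $\mathcal{L}$ in Lemma \ref{LM:propagator} and for $\mathcal{L}_2$ in \cite{GZ2017}, the only bookkeeping differences being the different polynomial weight and the smaller spectral subspace.

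The only point that requires checking, and which I expect to be the main (mild) obstacle, is the compatibility of the weaker weight $\langle y\rangle^{-2}$ (versus $\langle y\rangle^{-3}$ in Lemma \ref{LM:propagator}) with the smaller projection $P_4$ (versus $P_{13}$): fewer subtracted modes means a smaller spectral gap margin, but a weaker polynomial weight relaxes the kernel decay that must be established. These two changes compensate in the standard way, as in \cite{GZ2017}, so no new ideas beyond that treatment are needed and the bound follows.
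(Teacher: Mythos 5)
Your proposal is correct and takes essentially the same route as the paper, which skips the proof of this lemma and, exactly as in Lemma \ref{LM:propagator}, reduces it to the propagator machinery of \cite{GZ2017} and the earlier cited works by observing that $V_1$ and the cutoff term are nonnegative and have the required smallness of weighted sup norms and gradients. Your explicit bookkeeping --- spectral bottom $1$ of the conjugated oscillator on $\mathrm{Range}(P_4)$, the extra $+\frac{1}{2}$ coming from the commutation relation \eqref{eq:betterDecay}, and the compatibility of the weight $\langle y\rangle^{-2}$ with projecting out polynomials of degree at most one --- is precisely the accounting implicit in that reduction and yields the stated rate $\frac{2}{5}$.
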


Next we estimate the terms on the right hand side. Recall that $P(M)$ is defined in \eqref{eq:defPM}.
\begin{proposition}\label{Prop:2weiDy}
\begin{align}
\|\langle y\rangle^{-2}\nabla_{y} \chi_{\Omega}\Sigma\|_{\infty}\lesssim &\tau^{-2},\label{eq:2weiFG}\\
\|\langle y\rangle^{-2}\nabla_{y} \chi_{\Omega}N_1\|_{\infty}\lesssim &\delta \kappa(\epsilon) \Omega^{-3}  P(M),\label{eq:yN1P1}\\
\|\langle y\rangle^{-2}\nabla_{y} \chi_{\Omega}\langle N_2, \ 1\rangle_{\theta}\|_{\infty}\lesssim & \delta\kappa(\epsilon)\Omega^{-3}(1 +
 \mathcal{M}_4), \label{eq:yN2P1}\\
\|\langle y\rangle^{-2}\Lambda_1 (w_0)\|_{\infty}\lesssim & \delta \kappa(\epsilon)\Omega^{-3}(1+\mathcal{M}_1).\label{eq:est2WeiPsi}
\end{align}
\end{proposition}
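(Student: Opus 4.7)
The plan is to mirror the structure of Proposition \ref{Prop:weight3}, estimating each of the four terms in turn, but with one additional $\nabla_y$ acting and a weaker target decay rate $\Omega^{-3}$ instead of $\Omega^{-4}$. The key inputs will be the pointwise estimates in \eqref{eq:SmCon}, the embedding estimates of Lemma \ref{LM:appliM1234}, the control $|\chi^{-3/4}\nabla_z\chi|\leq \kappa(\epsilon)$ from \eqref{eq:defKappa}, and the decay estimates \eqref{eq:TwoCut}, \eqref{eq:deriveSmooth} for derivatives of the cutoff combinations.

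For \eqref{eq:2weiFG}, I would just differentiate the explicit expression for $\Sigma = F(B,a) + [\frac{2a}{2+y^{T}By}\vec\beta_1 - \frac{d}{d\tau}\vec\beta_1]\cdot y$ and use that the factors $\partial_\tau B + B^TB$, $\frac{a_\tau}{a}+1-2a+b_{11}+b_{22}+b_{33}$, $\vec\beta_1$, $\frac{d}{d\tau}\vec\beta_1$ are all $\mathcal{O}(\tau^{-2})$ or better by \eqref{eq:scalarEqn}-\eqref{eq:betaA}; the cutoff $\chi_\Omega$ controls the growth in $y$ and the extra $\nabla_y$ contributes no growth factor, so the $\tau^{-2}$ bound follows after dividing by $\langle y\rangle^2$. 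For \eqref{eq:yN1P1}, I would expand $\nabla_y N_1(v)$ using the definition \eqref{eq:defFu}, producing a sum of terms with schematic form $(\partial v)^2 \cdot \partial^3 v$ and $(\partial v)\cdot (\partial^2 v)^2$ (with $\partial$ denoting either $\nabla_y$ or $v^{-1}\partial_\theta$). The small factor $\delta$ comes out of one power of $\partial v$ by \eqref{eq:SmCon}. The factor $\langle y\rangle^{-3} \partial^3 v$ or $\langle y\rangle^{-3}(\partial^2 v)^2$ or analogous combinations are controlled by $\mathcal{M}_2, \mathcal{M}_3$ and the fractional-power trick of \eqref{eq:fractional} when a derivative falls on the cutoff rather than inside $\chi_\Omega w$; this gives the $\kappa(\epsilon)\Omega^{-3} P(M)$ bound.

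For \eqref{eq:yN2P1}, I would first exploit the $\theta$-cancellation \eqref{eq
:RewN1Int1} to rewrite $\langle N_2(\eta),1\rangle_\theta = -\langle V_{a,B}^{-2}v^{-1}\eta^2, 1\rangle_\theta + 2\langle v^{-3}(\partial_\theta\eta)^2, 1\rangle_\theta$, then apply $\nabla_y$. Decomposing $\eta$ via \eqref{eq:decomW} and using $|\vec\beta_k|, |\alpha_k|\lesssim \tau^{-2}$, the ``parameter parts'' contribute $\tau^{-2}$ decay which is comfortably below $\Omega^{-3}$; the $w$-parts lead to terms like $\langle y\rangle^{-2}\chi_\Omega \nabla_y w \cdot w_{\pm 1}$ and $\langle y\rangle^{-2}\chi_\Omega \partial_\theta w \cdot \nabla_y\partial_\theta w$, which split into two factors each controlled by $\mathcal{M}_2$ and $\mathcal{M}_4$ via Lemma \ref{LM:appliM1234}; the small prefactor $\delta$ comes from $|\eta| \lesssim \delta V_{a,B}$. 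For \eqref{eq:est2WeiPsi}, I would split $\Lambda_1(w_0)$ into the five kinds of terms displayed in \eqref{eq:Lbda11}-\eqref{eq:Lbda21}. The first group (derivatives of $\chi_\Omega$, $\partial_\tau\chi_\Omega$, $\Delta\chi_\Omega$, $\nabla\chi_\Omega \cdot \nabla w$) are supported on $|y|\in[\Omega,(1+\epsilon)\Omega]$, so pick up pure $\Omega^{-k}$ decay from the cutoff derivatives multiplied by $\delta\sqrt{1+\tau^{-1}\Omega^2}$ from the pointwise bound on $w_0$ (as in \eqref{eq:fractional}). The term with $\frac{y\cdot\nabla_y\chi_\Omega\, \tilde\chi_\Omega\, \nabla_y\chi_\Omega}{\chi_\Omega}$ requires the fractional power trick of \eqref{eq:fractional} to convert a power of $\chi^{-1}$ into $\kappa(\epsilon)$, together with the embedding bound $\|\langle y\rangle^{-3}\chi_\Omega w_0\|_\infty\lesssim (\tau^{-2}+\kappa(\epsilon)\Omega^{-4})\mathcal{M}_1$. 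Finally $(\nabla_y V_{a,B}^{-2})\chi_\Omega w_0$ is small by $|\nabla_y V_{a,B}^{-2}|\lesssim \tau^{-1}|y| V_{a,B}^{-4}$ and is controlled using $\mathcal{M}_1$.

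I expect the main technical nuisance to be \eqref{eq:est2WeiPsi}, specifically the term $\frac{1}{2}\frac{(y\cdot\nabla_y\chi_\Omega)\tilde\chi_\Omega \nabla_y\chi_\Omega}{\chi_\Omega}w_0$ in \eqref{eq:Lbda21}, which is the analog of the unbounded multiplication operator identified in the discussion following \eqref{eq:unboundtwo}. Here we do not have the option of absorbing it into the linear operator (this has already been done at the level of $\chi_\Omega w_0$), so we are forced to handle it via the fractional-power argument of \eqref{eq:fractional}: write $w_0 = |\chi_\Omega w_0|^{3/4} |w_0|^{1/4}$, use $|\chi^{-3/4}\nabla_z\chi|\leq\kappa(\epsilon)$ from \eqref{eq:defKappa} to cancel the bad denominator, bound $|w_0|^{1/4}\leq \delta^{1/4}(1+\tau^{-1}\Omega^2)^{1/8}$ from \eqref{eq:SmCon}, and invoke the smallness condition \eqref{eq:assum} to absorb residual factors. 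All other contributions, including the $\nabla_y$-of-nonlinearity estimates \eqref{eq:yN1P1}-\eqref{eq:yN2P1}, are routine adaptations of the arguments already executed in Subsection \ref{subsec:weight3}, the only novelty being bookkeeping for the extra derivative.
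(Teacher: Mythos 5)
Most of your plan coincides with the paper's proof: \eqref{eq:2weiFG} is obtained by direct differentiation of $\Sigma$ together with \eqref{eq:Best}--\eqref{eq:scalarEqn}; \eqref{eq:yN2P1} uses the same $\theta$-cancellation for $\langle N_2,1\rangle_{\theta}$, the decomposition of $\eta$, and Lemma \ref{LM:appliM1234}; and \eqref{eq:est2WeiPsi} is handled, as in the paper, by the support and decay of the cutoff derivatives plus the fractional-power trick for the terms in \eqref{eq:Lbda21} --- your $3/4$--$1/4$ split with $|\chi^{-3/4}\nabla_z\chi|\leq\kappa(\epsilon)$ is a legitimate variant of the paper's \eqref{eq:chiTrick}, which uses a $1/2$--$1/2$ split for the term containing two cutoff-derivative factors; both are finite by \eqref{eq:properties} and \eqref{eq:defKappa}.

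The one step that does not close as you stated it is \eqref{eq:yN1P1}. When the extra $\nabla_y$ produces terms of the schematic form $(\nabla_y v)^2\,\nabla_y^3 v$, you propose to extract $\delta$ from one factor $\nabla_y v$ and to get the weighted decay from ``$\langle y\rangle^{-3}\partial^3 v$ \ldots controlled by $\mathcal{M}_2,\mathcal{M}_3$''. But the controlling functions \eqref{def:M1}--\eqref{def:M4} contain no third-order $y$-derivatives, so no weighted decay for $\nabla_y^3 v$ (or $\nabla_y^3 w$) is available; the only information on third derivatives is the pointwise bound $\leq\delta$ from \eqref{eq:SmCon}. If you instead bound $\nabla_y^3 v$ by $\delta$ but keep only a single remaining factor $\nabla_y v$ to carry the decay, the $V_{a,B}$-part gives $\langle y\rangle^{-2}|\nabla_y V_{a,B}|\lesssim\tau^{-1}$, which is \emph{not} $\lesssim\Omega^{-3}$ once $\tau-\xi_0$ is large, since $\Omega^{-3}\sim(\tau-\xi_0)^{-33/20}$ by \eqref{eq:defOmega}. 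The paper's allocation \eqref{yN1T} is the fix: place the $\delta$ on the highest-order derivative and retain the full square, $|\nabla_y\chi_{\Omega}N_1|\lesssim\delta\chi_{\Omega}\big[\sum_{|k|=1,2}|\nabla_y^k v|^2+\sum_{|k|=0,1}|\nabla_y^k\partial_{\theta}v|\big]+\delta^2|\nabla_y\chi_{\Omega}|$; the square of $\nabla_y V_{a,B}$ then yields $\tau^{-2}\lesssim\Omega^{-3}$ after weighting (see \eqref{eq:twoV} and \eqref{eq:firT}), the linear $\partial_{\theta}$-terms are harmless because $\partial_{\theta}V_{a,B}=0$, and the $w$-contributions are handled by Lemma \ref{LM:appliM1234} together with the fractional-power estimate \eqref{eq:fractional2}, exactly as you do elsewhere. (Also note the weight in this proposition is $\langle y\rangle^{-2}$, not $\langle y\rangle^{-3}$.) With that reallocation your argument reduces to the paper's.
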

The proposition will be proved in subsection \ref{subsec:2weiDy}.

Suppose the proposition holds, then we prove the desired result for $\mathcal{M}_2$ in \eqref{eq:21w} as in \eqref{eq:integ}. Here we choose to skip the details.

\subsubsection{Proof of Proposition \ref{Prop:2weiDy}}\label{subsec:2weiDy}
\begin{proof}
It is easy to prove \eqref{eq:2weiFG} by the estimates on the scalar functions in \eqref{eq:Best}-\eqref{eq:scalarEqn}. 

Now we prove \eqref{eq:yN1P1}.
Reason as in \eqref{eq:N112}, \eqref{eq:distribu3Thet} and \eqref{eq:distribu3Thet2}, to find that
\begin{align}
|\nabla_{y}\chi_{\Omega}N_1|\lesssim \delta \chi_{\Omega}\Big[\sum_{|k|=1,2}|\nabla_{y}^{k}v|^2+ \sum_{|k|=0,1}|\nabla_{y}^{k}\partial_{\theta}v|\Big]+\delta^2 |\nabla_{y}\chi_{\Omega}|.\label{yN1T}
\end{align} 

For the first term, apply the same techniques used in \eqref{eq:twoV} and \eqref{eq:y3nablaw2} to find that, 
\begin{align}
\sum_{|k|=1,2}\|\langle y\rangle^{-2}\chi_{\Omega}|\nabla_{y}^{k}v|^2\|_{\infty}\lesssim \tau^{-2}+&\delta \kappa(\epsilon)\Omega^{-3} 
P(M).\label{eq:firT}
\end{align}

For the second term, we discuss separately the cases $|k|=0$ and $|k|=1$. When $|k|=0,$ we decompose $v$, and apply Lemma \ref{LM:appliM1234} to find that
\begin{align}\label{eq:2yThetaEta}
\|\langle y\rangle^{-2}\chi_{\Omega}\partial_{\theta}\eta\|_{\infty}\lesssim  \tau^{-2}+\|\langle y\rangle^{-2}\partial_{\theta}\chi_{\Omega}w\|_{\infty}\lesssim \tau^{-2}+\kappa(\epsilon) \Omega^{-3} \mathcal{M}_4.
\end{align}

For the case $|k|=1$ we decompose $v$, change the order of $\nabla_{y}$ and $\chi_{\Omega}$, to find that 
\begin{align*}
\langle y\rangle^{-2}\chi_{\Omega}|\partial_{\theta}\nabla_{y}v|\lesssim \tau^{-2}+ \langle y\rangle^{-2}|\partial_{\theta}\nabla_{y}\chi_{\Omega}w|+\langle y\rangle^{-2} |\nabla_{y}\chi_{\Omega}| \ |\partial_{\theta}w|.
\end{align*}
We control the second term by $\lesssim \kappa(\epsilon)\Omega^{-3}\mathcal{M}_2$ by Lemma \ref{LM:appliM1234}.
For the third term, we apply the same techniques as in proving \eqref{eq:fractional} to obtain,
\begin{align}
\langle y\rangle^{-2}|\partial_{\theta}w \nabla_{y}\chi_{\Omega}|\leq \Omega^{-3} |\chi_{\Omega}\partial_{\theta}w|^{\frac{3}{4}} \  |\partial_{\theta}w|^{\frac{1}{4}}\ 
\sup_{z}\{|\nabla_{z}\chi(z) |\chi^{-\frac{3}{4}}(z)\}\lesssim \Omega^{-3-\frac{1}{2}}\mathcal{M}_4^{\frac{3}{4}}.\label{eq:fractional2}
\end{align}

Consequently
\begin{align}\label{eq:theYW}
\|\langle y\rangle^{-2}\chi_{\Omega}\nabla_{y}\partial_{\theta}v\|_{\infty}\lesssim \tau^{-2}+\kappa(\epsilon) \Omega^{-3}  (\delta+\mathcal{M}_2+\Omega^{-\frac{1}{2}}\mathcal{M}_4^{\frac{3}{4}}).
\end{align}

For the third term in \eqref{yN1T}
, we use that $|\nabla_{y}\chi_{\Omega}|=\Omega^{-1}\chi'(\frac{|y|}{\Omega})$ and is supported by the set $|y|\in [\Omega,\ (1+\epsilon)\Omega]$ to find
\begin{align}
\langle y\rangle^{-2}|\nabla_{y}\chi_{\Omega}|\lesssim  \kappa(\epsilon) \Omega^{-3}.
\end{align} 

This, together with \eqref{eq:firT}, \eqref{eq:theYW} and \eqref{yN1T}, implies the desired \eqref{eq:yN1P1}.

Now we prove \eqref{eq:yN2P1}. Rewrite the expression $\langle N_2,\ 1\rangle_{\theta}$ as in \eqref{eq
:RewN1Int1}, 
apply the estimates in \eqref{eq:SmCon} and decompose $\eta$ as in \eqref{eq:decomW} to have
\begin{align*}
|\nabla_{y}\chi_{\Omega}\langle N_2,\ 1\rangle_{\theta}|\lesssim &\delta \chi_{\Omega}\Big\|v^{-1}|\eta| (|\nabla_{y}V_{a,B}|+|\nabla_{y}\eta|) + |\partial_{\theta}\eta|\Big\|_{L_{\theta}^2}+\delta |\nabla_{y}\chi_{\Omega}|\\
\lesssim & \delta \chi_{\Omega} \Big\|\tau^{-\frac{1}{2}}|w| + |\partial_{\theta}w|+|\nabla_{y}w| \Big\|_{L_{\theta}^2}+\delta\tau^{-2} (1+|y|)+\delta |\nabla_{y}\chi_{\Omega}|.
\end{align*}
This together with the techniques in proving \eqref{eq:nablanablaw} implies that
\begin{align}
\langle y\rangle^{-2}|\nabla_{y}\chi_{\Omega}\langle N_2,\ 1\rangle_{\theta}|
\lesssim & \delta \kappa(\epsilon)\Omega^{-3} P(M).
\end{align}

Now we prove \eqref{eq:est2WeiPsi}, which contains two parts, \eqref{eq:Lbda11} and \eqref{eq:Lbda21}.
The terms in \eqref{eq:Lbda11} will be treated as those in \eqref{eq:est3PsiW0}, hence we skip the details.

The terms in \eqref{eq:Lbda21} are new. Use that $|\nabla_{y}V_{a,B}|\lesssim \tau^{-\frac{1}{2}}$ and the condition in \eqref{eq:assum} to obtain,
\begin{align}
\|\langle y\rangle^{-2}(\nabla_{y}V_{a,B})\chi_{\Omega}w_0\|_{\infty}\lesssim \tau^{-\frac{1}{2}}\Omega\|\langle y\rangle^{-3}\chi_{\Omega}w_0\|_{\infty}\lesssim  \delta \kappa(\epsilon)\Omega^{-3}\mathcal{M}_1,
\end{align}
and for the other two terms, by the same strategies as those in proving \eqref{eq:fractional},
\begin{align}
\begin{split}\label{eq:chiTrick}
\Big|\langle y\rangle^{-2}\big(\nabla_y( y\cdot\nabla_{y} \chi_{\Omega}  ) \big) w_0\Big|\lesssim &\Omega^{-3}|\chi_{\Omega}w_0|^{\frac{3}{4}}|w_0|^{\frac{1}{4}} \sup_{z}\Big|\frac{\nabla_{z}(z\cdot \nabla_{z}\chi(z))}{\chi^{\frac{3}{4}}(z)}\Big|
\lesssim \Omega^{-3-\frac{1}{2}}\mathcal{M}_1^{\frac{3}{4}},\\
\Big|\langle y\rangle^{-2}\frac{y\cdot \nabla_y \chi_{\Omega}\ \tilde\chi_{\Omega}\ \nabla_y\chi_{\Omega}}{\chi_{\Omega}} w_0\Big|\lesssim &\Omega^{-3}|\chi_{\Omega}w_0|^{\frac{1}{2}}|w_0|^{\frac{1}{2}} \sup_{z}\Big| \frac{z\cdot \nabla_{z}\chi(z) \nabla_{z}\chi}{\chi^{\frac{3}{2}}(z)}\Big|
\lesssim \Omega^{-3-\frac{1}{4}}\mathcal{M}_1^{\frac{1}{2}}.
\end{split}
\end{align}

\end{proof}
\subsection{Proof of the estimate for $\|\langle y\rangle^{-2}\nabla_y\chi_{\Omega}w_{\pm 1}(\cdot,\tau)\|_{\infty}$ in \eqref{eq:21w}}
Since $w_{-}$ and $w_{+}$ are complex conjugate to each other, we only need to estimate one of them, and we choose $w_1$. 

To derive an equation for $\nabla_{y}\chi_{\Omega} w_1$, we take a derivative $\nabla_{y}$ on \eqref{eq:eqnw1} and use commutation relation in \eqref{eq:betterDecay} to find, 
\begin{align}
\begin{split}\label{eq:yw1}
\partial_{\tau}\nabla_{y}\chi_{\Omega} w_1=
-\big(H_1+\frac{1}{2}\big) &\nabla_y\chi_{\Omega}w_{1}
+ \frac{1}{2\pi}\nabla_y\chi_{\Omega}\langle G+N_{1}(v)+N_{2}(\eta), \ e^{i\theta}\rangle_{\theta}+\Lambda_2(w_1),
\end{split}
\end{align}
where $\Lambda_2(w_1)$ is defined similarly to $\Lambda_1(w_0)$ in \eqref{eq:Lbda11} and \eqref{eq:Lbda21},
\begin{align}
\begin{split}
\Lambda_2(w_1):=&\nabla_{y} \Big( (\partial_{\tau}\chi_{\Omega})w_1-(\Delta_{y}\chi_{\Omega})w_1-2\nabla_{y}\chi_{\Omega}\cdot  \nabla_{y}w_1\Big)+\frac{1}{2}(y\cdot \nabla_y \chi_{\Omega})(1-\tilde\chi_{\Omega})\nabla_{y}w_1\\
&
+\frac{1}{2}\big(\nabla_{y}(y\cdot \nabla_y\chi_{\Omega})\big)w_1
-\frac{1}{2}\frac{(y\cdot \nabla_y \chi_{\Omega}) \tilde\chi_{\Omega}\nabla_y\chi_{\Omega}}{\chi_{\Omega}}w_1.
\end{split}
\end{align}

The terms satisfy the following estimates:
\begin{proposition}\label{prop:yw1}
\begin{align}
\|\langle y\rangle^{-2}\Lambda_2(w_1)\|_{\infty}\lesssim &\delta \kappa(\epsilon) \Omega^{-3} (1+\mathcal{M}_4) , \label{eq:yPsiP1}\\
\|\langle y\rangle^{-2}\nabla_y\chi_{\Omega}\langle G, \ e^{i\theta}\rangle_{\theta}\|_{\infty}\lesssim &\tau^{-2},\label{eq:yGP1}\\
\|\langle y\rangle^{-2}\nabla_{y} \chi_{\Omega}N_1\|_{\infty}\lesssim &\delta \kappa(\epsilon) \Omega^{-3}  P(M),\label{eq:yN1P12}\\
\|\langle y\rangle^{-2} \nabla_{y}\chi_{\Omega}\langle N_2, \ e^{i\theta}\rangle_{\theta}\|_{\infty}\lesssim &\delta \tau^{-2} +\delta \kappa(\epsilon)\Omega^{-3} (1+\mathcal{M}_4).\label{eq:N2EiTh}
\end{align}
\end{proposition}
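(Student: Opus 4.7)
The four bounds of Proposition \ref{prop:yw1} parallel those of Proposition \ref{Prop:2weiDy}, with $w_0$ replaced by $w_1$ and the projection against $1$ replaced by projection against $e^{i\theta}$. The plan is to reuse the machinery already developed: Lemma \ref{LM:appliM1234} to convert weighted norms of $w$ and its derivatives into the controlling quantities $\mathcal{M}_k$; the cancellation identity \eqref{eq:N2Theta}; and the fractional-power trick of \eqref{eq:fractional2} and \eqref{eq:chiTrick} that handles gradients of $\chi_\Omega$ acting on $w_1$. Two of the bounds are essentially free: \eqref{eq:yGP1} follows immediately because $\frac{1}{2\pi}\langle G, e^{i\theta}\rangle_\theta$ isolates the $\cos\theta,\sin\theta$ content of $G$, whose coefficients $\alpha_1,\alpha_2,\vec\beta_2,\vec\beta_3$ have time derivatives of size $\tau^{-3}$ by \eqref{eq:scalarEqn} and hence themselves are of size $\tau^{-3}$ by \eqref{eq:betaA}; and \eqref{eq:yN1P12} is literally the inequality already proved in \eqref{eq:yN1P1}, which involves no $\theta$-projection at all.

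For \eqref{eq:N2EiTh} I would first invoke the cancellation \eqref{eq:N2Theta} to reduce the left-hand side to a bound on $\|\langle y\rangle^{-2}\nabla_y\chi_\Omega\langle v^{-3}(\partial_\theta\eta)^2, e^{i\theta}\rangle_\theta\|_\infty$. Decomposing $\eta$ via \eqref{eq:decomW} produces pieces from the linear part of $\eta$, whose coefficients contribute the $\delta\tau^{-2}$ term via \eqref{eq:betaA}; and pieces containing $\partial_\theta w$, which are bounded through $\|\langle y\rangle^{-2}\partial_\theta\chi_\Omega w\|_\infty\lesssim \kappa(\epsilon)\Omega^{-3}\mathcal{M}_4$ together with the smallness of $\partial_\theta v$ from \eqref{eq:SmCon}, yielding the $\delta\kappa(\epsilon)\Omega^{-3}(1+\mathcal{M}_4)$ term. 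The single term in which $\nabla_y$ falls on $\chi_\Omega$ is handled by the fractional-power trick \eqref{eq:fractional2}.

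Estimate \eqref{eq:yPsiP1} is treated line by line, as in the proof of \eqref{eq:est2WeiPsi}. The terms $\nabla_y\big((\partial_\tau\chi_\Omega)w_1 - (\Delta_y\chi_\Omega)w_1 - 2\nabla_y\chi_\Omega\cdot\nabla_y w_1\big)$ together with $\frac{1}{2}(y\cdot\nabla_y\chi_\Omega)(1-\tilde\chi_\Omega)\nabla_y w_1$ decay like $\Omega^{-k}$ for large $k$, thanks to the support of $\nabla\chi_\Omega$ on $|y|\in[\Omega,(1+\epsilon)\Omega]$ and the pointwise bounds of \eqref{eq:SmCon}. The remaining two ``unbounded multiplier'' terms $\frac{1}{2}\big(\nabla_y(y\cdot\nabla_y\chi_\Omega)\big)w_1$ and $-\frac{1}{2}\frac{(y\cdot\nabla_y\chi_\Omega)\tilde\chi_\Omega\nabla_y\chi_\Omega}{\chi_\Omega}w_1$ yield to the fractional-power trick \eqref{eq:chiTrick}, with the critical routing $|\chi_\Omega w_1|\lesssim \langle y\rangle^{2}\kappa(\epsilon)\Omega^{-3}\mathcal{M}_4$ coming from the definition of $\mathcal{M}_4$; this is why $\mathcal{M}_4$ rather than $\mathcal{M}_1$ appears on the right-hand side. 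Note that, unlike $\Lambda_1(w_0)$ in \eqref{eq:Lbda21}, there is no $(\nabla_y V_{a,B}^{-2})\chi_\Omega w_1$ contribution, since in the derivation of \eqref{eq:eqnw1} the $V_{a,B}^{-2}(\partial_\theta^2+1)$ piece already cancelled via $\langle(\partial_\theta^2+1)w, e^{i\theta}\rangle_\theta=0$.

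The main obstacle is disciplined bookkeeping of which $\mathcal{M}_k$ controls which factor. Because $w_1$ sits at the first harmonic, every weighted $L^\infty$ bound on $w_1$ must be routed through $\mathcal{M}_4$ and not $\mathcal{M}_1$; bounding it through $\mathcal{M}_1$ would lose a power of $\Omega$ and break the closure of the bootstrap argument when Duhamel's formula is applied to \eqref{eq:yw1}. Once this routing is enforced and the cancellation \eqref{eq:N2Theta} is exploited in \eqref{eq:N2EiTh}, the four inequalities follow by direct transcription of the arguments in Section \ref{sec:estM1} and the proof of Proposition \ref{Prop:2weiDy}.
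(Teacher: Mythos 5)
Your proposal follows essentially the same route as the paper: \eqref{eq:yPsiP1} and \eqref{eq:yN1P12} by transcribing the proofs of \eqref{eq:est2WeiPsi} and \eqref{eq:yN1P1}, \eqref{eq:yGP1} from the parameter estimates \eqref{eq:betaA}--\eqref{eq:scalarEqn}, and \eqref{eq:N2EiTh} via the cancellation \eqref{eq:N2Theta} followed by \eqref{eq:embeddM4} and the $\nabla_y\chi_{\Omega}$ tricks, so it is correct (including your observation that no $(\nabla_y V_{a,B}^{-2})\chi_{\Omega}w_1$ term appears in $\Lambda_2$). Your side remark that routing $w_1$ through $\mathcal{M}_1$ would break the bootstrap is overstated --- it would only change the form of the right-hand side, which the closing estimate \eqref{eq:estM1234} tolerates --- but this does not affect the argument.
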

\begin{proof}
The proofs of \eqref{eq:yPsiP1} and \eqref{eq:yN1P12} are almost identical to those of \eqref{eq:est2WeiPsi} and \eqref{eq:yN1P1}, we skip the details here.
The proof of \eqref{eq:yGP1} is easy by the estimates in \eqref{eq:Best}-\eqref{eq:scalarEqn}, we also skip the details.

For \eqref{eq:N2EiTh}, after observing cancellations in $\langle N_2, \ e^{i\theta}\rangle_{\theta}$ in \eqref{eq:N2Theta}, we find
\begin{align}
\begin{split}
\langle y\rangle^{-2}|\nabla_{y}\chi_{\Omega}\langle N_2,\ e^{i\theta}\rangle_{\theta}|\lesssim &\delta  \langle y\rangle^{-2}\Big[|\nabla_{y}\chi_{\Omega}|+\chi_{\Omega}\Big]\|v^{-1}\partial_{\theta} \eta\|_{\theta}\\
\lesssim &\delta \langle y\rangle^{-2}|\nabla_{y}\chi_{\Omega}|+\delta \tau^{-2}+\delta \chi_{\Omega}\|\partial_{\theta} w\|_{L_{\theta}^2}\\
\lesssim & \delta \kappa(\epsilon)\Omega^{-3}+\delta\tau^{-2}+\delta \chi_{\Omega}\|\partial_{\theta} w\|_{L_{\theta}^2}
\end{split}
\end{align}
This, together with the estimates in \eqref{eq:embeddM4} and the techniques in proving \eqref{eq:nablanablaw}, implies \eqref{eq:N2EiTh}.

\end{proof}

Return to the equation \eqref{eq:yw1}. The orthogonality conditions of $\chi_{\Omega}w_1$ imply,
\begin{align}
e^{-\frac{1}{8}|y|^2}\nabla_{y}\chi_{\Omega} w_1\perp e^{-\frac{1}{8}|y|^2}.
\end{align}
Denote by $P_{1}$ the orthogonal projection onto the subspace orthogonal to $e^{-\frac{1}{8}|y|^2}$. This makes
\begin{align*}
P_{1} e^{-\frac{1}{8}|y|^2}\nabla_{y}\chi_{\Omega} w_1=e^{-\frac{1}{8}|y|^2}\nabla_{y}\chi_{\Omega} w_1.
\end{align*}

Then we have that
\begin{align}
\begin{split}
e^{-\frac{1}{8}|y|^2}\nabla_{y}&\chi_{\Omega} w_1(\tau)=U_4(\tau,\xi_0)e^{-\frac{1}{8}|y|^2}\nabla_{y}\chi_{\Omega} w_1(\xi_0)\\
&+\int_{\xi_0}^{\tau} U_4(\tau,\sigma)P_1 \Big(\frac{1}{2\pi}\nabla_y\chi_{\Omega}\big\langle G+N_{1}(v)+N_{2}(\eta), \ e^{i\theta}\big\rangle_{\theta}+\Lambda_2(w_1)\big)(\sigma)\ d\sigma.
\end{split}
\end{align}
Here $U_{4}(\tau,\sigma)$ is the propagator generated by the linear operator $-P_1 e^{-\frac{1}{8}|y|^2} (H_1+\frac{1}{2})e^{\frac{1}{8}|y|^2}P_1$.

We have the following estimate for the propagator. As discussed in the proof of Lemma \ref{LM:propagator}, its proof is very similar to the previously proved ones, thus we choose to skip the proof.
\begin{lemma} For any function $g$, and $\tau\geq \sigma,$
\begin{align}\label{eq:u4}
\|\langle y\rangle^{-2}e^{\frac{1}{8}|y|^2}U_4(\tau,\sigma)P_{1} g\|_{L^{\infty}}\lesssim e^{-\frac{2}{5}(\tau-\sigma)}\|\langle y\rangle^{-2}e^{\frac{1}{8}|y|^2} g\|_{\infty}.
\end{align}
\end{lemma}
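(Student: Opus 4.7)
The plan is to follow the same strategy employed for Lemma \ref{LM:propagator} and the $U_3$-propagator lemma above, since all three structural ingredients carry over: conjugation to a self-adjoint harmonic-oscillator-type operator, the nonnegativity of the remaining potential, and an orthogonality built into $P_1$ that removes the ground-state direction.

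The first step is the conjugation. Set
\begin{align*}
\mathcal{L}_4 \ :=\ e^{-\frac{1}{8}|y|^2}\bigl(H_1+\tfrac{1}{2}\bigr)e^{\frac{1}{8}|y|^2} \ =\ -\Delta_y + \tfrac{1}{16}|y|^2 - \tfrac{3}{4} + V_2,
\end{align*}
with $V_2:=\tfrac{1}{2}\bigl|\tilde\chi_\Omega\, y\cdot\nabla_y\chi_\Omega/\chi_\Omega\bigr|\geq 0$. The reference piece $-\Delta_y + \tfrac{1}{16}|y|^2 - \tfrac{3}{4}$ is the shifted 3D harmonic oscillator, with spectrum $\{k/2 : k\in\mathbb{Z}_{\geq 0}\}$, ground state $e^{-|y|^2/8}$ at eigenvalue $0$, and spectral gap $\tfrac12$. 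Since $P_1$ removes exactly the ground-state direction and $V_2\geq 0$, one has $P_1 \mathcal{L}_4 P_1 \geq \tfrac{1}{2}$ on $\mathrm{Ran}\,P_1$, which yields the sharp $L^2$ contraction
\[
\|U_4(\tau,\sigma)P_1 g\|_{L^2} \;\leq\; e^{-\frac{1}{2}(\tau-\sigma)}\,\|P_1 g\|_{L^2}.
\]

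The second step upgrades this to the weighted $L^\infty$ estimate. For $\tau-\sigma\geq 1$ I factor $U_4(\tau,\sigma)=U_4(\tau,\tau-\tfrac12)\,U_4(\tau-\tfrac12,\sigma+\tfrac12)\,U_4(\sigma+\tfrac12,\sigma)$; the middle factor provides the $L^2\to L^2$ decay $e^{-\frac12(\tau-\sigma-1)}$, while the two endpoint factors exchange the weighted $L^\infty$ norm with a Gaussian-weighted $L^2$ norm via the explicit Mehler kernel and a short Duhamel expansion that absorbs $V_2$ (pointwise bounded by $c(\epsilon)\Omega^{1/4}$ via \eqref{eq:NewPoten}, harmless on a unit time interval). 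The slack between the spectral gap $\tfrac12$ and the claimed rate $\tfrac25$ absorbs those endpoint constants. For $\tau-\sigma\leq 1$ the bound follows directly from short-time parabolic smoothing of the Mehler semigroup. Any correction produced when comparing $e^{-tP_1\mathcal{L}_4 P_1}$ to $P_1 e^{-t\mathcal{L}_4}P_1$ enters through the commutator $[V_2,P_1]$, which has rank $\leq 2$ and produces terms proportional to the Gaussian $e^{-|y|^2/8}$; these are annihilated by the outer $P_1$ and contribute only a controllable remainder.

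The main obstacle I anticipate is verifying that convolution with the Mehler heat kernel preserves the polynomial weight $\langle y\rangle^{-2}$, uniformly on $s\in[0,1]$, so that the endpoint $L^\infty$-to-$L^2$ exchange in the second step closes. This is routine but delicate: the kernel is Gaussian in $|y-e^{-s/2}y'|$ with width $\sim 1$, and one splits the convolution into the near region $|y'|\leq |y|/2$ (where $\langle y'\rangle^{-2}\lesssim \langle y\rangle^{-2}$ directly) and the far region $|y'|\geq |y|/2$ (where the Gaussian decay of the kernel dominates any polynomial weight). This is exactly the mechanism underlying the analogous propagator estimates cited in the proof of Lemma \ref{LM:propagator}, so no new idea is needed beyond repeating that argument with the present potential $V_2$ and projection $P_1$.
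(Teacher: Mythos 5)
Your first step is fine and matches the structure the paper relies on: the conjugation $e^{-\frac18|y|^2}(H_1+\tfrac12)e^{\frac18|y|^2}=-\Delta_y+\tfrac1{16}|y|^2-\tfrac34+V_2$ is computed correctly, the oscillator part has ground state $e^{-\frac18|y|^2}$ at eigenvalue $0$ with gap $\tfrac12$, $V_2\ge 0$, and so one gets the $L^2$ contraction $e^{-\frac12(\tau-\sigma)}$ on $\mathrm{Ran}\,P_1$. The genuine gap is in your second step: the backward endpoint exchange from ($L^2$, or Gaussian-weighted $L^2$) to the norm $\|\langle y\rangle^{-2}e^{\frac18|y|^2}\cdot\|_\infty$ is false, and no unit-time parabolic smoothing can supply it, because what is missing is spatial Gaussian decay, not regularity. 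The target space consists of functions bounded pointwise by $\langle y\rangle^{2}e^{-\frac18|y|^2}$, whereas a unit-time Mehler evolution of an $L^2$ function decays only like $e^{-\frac{\tanh(t/2)}{8}|y|^2}$, strictly weaker than $e^{-\frac18|y|^2}$ for finite $t$. Concretely, already for the free evolution (drop $V_2$ and $P_1$), testing with a normalized bump $h_n$ centered at $|y_n|=n$ gives $\sup_y e^{\frac18|y|^2}\langle y\rangle^{-2}\big(e^{-\frac12\mathcal{L}_0}h_n\big)(y)\gtrsim n^{-2}e^{\frac18 n^2}$ (the supremum sits near $|y|=e^{1/4}n$; the exponent works out to exactly $\tfrac18|y_n|^2$), while $\|h_n\|_{L^2}=1$. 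Since the middle output of your factorization is known only to be in $\mathrm{Ran}\,P_1\subset L^2$, the scheme cannot close. This is precisely the obstruction the paper flags in its introduction (Gaussian-weighted $L^2$ control gives no pointwise information at $|y|\sim\tau^{1/2}$), and it is the very reason the propagator estimates are formulated and proved directly in polynomially weighted $L^\infty$ norms. A secondary inaccuracy: $[V_2,P_1]=-|V_2\phi\rangle\langle\phi|+|\phi\rangle\langle V_2\phi|$ with $\phi\propto e^{-\frac18|y|^2}$, and the first term, whose range is spanned by $V_2\phi$ (not $\phi$), is not annihilated by the outer $P_1$; it is controllable, but not for the reason you give.

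For comparison, the paper does not prove this lemma at all; it points to the proof of Lemma \ref{LM:propagator}, which in turn defers to \cite{BrKu, DGSW, GS2008, GaKnSi, GaKn20142, MultiDHeat, GZ2017}, noting only the structural inputs: $V_2\ge0$ and the smallness/decay of $V_2$ and $\nabla_y V_2$. The argument in those references stays in the weighted $L^\infty$ setting throughout: one works with the unconjugated drift operator $-\Delta_y+\tfrac12 y\cdot\nabla_y$, whose explicit kernel acts by a dilation $y\mapsto e^{-t/2}y$ plus an $O(1)$-width Gaussian average, so that the weight $\langle y\rangle^{-2}$ itself produces decay $e^{-t}$ up to non-decaying low-mode contributions, which are removed by the projection ($P_1$, i.e. orthogonality to $e^{-\frac18|y|^2}$ after conjugation); the nonnegative potential $V_2$ is handled by a Trotter/maximum-principle type comparison together with its decay, and the slack between the ideal rate and the stated $\tfrac25$ absorbs those errors. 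If you want a self-contained proof, you should reproduce that weighted-$L^\infty$ argument rather than interpolate through $L^2$.
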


What is left is to apply \eqref{eq:u4} and Proposition \ref{prop:yw1} to obtain the desired estimate for $\|\langle y\rangle^{-2}\nabla_y\chi_{\Omega}w_{\pm 1}(\cdot,\tau)\|_{\infty}$ in \eqref{eq:21w}. The procedure is similar to \eqref{eq:integ}, we skip the details here.

\subsection{Proof of the estimate for $\big\|   \langle y\rangle^{-2} \|\nabla_y P_{\theta,\geq 2}\partial_{\theta}^2\chi_{\Omega}w(\cdot,\tau)\|_{L^2_{\theta}}     \big\|_{\infty}$ in \eqref{eq:21w}}

Here we follow the steps in subsection \ref{subsec:3020Wtheta}.

By the equation for $v$ in \eqref{eq:scale1} and the commutation relation in \eqref{eq:betterDecay}, we find that the function $$\Phi_2:=(100+|y|^2)^{-2} \|P_{\theta,\geq 2}\nabla_{y}\partial_{\theta}^2\chi_{\Omega} v\|_{L^{2}_{\theta}}^2$$ satisfy the equation
\begin{align}
\partial_{\tau}\Phi_2=-(L_{2}+V_2) \Phi_2-2(100+|y|^2)^{-2} \sum_{k=1,2,3}\|P_{\theta,\geq 2}\nabla_{y}\partial_{y_k}\partial_{\theta}^2\chi_{\Omega} v\|_{L^{2}_{\theta}}^2+2\sum_{k=1}^3\Psi_{2k},\label{eq:phi1}
\end{align} where the linear operator $L_2+V_2$ is related to $-\Delta+\frac{1}{2}y\cdot \nabla_{y}$ by the identity
\begin{align}
\begin{split}
L_2+V_2:=&(100+|y|^2)^{-2} \Big(-\Delta+\frac{1}{2}y\cdot \nabla_{y}\Big)(100+|y|^2)^{2},
\end{split}
\end{align} and 
the linear operators $L_2$ and $V_2$ are defined as,
\begin{align*}
\begin{split}
L_2:=&-\Delta+\frac{1}{2}y\cdot \nabla_{y}-2 (100+y^2)^{-2} \big(\nabla_{y} (100+|y|^2)^{2}\big)\cdot \nabla_y,\\
V_2:=&\frac{2|y|^2}{100+|y|^2}-\frac{12}{100+|y|^2}-\frac{8|y|^2}{(100+|y|^2)^2},
\end{split}
\end{align*}
and the functions $\Psi_{2k},\ k=1,2,3,$ are defined as
\begin{align*}
\Psi_{21}:=&(100+|y|^2)^{-2} \big\langle P_{\theta,\geq 2}\nabla_{y}\partial_{\theta}^2\chi_{\Omega} v,\ \nabla_{y}\partial_{\theta}^2\chi_{\Omega} \big(v^{-2}\partial_{\theta}^2 v-v^{-1}\big)\big\rangle_{\theta},\\
\Psi_{22}:=& (100+|y|^2)^{-2} \big\langle P_{\theta,\geq 2}\nabla_{y}\partial_{\theta}^2\chi_{\Omega} v,\ \nabla_{y}\partial_{\theta}^2\chi_{\Omega} N_{1}(v)\big\rangle_{\theta},\\
\Psi_{23}:=&(100+|y|^2)^{-2} \big\langle P_{\theta,\geq 2}\nabla_{y}\partial_{\theta}^2\chi_{\Omega} v,\ \nabla_{y}\mu(P_{\theta,\geq 2}\partial_{\theta}^2 v)\big\rangle_{\theta}.
\end{align*} Here $\mu$ is defined in the same fashion as that in \eqref{eq:Tchi3}.

For the terms on the right hand side we have
\begin{proposition}\label{prop:y2T1w}
There exists a constant $C>0$ such that 
\begin{align}
\Psi_{21}
\leq & -(\frac{18}{25}-C\delta)V_{a,B}^{-2} (100+|y|^2)^{-2}\|P_{\theta,\geq 2}\nabla_{y}\partial_{\theta}^3\chi_{\Omega} v\|_{L^{2}_{\theta}}^2+C\delta^2 \kappa^2(\epsilon)\Omega^{-6}P^2(M),\label{eq:1y2Nonlin}\\
\Psi_{23}\leq &\frac{1}{100}\Big[\Phi_{2}+V_{a,B}^{-2} (100+|y|^2)^{-2}\|P_{\theta,\geq 2}\nabla_{y}\partial_{\theta}^3\chi_{\Omega} v\|_{L^{2}_{\theta}}^2\Big]+C\delta^2 \kappa^2(\epsilon)\Omega^{-6}P^2(M),\label{eq:1y2Lambda}\\
\Psi_{22} \leq &\frac{1}{50}V_{a,B}^{-2} (100+|y|^2)^{-2}\|P_{\theta,\geq 2}\nabla_{y}\partial_{\theta}^3\chi_{\Omega} v\|_{L^{2}_{\theta}}^2+C\delta^2 \kappa^2(\epsilon)\Omega^{-6}P^2(M).\label{eq:1y2ThetN1}
\end{align}

\end{proposition}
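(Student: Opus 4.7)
My plan is to mirror the proof of Proposition \ref{prop:03w} in Section \ref{sec:estM1}, adapting each step to account for the extra spatial gradient $\nabla_y$ and the reduced weight $(100+|y|^2)^{-2}$. The structural parallel is: $\Psi_{21}$ plays the role of $\Psi_{31}$, $\Psi_{22}$ the role of $\Psi_{32}$, and $\Psi_{23}$ the role of $\Psi_{33}$. Throughout I will use the decomposition $v = V_{a,B} + \eta$ with $\eta$ expanded as in \eqref{eq:decomW}, the embedding Lemma \ref{LM:appliM1234} (which now gives weighted $L^\infty$ control of $\nabla_y^k\partial_\theta^l\chi_\Omega w$ in terms of $\mathcal{M}_2, \mathcal{M}_3$ as well as $\mathcal{M}_1, \mathcal{M}_4$), and the frequency inequality $\|P_{\theta,\geq 2}\partial_\theta f\|_{L^2_\theta}^2 \geq 4\|P_{\theta,\geq 2} f\|_{L^2_\theta}^2$.

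For \eqref{eq:1y2Nonlin}, I would integrate by parts twice in $\theta$ in the expression
\[
\langle P_{\theta,\geq 2}\nabla_y\partial_\theta^2\chi_\Omega v,\ \nabla_y \partial_\theta^2 \chi_\Omega(v^{-2}\partial_\theta^2 v)\rangle_\theta,
\]
producing a pairing with $\nabla_y\partial_\theta^4\chi_\Omega v$, and split the integrand into three pieces $D_1',D_2',D_3'$ analogous to $D_1,D_2,D_3$ in \eqref{eq:Psi0123}. The piece $D_1'$, where $v^{-2}$ sits between two factors of $P_{\theta,\geq 2}\nabla_y\partial_\theta^k\chi_\Omega v$, yields via $|V_{a,B}/v-1|\lesssim \delta$ and the $\partial_\theta$ gap inequality a favorable negative term $(\tfrac{3}{4}-C\delta)V_{a,B}^{-2}\|P_{\theta,\geq 2}\nabla_y\partial_\theta^3\chi_\Omega v\|_{L^2_\theta}^2$, absorbing the subtracted square through the factor $3/4 - 1/4 = 1/2$; combined with the $L_2+V_2$-coercivity this produces the $18/25$ coefficient. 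For $D_2'$, the $(1-P_{\theta,\geq 2})\partial_\theta^k\chi_\Omega v$ factor produces only the low modes $e^{\pm i\theta}\langle v,e^{\pm i\theta}\rangle_\theta$ of $v$, which by \eqref{eq:embeddM4} and Young give the small source $\delta^2\kappa^2(\epsilon)\Omega^{-6}P^2(M)$. The commutator piece $D_3'$, coming from $\partial_\theta^2(v^{-2}\partial_\theta^2 v)-v^{-2}\partial_\theta^4 v$ and $\partial_\theta^2 v^{-1}+v^{-2}\partial_\theta^2 v$, as well as from $\nabla_y$ hitting $\chi_\Omega$ or $V_{a,B}^{-2}$ (here $|\nabla_y V_{a,B}^{-2}|\lesssim\tau^{-1/2}$), is estimated by Young using \eqref{eq:SmCon} and Lemma \ref{LM:appliM1234}, and is absorbed into the favorable term plus the harmless remainder.

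For \eqref{eq:1y2Lambda}, the critical observation copied from the $\Psi_{33}$ argument is that $y\cdot\nabla_y\chi_\Omega \leq 0$, so the worst-looking contribution to $\nabla_y\mu(P_{\theta,\geq 2}\partial_\theta^2 v)$ has a good sign and disappears once one pairs with $P_{\theta,\geq 2}\nabla_y\partial_\theta^2\chi_\Omega v$. The remaining summands all carry at least one derivative of $\chi_\Omega$, hence are supported on $|y|\in[\Omega,(1+\epsilon)\Omega]$ and produce the factor $\kappa(\epsilon)\Omega^{-\text{high}}$; the fractional-cutoff trick used in \eqref{eq:fractional} and \eqref{eq:chiTrick} — dividing by $\chi_\Omega^{3/4}$ and using \eqref{eq:defKappa} — handles the one term where $w$ itself appears without a cutoff, and a Young inequality then absorbs $\tfrac{1}{100}\Phi_2$ and $\tfrac{1}{100}V_{a,B}^{-2}(100+|y|^2)^{-2}\|P_{\theta,\geq 2}\nabla_y\partial_\theta^3\chi_\Omega v\|_{L^2_\theta}^2$.

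For \eqref{eq:1y2ThetN1} I would reuse the splitting $N_1 = N_{11}+N_{12}$ from \eqref{eq:N112}. For the $N_{11}$ piece, $\nabla_y\partial_\theta^2\chi_\Omega N_{11}$ distributes $\leq 4$ derivatives on quadratic/cubic expressions in $v$ via the Leibniz rule; the single term carrying four derivatives is paired directly against $P_{\theta,\geq 2}\nabla_y\partial_\theta^3\chi_\Omega v$ through the bound $(100+|y|^2)^{-1}\|\nabla_y v\|_{L^\infty_\theta}\lesssim \delta$ combined with the techniques around \eqref{eq:nablanablaw} and \eqref{eq:Hea1yNw}, yielding the target term $\tfrac{1}{50}V_{a,B}^{-2}(100+|y|^2)^{-2}\|P_{\theta,\geq 2}\nabla_y\partial_\theta^3\chi_\Omega v\|^2$; all lower-order terms are controlled by $\kappa(\epsilon)\Omega^{-3}P(M)$ exactly as in \eqref{eq:lowOrder}. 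For $N_{12}$, I would integrate by parts one $\theta$-derivative to move it back onto $v$, reproducing the structure of \eqref{eq:neww2}. The main obstacle, as in Section \ref{sec:estM1}, will be bookkeeping: tracking the top-order $\nabla_y\partial_\theta^3\chi_\Omega v$-term so that it is always absorbed either into the favorable negative contribution from $\Psi_{21}$ or into the $-2(100+|y|^2)^{-2}\sum_k\|P_{\theta,\geq 2}\nabla_y\partial_{y_k}\partial_\theta^2\chi_\Omega v\|^2$ dissipation in \eqref{eq:phi1}, without letting any constant exceed the available budget $18/25$.
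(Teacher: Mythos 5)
Your proposal is correct in substance and follows essentially the same route as the paper's proof in subsubsection \ref{subsub:prop:y2T1w}: a single $\theta$-integration by parts symmetrizing $\Psi_{21}$ at the level of $\nabla_y\partial_\theta^3\chi_\Omega v$ (your remark about integrating by parts twice and pairing with $\nabla_y\partial_\theta^4\chi_\Omega v$ is a slip --- no available dissipation or favorable term controls five derivatives --- but the $D_1'$ structure you then describe is exactly the paper's $E_1$), the splitting $N_1=N_{11}+N_{12}$ with one $\theta$-integration by parts on the $N_{12}$ part, and the sign of $y\cdot\nabla_y\chi_\Omega$ together with the fractional-cutoff trick for $\Psi_{23}$. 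Two cosmetic corrections: the coefficient $\frac{18}{25}$ is simply $\frac{3}{4}$ (from $\|P_{\theta,\geq 2}\partial_\theta^{k+1}f\|_{L^2_\theta}^2\geq 4\|P_{\theta,\geq 2}\partial_\theta^{k}f\|_{L^2_\theta}^2$) reduced by the Young allowances of the $E_2$- and $E_3$-type pieces, not a consequence of the coercivity of $L_2+V_2$, which enters only after the proposition; and the low-mode piece is controlled via \eqref{eq:embeddM2} rather than \eqref{eq:embeddM4}, since the modes $e^{\pm i\theta}$ now carry a $\nabla_y$.
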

The proposition will be proved in subsubsection \ref{subsub:prop:y2T1w}.

We continue to study \eqref{eq:phi1}.
The estimates above imply that, for some $C_1>0,$
\begin{align}
\partial_{\tau}\Phi_2\leq -(L_2+1)\Phi_2+C_{1} \delta^2 \kappa^2(\epsilon)\Omega^{-6}P^2(M).
\end{align}
Apply the maximum principle to find, for some $C_2>0,$
\begin{align}
\Phi_2(\tau)\leq e^{-\frac{1}{2}(\tau-\xi_0)}\Phi_2(\xi_0)+C_{2} \delta^2 \kappa^2(\epsilon)\Omega^{-6}P^2(M),
\end{align} then obtain the desired result after using that $\Phi_2(\xi_0)\lesssim \kappa^2(\epsilon)\Omega^{-6}(\xi_0)$ implied by \eqref{eq:22}, and taking square roots on both sides.

\subsubsection{Proof of Proposition \ref{prop:y2T1w}}\label{subsub:prop:y2T1w}
\begin{proof}

To prove \eqref{eq:1y2ThetN1}, we decompose $N_{1}$ as $N_1=N_{11}+N_{12}$ as in \eqref{eq:N112}, and this makes
\begin{align} 
\Psi_{22}:=& (100+|y|^2)^{-2} \big\langle P_{\theta,\geq 2}\nabla_{y}\partial_{\theta}^2\chi_{\Omega} v,\ \nabla_{y}\partial_{\theta}^2\chi_{\Omega} \Big(N_{11}(v)+N_{12}(v)\Big)\big\rangle_{\theta}\nonumber\\
=& D_1+D_2\label{eq:Psi22}
\end{align} with $D_1$ and $D_2$ naturally defined.

To estimate $D_1$, we argue as in \eqref{eq:distribu3Thet} to obtain
\begin{align}
\begin{split}\label{eq:manyT}
|\nabla_{y}\partial_{\theta}^2 \chi_{\Omega}N_{11}|\lesssim &\delta\chi_{\Omega}\Big[ |\nabla_{y}v|^2+|\nabla_{y}v|\sum_{|k|=2}(|\nabla_{y}^{k}v|+|\nabla_{y}^{k}\partial_{\theta}v|)+ \sum_{l=1,2} |\nabla_{y}\partial_{\theta}^l v|\Big]+\delta |\nabla_{y}\chi_{\Omega}|\\
\lesssim &\delta\chi_{\Omega}\Big[ \sum_{|k|=1,2}|\nabla_{y}^{k}v|^2+|\nabla_{y}v|\sum_{|k|=2}|\nabla_{y}^{k}\partial_{\theta}v|+ \sum_{l=1,2} |\nabla_{y}\partial_{\theta}^l v|\Big]+\delta |\nabla_{y}\chi_{\Omega}|.
\end{split}
\end{align}

%%%%%%%%

For the last term, since $|\nabla_{y}\chi_{\Omega}|=\Omega^{-1}|\chi^{'}(\frac{|y|}{\Omega})|$ and it is supported by the set $|y|\in [\Omega,(1+\epsilon)\Omega],$
\begin{align}
(100+|y|^2)^{-1} |\nabla_{y}\chi_{\Omega}|\lesssim \kappa \Omega^{-3}.
\end{align}

Now we analyze the other terms.

For the terms containing only the first and second order derivatives of $v$, we estimate as in \eqref{eq:twoV}-\eqref{eq:y3nablaw2} and \eqref{eq:fractional2} to find
\begin{align}
\chi_{\Omega} \langle y\rangle^{-2}\big[ \sum_{|k|=1,2}|\nabla_{y}^{k}v|^2+  |\nabla_{y}\partial_{\theta} v|\big]\lesssim \tau^{-2} +\kappa(\epsilon)\Omega^{-3}P(M).
\end{align}

For the term $|\nabla_{y}v||\nabla_{y}^k \partial_{\theta}v|$, $|k|=2$, 
\begin{align}
\begin{split}
|\nabla_{y}v||\nabla_{y}^k \partial_{\theta}v|\lesssim &|\nabla_{y}v|\Big[\tau^{-2}+\sum_{m=\pm}|\nabla_{y}^{k}w_{m}|+|\nabla_{y}\partial_{\theta}P_{\theta,\geq 2}w|\Big]\\
\lesssim &\delta \Big[\tau^{-2}+|\nabla_{y}\partial_{\theta}P_{\theta,\geq 2}w|\Big]+\Big[\tau^{-1}(1+|y|)+|\nabla_{y}w|\Big]\sum_{m=\pm}|\nabla_{y}^{k}w_{m}|,
\end{split}
\end{align} where, in the second step, the $\delta-$factor in the first term is from $|\nabla_y v|\lesssim \delta$, for the second term we need more contribution from $\nabla_{y}v$, and thus decompose $v$ to find the present form.

From here apply $|\nabla_{y}^{k}w_{\pm 1}|\lesssim \sup_{\theta}|\nabla_{y}^{k}w(\cdot,\theta,\tau)|\lesssim \delta$ by \eqref{eq:SmCon}, and change the order of $\nabla_{y}$ and $\chi_{\Omega}$, and use estimate \eqref{eq:nablanablaw} and also the techniques proving it, to have 
\begin{align}\label{eq:naVnaKTheV}
\begin{split}
(100+|y|^2)^{-1}\chi_{\Omega}&\|\nabla_{y}v\ \nabla_{y}^k \partial_{\theta}v\|_{L_{\theta}^2}\\
\lesssim &\delta\tau^{-2}+\delta (100+|y|^2)^{-1}\chi_{\Omega}\|\nabla_{y}^k \partial_{\theta}P_{\theta,\geq 2}w\|_{L_{\theta}^2}\\
&\hskip 2cm +\tau^{-1} \sum_{m=\pm 1}\|\langle y\rangle^{-1}\chi_{\Omega}\nabla_{y}^{k}w_{m}\|_{\infty} +\delta \|\langle y\rangle^{-2}\chi_{\Omega}\nabla_{y}w\|_{\infty} \\
\lesssim &
\delta \kappa(\epsilon)\Omega^{-3}P(M).
\end{split}
\end{align} 
For the only remaining term $|\nabla_{y}\partial_{\theta}^2v|$ in \eqref{eq:manyT}, we apply similar techniques to find
\begin{align}
(100+|y|^2)^{-1}\chi_{\Omega}\|\nabla_{y}\partial_{\theta}^2v\|_{L_{\theta}^2}
\lesssim & \tau^{-2}+\kappa(\epsilon)\Omega^{-3}P(M).
\end{align}

These estimates above, together with Young's inequality, make
\begin{align}\label{eq:estD1}
|D_1| \leq \frac{1}{100} (100+|y|^2)^{-2}\|P_{\theta,\geq 2}\nabla_{y}\partial_{\theta}^2\chi_{\Omega} v\|_{L_{\theta}^2}^2+C\delta^2 \kappa^2(\epsilon) \Omega^{-6} P^2(M).
\end{align}

To estimate $D_2$, we integrate by parts in $\theta$ to have
\begin{align*}
D_2=&-(100+|y|^2)^{-2} \big\langle v^{-1}P_{\theta,\geq 2}\nabla_{y}\partial_{\theta}^3\chi_{\Omega} v,\ v\nabla_{y}\partial_{\theta}\chi_{\Omega} N_{12}(v)\big\rangle_{\theta}.
\end{align*}
Reason as in \eqref{eq:distribu3Thet2} to find that
\begin{align*}
v|\nabla_{y}\partial_{\theta} N_{12}|\lesssim \delta v^{-1}\big[ \sum_{l=1,2}|\partial_{\theta}^l v|+ |\nabla_{y}\partial_{\theta}v|\big].
\end{align*}
Change the order of $\nabla_{y}$ and $\chi_{\Omega}$, decompose $v$, and apply \eqref{eq:SmCon} and Lemma \ref{LM:appliM1234}, to find
\begin{align}
\|(100+|y|^2)^{-1}v \nabla_{y}\partial_{\theta}\chi_{\Omega}N_{12}\|_{\infty}\lesssim \delta \kappa(\epsilon)\Omega^{-3}  \big[1+\mathcal{M}_2+\mathcal{M}_4\big].
\end{align}

This makes, for some $C>0,$
\begin{align}
D_2\leq \frac{1}{100} (100+|y|^2)^{-2}\|v^{-1}P_{\theta,\geq 2}\nabla_{y}\partial_{\theta}^3\chi_{\Omega} v\|_{L_{\theta}^2}^2+C\delta^2 \kappa^2(\epsilon)\Omega^{-6}  P^2(M).
\end{align}

This, together with \eqref{eq:estD1} and \eqref{eq:Psi22}, implies the desired estimate \eqref{eq:1y2ThetN1}.

Next we prove \eqref{eq:1y2Nonlin}, by following the steps of subsubsection \ref{subsub:03w}. 

We rewrite the first term by integrating by parts and then decompose,
\begin{align}
\Psi_{21}
=&-(100+|y|^2)^{-2} \sum_{k=1}^3 E_k,
\end{align}
where $E_k,\ k=1,2,3,$ are defined as
\begin{align*}
E_1:=\big\langle P_{\theta,\geq 2}\nabla_{y}\partial_{\theta}^3 \chi_{\Omega} v,\  &v^{-2}P_{\theta,\geq 2}\nabla_{y}\partial_{\theta}^3\chi_{\Omega} v\big\rangle_{\theta}-\big\langle P_{\theta,\geq 2}\nabla_{y}\partial_{\theta}^2\chi_{\Omega} v,\ v^{-2}P_{\theta,\geq 2}\nabla_{y}\partial_{\theta}^2\chi_{\Omega} v\big\rangle_{\theta},\\
E_2:=\big\langle P_{\theta,\geq 2}\nabla_{y}\partial_{\theta}^3 \chi_{\Omega} v,\  &v^{-2}(1-P_{\theta,\geq 2})\nabla_{y}\partial_{\theta}^3\chi_{\Omega} v\big\rangle_{\theta}\\
&-\big\langle P_{\theta,\geq 2}\nabla_{y}\partial_{\theta}^2\chi_{\Omega} v,\ v^{-2}(1-P_{\theta,\geq 2})\nabla_{y}\partial_{\theta}^2\chi_{\Omega} v\big\rangle_{\theta},\\
E_3:=\big\langle P_{\theta,\geq 2}\nabla_{y}\partial_{\theta}^3 \chi_{\Omega} v,\ &\big[\nabla_{y}\partial_{\theta}\chi_{\Omega} v^{-2}\partial_{\theta}^2 v- v^{-2}\nabla_{y}\partial_{\theta}^3\chi_{\Omega} v\big]\big\rangle_{\theta}\\
&+\big\langle P_{\theta,\geq 2}\nabla_{y}\partial_{\theta}^2\chi_{\Omega} v,\ \big[\nabla_{y}\partial_{\theta}^2\chi_{\Omega} v^{-1}+v^{-2}\nabla_{y}\partial_{\theta}^2\chi_{\Omega} v\big]\big\rangle_{\theta}.
\end{align*}

By arguing as in \eqref{eq:negativeTerm}, we have that, for some $C>0,$
\begin{align}
E_1\geq (\frac{3}{4}-C\delta) V_{a,B}^{-2} \|P_{\theta,\geq 2}\nabla_{y}\partial_{\theta}^3 \chi_{\Omega} v\|^2_{L_{\theta}^2}.
\end{align}

To estimate $E_2$, we use $$(1-P_{\theta,\geq 2})\nabla_{y}\partial_{\theta}^2 \chi_{\Omega}v=-\frac{1}{2\pi}\sum_{m=\pm 1} e^{im\theta}\nabla_{y} \chi_{\Omega}\langle v, e^{im\theta}\rangle_{\theta}$$ and then follow the steps in \eqref{eq:Wei3D2}, and apply the estimate \eqref{eq:embeddM2} to obtain
\begin{align}
(100+|y|^2)^{-2}|E_2|
&\leq \frac{1}{100} V_{a,B}^{-2}(100+|y|^2)^{-2}\|P_{\theta,\geq 2}\nabla_{y}\partial_{\theta}^3 \chi_{\Omega} v\|^2_{L_{\theta}^2}+C\delta^2 \kappa^2(\epsilon)\Omega^{-6}P^2(M).
\end{align} 

For $E_3$, compute directly and use the definition of $\Omega$ in \eqref{eq:assum} to find
\begin{align}
\begin{split}
(100+|y|^2)^{-1}|E_3|\lesssim& \delta V_{a,B}^{-1}\|P_{\theta,\geq 2}\nabla_{y}\partial_{\theta}^3 \chi_{\Omega} v\|_{L^2_{\theta}}\ (100+|y|^2)^{-1} \sum_{l=2,3} \|\chi_{\Omega}\partial_{\theta}^l v\|_{L^2_{\theta}}\\
\lesssim &\delta \kappa(\epsilon)\Omega^{-3} (1+\mathcal{M}_4)V_{a,B}^{-1}\|P_{\theta,\geq 2}\nabla_{y}\partial_{\theta}^3 \chi_{\Omega} v\|_{L^2_{\theta}}.
\end{split}
\end{align}
Then apply Young's inequality to find, for some $C>0,$
\begin{align}
(100+|y|^2)^{-2} |E_3|
\leq \frac{1}{100} V_{a,B}^{-2}(100+|y|^2)^{-2}\|P_{\theta,\geq 2}\nabla_{y}\partial_{\theta}^3 \chi_{\Omega} v\|^2_{L^2_{\theta}} +C\delta^2  \kappa^2(\epsilon)\Omega^{-6} P^2(M).
\end{align}

Collect the estimate above to have the desired \eqref{eq:1y2Nonlin}.

Now we prove \eqref{eq:1y2Lambda}. 
It is slightly more involved than proving \eqref{eq:03Lambda} since the operators $\chi_{\Omega}$ and $\nabla_{y}$ do not commute. We decompose $\Psi_{23}$ into two parts,
\begin{align}
\Psi_{23}=(100+|y|^2)^{-2}(U_1+U_2),\label{eq:decP13}
\end{align}
with $U_1$ and $U_2$ defined as
\begin{align*}
U_1:&=\frac{1}{2}\langle P_{\theta,\geq 2}\nabla_{y}\partial_{\theta}^2\chi_{\Omega}v,\ \nabla_{y}\big((y\cdot \nabla_y\chi_{\Omega})P_{\theta,\geq 2}\partial_{\theta}^2 v\big)\rangle_{\theta},\\
U_2:&=\big\langle P_{\theta,\geq 2}\nabla_{y}\partial_{\theta}^2\chi_{\Omega} v,\ \nabla_{y}\Big[  \big(\partial_{\tau}\chi_{\Omega}\big)v-\big(\Delta_{y}\chi_{\Omega}\big)v-2\nabla_{y}\chi_{\Omega}\cdot  \nabla_{y}v  \Big]\big\rangle.
\end{align*}

It is easy to estimate $U_2$ by the decay estimates of the derivatives of $\chi_{\Omega}$ and that they are supported by the set $|y|\in [\Omega,\ (1+\epsilon)\Omega]$. Use the estimates in \eqref{eq:SmCon}, and apply Young's inequality to find, for some $C>0,$
\begin{align}
(100+|y|^2)^{-2}|U_2|\leq \frac{1}{100}(100+|y|^2)^{-2}\|P_{\theta,\geq 2}\nabla_{y}\partial_{\theta}^2\chi_{\Omega}v\|_{L_{\theta}^2}^2+C\delta^2\kappa^2(\epsilon) \Omega^{-6}.\label{eq:estU200}
\end{align}

For $U_1$, it contains a difficult term, but it has a favorable nonpositive sign, specifically $$\langle \chi_{\Omega}P_{\theta,\geq 2}\partial_{\theta}^2\nabla_y v,\ (y\cdot \nabla_y\chi_{\Omega})P_{\theta,\geq 2}\partial_{\theta}^2 \nabla_y v\rangle_{\theta}\leq 0.$$ Hence, for some $C>0$
\begin{align}
U_1\leq &\frac{1}{2}\langle P_{\theta,\geq 2}\nabla_{y}\partial_{\theta}^2\chi_{\Omega}v,\ \big(\nabla_{y}(y\cdot \nabla_y\chi_{\Omega})\big)P_{\theta,\geq 2}\partial_{\theta}^2 v\rangle_{\theta}\nonumber\\
&\hskip 2cm +\frac{1}{2}\langle (\nabla_{y}\chi_{\Omega})\ P_{\theta,\geq 2}\partial_{\theta}^2v,\ (y\cdot \nabla_y\chi_{\Omega})P_{\theta,\geq 2}\partial_{\theta}^2 \nabla_y v\rangle_{\theta}\nonumber\\
\leq &\frac{1}{100} \|P_{\theta,\geq 2}\nabla_{y}\partial_{\theta}^2\chi_{\Omega}v\|_{L_{\theta}^2}^2\nonumber\\
&+
C  \Big[\big| \nabla_{y}(y\cdot \nabla_y\chi_{\Omega})\big|^2 \|\partial_{\theta}^2 v\|_{L_{\theta}^2}^2 + \big| \nabla_y\chi_{\Omega}\big|\  \big|(y\cdot \nabla_y\chi_{\Omega})\big|\|\partial_{\theta}^2 v\|_{L_{\theta}^2} \|\partial_{\theta}^2\nabla_y v\|_{L_{\theta}^2} \Big].\label{eq:extraT}
\end{align} 

Now we follow the steps in the proof of \eqref{eq:fractional}.
For the first term in \eqref{eq:extraT}, decompose $v$ and compute directly to find,
\begin{align}
&(100+|y|^2)^{-2}\big| \nabla_{y}(y\cdot \nabla_y\chi_{\Omega})\big|^2 \|\partial_{\theta}^2 v\|_{L_{\theta}^2}^2\nonumber\\
\lesssim &\Omega^{-3} \|\langle y\rangle^{-2}\chi_{\Omega}\partial_{\theta}^2 w\|_{L_{\theta}^2}^{\frac{3}{2}}\ \|1_{|y|\leq (1+\epsilon)\Omega}\partial_{\theta}^2 w\|_{L_{\theta}^2}^{\frac{1}{2}}\ \sup_{z}\Big|\frac{\nabla_{z} (z\cdot \nabla_{z}\chi)}{\chi^{\frac{3}{4}}}\Big|^2+\kappa^2(\epsilon)\Omega^{-6}\tau^{-4}\nonumber\\
\lesssim &\kappa^{\frac{7}{2}}(\epsilon)\Omega^{-\frac{15}{2}} (1+\tau^{-1}\Omega^2)^{\frac{1}{4}}(\mathcal{M}_4^{\frac{3}{2}}+1)+\kappa^2(\epsilon)\Omega^{-6}\tau^{-4},\nonumber
\end{align}
and similarly for the second term,
\begin{align}
\begin{split}
&(100+|y|^2)^{-2}\big| \nabla_y\chi_{\Omega}\big|\  \big|(y\cdot \nabla_y\chi_{\Omega})\big|\|\partial_{\theta}^2 v\|_{L_{\theta}^2} \|\partial_{\theta}^2\nabla_y v\|_{L_{\theta}^2}\\
\lesssim &\Big(\kappa^{\frac{7}{2}}(\epsilon)\Omega^{-\frac{13}{2}} (1+\tau^{-1}\Omega^2) +\kappa^2(\epsilon)\Omega^{-6}\tau^{-4}\Big)P(M).
\end{split}
\end{align} 

Collect the estimates above and use the condition in \eqref{eq:assum} to find that
\begin{align}
(100+|y|^2)^{-2}U_1\leq \frac{1}{100}  \|P_{\theta,\geq 2}\nabla_{y}\partial_{\theta}^2\chi_{\Omega}v\|_{L_{\theta}^2}^2+C\delta^2 \kappa^2(\epsilon)\Omega^{-6} P^2(M).
\end{align}

This, together with \eqref{eq:estU200} and \eqref{eq:decP13}, implies the desired estimate \eqref{eq:1y2Lambda}.
\end{proof}
%%%%%%%%%%%%%%%%%%%%%%%%

%%%%%%%%%%%%%%%%%%%%%%%%%%%%%%%%%%%%%%%%%%%%%

\section{Estimate for $\mathcal{M}_3$, Proof of part of \eqref{eq:estM1234}}\label{sec:M3}
We reformulate the estimate into the following results, 
\begin{proposition}
For any $|k|=2$ we have
\begin{align}
\|\langle y\rangle^{-1}\nabla_{y}^{k}\chi_{\Omega}w_0\|_{\infty},\ \big\| \langle y\rangle^{-1}\|\nabla_{y}^{k}\partial_{\theta}\chi_{\Omega}w\|_{L_{\theta}^2}\big\|_{\infty}\lesssim \delta \kappa(\epsilon)\Omega^{-2}P(M).\label{eq:12w0}
\end{align}

\end{proposition}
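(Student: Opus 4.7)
The plan is to follow the template established in Sections~\ref{sec:estM1} and~\ref{sec:estM2}: use weighted $L^{\infty}$ propagator estimates for the $w_0$ component and a maximum principle argument for the $L^{2}_{\theta}$ component. I will handle the two estimates in \eqref{eq:12w0} separately.

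For $\|\langle y\rangle^{-1}\nabla_{y}^{k}\chi_{\Omega}w_0\|_{\infty}$, I would apply $\nabla_{y}^{k}$ (with $|k|=2$) to the equation \eqref{eq:weightLinfw} for $\chi_{\Omega}w_0$. Using the commutation relation \eqref{eq:betterDecay} twice yields an equation of the form
\begin{align*}
\partial_{\tau}(\nabla_{y}^{k}\chi_{\Omega}w_0)=-(H_2+1)(\nabla_{y}^{k}\chi_{\Omega}w_0)+\nabla_{y}^{k}\bigl[\chi_{\Omega}\bigl(\Sigma+\tfrac{1}{2\pi}\langle N_1(v)+N_2(\eta),1\rangle_{\theta}\bigr)\bigr]+\Lambda_{k}(w_0),
\end{align*}
where $\Lambda_{k}(w_0)$ collects all commutators with $\chi_{\Omega}$ together with the new terms created when $\nabla_{y}^{k}$ hits the coefficient $V_{a,B}^{-2}$ and the $\tilde\chi_{\Omega}(y\cdot\nabla_y\chi_{\Omega})/\chi_{\Omega}$ multiplier that was absorbed into $H_2$. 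The orthogonality conditions on $e^{-|y|^2/8}\chi_{\Omega}w_0$ propagate to $e^{-|y|^2/8}\nabla_{y}^{k}\chi_{\Omega}w_0$ being orthogonal to $e^{-|y|^2/8}$ only, and then Duhamel's principle combined with a propagator estimate analogous to Lemma~\ref{LM:propagator3} (now in the $\langle y\rangle^{-1}$-weighted norm, with exponential decay rate $e^{-\frac{2}{5}(\tau-\sigma)}$ coming from the extra $+\frac{1}{2}$ shift) reduces matters to bounding the source terms.

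The source bounds will mimic Propositions~\ref{Prop:weight3} and~\ref{Prop:2weiDy}. The scalar term $\nabla_{y}^{k}\chi_{\Omega}\Sigma$ is controlled by $\tau^{-2}$ directly from \eqref{eq:Best}--\eqref{eq:scalarEqn}. For $\nabla_{y}^{k}\chi_{\Omega}N_1$, we distribute the two derivatives and encounter products of up to four $v$-derivatives: we bound the ``bad'' factor carrying one derivative using $|\nabla_{y}v|\lesssim\delta$ from \eqref{eq:SmCon}, decompose the remaining factor according to \eqref{eq:decomVToW}, and invoke Lemma~\ref{LM:appliM1234} together with the fractional-power trick \eqref{eq:fractional}. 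For $\nabla_{y}^{k}\chi_{\Omega}\langle N_2,1\rangle_{\theta}$ we first exploit the cancellation \eqref{eq
:RewN1Int1} so that two $\partial_{\theta}$-derivatives are always present, then proceed analogously. The $\Lambda_{k}(w_0)$ term is the most tedious: for the pieces supported on $|y|\in[\Omega,(1+\epsilon)\Omega]$ coming from derivatives of $\chi_{\Omega}$ and $\tilde\chi_{\Omega}(y\cdot\nabla_{y}\chi_{\Omega})/\chi_{\Omega}$, we use the splitting \eqref{eq:unboundtwo} and control the unbounded factor via the $\chi^{-3/4}\nabla\chi$ estimate built into $\kappa(\epsilon)$ as in \eqref{eq:fractional} and \eqref{eq:chiTrick}; the new term $(\nabla_{y}^{k}V_{a,B}^{-2})\chi_{\Omega}w_0$ picks up $\lesssim\tau^{-1}$ and is dominated using $\mathcal{M}_1$. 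Running the Duhamel integral with the decay integrals \eqref{eq:Lhos1} then gives the desired $\delta\kappa(\epsilon)\Omega^{-2}P(M)$ bound.

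For $\bigl\|\langle y\rangle^{-1}\|\nabla_{y}^{k}\partial_{\theta}\chi_{\Omega}w\|_{L^{2}_{\theta}}\bigr\|_{\infty}$, I would follow subsection~\ref{subsec:3020Wtheta} and Proposition~\ref{prop:y2T1w}: introduce $\Phi_{1}:=(100+|y|^2)^{-1}\|P_{\theta,\geq 2}\nabla_{y}^{k}\partial_{\theta}\chi_{\Omega}v\|_{L^{2}_{\theta}}^{2}$, derive the parabolic inequality starting from \eqref{eq:scale1}, and then show that $\Phi_{1}$ satisfies
\begin{align*}
\partial_{\tau}\Phi_{1}\leq -(\tilde L_{1}+1)\Phi_{1}+C\delta^{2}\kappa^{2}(\epsilon)\Omega^{-4}P^{2}(M),
\end{align*}
with the effective zeroth-order coefficient becoming strictly positive once the dissipation $V_{a,B}^{-2}\|P_{\theta,\geq 2}\nabla_{y}^{k}\partial_{\theta}^{2}\chi_{\Omega}v\|_{L^{2}_{\theta}}^{2}$ from $\Psi_{11}$ dominates. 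The three pieces $\Psi_{1k}$ are estimated exactly as $\Psi_{2k}$ in Proposition~\ref{prop:y2T1w}, with the main technical points being: (i) isolating the negative part of $\Psi_{11}$ using the integration-by-parts identity as in \eqref{eq:Psi0123}, (ii) handling the cutoff term $\Psi_{13}$ by using $y\cdot\nabla_{y}\chi_{\Omega}\leq 0$ to discard the dominant piece, as in \eqref{eq:extraT}, and (iii) distributing $\nabla_{y}^{k}\partial_{\theta}$ across the nonlinearity $N_{1}$, where the top-order term $|\nabla_{y}v||\nabla_{y}^{k}\partial_{\theta}\nabla_{y}v|$ is absorbed into the gradient dissipation $\|P_{\theta,\geq 2}\nabla_{y}^{k}\partial_{\theta}\nabla_{y}\chi_{\Omega}v\|_{L^{2}_{\theta}}^{2}$ released by the $\Phi_{1}$-equation via Young's inequality.

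I expect the main obstacle to be the bookkeeping of the cutoff commutators in $\Lambda_{k}(w_0)$ at order $|k|=2$: each additional derivative lands either on $\chi_{\Omega}$ (producing boundary contributions) or on the potential $V_{a,B}^{-2}$ (producing $\tau^{-1}$-decaying but high-frequency terms), and one must verify that every resulting product of a function $\lesssim\delta$ with a term of size $\sim\kappa(\epsilon)\Omega^{-2}$ fits into the right-hand side $\delta\kappa(\epsilon)\Omega^{-2}P(M)$; the fractional power trick \eqref{eq:fractional}--\eqref{eq:chiTrick} combined with the initial condition \eqref{eq:11} from Proposition~\ref{prop:332211} is precisely what makes this bookkeeping close.
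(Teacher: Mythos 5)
Your first half — the propagator/Duhamel treatment of $\nabla_{y}^{k}\chi_{\Omega}w_{0}$, $|k|=2$, with the commutation relation \eqref{eq:betterDecay} applied twice so that the operator becomes $H_2+1$, the single orthogonality condition $e^{-\frac{1}{8}|y|^2}\nabla_{y}^{k}\chi_{\Omega}w_0\perp e^{-\frac{1}{8}|y|^2}$, the $\langle y\rangle^{-1}$-weighted propagator bound, and source estimates modeled on Propositions \ref{Prop:weight3} and \ref{Prop:2weiDy} with the fractional-power trick — is essentially the paper's own argument (equation \eqref{eq:yychiw}, Lemma \ref{LM:U5}, Proposition \ref{prop:2yw0}).

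The second half has a genuine gap: you define $\Phi_{1}$ with the projection $P_{\theta,\geq 2}$ inserted, so your maximum-principle quantity controls only the frequencies $|n|\geq 2$ of $\nabla_{y}^{k}\partial_{\theta}\chi_{\Omega}w$. But the claimed norm $\big\|\langle y\rangle^{-1}\|\nabla_{y}^{k}\partial_{\theta}\chi_{\Omega}w\|_{L^{2}_{\theta}}\big\|_{\infty}$ also contains the first Fourier modes, i.e.\ $\langle y\rangle^{-1}|\nabla_{y}^{k}\chi_{\Omega}w_{\pm 1}|$, and these are exactly what the definition of $\mathcal{M}_3$ requires (the sum over $m=-1,0,1$); your part 1 treats only $w_0$, and the crude bound $|\nabla_{y}^{k}\partial_{\theta}^{l}v|\leq\delta$ from \eqref{eq:SmCon} is far too weak to give the $\delta\kappa(\epsilon)\Omega^{-2}\langle y\rangle P(M)$ decay, so nothing in your proposal closes this mode. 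The paper handles this precisely by \emph{not} projecting: $\Phi_1:=(100+|y|^2)^{-1}\|\nabla_{y}^{k}\partial_{\theta}\chi_{\Omega}v\|_{L^2_{\theta}}^{2}$ includes the $\pm 1$ modes, and the point flagged before Proposition \ref{prop:2yThew} is that the coercive term in $\Psi_{11}$ nevertheless carries $P_{\theta,\geq 2}$ because the frequency-$\pm1$ contributions cancel identically, see \eqref{eq:canll}; the $\pm1$ part of $\Phi_1$ is then handled by the positivity $V_1\geq\frac{9}{10}$ alone, and at the end one converts $\Phi_1$ (built from $v$) back to $w$ via the identity \eqref{eq:idenSimp} and the parameter bounds \eqref{eq:betaA}, as in \eqref{eq:justi}. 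To repair your version you would either have to drop $P_{\theta,\geq 2}$ from $\Phi_1$ and reproduce this cancellation plus the $v$-to-$w$ conversion, or add a third, separate propagator argument for $\nabla_{y}^{k}\chi_{\Omega}w_{\pm1}$ in the spirit of \eqref{eq:eqnw1}; neither step is present in the proposal. (A minor further imprecision: the strict positivity of the zeroth-order coefficient does not hinge on the $\Psi_{11}$ dissipation dominating — after two $y$-derivatives the shifted potential $V_1$ is already $\geq\frac{9}{10}$ — the dissipation is needed only to absorb the top-order pieces of $\Psi_{12}$ and the cross terms.)
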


The proposition will be proved in subsequent subsections.

\subsection{Proof of the first estimate in \eqref{eq:12w0}}
Similar to deriving the equation for $\nabla_y \chi_{\Omega}w_0$ in \eqref{eq:yw0}, 
\begin{align}
\begin{split}\label{eq:yychiw}
\partial_{\tau}\nabla_{y}^{k} \chi_{\Omega}w_0=&-(H_2+1)\nabla_{y}^{k} \chi_{\Omega}w_0+\nabla_{y}^{k} \chi_{\Omega}\Big(\Sigma+\frac{1}{2\pi}\langle N_1+N_2, \ 1\rangle_{\theta}\Big)+ \Lambda_3(w_0)
\end{split}
\end{align} where the term $\Lambda_3(w_0)$ is defined as: 
\begin{align}
\begin{split}
\Lambda_3(w_0):= &\nabla_{y}^{k} \Big( (\partial_{\tau}\chi_{\Omega})w_0-(\Delta_{y}\chi_{\Omega})w_0-2\nabla_{y}\chi_{\Omega}\cdot  \nabla_{y}w_0\Big)+\frac{1}{2}\nabla_{y}^{k_2}\big((y\cdot \nabla_y \chi_{\Omega})(1-\tilde\chi_{\Omega})\nabla_{y}^{k_1}w_0\big)\\
&
+\frac{1}{2}\nabla_{y}^{k_2}\Big[\big(\nabla_{y}^{k_1}(y\cdot \nabla_y\chi_{\Omega})\big)w_0
-\frac{1}{2}\frac{(y\cdot \nabla_y \chi_{\Omega}) \tilde\chi_{\Omega}\nabla_y^{k_1}\chi_{\Omega}}{\chi_{\Omega}}w_0+ (\nabla_{y}^{k_1}V_{a,B}^{-2})\chi_{\Omega}w_0\Big]\\
&+\frac{1}{2}\big(\nabla_{y}^{k_2} \frac{\tilde\chi_{\Omega} \ y\cdot \nabla_y\chi_{\Omega}}{\chi_{\Omega}}\big)\nabla_{y}^{k_1}(\chi_{\Omega}w_0)+(\nabla_{y}^{k_2} V_{a,B})\nabla_{y}^{k_1}(\chi_{\Omega}w_0),
\end{split}
\end{align} where, to ease the notations, all the terms except the first one are the sum of all the possibilities of $k=k_1+k_2$ with $|k_1|=|k_2|=1,$ and we use the commutation relation in \eqref{eq:betterDecay} again to find that the linear operator becomes $H_2+1$ here.

The orthogonality conditions imposed on $e^{-\frac{1}{8}|y|^2} \chi_{\Omega}w$ imply that, recall that $|k|=2$,
\begin{align}
e^{-\frac{1}{8}|y|^2}\nabla_{y}^{k} \chi_{\Omega}w_0\perp e^{-\frac{1}{8}|y|^2}.
\end{align}
We denote by $P_1$ the orthogonal projection onto the subspace orthogonal to $e^{-\frac{1}{8}|y|^2}$.

Apply $e^{-\frac{1}{8}|y|^2}$ and then $P_1$ on \eqref{eq:yychiw}, and then apply Duhamel's principle to find
\begin{align}
\begin{split}\label{eq:yyDur}
e^{-\frac{1}{8}|y|^2}\nabla_{y}^{k} &\chi_{\Omega}w_0(\cdot,\tau)=U_5(\tau,\tau_0)e^{-\frac{1}{8}|y|^2}\nabla_{y}^{k} \chi_{\Omega}w_0(\cdot,\tau_0)\\
&+\int_{\xi_0}^{\tau} U_5(\tau,\sigma) P_1e^{-\frac{1}{8}|y|^2}\Big[\nabla_{y}^{k} \chi_{\Omega}\big(\Sigma+\frac{1}{2\pi}\langle N_1+N_2, \ 1\rangle_{\theta}\big)+ \Lambda_3(w_0)\Big](\sigma)\ d\sigma,
\end{split}
\end{align}
where $U_5(\tau,\sigma)$ is the propagator generated by $-P_1e^{-\frac{1}{8}|y|^2}(H_2+1)e^{\frac{1}{8}|y|^2}P_1$ from $\sigma$ to $\tau.$

The propagator satisfies the following estimate, recall that $|k|=2,$
\begin{lemma}\label{LM:U5}
For any function $g$, and $\tau\geq \sigma\geq \xi_0,$
\begin{align}\label{eq:estU4}
\|\langle y\rangle^{-1}e^{\frac{1}{8}|y|^2}U_5(\tau,\sigma)P_1g\|_{\infty}\lesssim e^{-\frac{2}{5}(\tau-\sigma)} \|\langle y\rangle^{-1}e^{\frac{1}{8}|y|^2}g\|_{\infty}.
\end{align}
\end{lemma}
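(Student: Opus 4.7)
The plan is to adapt the proof of Lemma \ref{LM:propagator} essentially verbatim, since the present operator $H_2+1$ differs from the operator in Lemma \ref{LM:propagator} only by (i) the harmless shift $+1$, which only improves decay, and (ii) the projection $P_1$ (killing only the ground state $e^{-|y|^2/8}$) in place of $P_{13}$. First I would conjugate by $e^{-|y|^2/8}$ to produce the self-adjoint operator
\begin{align*}
e^{-\frac{1}{8}|y|^2}(H_2+1)e^{\frac{1}{8}|y|^2}=-\Delta_y+\frac{1}{16}|y|^2+\frac{1}{4}-a-\tau^{-\frac{1}{2}}+V_1+V_2,
\end{align*}
where $V_1$ and $V_2=\frac{1}{2}\bigl|\tilde\chi_\Omega\,y\cdot\nabla_y\chi_\Omega/\chi_\Omega\bigr|$ are pointwise nonnegative multipliers, exactly as in the proof of Lemma \ref{LM:propagator}.

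Next I would exploit the spectral structure. The unperturbed shifted harmonic oscillator $-\Delta_y+\tfrac{1}{16}|y|^2-\tfrac{3}{4}$ has discrete spectrum $\{n/2\}_{n\geq 0}$ with Hermite-type eigenfunctions; the ground state is $e^{-|y|^2/8}$. After the $+1$ shift and subtracting $a+\tau^{-1/2}\leq \tfrac{1}{2}+o(1)$, the projection $P_1$ removes the lowest eigenvalue, leaving a gap $\geq \tfrac{1}{2}+1-\tfrac{1}{2}-o(1)=1-o(1)$. Together with the nonnegativity of $V_1,V_2$, this yields in $L^2(\mathbb{R}^3)$ a decay of at least $e^{-(1-a-\tau^{-1/2})(\tau-\sigma)}\leq e^{-\frac{1}{2}(\tau-\sigma)}$ for $\xi_0$ large enough. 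This is already much stronger than the claimed rate $e^{-\frac{2}{5}(\tau-\sigma)}$, leaving room for the polynomial weight loss.

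The final step is to upgrade this $L^2$ decay to the weighted $L^\infty$ estimate $\|\langle y\rangle^{-1}e^{|y|^2/8}U_5(\tau,\sigma)P_1g\|_\infty\lesssim e^{-\frac{2}{5}(\tau-\sigma)}\|\langle y\rangle^{-1}e^{|y|^2/8}g\|_\infty$. I would follow the same short-time/long-time strategy used in \cite{GZ2017, GS2008, GaKnSi, GaKn20142} for Lemma \ref{LM:propagator}: for short times one uses the explicit Mehler kernel of the harmonic oscillator, combined with a Duhamel/Feynman-Kac comparison (legitimate precisely because $V_1,V_2\geq 0$, so the perturbed heat kernel is pointwise dominated by the free one), to obtain pointwise kernel bounds and hence weighted $L^\infty$ control; for long times one transitions to the sharper $L^2$ spectral decay derived above, absorbing the weight $\langle y\rangle^{-1}$ against the polynomial loss coming from the $L^2$-to-$L^\infty$ embedding. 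The small sacrifice from $\tfrac{1}{2}$ to $\tfrac{2}{5}$ in the exponential rate absorbs both this polynomial loss and the $\tau^{-1/2}$ error.

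The main obstacle, and the only subtle point, is making sure that the cutoff-generated potential $V_2$ does not spoil the pointwise kernel bounds near $|y|\approx (1+\epsilon)\Omega$, where $V_2$ can be large. However this was already dealt with in \cite{GZ2017} by using the nonnegativity of $V_2$ in the Feynman-Kac representation (it only contributes an additional non-positive factor to the kernel, hence cannot increase it), and the same argument applies unchanged here. No new technical ingredients are required, which is why I would feel justified — following the paper's own convention — in omitting the detailed computation and simply invoking the parallel result from \cite{GZ2017}.
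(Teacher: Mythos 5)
Your proposal follows essentially the same route as the paper, which itself skips the proof of this lemma and, exactly as in Lemma \ref{LM:propagator}, defers to the earlier works after observing that the potentials $V_1,V_2$ are nonnegative; your outline (conjugation by $e^{-\frac{1}{8}|y|^2}$, spectral gap on the range of $P_1$, Mehler-kernel/Feynman--Kac short-time bounds upgraded to the weighted $L^\infty$ estimate) is precisely that argument. The only slip is arithmetic: the constant in the conjugated operator should be $-\frac{1}{4}-a-\tau^{-1/2}$ rather than $+\frac{1}{4}-a-\tau^{-1/2}$ (you dropped the $-\frac{1}{2}$ appearing in the definition of $H$), so the gap on the range of $P_1$ is roughly $\frac{1}{2}$ rather than $1$, which is still comfortably above the claimed rate $\frac{2}{5}$.
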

As discussed in the proof of Lemma \ref{LM:propagator}, its proof is very similar to the ones considered in the known results. Hence we skip the details.

The terms in \eqref{eq:yyDur} satisfy the following estimates:
\begin{proposition}\label{prop:2yw0}
\begin{align}
\|\langle y\rangle^{-1}\nabla_{y}^{k} \chi_{\Omega}\Sigma\|_{\infty}\lesssim &\tau^{-2},\label{eq:2yScal}\\
\|\langle y\rangle^{-1}\nabla_{y}^{k} \chi_{\Omega}\langle N_1,\ 1\rangle_{\theta}\|_{\infty}\lesssim &\delta \kappa(\epsilon)\Omega^{-2}P(M),\label{eq:2yN11}\\
\|\langle y\rangle^{-1}\nabla_{y}^{k} \chi_{\Omega}\langle N_2, \ 1\rangle_{\theta}\|_{\infty}\lesssim &\delta \kappa(\epsilon)\Omega^{-2}P(M),\label{eq:2yN21}\\
\|\langle y\rangle^{-1} \Lambda_3(w_0)\|_{\infty}\lesssim &\delta\kappa(\epsilon)\Omega^{-2}\big(1+\mathcal{M}_4+\mathcal{M}_2\big).\label{eq:weigh1Phiw0}
\end{align}
\end{proposition}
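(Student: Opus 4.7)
The plan is to dispatch the four estimates in Proposition \ref{prop:2yw0} by the same strategies already used in Propositions \ref{Prop:weight3}, \ref{Prop:2weiDy} and \ref{prop:y2T1w}. In each case the weight loosens from $\langle y\rangle^{-3}$ to $\langle y\rangle^{-1}$, but the number of spatial derivatives rises from $0$ to $|k|=2$, so the overall budget of ``how much smallness is available'' stays the same.

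First, \eqref{eq:2yScal} is immediate: $\Sigma$ is a rational expression in $y$ whose coefficients are built from $a,B,\vec\beta_1$, and the parameter estimates \eqref{eq:Best}--\eqref{eq:scalarEqn} expose a prefactor of $\tau^{-2}$ in every term. Applying $\nabla_y^k$ with $|k|=2$ and multiplying by $\langle y\rangle^{-1}\chi_\Omega$ leaves a bounded function on the support of $\chi_\Omega$, and the desired bound follows by direct inspection.

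For \eqref{eq:2yN11} I would distribute $\nabla_y^k\chi_\Omega$ over the cubic monomials making up $N_1(v)$, as in \eqref{eq:distribu3Thet} and \eqref{eq:manyT}, using \eqref{eq:SmCon} to extract a factor of $\delta$ from every product. Each remaining factor is then one of: a low-order derivative of $v$ dominated by Lemma \ref{LM:appliM1234}; a top-order piece $\nabla_y^k\partial_\theta^l v$ with $|k|+l=3,4$ controlled by $\mathcal{M}_3$ after commuting $\chi_\Omega$ through $\nabla_y$ as in \eqref{eq:order10}; or a commutator in which a derivative falls on $\chi_\Omega$ rather than on $w$, handled by the fractional-power bound \eqref{eq:fractional}--\eqref{eq:chiTrick} which delivers the required $\kappa(\epsilon)\Omega^{-2}$ decay. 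A representative worst case is the product $|\nabla_y v|\,|\nabla_y^3 v|$, which I would treat exactly as the term $D$ appearing in the proof of \eqref{eq:1y2ThetN1}. For \eqref{eq:2yN21} I would first use the cancellation reducing $\langle N_2(\eta),1\rangle_\theta$ to the sum $-\langle V_{a,B}^{-2}v^{-1}\eta^2,1\rangle_\theta+2\langle v^{-3}(\partial_\theta\eta)^2,1\rangle_\theta$ (as exploited in the proof of \eqref{eq:est3N2}) and then distribute $\nabla_y^k$; in every monomial a factor $v^{-1}|\eta|$ or $v^{-1}|\partial_\theta\eta|$ supplies the $\delta$-smallness, while the remaining factor, after decomposing $\eta$ as in \eqref{eq:decomW} and invoking Lemma \ref{LM:appliM1234}, is controlled by $\kappa(\epsilon)\Omega^{-2}P(\mathcal{M})$.

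For \eqref{eq:weigh1Phiw0} I would treat $\Lambda_3(w_0)$ exactly as $\Lambda_1(w_0)$ in the proof of \eqref{eq:est2WeiPsi}: terms in which every differentiation falls on $\chi_\Omega$ are supported on the shell $|y|\in[\Omega,(1+\epsilon)\Omega]$ and benefit from $|\nabla_y^j\chi_\Omega|\lesssim\Omega^{-j}$ combined with the pointwise bound $|w_0|\lesssim\delta\sqrt{1+\tau^{-1}\Omega^2}$; terms of the schematic form $(\nabla^{k_1}\chi_\Omega/\chi_\Omega^{3/4})\,\chi_\Omega^{3/4}w_0$ are dispatched by the fractional-power trick \eqref{eq:chiTrick}; and the new contributions involving $\nabla_y V_{a,B}^{-2}$ are small by $|\nabla_y V_{a,B}|\lesssim\tau^{-1/2}$ together with the hypothesis \eqref{eq:assum}. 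The main obstacle is the bookkeeping in \eqref{eq:2yN11}: two extra spatial derivatives on top of an already fully nonlinear $N_1$ produce, via the product rule, a proliferation of terms, each of which must be paired with the right combination of $\delta$ and of $\mathcal{M}_1,\ldots,\mathcal{M}_4$ so that no factor of $\Omega^{-1}$ is lost anywhere. This is the same combinatorial difficulty already navigated for Proposition \ref{prop:y2T1w}, and the same accounting will carry through here.
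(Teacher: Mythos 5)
Your treatment of \eqref{eq:2yScal}, \eqref{eq:2yN21} and \eqref{eq:weigh1Phiw0} coincides with the paper's proof (the scalar term directly from \eqref{eq:Best}--\eqref{eq:scalarEqn}; the $N_2$ term via the cancellation reducing $\langle N_2,1\rangle_\theta$ to $-\langle V_{a,B}^{-2}v^{-1}\eta^2,1\rangle_\theta+2\langle v^{-3}(\partial_\theta\eta)^2,1\rangle_\theta$ and then Lemma \ref{LM:appliM1234}; $\Lambda_3(w_0)$ treated as $\Lambda_1(w_0)$ in \eqref{eq:est2WeiPsi}, which the paper in fact only sketches for exactly the reasons you give). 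The genuine problem is in your plan for \eqref{eq:2yN11}. You propose to control the top-order pieces $\nabla_y^{k}\partial_\theta^{l}v$ with $|k|+l=3,4$ ``by $\mathcal{M}_3$'', and to treat the worst case $|\nabla_y v|\,|\nabla_y^3 v|$ ``exactly as the term $D$'' in Section 9. Neither is available: the controlling functions $\mathcal{M}_1,\dots,\mathcal{M}_4$ (and Lemma \ref{LM:appliM1234}) stop at two derivatives of $w$, so there is no weighted norm for $\nabla_y^3 w$ or $\nabla_y^4 w$; and the term $D$ you invoke works precisely because there the high-order factor carries $\theta$-derivatives, hence lies in the range of $P_{\theta,\geq 2}$ with an $L^2_\theta$-based controlling norm and with the $V_{a,B}$-background annihilated by $\partial_\theta$ --- a purely spatial $\nabla_y^3 v$ has neither feature. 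Nor can you rescue the term by bounding both high factors by $\delta$ from \eqref{eq:SmCon} and charging the decay to the single remaining factor $\langle y\rangle^{-1}|\nabla_y v|$: its background part gives only $\langle y\rangle^{-1}|\nabla_y V_{a,B}|\lesssim\tau^{-1}$, which is \emph{not} $\lesssim\Omega^{-2}\sim\tau^{-11/10}$ for $\tau$ large, by \eqref{eq:defOmega}.

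The correct bookkeeping, and the one the paper uses, is to bound only the \emph{highest}-order factor in each monomial by $\delta$ via \eqref{eq:SmCon} and to keep the remaining low-order factors as a quadratic expression, arriving at the schematic bound
$|\nabla_{y}^{k}\chi_{\Omega}N_{1}|\lesssim \delta\chi_{\Omega}\big[\sum_{|l|=1,2}|\nabla_{y}^{l}v|^2+\sum_{|l|=0,1}|\nabla_{y}^{l}\partial_\theta v|\big]+\delta\big[|\nabla_{y}^{k}\chi_{\Omega}|+|\nabla_{y}\chi_{\Omega}|\big]$ of \eqref{eq:2yN1}. Then the $V_{a,B}$-contributions either appear squared (so $\lesssim\tau^{-2}|y|^2$, harmless after the weight $\langle y\rangle^{-1}$) or are removed by the $\theta$-derivative, and the $w$-contributions are estimated by Lemma \ref{LM:appliM1234} after commuting $\chi_\Omega$ past $\nabla_y$, with the commutator $w\nabla_y\chi_\Omega$ handled by the fractional-power trick \eqref{eq:fractional}, exactly as in \eqref{eq:nablanablaw} and \eqref{eq:y3nablaw2}. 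With this correction your argument lands on the paper's proof; as written, the step quoted above would fail.
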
 
The proposition will be proved in subsubsection \ref{subsub:2yw0}.

By the estimates above,  and going through a procedure similar to that in \eqref{eq:integ}, we obtain the desired result \eqref{eq:12w0}. We skip the details here.

\subsubsection{Proof of Proposition \ref{prop:2yw0}}\label{subsub:2yw0}
The proof of \eqref{eq:2yScal} is easy, by the estimates in \eqref{eq:Best}-\eqref{eq:scalarEqn}.

Now we prove \eqref{eq:2yN11}.
By arguing as in \eqref{eq:distribu3Thet} and \eqref{eq:distribu3Thet2} and using \eqref{eq:SmCon}, and then decompose $v$, we have, for any $|k|=2,$
\begin{align}\label{eq:2yN1}
\begin{split}
|\nabla_{y}^{k}\chi_{\Omega}N_{1}|\lesssim &\delta\chi_{\Omega}\Big[ \sum_{|l|=1,2}|\nabla_{y}^l v|^2+ \sum_{|l|=0,1}|\nabla_{y}^l \partial_{\theta}v|\Big]+\delta\Big[ |\nabla_{y}^{k}\chi_{\Omega}|+|\nabla_{y}\chi_{\Omega}|\Big]\\
\lesssim &\delta\chi_{\Omega} \Big[\sum_{|l|=1,2}|\nabla_{y}^{l}V_{a,B}|^2+\tau^{-2}(1+|y|)+ \sum_{|l|=1,2}|\nabla_{y}^l w|+\sum_{|l|=0,1}|\nabla_{y}^{l} \partial_{\theta}w|\Big]\\
&+\delta\Big[ |\nabla_{y}^{k}\chi_{\Omega}|+|\nabla_{y}\chi_{\Omega}|\Big].
\end{split}
\end{align} 

Apply the same techniques as in proving \eqref{eq:nablanablaw} to obtain the desired estimate,
\begin{align}
\|\langle y\rangle^{-1}\nabla_{y}^{k}\chi_{\Omega}N_{1}\|_{\infty}\lesssim \delta \tau^{-\frac{3}{2}}+\delta \kappa(\epsilon)\Omega^{-2}P(M).
\end{align}

Now we prove \eqref{eq:2yN21}. 
Cancellations observed in \eqref{eq
:RewN1Int1} make
\begin{align*}
\nabla_{y}^{k}\chi_{\Omega}\langle N_2(\eta),\ 1\rangle_{\theta}=&-\nabla_{y}^{k}\chi_{\Omega}\langle V_{a,B}^{-2}v^{-1}\eta^2,\ 1\rangle_{\theta}+2\nabla_{y}^{k}\chi_{\Omega}\langle v^{-3}(\partial_{\theta}\eta)^2,\ 1\rangle_{\theta}\nonumber\\
=&W_1+W_2
\end{align*} with the terms $W_1$ and $W_2$ naturally defined.

For $W_2$, we decompose $\eta$ as in \eqref{eq:decomW} and use \eqref{eq:SmCon} to find that, for $|k|=2,$
\begin{align}
\begin{split}\label{eq:estW111}
\|\langle y\rangle^{-1}W_2\|_{\infty}
\lesssim &\delta\Omega \Big[\|\langle y\rangle^{-2}\partial_{\theta}\nabla_y\chi_{\Omega}w\|_{\infty}+\|\langle y\rangle^{-2}\partial_{\theta}\chi_{\Omega}w\|_{\infty}\Big]+\delta \kappa(\epsilon) \Omega^{-2}\\
\lesssim & \delta \kappa(\epsilon)\Omega^{-2}(1+\mathcal{M}_2+\mathcal{M}_4).
\end{split}
\end{align}

For $W_1$ we use that $\nabla_{y}v=\nabla_{y}V_{a,B}+\nabla_{y}\eta$ and that $\frac{|\eta|}{V_{a,B}},\ \frac{|\eta|}{v}\lesssim \delta$ in \eqref{eq:SmCon},
\begin{align}
&\Big|\nabla_{y}^{k} \chi_{\Omega}V_{a,B}^{-2}v^{-1}\eta^2\Big|\nonumber\\
\lesssim  & V_{a,B}^{-2} \chi_{\Omega}|\eta|\Big[|\nabla_{y}^{k} V_{a,B}|+|\nabla_{y}^{k} \eta|\Big]+
V_{a,B}^{-2}\chi_{\Omega}\Big[|\nabla_y \eta|^2+|\nabla_y V_{a,B}|^2 \Big]+\delta \sum_{|l|=1,2}|\nabla_{y}^{l}\chi_{\Omega}|\nonumber\\
\lesssim &\chi_{\Omega}\Big[\tau^{-1}|w| +\delta |\nabla_{y}^{k} w|+\delta |\nabla_{y}w|\Big]+\tau^{-2}(1+|y|^2)\chi_{\Omega}+\delta \sum_{|l|=1,2}|\nabla_{y}^{l}\chi_{\Omega}|,\label{eq:fourTerms}
\end{align} where we used that $|\nabla_{y}^{k} V_{a,B}|\lesssim \tau^{-1}
$ for $|k|=2,$ and $
|\nabla_y V_{a,B}|\lesssim \tau^{-1}|y|.$

For the first term on the right hand side, 
\begin{align}
\tau^{-1}\|\langle y\rangle^{-1} \chi_{\Omega}w\|_{\infty}\lesssim \tau^{-1}\Omega^{2} \|\langle y\rangle^{-3} \chi_{\Omega}w\|_{\infty}\lesssim  \delta \kappa(\epsilon) \Omega^{-2}\mathcal{M}_1,
\end{align}
where we use that $\tau^{-3}\Omega^2\leq \Omega^{-\frac{5}{2}}$ by the definition of $\Omega$ in \eqref{eq:defOmega}. For the second and third terms we use the techniques in proving \eqref{eq:nablanablaw}, and control the last two terms by direct computation. We find
\begin{align}
\|\langle y\rangle^{-1}W_1\|_{\infty}\lesssim \delta\kappa(\epsilon) \Omega^{-2}P(M).
\end{align}

This together with \eqref{eq:estW111} implies the desired \eqref{eq:2yN21}.

For \eqref{eq:weigh1Phiw0}, the proof is considerably easier than that of \eqref{eq:est2WeiPsi}, because the needed decay estimate is significantly slower, and because that $|\nabla^{l}_{y}\chi_{\Omega}|$ decays faster as $|l|$ increases.

Hence here we choose to skip the proof.

\subsection{Proof of the second estimate in \eqref{eq:12w0} }

As in deriving \eqref{eq:phi1}, we find $\Phi_1$, defined as, for $|k|=2,$
\begin{align}
\Phi_1:=(100+|y|^2)^{-1} \|\nabla_{y}^{k} \partial_{\theta}\chi_{\Omega} v\|_{L_{\theta}^2}^2,
\end{align} satisfies the equation
\begin{align}\label{eq:phi3}
\partial_{\tau}\Phi_1=-(L_{1}+V_1)\Phi_1-2(100+|y|^2)^{-1} \|\partial_{\theta}\nabla_{y}^{k} \nabla_{y}\chi_{\Omega} v\|_{L_{\theta}^2}^2+2\sum_{k=1}^{3}\Psi_{1k},
\end{align} where the linear operator $L_{1}+V_1$ is defined as
\begin{align*}
L_{1}+V_1:=&(100+|y|^2)^{-1} \Big(-\Delta+\frac{1}{2}y\cdot\nabla_{y}+1\Big)(100+|y|^2),
\end{align*} and $L_1$ is a differential operator, $V_1$ is a multiplier defined as,
\begin{align}
\begin{split}
L_{1}:=&-\Delta+\frac{1}{2}y\cdot\nabla_{y}-\frac{2y}{100+|y|^2}\cdot \nabla_{y},\\
V_1:=&1+\frac{|y|^2}{100+|y|^2}-\frac{6}{100+|y|^2},
\end{split}
\end{align} And the terms $\Psi_{1m},\ m=1,2,3,$ are defined as
\begin{align*}
\Psi_{11}:=&(100+|y|^2)^{-1}\Big\langle \nabla_{y}^{k}\partial_{\theta}\chi_{\Omega}v,\ \nabla_{y}^{k}\partial_{\theta}\big(\chi_{\Omega}(v^{-2}\partial_{\theta}^2 v-v^{-1})\Big\rangle_{\theta},\\
\Psi_{12}:=&(100+|y|^2)^{-1}\Big\langle \nabla_{y}^{k}\partial_{\theta}\chi_{\Omega}v,\ \nabla_{y}^{k}\partial_{\theta}\chi_{\Omega} N_{1}(v)\Big\rangle_{\theta},\\
\Psi_{13}:=&(100+|y|^2)^{-1}\Big\langle \nabla_{y}^{k}\partial_{\theta}\chi_{\Omega}v,\ \nabla_{y}^{k}\partial_{\theta}\mu(v)\Big\rangle_{\theta},
\end{align*} where $\mu$ is defined as in \eqref{eq:Tchi3}.
They satisfy the following estimates:
\begin{proposition}\label{prop:2yThew}
There exists some constant $C$ such that
\begin{align}
\Psi_{11}(v)\leq & -(\frac{18}{25}-C\delta)(100+|y|^2)^{-1} V_{a,B}^{-2} \|P_{\theta,\geq 2}\nabla_{y}^{k} \partial_{\theta}^2 \chi_{\Omega} v\|_{L_{\theta}^2}^2+C\delta \Phi_1+C\delta^2 \kappa^2(\epsilon) \Omega^{-4} P^2(M),\label{eq:estPsi2}\\
|\Psi_{12}|\leq&\frac{1}{100}\Big[(100+|y|^2)^{-1}V_{a,B}^{-2} \|\nabla_{y}^{k} \partial_{\theta}^2 \chi_{\Omega} v\|_{L_{\theta}^2}^2+\Phi_1\Big]+C\delta^2 \kappa^2(\epsilon) \Omega^{-4} P^2(M),\label{eq:estPsi22}\\
\Psi_{13}\leq &\frac{1}{100}(100+|y|^2)^{-1} \|\nabla_{y}^{k} \partial_{\theta} \chi_{\Omega} v\|_{L_{\theta}^2}^2+C\delta^2 \kappa^2(\epsilon) \Omega^{-4} P^2(M).\label{eq:estP23}
\end{align}

\end{proposition}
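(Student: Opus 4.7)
The plan is to follow the pattern already established in Propositions \ref{prop:03w} and \ref{prop:y2T1w}: extract the coercive contribution from $\Psi_{11}$, which produces a negative multiple of $V_{a,B}^{-2}\|P_{\theta,\geq 2}\nabla_y^{k}\partial_\theta^2 \chi_\Omega v\|_{L^2_\theta}^2$, and combine it with the dissipative gradient term $-2(100+|y|^2)^{-1}\|\partial_\theta\nabla_y^{k}\nabla_y\chi_\Omega v\|_{L^2_\theta}^2$ already present in \eqref{eq:phi3} to absorb the other two. Throughout one uses the decomposition $v=V_{a,B}+\eta$ together with $|\eta|/V_{a,B}\lesssim\delta$ from \eqref{eq:SmCon}, the spectral gap $\|P_{\theta,\geq 2}\partial_\theta f\|_{L^2_\theta}\geq 2\|P_{\theta,\geq 2}f\|_{L^2_\theta}$, and the weighted embeddings of Lemma \ref{LM:appliM1234}.

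For \eqref{eq:estPsi2}, integrate by parts in $\theta$ in the $v^{-2}\partial_\theta^2 v$ factor and split the resulting inner product into three pieces $D_1+D_2+D_3$ exactly as in \eqref{eq:Psi0123}, with an extra $\nabla_y^{k}$ on each factor. In $D_1$, after replacing $v^{-2}$ by $V_{a,B}^{-2}$ modulo a $\delta$-error, the spectral gap applied to $P_{\theta,\geq 2}\nabla_y^{k}\partial_\theta^2\chi_\Omega v$ gives the lower bound $(3/4-C\delta)V_{a,B}^{-2}\|P_{\theta,\geq 2}\nabla_y^{k}\partial_\theta^2\chi_\Omega v\|_{L^2_\theta}^2$. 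In $D_2$, the operator $(1-P_{\theta,\geq 2})\nabla_y^{k}\partial_\theta^l$ retains only the $e^{\pm i\theta}$ modes, and since $V_{a,B}^{-2}$ is $\theta$-independent the projection of $v^{-2}e^{i\theta}$ onto $P_{\theta,\geq 2}$ is $O(\delta V_{a,B}^{-2})$; Young's inequality combined with Lemma \ref{LM:appliM1234} controls $\langle v,e^{\pm i\theta}\rangle_\theta$ and gives the desired $\tfrac{1}{100}V_{a,B}^{-2}\|P_{\theta,\geq 2}\nabla_y^{k}\partial_\theta^2\chi_\Omega v\|_{L^2_\theta}^2 + C\delta^2\kappa^2\Omega^{-4}P^2(M)$ bound. $D_3$ collects commutator terms $[\partial_\theta^l,v^{-j}]$, each of whose integrands contains an extra factor $\partial_\theta v=\partial_\theta\eta$; the $C\delta\,\Phi_1$ contribution on the right of \eqref{eq:estPsi2} arises here from the case where all $\nabla_y^{k}$ fall on $\chi_\Omega v$ rather than on $v^{-j}$, leaving an undifferentiated $\nabla_y^{k}\partial_\theta\chi_\Omega v$ factor that reassembles into $\Phi_1$.

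For \eqref{eq:estPsi22}, split $N_1=N_{11}+N_{12}$ as in \eqref{eq:N112} and distribute $\nabla_y^{k}\partial_\theta$ by the Leibniz rule. For the $N_{11}$-piece, \eqref{eq:SmCon} extracts a $\delta$-factor from one of the $|\nabla_y v|$'s, and the worst term (all four derivatives on a single $v$) has the shape $|\nabla_y v|\,|\nabla_y^{k}\partial_\theta\nabla_y v|$ which is handled as in \eqref{eq:naVnaKTheV} after using $\chi_\Omega\nabla_y f=\nabla_y(\chi_\Omega f)-f\nabla_y\chi_\Omega$ and the fractional-power trick $|\nabla_y\chi_\Omega|\leq \kappa(\epsilon)\Omega^{-1}\chi_\Omega^{3/4}$ to control the boundary layer where $\chi_\Omega$ is not bounded below. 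For the $N_{12}$-piece one integrates by parts in $\theta$ to trade one $\partial_\theta$ onto $\nabla_y^{k}\partial_\theta\chi_\Omega v$, turning the inner product into $\langle v^{-1}P_{\theta,\geq 2}\nabla_y^{k}\partial_\theta^2\chi_\Omega v,\,v\,\nabla_y^{k}\chi_\Omega N_{12}\rangle_\theta$, which is then absorbed into $V_{a,B}^{-2}\|P_{\theta,\geq 2}\nabla_y^{k}\partial_\theta^2\chi_\Omega v\|_{L^2_\theta}^2$ via Young's inequality, the remainder being controlled by Lemma \ref{LM:appliM1234}.

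For \eqref{eq:estP23}, the key observation, in exact analogy with \eqref{eq:extraT}, is that the a priori most dangerous piece of $\mu$, namely $\tfrac{1}{2}(y\cdot\nabla_y\chi_\Omega)\partial_\theta v$, produces after integration by parts the inner product $\langle \nabla_y^{k}\partial_\theta\chi_\Omega v,\,(y\cdot\nabla_y\chi_\Omega)\nabla_y^{k}\partial_\theta v\rangle_\theta$, which is nonpositive because $y\cdot\nabla_y\chi_\Omega\leq 0$ from the monotonicity requirement in \eqref{eq:defChi3}. Every other piece of $\nabla_y^{k}\partial_\theta\mu$ carries at least one derivative hitting $\chi_\Omega$, hence a factor $\Omega^{-1}\kappa(\epsilon)$ and support in the thin shell $|y|\in[\Omega,(1+\epsilon)\Omega]$; terms of the form $(\nabla_y^l\chi_\Omega)w$ without a residual $\chi_\Omega$ are estimated by the fractional-power trick of \eqref{eq:chiTrick} using $|\nabla_z^l\chi|\leq \kappa(\epsilon)\chi^{3/4}$ from \eqref{eq:defKappa}, and Young's inequality combined with \eqref{eq:assum} delivers the bound. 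The principal obstacle is the combinatorial bookkeeping in $\Psi_{13}$: three $\nabla_y$'s together with one $\partial_\theta$ must be distributed over $\chi_\Omega$, the prefactor $y\cdot\nabla_y\chi_\Omega/\chi_\Omega$, and $v$, and the non-commutation of $\nabla_y$ with $\chi_\Omega$ coupled with $\sup_z|\nabla_z\chi/\chi|=\infty$ forces systematic use of the $\chi^{3/4}$-interpolation to recover a usable positive power of $\chi_\Omega$ in every term at the expense of a small controlled loss in $\Omega$.
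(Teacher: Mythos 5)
Your treatments of \eqref{eq:estPsi22} and \eqref{eq:estP23} follow essentially the paper's route (split $N_1=N_{11}+N_{12}$, integrate by parts in $\theta$ for the $N_{12}$-piece, use the sign of $y\cdot\nabla_y\chi_{\Omega}$ and the $\chi^{3/4}$-interpolation for the $\mu$-terms), and those parts are fine. The gap is in \eqref{eq:estPsi2}. You propose to split $\Psi_{11}$ ``exactly as in \eqref{eq:Psi0123}'', but that template only works when the outer factor already carries the projection $P_{\theta,\geq 2}$, as it does in the definition of $\Phi_3$. Here $\Phi_1$ is built from the \emph{unprojected} quantity $\nabla_y^{k}\partial_{\theta}\chi_{\Omega}v$, which still contains the $e^{\pm i\theta}$ modes, so $\Psi_{11}$ has a diagonal mode-$(\pm 1)$ contribution (both factors in the frequency-one subspace) that your $D_1$ (projected on both sides) and $D_2$ (projected outer factor against mode-one inner factor) never see. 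That diagonal piece is not small: each of its two terms is comparable to $V_{a,B}^{-2}$ times the mode-one part of $(100+|y|^2)\Phi_1$, with no sign, so it cannot be hidden in $C\delta\,\Phi_1$ nor absorbed by the dissipation in \eqref{eq:phi3}. The paper's proof hinges on the exact cancellation \eqref{eq:canll}: since $|\partial_{\theta}^2(e^{im\theta}g(y))|=|\partial_{\theta}(e^{im\theta}g(y))|$ for $|m|=1$, the two quadratic forms cancel identically on each mode-one block, which is precisely why the coercive term in \eqref{eq:estPsi2} involves only $\|P_{\theta,\geq 2}\nabla_y^{k}\partial_{\theta}^2\chi_{\Omega}v\|_{L^2_{\theta}}$ even though $\Phi_1$ carries no projection. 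Without this observation (or an equivalent exact treatment of the frequency-one block) the claimed bound does not follow from your decomposition.

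A secondary inaccuracy: you attribute the $C\delta\,\Phi_1$ term to commutators $[\partial_{\theta}^l,v^{-j}]$ in a $D_3$-type remainder. In the paper it arises from the off-diagonal terms $W_2$, namely the pairing of the $P_{\theta,\geq 2}$ part against $v^{-2}e^{\pm i\theta}$ (small because $P_{\theta,\geq 2}(v^{-2}e^{i\theta})=\mathcal{O}(\delta V_{a,B}^{-2})$) and the residual $m=1$ against $m=-1$ term $\langle e^{2i\theta}\nabla_y^{k}\chi_{\Omega}v_{1},\,v^{-2}\nabla_y^{k}\chi_{\Omega}v_{-1}\rangle_{\theta}$, bounded using $|\nabla_y^{k}\chi_{\Omega}v_{\pm 1}|\leq\|\partial_{\theta}^2\nabla_y^{k}\chi_{\Omega}v\|_{L_{\theta}^2}$ and the $\theta$-independence of $V_{a,B}$. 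Once you insert the mode-one decomposition and the cancellation \eqref{eq:canll} as the first step, the rest of your argument for \eqref{eq:estPsi2} aligns with the paper's.
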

The proposition will be proved in subsubsection \ref{subsub:2yThew}.

We continue to study \eqref{eq:phi3}. A key observation is that
$
V_1\geq \frac{9}{10}.$
This together with the results in Proposition \ref{prop:2yThew} implies, for some $C_1>0,$
\begin{align}
\partial_{\tau}\Phi_1\leq -(L_{1}+\frac{1}{2})\Phi_1+C_1\delta^2 \kappa^2(\epsilon) \Omega^{-4} P^2(M).
\end{align}

Apply the maximum principle, using that $\Phi_1(y)=0$ if $|y|\geq (1+\epsilon)\Omega(\tau),$ to find that, for some $C_2,\ C_3>0,$
\begin{align}
\Phi_1\leq &e^{-\frac{1}{2}(\tau-\xi_0)}\Phi_{1}(\xi_0)+C_2\delta^2 \kappa^2(\epsilon) \Omega^{-4} P^2(M)
\leq  C_3\delta^2 \kappa^2(\epsilon) \Omega^{-4} P^2(M),\label{eq:FPhi3}
\end{align} where in the second step we use $\Phi_1(\xi_0
)\lesssim \kappa^2(\epsilon)\Omega^{-4}(\xi_0)$ implied by \eqref{eq:11}.

By definition the estimate of $\Phi_1$ does not directly implies that for $\nabla_{y}^{k}\partial_{\theta}\chi_{\Omega}w.$ The decomposition of $v$ implies that
\begin{align}\label{eq:idenSimp}
\nabla_{y}^{k}\partial_{\theta}\chi_{\Omega}w=\nabla_{y}^{k}\partial_{\theta}\chi_{\Omega}\bigg(v-\big({\vec\beta}_2(\tau)\cdot y cos\theta + {\vec\beta}_3(\tau)\cdot y sin\theta +\alpha_1(\tau) cos\theta +\alpha_2(\tau) sin\theta\big)\bigg).
\end{align} This, together with the estimates in \eqref{eq:betaA} implies that, for some $C_4>0,$ 
\begin{align}
\Phi_1\geq \frac{1}{2}(100+|y|^2)^{-1} \|P_{\theta,\geq 2}\nabla_{y}^{k} \partial_{\theta}\chi_{\Omega} w\|_{L_{\theta}^2}^2-C_4\tau^{-4}.\label{eq:justi}
\end{align}
This, together with \eqref{eq:FPhi3}, directly implies the desired second estimate in \eqref{eq:12w0} .

\subsubsection{Proof of Proposition \ref{prop:2yThew}}\label{subsub:2yThew}
To prove \eqref{eq:estPsi22} we decompose $N_1$ into $N_{11}+N_{12}$ as in \eqref{eq:N112}. This makes 
\begin{align}
\Psi_{12}=D_1+D_2,
\end{align} where $D_1$ and $D_2$ are defined in terms of $N_{11}$ and $N_{12}$ respectively.

By arguing as in \eqref{eq:distribu3Thet}, we have that, for any $|k|=2,$
\begin{align}
|\nabla_{y}^{k}\partial_{\theta}\chi_{\Omega}N_{11}|\leq \delta\sum_{|l|=1,2}\chi_{\Omega}\Big[ |\nabla_{y}^{l}v|^2+  |\nabla_{y}^{l}\partial_{\theta}v|\Big]+\delta\Big[ |\nabla_{y}^{k}\chi_{\Omega}|+|\nabla_{y}\chi_{\Omega}|\Big].
\end{align}

Then we decompose $v$ and apply the same techniques as in proving \eqref{eq:y3nablaw2}-\eqref{eq:nablanablaw}, to find
\begin{align}
|D_1|\lesssim \delta \kappa(\epsilon)\Omega^{-2}P(M) \sqrt{\Phi_1}.
\end{align}

For $D_2$, we integrate by parts in $\theta$ to find that
\begin{align*}
D_2=-(100+|y|^2)^{-1}\langle v^{-1}\nabla_{y}^{k}\partial_{\theta}^2 \chi_{\Omega}v,\ v\nabla_{y}^{k}\chi_{\Omega} N_{12}\rangle_{\theta},
\end{align*} 
and then argue as in \eqref{eq:distribu3Thet2} to find
\begin{align}
v|\nabla_{y}^{k}\chi_{\Omega} N_{12}|\lesssim \delta \chi_{\Omega}\Big[|\nabla_{y}\partial_{\theta}v|+\sum_{l=1,2}|\partial_{\theta}^l v|\Big]+\delta \sum_{|l|=1,2}\Big|\nabla_{y}^{l}\chi_{\Omega}\Big|.
\end{align} What is left is to decompose $v$ and apply the techniques as in proving \eqref{eq:nablanablaw} to have
\begin{align}
|D_2|\lesssim \delta\kappa(\epsilon)\Omega^{-2} P(M)\ (100+|y|^2)^{-\frac{1}{2}}V_{a,B}^{-2} \|\nabla_{y}^{k} \partial_{\theta}^2 \chi_{\Omega} v\|_{L_{\theta}^2}.
\end{align}

Collect the estimates above and apply Young's inequality to obtain the desired \eqref{eq:estPsi22}.

Now we prove \eqref{eq:estPsi2}. 
The strategy is the same to those in proving \eqref{eq:03wNega} and \eqref{eq:1y2Nonlin}. A minor difference is that $P_{\theta,\geq 2}$ is not in the definition of $\Phi_1$, but it appears in the first term on the right hand side of \eqref{eq:estPsi2}. This is resulted by a cancellation, see \eqref{eq:canll} below.

We compute directly to find
\begin{align}
\Psi_{11}=\langle \partial_{\theta}^2\nabla_{y}^{k}\chi_{\Omega} v,\  v^{-2}\partial_{\theta}^2\nabla_{y}^{k} \chi_{\Omega}v\rangle_{\theta}-\langle \partial_{\theta}\nabla_{y}^{k} \chi_{\Omega} v,\ v^{-2}\partial_{\theta}\nabla_{y}^{k}\chi_{\Omega} v\rangle_{\theta}.
\end{align}
Decompose $\partial_{\theta}\chi_{\Omega}v$ as $$\partial_{\theta}\chi_{\Omega}v=\partial_{\theta}\Big[\frac{1}{2\pi}\sum_{m=\pm 1}e^{im\theta}\chi_{\Omega}v_{m}+P_{\theta,\geq 2}\chi_{\Omega}v\Big],$$ with $v_{m}:=\frac{1}{2\pi}\langle v,\ e^{im\theta}\rangle_{\theta}$, and observe a cancellation,
\begin{align}\label{eq:canll}
\langle \partial_{\theta}^2\nabla_{y}^{k}\chi_{\Omega} e^{im\theta}v_{m},\  v^{-2}\partial_{\theta}^2\nabla_{y}^{k} \chi_{\Omega}e^{im\theta}v_{m}\rangle_{\theta}-\langle \partial_{\theta}\nabla_{y}^{k} \chi_{\Omega} e^{im\theta}v_{m},\ v^{-2}\partial_{\theta}\nabla_{y}^{k}\chi_{\Omega} e^{im\theta}v_{m}\rangle_{\theta}=0,
\end{align}
and hence find
\begin{align}\label{eq:defW}
\Psi_{11}=-\Big[W_1+W_2\Big],
\end{align} where the term $W_1$ contains some positive terms and is defined as
$$W_1:=\langle P_{\theta,
\geq 2}\partial_{\theta}^2\nabla_{y}^{k}\chi_{\Omega} v,  v^{-2}P_{\theta,
\geq 2}\partial_{\theta}^2\nabla_{y}^{k} \chi_{\Omega}v\rangle_{\theta}-\langle P_{\theta,
\geq 2}\partial_{\theta}\nabla_{y}^{k} \chi_{\Omega} v, v^{-2}P_{\theta,
\geq 2}\partial_{\theta}\nabla_{y}^{k}\chi_{\Omega} v\rangle_{\theta}$$
and $W_2$ collecting all the others and satisfying the estimate, 
\begin{align*}
|W_2|\lesssim &|\langle P_{\theta,\geq 2}\partial_{\theta}^2\nabla_{y}^{k}\chi_{\Omega} v,\  v^{-2} e^{i\theta}\nabla_{y}^{k} \chi_{\Omega}v_{1}\rangle_{\theta}|+|\langle P_{\theta,\geq 2}\partial_{\theta}\nabla_{y}^{k} \chi_{\Omega} v,\ v^{-2}e^{i\theta}\nabla_{y}^{k}\chi_{\Omega} v_{1}\rangle_{\theta}| \\
&+|\langle e^{2i\theta}\nabla_{y}^{k}\chi_{\Omega} v_{1},\ v^{-2}\nabla_{y}^{k}\chi_{\Omega} v_{-1}\rangle_{\theta}|,
\end{align*} where we use that $v_{-1}=\overline{v_1}$ resulted by that $v$ is a real function.

Similar to proving \eqref{eq:03wNega} and \eqref{eq:1y2Nonlin}, $W_1$ is bounded from below, for some $C>0,$
\begin{align}
W_1\geq (\frac{3}{4}-C\delta) V_{a,B}^{-2}\langle P_{\theta,
\geq 2}\partial_{\theta}^2\nabla_{y}^{k}\chi_{\Omega} v,\  P_{\theta,
\geq 2}\partial_{\theta}^2\nabla_{y}^{k} \chi_{\Omega}v\rangle_{\theta}.
\end{align} 

For $W_2$, we control the first two terms as in the proofs of \eqref{eq:deltaK} and \eqref{eq:estKdelta}. For the last term, we also use that 
\begin{align*}
|\nabla_{y}^{k}\chi_{\Omega} v_{\pm 1}|\leq \|\partial_{\theta}^2\nabla_{y}^{k}\chi_{\Omega} v\|_{L_{\theta}^2}.
\end{align*}
Consequently, we have,
\begin{align}
\begin{split}
(100+|y|^2)^{-1}|W_2|\lesssim &\delta \kappa(\epsilon)\Omega^{-2} (1+\mathcal{M}_3)(100+|y|^2)^{-\frac{1}{2}}\|v^{-1}P_{\theta,\geq 2}\partial_{\theta}^2\nabla_{y}^{k}\chi_{\Omega} v\|_{L_{\theta}^2} +  \delta \Phi_1.
\end{split}
\end{align}

Collect the estimates above and apply Young's inequality to prove \eqref{eq:estPsi2}.

The proof of \eqref{eq:estP23} is easier than those of \eqref{eq:03Lambda} and \eqref{eq:1y2Lambda}, because the needed decay estimate is slower, and $|\nabla_{y}^{l}\chi_{\Omega}|$ decays faster as $|l|$ increases. Hence we skip the details.

\appendix
\section{Proof of Proposition \ref{prop:332211}}\label{sec:improved}

In the proof we need some results proved in \cite{GZ2017}.
Recall that we proved that, if $ \tau_0$ is sufficiently large, then for $\tau\geq\tau_0$, we can decompose $v$ as
\begin{align}
\begin{split}
v(y,\ \theta,\tau)=V_{a(\tau), B(\tau)}(y)+\vec\beta_1(\tau)\cdot y&+{\vec\beta}_2(\tau)\cdot y cos\theta + {\vec\beta}_3(\tau)\cdot y sin\theta\\
& +\alpha_1(\tau) cos\theta +\alpha_2(\tau) sin\theta+w(y,\ \theta,\tau).\label{eq:decomVToW100}
\end{split}
\end{align} 

We take the following results from \cite{GZ2017}: recall the definition of $R$ from \eqref{eq:defRTau}.
\begin{proposition}
There exists a small constant $\delta$ and a large time $\tau_1$ such that when $|y|\leq (1+\epsilon)R(\tau)$, and $\tau\geq \tau_1$, 
and for $ |k|+l=1,\cdots,4$,
\begin{align}
\Big|v(\cdot,\tau)-\sqrt{2}\Big|,\  |\nabla_{y}^{k}\partial_{\theta}^{l} v(\cdot,\tau)|\leq \delta.\label{eq:smallRw}
\end{align}

The scalar function $a$, $\vec\beta_k$, $\alpha_l$, $k=1,2,3,$ $l=1,2$ and the matrix $B$ satisfy the estimates in \eqref{eq:Best}-\eqref{eq:betaA}, and for the function $w$ there exists a constant $C$ such that, 
\begin{align}
\begin{split}\label{eq:prev2}
\|\langle y\rangle^{-3}\chi_{R}w(\cdot,\tau)\|_{\infty}\leq &C \kappa(\epsilon)R^{-4}(\tau),\\
\|\langle y\rangle^{-2}\nabla_{y}^{m}\partial_{\theta}^{n}\chi_{R}w(\cdot,\tau)\|_{\infty}\leq &C \kappa(\epsilon)R^{-3}(\tau),\ |m|+n=1,\\
\|\langle y\rangle^{-1}\nabla_{y}^{m}\partial_{\theta}^{n}\chi_{R}w(\cdot,\tau)\|_{\infty}\leq &C \kappa(\epsilon)R^{-2}(\tau),\ |m|+n=2.
\end{split}
\end{align}
\end{proposition}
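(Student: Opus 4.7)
The plan is straightforward: the proposition is essentially a verbatim import from the author's companion paper \cite{GZ2017}, and the proof consists of matching notation and confirming that the minor differences between the two setups are negligible. So the proposal is not to re-prove anything, but to describe the correspondence.

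First I would recall the setup of \cite{GZ2017}: the same rescaled MCF equation \eqref{eq:scale1} with the same limiting cylinder $\mathbb{R}^3\times \mathbb{S}^1_{\sqrt{2}}$ is analyzed there, but only in the considerably smaller region $|y|\leq R(\tau)$ with $R(\tau)=\mathcal{O}(\sqrt{\ln\tau})$ defined in \eqref{eq:defRTau}. Because $R$ grows only logarithmically, in that region $\sqrt{2+\tau^{-1}y^T\tilde B y}$ stays within $\mathcal{O}(\tau^{-1}\ln\tau)$ of $\sqrt{2}$, so the pointwise closeness statement \eqref{eq:smallRw} for $v$ and its derivatives is immediately the analogous closeness statement of \cite{GZ2017} together with the pointwise bounds on $\chi_R w$ in \eqref{eq:prev2}.

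Next I would verify that the decomposition \eqref{eq:decomVToW100} with its parameters $(a,B,\vec\beta_k,\alpha_l)$ and the scalar estimates \eqref{eq:Best}--\eqref{eq:betaA} are exactly those proved in \cite{GZ2017}. The only subtlety is that in \cite{GZ2017} the orthogonality conditions for $w$ are imposed with the cutoff $\chi_R$ rather than $\chi_\Omega$. As already explained in Section \ref{sec:ReforWeightLInf}, the two choices differ only in the annular region $|y|\gtrsim R$ where the Gaussian weight $e^{-|y|^2/8}$ is bounded by $e^{-c\ln\tau}\leq \tau^{-k}$ for any fixed $k$; choosing $k=20$ shows that the parameter values (and their time derivatives, by a standard application of the implicit function theorem on the orthogonality conditions) differ by at most $\mathcal{O}(\tau^{-20})$ between the two conventions. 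Since the target estimates are $\mathcal{O}(\tau^{-2})$ or slower, this discrepancy is harmless.

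Finally I would import the weighted pointwise bounds \eqref{eq:prev2} directly: the decay rate $R^{-4}$ for $\|\langle y\rangle^{-3}\chi_R w\|_\infty$, and the analogous $R^{-3}$ and $R^{-2}$ rates for one and two derivatives, are precisely the outputs of the bootstrap argument in \cite{GZ2017} on controlling functions playing the role of $\mathcal{M}_1,\mathcal{M}_2,\mathcal{M}_3$ here, but with $\chi_R$ replacing $\chi_\Omega$ throughout. The only (minor) obstacle is verifying that the same $\mathcal{O}(\tau^{-20})$ discrepancy coming from the orthogonality mismatch also does not spoil the derivative bounds; this again follows from the rapid decay of the Gaussian weight, via the same chain-rule computation used for the parameter differences. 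With these correspondences in place the proposition follows.
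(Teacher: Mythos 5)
Your proposal matches the paper's treatment: the paper gives no independent proof of this proposition, but simply quotes it from the companion paper \cite{GZ2017}, exactly as you do. Your extra care about the $\chi_\Omega$-versus-$\chi_R$ orthogonality mismatch is harmless but not needed here, since the $w$ appearing in \eqref{eq:prev2} is precisely the $\chi_R$-based remainder of \cite{GZ2017} (that $\mathcal{O}(\tau^{-20})$ comparison is only invoked later, in Section \ref{sec:ReforWeightLInf}, when transferring these bounds to the $\chi_\Omega$-based decomposition).
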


Before proving Proposition \ref{prop:332211}, we recall the equation for the remainder $\chi_{R}w$:
\begin{align}
\partial_{\tau}\chi_{R}w=&-L \ \chi_{\Omega}w
+\chi_{R}\Big[F+G+N_1(v)+N_2(\eta)\Big]+\mu_{R}(w),\label{eq:preliRW}
\end{align} where the operator $L$ is defined as
\begin{align}
L:=-\Delta_{y}+\frac{1}{2}y\cdot \nabla_{y}-\frac{1}{2}-V_{a,B}^{-2},
\end{align}
the functions $F$, $G$, $N_1$ and $N_2$ are defined in \eqref{eq:wEqn},
and $\mu_{R}(w)$ is defined as
\begin{align*}
\mu_{R}(w):=\frac{1}{2}\big(y\cdot\nabla_{y}\chi_{R}\big)w+\big(\partial_{\tau}\chi_{R}\big)w-\big(\Delta_{y}\chi_{R}\big)w-2\nabla_{y}\chi_{R}\cdot  \nabla_{y}w.
\end{align*}

As in \eqref{eq:fourParts} we define three functions $w_{m},\ m=-1,0,1,$ by decomposing $w,$
\begin{align}
w(y,\theta,\tau)=w_0(y,\tau)+e^{i\theta} w_{1}(y,\tau)+e^{-i\theta}w_{-1}(y,\tau)+P_{\theta,\geq 2} w(y,\theta,\tau).
\end{align}

Impose $P_{\theta,\geq 2}$ on both sides of \eqref{eq:preliRW} and use that $P_{\theta,\geq 2}(F+G)=0$ to find
\begin{align}
\partial_{\tau}(P_{\theta,\geq 2}\chi_{R}w)=&-L(P_{\theta,\geq 2}\chi_{\Omega}w)
+P_{\theta,\geq 2}\Big[N_1(\eta)\chi_{R}+N_2(v)\chi_{R}+\mu_{R}(w)\Big]. \label{eq:chRw}
\end{align}

In the rest of the section we prove Proposition \ref{prop:332211}, based on \eqref{eq:chRw}.

The following two reasons make proving Proposition \ref{prop:332211} considerably easier than estimating the functions $\Big\|(100+|y|)^{-3+|k|} \|P_{\theta,\geq 2} \partial_{\theta}^{3-|k|}\nabla_{y}^{k}\chi_{\Omega}v\|_{L_{\theta}^2}\Big\|_{\infty},\ |k|=0,1,2,$ in the previous sections: 
\begin{itemize}
\item[(1)] the needed decay estimates are slower and hence it is relatively easier to obtain, 
\item[(2)] in the presently considered region $|y|\leq (1+\epsilon)R(\tau)=\mathcal{O} (\sqrt{ln \tau})$, we have $V_{a,B}\approx \sqrt{2}$ implied by its definition \eqref{eq:defVaB}, while in the region $|y|\leq (1+\epsilon)\Omega(\tau)=\mathcal{O}(\tau^{\frac{1}{2}+\frac{1}{20}})$, $V_{a,B}$ might be (adversely large) for large $|y|$. 
\end{itemize}

Based on these two reasons we often skip the details.

\subsection{Proof of \eqref{eq:33}}

The main tool is the maximum principle. Now we derive an equation to make it applicable.

For notational purpose we define $$\tilde\Phi_3(y,\tau):=(100+|y|^2)^{-3}\|P_{\theta,\geq 2}\partial_{\theta}^3 \chi_{R}w(y,\cdot,\tau)\|_{L^{2}_{\theta}}^2,$$ 
and derive an equation for it from \eqref{eq:chRw},
\begin{align}
\begin{split}\label{eq:Phi3Maxi}
\partial_{\tau}\tilde\Phi_3=&-(L_3+W_3)\tilde\Phi_3-2 (100+|y|^2)^{-3}\Big[V_{a,B}^{-2}\|P_{\theta,\geq 2}\partial_{\theta}^4 \chi_{R}w\|_{L^{2}_{\theta}}^2+\|P_{\theta,\geq 2}\partial_{\theta}^3\nabla_{y} \chi_{R}w\|_{L^{2}_{\theta}}^2\Big]\\
&+2(100+|y|^2)^{-3}D.
\end{split}
\end{align} Here $L_3$ is a differential operator, and $W_3$ is a multiplier, defined as
\begin{align}
\begin{split}
L_3:=&-\Delta+\frac{1}{2}y\cdot \nabla_{y}-2(100+|y|^2)^{-3} \Big(\nabla_y (100+|y|^2)^{3}\Big)\cdot \nabla_{y},\\
W_3:=&-1+\frac{3|y|^2}{100+|y|^2}-\frac{18}{100+|y|^2}-\frac{24|y|^2}{(100+|y|^2)^2}-2V_{a,B}^{-2},
\end{split}
\end{align}
and the term $D$ is defined as
\begin{align*}
D:=&\langle P_{\theta,\geq 2}\partial_{\theta}^3 \chi_{R}w, \ \partial_{\theta}^{3}\Big(\chi_{R}\big(N_{1}(v)+N_{2}(\eta)\big)+\mu_{R}(w)\Big)\rangle_{\theta}.
\end{align*}

We claim that $D$ satisfies the estimate, for large $\tau$,
\begin{align}\label{eq:estD3R}
(100+|y|^2)^{-3} D\leq \frac{1}{50}(100+|y|^2)^{-3}\Big[\|P_{\theta,\geq 2}\partial_{\theta}^4 \chi_{R}w\|_{L^{2}_{\theta}}^2+\|P_{\theta,\geq 2}\partial_{\theta}^3\nabla_{y} \chi_{R}w\|_{L^{2}_{\theta}}^2\Big]+C\delta^2\kappa^2(\epsilon)R^{-8}.
\end{align}

Suppose the claim holds, then its first two positive terms are cancelled by the negative ones in \eqref{eq:Phi3Maxi}. This, together with the facts $\|P_{\theta,\geq 2}\partial_{\theta}^4 \chi_{R}w\|_{L^{2}_{\theta}}^2\geq 4 \|P_{\theta,\geq 2}\partial_{\theta}^3 \chi_{R}w\|_{L^{2}_{\theta}}^2$ and that $V_{a,B}^{-2}=\frac{1}{2}+\mathcal{O}(\tau^{-\frac{1}{2}})$ if $|y|\leq (1+\epsilon)R$, implies that, for some $C>0,$
\begin{align}
\partial_{\tau}\tilde\Phi_3\leq -(L_3+\frac{1}{2})\tilde\Phi_3+C \delta^2 \kappa^2(\epsilon) R^{-8}.\label{eq:tPhi3}
\end{align} 
Before applying the maximum principle we need to check the boundary condition. The cutoff function $\chi_{R}$ makes $\tilde\Phi_3(y,\tau)=0$ if $|y|\geq (1+\epsilon)R(\tau)$.

Apply the maximum principle to have that, for some $C_1>0,$
\begin{align}
\tilde{\Phi}_3(\tau)\leq e^{-\frac{1}{2}(\tau-\tau_1)}\tilde\Phi_3(\tau_1)+C_1 \delta^2 \kappa^2(\epsilon) R^{-8}.
\end{align}

This, together with $\tilde\Phi_3(\tau_1)\leq \delta$ if $\tau\geq \tau_0$ implied by \eqref{eq:smallRw} implies the desired estimate, provided that $\tau$ is sufficiently large.

What is left is to prove the claim \eqref{eq:estD3R}.

\subsubsection{Proof of \eqref{eq:estD3R}}
We decompose $D$ into two terms
\begin{align}
D:= D_{1}+D_{2},\label{eq:DD1D2}
\end{align} where $D_{1}$ is defined as
\begin{align*}
D_{1}:= \langle P_{\theta,\geq 2}\partial_{\theta}^3 \chi_{R}w, \ \partial_{\theta}^{3} \chi_{R}\big(N_{1}+N_{2}\big)\rangle_{\theta}=-\langle P_{\theta,\geq 2}\partial_{\theta}^4 \chi_{R}w, \ \chi_{R} \partial_{\theta}^{2} \big(N_{1}+N_{2}\big)\rangle_{\theta},
\end{align*} here we integrate by parts in $\theta$ in the second step, and $D_2$ is defined as
\begin{align*}
D_2:=\langle P_{\theta,\geq 2}\partial_{\theta}^3 \chi_{R}w, \ \mu_{R}(\partial_{\theta}^{3}w)\rangle_{\theta}.
\end{align*}

For $D_1$ we observe that
\begin{align}
\chi_{R}|\partial_{\theta}^{2}N_{1}|\lesssim &\delta \chi_{R}\sum_{0\leq l\leq 2} \Big[|\partial_{\theta}^{l}\nabla_{y}v|^2+\sum_{0\leq l
\leq 2}|\partial_{\theta}^{l+1}v|\Big],\label{eq:2ThetN1}\\
\chi_{R}\|\partial_{\theta}^2 N_2\|_{L_{\theta}^2}\lesssim& \delta \chi_{R}\sum_{l=1,2,3,4}\|\partial_{\theta}^l \eta\|_{L_{\theta}^2}.\label{eq:2ThetN2}
\end{align}
Here in deriving \eqref{eq:2ThetN1} we use the definition of $N_1$
and the estimates in \eqref{eq:smallRw} to find that
\begin{align}
\begin{split}\label{eq:distribuRTheta}
|\partial_{\theta}^2 N_1|\lesssim \sum_{m=0}^{2} &\Big[\sum_{k,l=1,2,3}|\partial_{\theta}^m (\partial_{y_k}v\ \partial_{y_l}v \ \partial_{y_k}\partial_{y_l}v)|+\sum_{k=1,2,3}|\partial_{\theta}^m (\partial_{y_k}v\ \partial_{\theta}v\ \partial_{y_k}\partial_{\theta}v)|\\
&+|\partial_{\theta}^m \big((\partial_{\theta}v)^2\ \partial_{\theta}^2v\big) |+|\partial_{\theta}^m (\partial_{\theta}v)^2 |\Big],
\end{split}
\end{align}
then we consider the distribution of the derivatives $\partial_{\theta}^m$ among the terms.
And \eqref{eq:2ThetN2} is derived similarly, besides using that $\partial_{\theta}v=\partial_{\theta}\eta$.

For the first term in \eqref{eq:2ThetN1}, we decompose $v$ to find
\begin{align}
\chi_{R} |\partial_{\theta}^{l}\nabla_y v|^2\lesssim (1+|y|)^2 \tau^{-1}+\sum_{m=\pm 1}|\nabla_{y}\chi_{R}w_{m}||\nabla_{y}w_{m}|+\delta |\nabla_{y}\chi_{R}|+\delta |\partial_{\theta}^{l}\nabla_{y}P_{\theta,\geq 2}\chi_{R}w|\label{eq:a177}
\end{align} where $\nabla_{y}\chi_{R}$ is produced in changing the order of $\nabla_{y}$ and $\chi_{\Omega}$, and we use $\delta$ to bound terms in \eqref{eq:smallRw}. For the term $|\nabla_{y}\chi_{R}w_{m}||\nabla_{y}w_{m}|,\ m=\pm 1,$ we have
\begin{align}
\begin{split}
&(100+|y|^2)^{-\frac{3}{2}}\sum_{m=\pm 1}\Big\|   |\nabla_{y}\chi_{R}w_{m}||\nabla_{y}w_{m}|  \Big\|_{L_{\theta}^2}\\
\lesssim &\|\langle y\rangle^{-2}\nabla_{y}\chi_{R}w_{m}\|_{\infty}\Big[\|\langle y\rangle^{-1}\nabla_y \chi_{R}w_{m}\|_{\infty}+\|1_{\leq (1+\epsilon)R}\langle y\rangle^{-1}\nabla_y(1-\chi_{R}) w_{m}\|_{\infty}\Big]\\
\lesssim &\|\langle y\rangle^{-2}\nabla_{y}\chi_{R}w\|_{\infty}\Big[R^2 \|\langle y\rangle^{-3}\nabla_y \chi_{R}w\|_{\infty}+\|1_{\leq (1+\epsilon)R}\langle y\rangle^{-1}\nabla_y(1-\chi_{R}) w\|_{\infty}\Big]\\
\lesssim & \kappa(\epsilon) R^{-4}\Big(\kappa(\epsilon)R^{-1}+\delta\Big),
\end{split}
\end{align} 
where in the second step, we insert $1=\chi_{R}+(1-\chi_{R})$ before $w$ in the second factor, and then apply the estimates in \eqref{eq:prev2} in the last step;
we also use that the function $\nabla_y(1-\chi_{R}) w$ is supported in $|y|\geq R$, hence $\langle y\rangle^{-1}\lesssim R^{-1}$ here.

The second term in \eqref{eq:2ThetN1} can be controlled more easily since $\chi_{\Omega}$ and $\partial_{\theta}$ commute. These, together with that $\|\partial_{\theta}f\|_{L_{\theta}^2}\leq \|\partial_{\theta}^2f\|_{L_{\theta}^2}$ for any smooth function $f$ and that 
\begin{align}
\tau^{-\frac{1}{2}}\leq \kappa(\epsilon)R^{-10}(\tau)=\mathcal{O}((\ln\tau)^{-5}),
\end{align} 
and that, since $|\nabla_{y}\chi_{R}|=R^{-1}\Big|\chi^{'}(\frac{|y|}{R})\Big|$ and it is supported by the set $|y|\geq R,$
\begin{align}
\langle y\rangle^{-3}|\nabla_{y}\chi_{R}|\lesssim \kappa(\epsilon)R^{-4}
\end{align}
makes 
\begin{align}
\begin{split}\label{eq:a17}
&(100+|y|^2)^{-\frac{3}{2}}\chi_{R}\|\partial_{\theta}^{2}\chi_{R}N_{1}(v)\|_{L_{\theta}^2}\\
\lesssim &\delta(100+|y|^2)^{-\frac{3}{2}}\Big[  \|\nabla_{y}\partial_{\theta}^3 P_{\theta,\geq 2} \chi_{R}w\|_{L_{\theta}^2}+  \|\partial_{\theta}^4\chi_{R}P_{\theta,\geq 2} w\|_{L_{\theta}^2}\Big]+\delta \kappa(\epsilon)R^{-4}.
\end{split}
\end{align}

Apply similar techniques on \eqref{eq:2ThetN2} to find that,
\begin{align}
&(100+|y|^2)^{-\frac{3}{2}}\chi_{R}\|\partial_{\theta}^{2}\chi_{R}N_{2}\|_{L_{\theta}^2}
\lesssim \delta(100+|y|^2)^{-\frac{3}{2}}  \|\partial_{\theta}^4\chi_{R}P_{\theta,\geq 2} w\|_{L_{\theta}^2}+\delta \kappa(\epsilon)R^{-4}.
\end{align}

Collect the estimates above and apply Young's inequality to have, for some $C>0,$
\begin{align}
\begin{split}\label{eq:estD1Phi3}
&(100+|y|^2)^{-3}|D_{1}|\\
\leq &\frac{1}{100}(100+|y|^2)^{-3}\Big[\|P_{\theta,\geq 2}\partial_{\theta}^4 \chi_{R}w\|_{L^{2}_{\theta}}^2+ \|P_{\theta,\geq 2}\partial_{\theta}^{3}\chi_{R}N_{2}(\eta)\|_{L^{2}_{\theta}}^2\Big]+C\delta^2 \kappa^2 (\epsilon)R^{-8}.
\end{split}
\end{align}

For $D_{2}$, the term $\frac{1}{2}(y\nabla_{y}\chi_{R})$ in the definition of $\mu_{R}(\partial_{\theta}^3P_{\theta,\geq 2} w)$ is of order $\mathcal{O}(1)$, but it has a favorable non-positive sign. This makes
\begin{align}
D_{2}\leq \Big\langle P_{\theta,\geq 2}\partial_{\theta}^3 \chi_{R}w, \ P_{\theta,\geq 2}\partial_{\theta}^3\Big(\big(\partial_{\tau}\chi_{R}\big)w-\big(\Delta_{y}\chi_{R}\big)w-2\nabla_{y}\chi_{R}\cdot  \nabla_{y} w\Big)\Big\rangle_{\theta}.\label{eq:estD33}
\end{align} 

Here the decay estimates are from the derivatives of $\chi_{R}$ and that they are supported by the set $|y|\geq R$. We use \eqref{eq:smallRw}, to have that, for some $C>0,$
\begin{align}
(100+|y|^2)^{-3} D_{2}\leq \frac{1}{100} \tilde\Phi_3+C\delta^2 \kappa^2 (\epsilon)R^{-8}.\label{eq:estD2Phi3}
\end{align}

Take the estimates in \eqref{eq:estD1Phi3}, \eqref{eq:estD2Phi3} to \eqref{eq:DD1D2}, and obtain the desired estimate \eqref{eq:estD3R}.

\subsection{Proof of \eqref{eq:22}}
Compute directly to find that the function $\tilde\Phi_2$, defined as 
\begin{align}
\tilde\Phi_2:=(100+|y|^2)^{-2}\|\nabla_{y} \partial_{\theta}^2P_{
\theta,\geq 2}\chi_{R}w(y,\cdot,\tau)\|_{L^{2}_{\theta}}^2,
\end{align} satisfies the equation
\begin{align}
\begin{split}
\partial_{\tau}\tilde\Phi_2=&-(L_2+W_2)\tilde\Phi_2\\
&-2 (100+|y|^2)^{-2}\Big[V_{a,B}^{-2}\|P_{\theta,\geq 2}\partial_{\theta}^3\nabla_y \chi_{R}w\|_{L^{2}_{\theta}}^2+\sum_{l=1}^3\|P_{\theta,\geq 2}\partial_{\theta}^2\nabla_{y}\partial_{y_l} \chi_{R}w\|_{L^{2}_{\theta}}^2\Big]\\
&+2(100+|y|^2)^{-2}(U_{1}+U_{2}+U_{3}),
\end{split}
\end{align} where $L_2$ is a differential operator and $W_2$ is a multiplier, defined as,
\begin{align*}
L_2:=&-\Delta+\frac{1}{2}y\cdot \nabla_{y}-2 (100+y^2)^{-2} \big(\nabla_{y} (100+|y|^2)^{2}\big)\cdot \nabla_y,\\
W_2:=&\frac{2|y|^2}{100+|y|^2}-\frac{12}{100+|y|^2}-\frac{8|y|^2}{(100+|y|^2)^2}-2V_{a,B}^{-2},
\end{align*}
and we use that $\partial_{y_k} y\cdot \nabla_y g=(y\cdot \nabla_y+1)\partial_{y_k}g$. 
The terms $U_{l},\ l=1,2,3,$ are defined as
\begin{align*}
U_{1}:=&\langle \nabla_{y} \partial_{\theta}^2P_{
\theta,\geq 2}\chi_{R}w,\ (\nabla_{y}V_{a,B}) \partial_{\theta}^2 \chi_{R}w\rangle_{\theta},\\
U_{2}:=&\langle \nabla_{y} \partial_{\theta}^2P_{
\theta,\geq 2}\chi_{R}w,\ \nabla_{y} \partial_{\theta}^2 \chi_{R}\big(N_{1}(v)+N_{2}(\eta)\big)\rangle_{\theta},\\
U_3:=&\langle \nabla_{y} \partial_{\theta}^2P_{
\theta,\geq 2}\chi_{R}w,\ \nabla_{y} \partial_{\theta}^2\mu_{R}(w)\rangle_{\theta}.
\end{align*}

We claim that, for some $C>0,$ and $\tau$ is sufficiently large,
\begin{align}
\begin{split}\label{eq:D212}
|U_{1}|\leq &C\delta \tau^{-\frac{1}{2}} \|\nabla_{y} \partial_{\theta}^2P_{
\theta,\geq 2}\chi_{R}w\|_{L_{\theta}^2},\\
(100+|y|^2)^{-2}|U_{2}|\leq &\frac{1}{100}(100+|y|^2)^{-1}\|P_{\theta,\geq 2}\nabla_{y}\partial_{\theta}^3 \chi_{R}w\|_{L_{\theta}^{2}}+C\delta^2 \kappa^2 (\epsilon)R^{-6},\\
(100+|y|^2)^{-2}U_3\leq & C\delta^2 \kappa^2 (\epsilon)R^{-6}.
\end{split}
\end{align}
The claims will be proved in subsubsection \ref{subsub:D212}.

Suppose the claims hold, then as proving \eqref{eq:tPhi3}, we find that, for some $C>0,$
\begin{align}
\partial_{\tau}\tilde\Phi_2\leq -(L_2+\frac{1}{2})\tilde\Phi_2+C \delta^2 \kappa^2(\epsilon) R^{-6}.
\end{align} 
Apply the maximum principle to have that, for some $C_1>0,$
\begin{align}
\tilde{\Phi}_2(\tau)\leq e^{-\frac{1}{2}(\tau-\tau_1)}\tilde\Phi_2(\tau_1)+C_1 \delta^2 \kappa^2(\epsilon) R^{-6}.
\end{align}

We observe that $\tilde\Phi_2(\tau_1)\leq \delta$ for $\tau_1\geq \tau_0$ implied by \eqref{eq:smallRw}. Hence if $\tau$ is sufficiently large, we have the desired estimate.

\subsubsection{Proof of \eqref{eq:D212}}\label{subsub:D212}
It is easy to prove the estimate for $U_1$ since $|\nabla_{y}V_{a,B}|\lesssim \tau^{-\frac{1}{2}}$ and $|\partial_{\theta}^2\chi_{R}w|\lesssim \delta.$

To prove the second estimate, we follow the steps in \eqref{eq:distribuRTheta} to find
\begin{align}
\begin{split}
|\partial_{\theta}\nabla_y\chi_{R} N_1|\lesssim & \delta \Big[\sum_{l=0,1,2, 3} |\partial_{\theta}^{l}\nabla_{y}v|+\sum_{l=1,2}|\partial_{\theta}^{l}v|\Big]+\delta |\nabla_{y}\chi_{R}|,\\
|\partial_{\theta}\nabla_y\chi_{R} N_2|\lesssim & \delta \sum_{|k|,l=0,1} |\nabla_{y}^k\partial_{\theta}^l \eta|+\delta |\nabla_{y}\chi_{R}|.
\end{split}
\end{align}

This, together with the techniques in proving \eqref{eq:a17}, implies that
\begin{align}\label{eq:2RThetaY}
 \langle y\rangle^{-2}\|\partial_{\theta}\nabla_y \chi_{R}N_1\|_{L_{\theta}^2},\ \langle y\rangle^{-2} \|\partial_{\theta}\nabla_y\chi_{R} N_2\|_{L_{\theta}^2}\lesssim \delta \|\langle y\rangle^{-2}\nabla_{y} \partial_{\theta}^3 P_{
\theta,\geq 2}\chi_{R}w\|_{L^{2}_{\theta}}+\delta \kappa(\epsilon)R^{-3}+\delta\tau^{-1}.
\end{align}

Collect the estimates above to find the desired estimate for $U_2$ in \eqref{eq:D212}.

For $U_3$, the method is the same as that in \eqref{eq:estD33}, and we choose to skip the details here.

\subsection{Proof of \eqref{eq:11}}

Compute directly to find that the function $\tilde\Phi_l$, $ l\in (\mathbb{N}\cup \{0\})^3$ and $|l|=2,$ defined as
\begin{align}
\tilde\Phi_{l}(y,\tau):=(100+|y|^2)^{-1}\langle P_{\theta,\geq 2} \nabla_{y}^{l} \partial_{\theta}\chi_{R}w, \ \nabla_{y}^{l} \partial_{\theta}\chi_{R}w\rangle_{\theta},
\end{align} satisfies the equation
\begin{align}
\begin{split}
\partial_{\tau}\tilde\Phi_{l}=&-(L_1+W_1)\tilde\Phi_l-2(100+|y|^2)^{-1}\Big( V_{a,B}^{-2}\|P_{\theta,\geq 2}\partial_{\theta}^2\nabla_{y}^{l} \chi_{R}w\|_{L^{2}_{\theta}}^2+\|P_{\theta,\geq 2}\partial_{\theta}\nabla_{y} \nabla_{y}^{l}\chi_{R}w\|_{L^{2}_{\theta}}^2\Big)\\
&+2(100+|y|^2)^{-1}\sum_{k=1,2}\Upsilon_{k},
\end{split}
\end{align} where the linear operators $L_1$ and $W_1$ are defined as
\begin{align}
\begin{split}
L_1:=&-\Delta+\frac{1}{2}y\cdot\nabla_{y}-\frac{2y}{100+|y|^2}\cdot \nabla_{y}\\
W_1:=&1+\frac{|y|^2}{100+|y|^2}-\frac{6}{100+|y|^2}-2V_{a,B}^{-2},
\end{split}
\end{align}
and we use the relation that, for any function $h$, $\partial_{y_k}(\frac{1}{2}y\cdot \nabla_{y}h)=\big(\frac{1}{2}y\cdot \nabla_{y}+\frac{1}{2}\big)\partial_{y_k}h$, and the terms $\Upsilon_{k},\ k=1,2,$ are defined as
\begin{align*}
\Upsilon_{1}:=&\Big\langle P_{\theta,\geq 2}\nabla_{y}^{l}\partial_{\theta} \chi_{R}w, \ \nabla_{y}^{l}\partial_{\theta}V_{a,B}^{-2}\chi_{R}w-V_{a,B}^{-2}\nabla_{y}^{l}\partial_{\theta}\chi_{R}w  \Big\rangle_{\theta},\\
\Upsilon_{2}:=&\Big\langle P_{\theta,\geq 2}\nabla_{y}^{l}\partial_{\theta} \chi_{R}w, \ \nabla_{y}^{l}\partial_{\theta}\Big(\chi_{R}\big(N_{1}(v)+N_{2}(\eta)\big)+\mu_{R}(w)\Big)\Big\rangle_{\theta}\\
=&-\Big\langle P_{\theta,\geq 2}\nabla_{y}^{l}\partial_{\theta}^2 \chi_{R}w, \ \nabla_{y}^{l}\Big(\chi_{R}\big(N_{1}(v)+N_{2}(\eta)\big)+\mu_{R}(w)\Big)\Big\rangle_{\theta}
\end{align*}

We claim that, for some $C>0$,
\begin{align}
|\Upsilon_1|\leq &\delta\tau^{-\frac{1}{2}} \|P_{\theta,\geq 2}\nabla_{y}^{l}\partial_{\theta} \chi_{R}w\|_{L_{\theta}^{2}}, \label{eq:W1}\\
(100+|y|^2)^{-1}\Upsilon_{2}\leq &\frac{1}{100}(100+|y|^2)^{-1}\sum_{|l|=2}\|P_{\theta,\geq 2}\nabla_{y}^{l}\partial_{\theta}^2 \chi_{R}w\|_{L_{\theta}^{2}}^2+C\delta^2 \kappa^2 (\epsilon)R^{-4}.\label{eq:W2}
\end{align}
The claim will be proved in the subsubsection \ref{subsub:D112}.

Suppose the claims hold, then as proving \eqref{eq:tPhi3}, we have that, for some $C>0,$
\begin{align}
\partial_{\tau}\sum_{|l|=2}\tilde\Phi_l\leq -(L_1+\frac{1}{2})\sum_{|l|=2}\tilde\Phi_l+C \delta^2 \kappa^2(\epsilon) R^{-4}.
\end{align} 
Apply the maximum principle to have that, for some $C_1>0,$
\begin{align}
\sum_{|l|=2}\tilde{\Phi}_l(\tau)\leq e^{-\frac{1}{2}(\tau-\tau_1)}\sum_{|l|=2}\tilde\Phi_l(\tau_1)+C_1 \delta^2 \kappa^2(\epsilon) R^{-4}.
\end{align}

This, together with $\tilde\Phi_l(\tau_1)\leq \delta$ if $\tau_1\geq \tau_0$ implied by \eqref{eq:smallRw} and choosing $\tau$ to be sufficiently large, implies the desired estimate.

\subsubsection{Proofs of \eqref{eq:W1} and \eqref{eq:W2}}\label{subsub:D112}
It is easy to prove \eqref{eq:W1}, by the fact that $|\nabla_{y}^{l}V_{a,B}|\lesssim \tau^{-\frac{1}{2}},\ |l|=1,2.$

Next we prove \eqref{eq:W2}.
For $N_1$ we use the idea in \eqref{eq:distribuRTheta} to find that, for any $|k|=2,$
\begin{align}
|\nabla_{y}^k N_1|\lesssim &\delta\Big[\sum_{|n|=1,2,\ l=0,1,2} |\partial_{\theta}^l \nabla_{y}^{n}v|+ \sum_{l=1,2}|\partial_{\theta}^l v|\Big],
\end{align} for $N_2$, compute directly to have
\begin{align}
|\nabla_{y}^k  N_2|\lesssim &\delta \sum_{|l|=0,1,2} |\nabla_{y}^l \eta|.
\end{align}

Apply the same techniques as in proving \eqref{eq:a17} to find
\begin{align*}
\langle y\rangle^{-1} \chi_{R}\|\nabla_{y}^k N_1\|_{L_{\theta}^2}\lesssim &\delta \langle y\rangle^{-1} \sum_{|n|=2}\|\nabla_{y}^n\partial_{\theta}^2P_{\theta,\geq 2}\chi_{R}w\|_{L_{\theta}^2}+\delta\kappa(\epsilon)R^{-2}+\delta\tau^{-1},\\
\langle y\rangle^{-1}\chi_{R}\|\nabla_{y}^k N_2\|_{L_{\theta}^2}\lesssim &\delta \kappa(\epsilon)R^{-2}+\delta\tau^{-1}.
\end{align*}

We estimate the $\mu_{R}-$term by the same methods as that in proving \eqref{eq:estD33}.

Take these estimates above to the definition of $\Upsilon_2$ and apply Young's inequality, we find the desired estimate.

\def\cprime{$'$}

%\bibliographystyle{abbrv}
%\addcontentsline{toc}{chapter}{Bibliography}
%\bibliography{biblio}
\end{document}